\newtheorem{maintheorem}{Theorem}
\newtheorem{theorem}{Theorem}[section]
\newtheorem{lemma}[theorem]{Lemma}
\newtheorem{corollary}[theorem]{Corollary}
\newtheorem{proposition}[theorem]{Proposition}
\theoremstyle{definition}
\newtheorem{definition}[theorem]{Definition}
\newtheorem{notation}[theorem]{Notation}
\newtheorem{example}[theorem]{Example}
\newtheorem{remark}[theorem]{Remark}
\DeclarePairedDelimiter{\ceil}{\lceil}{\rceil}
\DeclarePairedDelimiter{\floor}{\lfloor}{\rfloor}
\newcommand{\len}[1]{| #1 |}
\newcommand{\bigslant}[2]{{\raisebox{.2em}{$#1$}\left/\raisebox{-.2em}{$#2$}\right.}}
\begin{document}

\dtitle[On minimal subshifts of linear word complexity with slope less than $3/2$]{{\LARGE On minimal subshifts of linear word complexity with slope less than $3/2$}}
\dauthorone[D.~Creutz]{Darren Creutz}{creutz@usna.edu}{US Naval Academy}{}
\dauthortwo[R.~Pavlov]{Ronnie Pavlov}{rpavlov@du.edu}{University of Denver}{The second author gratefully acknowledges the support of a Simons Foundation Collaboration Grant.}
\datewritten{\today}

\keywords{Symbolic dynamics, word complexity, discrete spectrum, S-adic, Pisot conjecture, Sarnak conjecture, nilmanifolds, adeles}
\subjclass{Primary: 37B10; Secondary 37A25}

\dabstract{%
We prove that every infinite minimal subshift with word complexity $p(q)$ satisfying $\limsup p(q)/q < 3/2$ is
measure-theoretically isomorphic to its maximal equicontinuous factor; in particular, it has measurably discrete spectrum. Among other applications, this provides a proof of Sarnak's conjecture for all subshifts with $\limsup p(q)/q < 3/2$ (which can be thought of as a much stronger version of zero entropy). 

As in \cite{creutzpavlov}, our main technique is proving that all low-complexity minimal subshifts have a specific type of representation via a sequence $\{\tau_k\}$ of substitutions, usually called an S-adic decomposition. The maximal equicontinuous factor is the product of an odometer with a rotation on a compact abelian connected one-dimensional group, for which we can give an explicit description in terms of the substitutions $\tau_k$. We also prove that all such odometers and groups may appear for minimal subshifts with $\limsup p(q)/q = 1$, demonstrating that lower complexity thresholds do not further restrict the possible structure.
}

\makepreprint

\section{Introduction}

In this work, we demonstrate some surprising connections between symbolically defined dynamical systems called \textbf{subshifts} and algebraic number theory. Our main result (see Section~\ref{int3}) shows that every minimal subshift with word complexity function (see Section~\ref{int1}) of very slow growth is measurably isomorphic to a specific rotation of a compact abelian group called its 
\textbf{maximal equicontinuous factor} or \textbf{MEF} (which can be defined as the character group of its eigenvalue group; see Section~\ref{int2}). 

What's more, the group in question has a very specific structure as the product of an odometer and a compact abelian connected one-dimensional group about which we establish specific number-theoretic properties. 
Sections \ref{int6} and \ref{int62} give more details about the latter object, but for context we mention that a simple example is irrational rotation of the unit circle 
$S^1 = \mathbb{R} / \mathbb{Z}$, and all of the groups in question can be thought of as rotations on $p$-adic extensions of $S^{1}$.

In this introduction, we will describe our main results and consequences/connections to several disparate areas, including Sarnak's conjecture (see Section~\ref{int3.5}), Pisot's substitution conjecture and so-called S-adic representations of subshifts (see Section~\ref{int4}), and word complexity thresholds (see Section~\ref{int5}). To explain these, we need some brief definitions/background; for full formal definitions, see Section~\ref{defs}.

\subsection{Topological dynamics, subshifts, and word complexity}\label{int1}

A \textbf{topological dynamical system (TDS)} is a pair $(X, T)$ where $X$ is a compact metric space and $T: X \rightarrow X$ is a homeomorphism. A TDS is \textbf{transitive} if there exists $x \in X$ for which $X = \overline{\{T^n x\}}$; every TDS throughout this work will be assumed transitive to avoid degenerate examples such as disjoint unions, for which it is impossible to give a unified structure. A TDS is called \textbf{minimal} if it does not properly contain any nonempty TDS.

A \textbf{subshift} is a TDS given by a finite set $A$ (called the \textbf{alphabet}) and a set $X \subset A^\mathbb{Z}$ which is closed (thereby compact) in the product topology and invariant under all powers of the left shift $\sigma$ (i.e. any shift of a sequence in $X$ must also be in $X$). Every subshift is endowed with the action by the homeomorphism $\sigma$, and so we generally refer to a subshift as $X$ instead of $(X, \sigma)$. 

The \textbf{word complexity function} $p(q)$ of a subshift $X$ simply counts the number of $q$-letter contiguous strings which appear within at least one $x \in X$. For instance, if $X$ is the so-called golden mean shift, consisting of $0$-$1$ sequences without consecutive $1$s, then it's not hard to check that $p(1) = 2$, $p(2) = 3$, $p(3) = 5$, and in fact $p(q)$ is the Fibonacci sequence.

The well-known Morse-Hedlund theorem (\cite{HM}) states that if there exists any $q$ for which $p(q) \leq q$, then $X$ is a finite set of periodic sequences, i.e. any infinite $X$ must have $p(q) \geq q+1$ for all $q$. This means that for infinite $X$, word complexity grows at least linearly. Ferenczi (\cite{ferenczirank}) proved that minimal subshifts with linear complexity have a recursive structure given by a sequence $\tau_k$ of \textbf{substitutions} (such subshifts are now called \textbf{S-adic}); see Section~\ref{defs} for formal definitions. 

This structure already restricts the dynamical behavior of such a subshift significantly; for instance $X$ must have finite topological rank (\cite{DDMP2}), finitely many ergodic invariant measures (\cite{boshernitzan}, \cite{CK2}, \cite{DMP}), and cannot support a strongly mixing measure (\cite{ferenczirank}). (All $\sigma$-invariant measures on subshifts are assumed to be Borel probability measures.)

In \cite{creutzpavlov}, we showed that any minimal subshift $X$ with $\limsup p(q)/q < 4/3$ has a quite restrictive S-adic structure, which implies (measurable) \textbf{discrete spectrum}, meaning that $X$ is measurably isomorphic to the rotation of a compact abelian group.
In this work, we improve that result in multiple ways. First, we increase the threshold from $4/3$ to $3/2$, which is optimal since \cite{creutzpavlov} also contains an example with $\limsup p(q)/q = 3/2$ which is (measurably) \textbf{weakly mixing}, which is antithetical to discrete spectrum. Second, we describe the exact group in question (in terms of the S-adic decomposition) and show that the canonical projection of $X$ to the group rotation is a measure-theoretic isomorphism; this is in a sense showing that $X$ is as close as possible to a group rotation (since an infinite subshift cannot be topologically isomorphic to a rotation).

We note for future reference that $\limsup p(q)/q < 3$ is known to imply uniqueness of invariant measure for minimal subshifts (\cite{boshernitzan}), and so when we refer to `the measure' on such a subshift $X$, there is no ambiguity.

\subsection{Eigenvalues, characters, MEFs, and Sturmian and Toeplitz subshifts}\label{int2}

We say that $f \in C(X)$ is a \textbf{continuous eigenfunction} of the TDS $(X, T)$ with \textbf{continuous additive eigenvalue} 
$\gamma$ if $f(Tx) = e^{2\pi i \gamma} f(x)$ for all $x \in X$. The continuous additive eigenvalues form a subgroup $E_{X}$ of $(\mathbb{R}, +)$ containing $\mathbb{Z}$; the \textbf{continuous multiplicative eigenvalues} $\mathcal{E}_{X} = \{ \exp(2\pi i \gamma) : \gamma \in E \}$ form a subgroup of the unit circle $(S^{1}, \cdot)$ in the complex plane, which is always isomorphic to $E_{X}/\mathbb{Z}$.

For any minimal TDS, its MEF is a rotation of the \textbf{dual group} or \textbf{character group} of the continuous multiplicative eigenvalues, i.e. the group $\widehat{\mathcal{E}_{X}}$ (under pointwise multiplication) of homomorphisms from $\mathcal{E}_{X}$ to $S^{1}$, see \cite{MR3160543}.

Two well-studied classes of subshifts with known MEF are the \textbf{Sturmian subshifts} and \textbf{Toeplitz subshifts}. Sturmian subshifts are particularly relevant for our purposes since they are subshifts of minimal word complexity, i.e. $p(q) = q+1$ for all $q$. In addition, any Sturmian subshift $S$ has $E_{S} = \mathbb{Z} \alpha + \mathbb{Z}$ for some $\alpha \notin \mathbb{Q}$, and so 
$\mathcal{E}_S = \mathbb{Z} \alpha$, which is isomorphic to $\mathbb{Z}$. The MEF of a Sturmian shift is therefore $\widehat{\mathbb{Z}}$, which is an (irrational) rotation on the unit circle. 

A subshift $T$ is Toeplitz if it is an almost $1$-$1$ extension of its MEF and has $E_{T}$ a subgroup of $(\mathbb{Q}, +)$ (which must contain $\mathbb{Z}$). Then $\mathcal{E}_{T}$ is isomorphic to $E_T/\mathbb{Z}$, and so
the MEF of $T$ is given by $\widehat{E_T/\mathbb{Z}}$. One way of viewing any such dual group is as an \textbf{odometer}, which is defined as coordinatewise addition by $1$ in an inverse limit of the form $\varprojlim_k \mathbb{Z}/o_1 o_2 \ldots o_k \mathbb{Z}$.
(In a slight abuse of notation, we sometimes also use the term `odometer' to refer to the group itself and not the rotation; since the only rotation ever considered on such groups in this work is the coordinatewise addition by $1$, we hope this will not cause ambiguity.)

Unlike Sturmian subshifts, Toeplitz subshifts need not have low word complexity (it can even grow exponentially) and may have many invariant measures; in fact one of the most celebrated results about this class (\cite{MR1135237}) is that their measure-theoretic structure can be that of an arbitrary Choquet simplex, and so measure-theoretically Toeplitz subshifts are no more restrictive than general topological dynamical systems. 

As will be seen in the next section, we prove that every minimal subshift $X$ with sufficiently low word complexity is a combination of Sturmian and Toeplitz subshifts in the sense that their MEFs are a product of an odometer and a rotation of a compact abelian connected one-dimensional group which generalizes and factors onto an irrational circle rotation; see Theorem \ref{MEFprod}.

\subsection{Our main results}\label{int3}

\begin{maintheorem}\label{main}
Let $X$ be an infinite minimal subshift with $\limsup p(q)/q < 3/2$. Then, if $(M, \eta)$ is the maximal equicontinuous factor of $(X, \sigma)$ and $\phi: (X, \sigma) \rightarrow (M, \eta)$ is the associated factor map, 

\begin{itemize}
\item $\phi$ is a measure-theoretic isomorphism with respect to the unique invariant measures on $(X, \sigma)$ and $(M,\eta)$ (Proposition~\ref{almostoneone}),
\item the additive continuous eigenvalue group of $(X, \sigma)$ is $E_X = \{ q\alpha + \sum \{ qe_{p} \}_{p} : q \in Q \} + R$ for some $\alpha \notin \mathbb{Q}$, $e_{p} \in \mathbb{Q}_{p}$ and 
$Q,R$ subgroups of $\mathbb{Q}$ containing $\mathbb{Z}$ (Theorem~\ref{evalgroup}), 
\item $(M, \eta)$ is isomorphic to a product of 
a (possibly finite) odometer $\mathcal{O}_{X}$ (controlling the rational continuous eigenvalues) and 
a rotation of a compact abelian connected one-dimensional group $\mathcal{M}_{X}$
(Theorem~\ref{MEFprod}), and
\item for every odometer $\mathcal{O}$ and group $\mathcal{M}$ which can so appear in such an MEF, there exists a minimal subshift with 
$\limsup p(q)/q = 1$ for which the MEF is the product of $\mathcal{O}$ and a rotation on $\mathcal{M}$. This limsup in fact may take any prescribed value in $[1, 3/2)$ as long as either $\mathcal{O}$ is infinite or $\mathcal{M}$ is not a finite extension of a circle (Theorem~\ref{nobetter}).
\end{itemize}

In particular, $X$ has measurably discrete spectrum for its unique invariant measure (Theorem~\ref{discspec}), factors onto an irrational circle rotation (Corollary \ref{circlefactor}), and every eigenfunction is continuous (Theorem~\ref{evalgroup}). 

Also, any two such subshifts are orbit equivalent iff they are strong orbit equivalent iff they have the same additive eigenvalue group.

\end{maintheorem}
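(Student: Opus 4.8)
The four structural bullets of Theorem~\ref{main} are, essentially verbatim, Proposition~\ref{almostoneone}, Theorem~\ref{evalgroup}, Theorem~\ref{MEFprod}, and Theorem~\ref{nobetter}, and the three ``in particular'' consequences are Theorem~\ref{discspec}, Corollary~\ref{circlefactor}, and the continuity clause of Theorem~\ref{evalgroup}; all of these rest on first establishing the rigid S-adic decomposition available when $\limsup p(q)/q < 3/2$, which strengthens the $4/3$ machinery of \cite{creutzpavlov}. So the only portion of Theorem~\ref{main} that needs a self-contained argument here is the final sentence, and the rest of this sketch concerns it.

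An infinite minimal subshift is a Cantor minimal system, so the natural tool is the Giordano--Putnam--Skau classification: two Cantor minimal systems are strongly orbit equivalent iff their dimension groups are isomorphic as ordered abelian groups with distinguished order unit, and (merely) orbit equivalent iff the quotients of the dimension groups by their subgroups of infinitesimals are so isomorphic. The plan is to prove, for a subshift $X$ as in the theorem, two facts: (i) its dimension group $G_X$ has no nonzero infinitesimals, and (ii) $(G_X, G_X^+, [\mathbf{1}])$ is, canonically, $(E_X,\, E_X \cap [0,\infty),\, 1)$ sitting inside $(\mathbb{R},+)$ with the order inherited from $\mathbb{R}$.

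Granting (i) and (ii), for two such subshifts $X$ and $Y$ both ``strongly orbit equivalent'' and ``orbit equivalent'' reduce to the existence of a unit-preserving positive group isomorphism $\psi\colon E_X \to E_Y$. But $E_X$ contains $\mathbb{Z}$ and, taking $q=1$ in its defining formula, an irrational number; hence $E_X$ is dense in $\mathbb{R}$, and the inclusion $E_X \hookrightarrow \mathbb{R}$ is its unique state normalized at $1$. Composing $\psi$ with $E_Y \hookrightarrow \mathbb{R}$ yields such a state on $E_X$, forcing $\psi$ to be the identity and $E_X = E_Y$. Conversely, $E_X = E_Y$ makes the dimension groups with their order units literally equal, hence strongly --- a fortiori, plainly --- orbit equivalent, completing the trichotomy.

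Facts (i) and (ii) should be read off the S-adic data produced for the structural bullets. After telescoping the substitutions $\tau_k$ to be proper and recognizable, $G_X$ is the inductive limit of the groups $\mathbb{Z}^{|A_k|}$ along the connecting maps determined by the incidence matrices of the $\tau_k$, with order unit the class of the all-ones vector; since $\limsup p(q)/q < 3$ forces unique ergodicity (\cite{boshernitzan}), $G_X$ carries a unique normalized trace, given by pairing with the vector of level-$k$ cylinder measures, and (ii) is exactly the computation underlying Theorem~\ref{evalgroup}, which identifies that trace image with $E_X = \{ q\alpha + \sum_p \{qe_p\}_p : q \in Q \} + R$. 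Fact (i) --- that the telescoped incidence matrices contract onto a one-dimensional limit, so the trace map $G_X \to \mathbb{R}$ is injective --- is where I expect the real difficulty: a generic finite-rank Cantor minimal system can retain a dominant Perron direction while still carrying nonzero infinitesimals, so one must genuinely exploit the constrained shape of the $\tau_k$ forced by $\limsup p(q)/q < 3/2$ (the same input behind the almost one-to-one MEF map of Proposition~\ref{almostoneone} and the equicontinuity of $(M,\eta)$). The remaining bookkeeping --- matching $\mathcal{O}_X$ with the rational subgroup $R$ and $\mathcal{M}_X$ with the rank-one, $p$-adically twisted part of $E_X$ under Pontryagin duality --- is routine once (i) and Theorem~\ref{evalgroup} are in hand.
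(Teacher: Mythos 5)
Your framing of the orbit/strong-orbit claim is the same as the paper's (Section~\ref{orb}): reduce via Giordano--Putnam--Skau to comparing the dimension group $K^0(X,\sigma)$ and its infinitesimal-free quotient $\widehat{K^0}(X,\sigma)$, identify both with $(E_X, E_X\cap\mathbb{R}^+,1)$, and then observe that a unit-preserving positive isomorphism between two dense subgroups of $\mathbb{R}$ containing $1$ forces the subgroups to coincide. The paper states this last observation, as you do, as an elementary remark, and then Corollary~\ref{SOE} follows.

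The genuine gap is exactly the step you flag as ``where I expect the real difficulty'': you do not prove that the infinitesimal subgroup is trivial, you only predict that it must ``exploit the constrained shape of the $\tau_k$.'' That step is the content of Theorem~\ref{balanced}. The paper's route is: using the S-adic decomposition from Corollary~\ref{taus}, the letter frequency vector is controlled via the continued-fraction convergents, the incidence-matrix products restricted to $f^\perp$ are bounded by $C\cdot(d_{k-1}/d_k)\epsilon_k$ with $\epsilon_k$ the summable sequence from Proposition~\ref{Pmain}, and Theorem~5.8 of \cite{MR3330561} then gives balancedness on letters; right-properness of the $\tau_{m,n,r}$ and \cite{PoSt} upgrade this to balancedness on words; and Proposition~5.4 of \cite{MR4228544} converts balancedness on words into vanishing infinitesimals. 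Without Proposition~\ref{Pmain}'s exponential decay this does not go through (and indeed can fail above the $3/2$ threshold, where weak mixing examples exist). A secondary imprecision: your claim (ii) is not ``exactly the computation underlying Theorem~\ref{evalgroup}.'' The paper proves $I(X,\sigma)\subseteq E_X$ by a direct argument that each cylinder measure $\mu([w])$ equals $\len{v_{k_0}}_w\,\alpha_{k_0}$ (a limit of convergents landing in $E_X$), and gets the reverse inclusion $E_X\subseteq I(X,\sigma)$ from Proposition~11 of \cite{MR3570018}; this is related to but distinct from the eigenvalue-group computation itself.
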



\subsection{The Sarnak conjecture}\label{int3.5}

The celebrated \textbf{Sarnak conjecture} states that for any zero entropy TDS $(X,T)$, any $f \in C(X)$, and any $x \in X$,  
\begin{equation}\label{sar}
\frac{1}{N} \sum_{n = 0}^{N-1} f(T^n x) \mu(n) \rightarrow 0,
\end{equation}
where $\mu$ is the M\"{o}bius function. A simple application of Theorem~\ref{main} is the following.

\begin{corollary}\label{sarcor}
For any subshift $(X,\sigma)$ with $\limsup p(q)/q < 3/2$, any $f \in C(X)$, and any $x \in X$, (\ref{sar}) holds.
\end{corollary}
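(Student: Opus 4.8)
The plan is to deduce Corollary~\ref{sarcor} directly from Theorem~\ref{main} together with known results connecting discrete spectrum (or isomorphism to the MEF) to the Sarnak conjecture. First I would reduce to the minimal case: if $X$ is an arbitrary subshift with $\limsup p(q)/q < 3/2$, then for a fixed $x \in X$ the orbit closure $Y = \overline{\{\sigma^n x\}}$ is a transitive subshift whose complexity function is bounded above by that of $X$, so $\limsup p_Y(q)/q < 3/2$ as well. Since (\ref{sar}) only depends on the orbit of $x$ and the restriction of $f$ to $Y$, it suffices to prove the statement for $Y$ in place of $X$; and every transitive subshift of linear complexity contains a minimal subshift, but more to the point one knows (again from the linear complexity hypothesis, e.g.\ via the structure theory cited in the introduction) that such a $Y$ is itself minimal, or one passes to its unique minimal subsystem after noting that the Möbius orthogonality of a system is inherited by all its subsystems and that the relevant averages localize. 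The cleanest route is: it is enough to verify the \textbf{Möbius orthogonality} property (\ref{sar}) for all minimal subshifts $X$ with $\limsup p(q)/q < 3/2$ and all $x \in X$, $f \in C(X)$.

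Next, for a minimal such $X$, Theorem~\ref{main} gives that $X$ has measurably discrete spectrum for its unique invariant measure, and moreover that the factor map $\phi : X \to M$ onto the maximal equicontinuous factor is a measure-theoretic isomorphism, with $M$ an explicit group rotation (a product of an odometer and a rotation on a compact abelian one-dimensional group). I would then invoke the known implication that systems with this structure satisfy Sarnak's conjecture. Concretely, group rotations (and, more generally, nilsystems and their inverse limits) are Möbius orthogonal by the work of Green--Tao; an odometer is an inverse limit of finite cyclic rotations, hence Möbius orthogonal; and a product of two Möbius orthogonal systems of this type is again Möbius orthogonal. Thus $M$ itself satisfies (\ref{sar}) for every continuous function and every point. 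The remaining issue is to transfer this from $M$ back to $X$, where the subtlety is that $\phi$ is only a \emph{measurable} isomorphism, not a topological one, so one cannot simply pull back continuous functions.

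To bridge this gap I would use the standard principle that Möbius orthogonality for a minimal system follows once one knows it holds for the maximal equicontinuous factor \emph{and} the extension is measurably isomorphic to that factor; the key input is that $X$ then has topological entropy zero (which is immediate from linear complexity) and that the Möbius--Liouville averages against any $f \in C(X)$ can be approximated, uniformly along the orbit of $x$, by averages against functions of the form $g \circ \phi$ with $g$ continuous on $M$. This approximation is precisely where the measure-isomorphism is used: since $\phi$ is a measure isomorphism and $X$ is uniquely ergodic, $f$ agrees with a $\phi$-measurable function up to a set of measure zero, and by uniform convergence of ergodic averages (unique ergodicity) the contribution of the error term to (\ref{sar}) vanishes in the Cesàro limit, uniformly in $x$. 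This is the one genuinely non-formal step, and I expect it to be the main obstacle: making the ``measurable isomorphism implies Möbius orthogonality transfers'' argument rigorous requires care, and the honest thing to do is to cite the existing literature where this exact implication is proved (orthogonality of the Möbius function with systems of ``quasi-discrete'' or discrete spectrum, and its stability under measurable isomorphism for uniquely ergodic zero-entropy systems), rather than re-deriving it. With that citation in hand, the corollary follows by combining the reduction to the minimal case, Theorem~\ref{main}, and Möbius orthogonality of the explicit MEF.
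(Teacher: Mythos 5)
Your reduction to the minimal case has a genuine gap. You assert that ``one knows (again from the linear complexity hypothesis\ldots) that such a $Y$ is itself minimal,'' but this is false: as the paper itself emphasizes in Section~1.7, transitive nonminimal subshifts with $\limsup p(q)/q < 3/2$ do exist, and by Ormes--Pavlov they are exactly the orbit closures of sequences that are eventually periodic in both directions. Your fallback of ``passing to the unique minimal subsystem'' does not close the gap either: when $Y = \overline{\{\sigma^n x\}}$ is transitive but not minimal, the point $x$ does \emph{not} lie in the minimal subsystem, and the averages in~(\ref{sar}) are along the orbit of $x$, so knowing Möbius orthogonality for a strictly smaller subsystem that $x$ never visits tells you nothing. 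The paper handles this case by a different mechanism entirely: it invokes \cite{ormespavlov} to conclude that $x$ is eventually periodic in both directions, at which point~(\ref{sar}) follows from the Prime Number Theorem, just as in the finite case. You need this trichotomy (finite / infinite-nonminimal-hence-eventually-periodic / infinite-minimal); collapsing it to ``minimal'' breaks the argument.

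For the minimal case itself, your route (Möbius orthogonality of the MEF via Green--Tao plus a transfer argument across the measurable isomorphism) is more elaborate than what the paper actually does, and as you yourself concede, the transfer step is ``genuinely non-formal'' and would need its own citation or proof. The paper avoids the transfer entirely: once Theorem~\ref{main} gives unique ergodicity with measurably discrete spectrum, it directly invokes Theorem~1.2 of \cite{HWY}, which shows that uniquely ergodic systems whose unique invariant measure has sufficiently low measure complexity (in particular, any system with discrete spectrum) satisfy Sarnak's conjecture for all points and all continuous functions. No appeal to the MEF structure, Green--Tao, or any pull-back of continuous functions is needed. So even in the branch where your plan is salvageable, the missing ingredient is the specific citation \cite{HWY}, which subsumes the whole transfer discussion you sketched.
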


\begin{proof}
Assume that $(X,\sigma)$ has $\limsup p(q)/q < 3/2$, and consider $f \in C(X)$ and $x \in X$. The subsystem 
$Y = \overline{\{\sigma^n x\}}$ is transitive by definition. If $Y$ is finite, then it is a finite union of periodic orbits, and it is a simple consequence of the Prime Number Theorem that (\ref{sar}) holds for $x$ in this case. If $Y$ is infinite, transitive, and non-minimal, then by \cite{ormespavlov}, $x$ is eventually periodic in both directions, in which case (\ref{sar}) holds for $x$ for the same reason. 

Finally, if $Y$ is infinite and minimal, then by Theorem~\ref{main}, it is uniquely ergodic with unique invariant measure having discrete spectrum. By Theorem 1.2 of \cite{HWY}, Sarnak's conjecture holds for such a system.

\end{proof}

Corollary~\ref{sarcor} is, to our knowledge, the first result to show that sufficiently low word complexity (which is just a stronger version of zero entropy) implies the conclusion of Sarnak's conjecture. 

\begin{remark}
We note that the previously mentioned \cite{HWY} proves that a different sort of low complexity implies discrete spectrum, and so also implies Sarnak's conjecture. Their property is called \textbf{measure complexity}, and is a bit too long to define fully here. Roughly, for any $n$, they look at the minimum size of a collection of points $S$ so that most $x \in X$ (in the sense of the invariant measure) maintain a small average distance from some $s \in S$ over the first $n$ iterates. We do not believe that our result implies theirs or the converse; their measure complexity is clearly bounded from above by word complexity, but can be much smaller. In fact, their required upper bound is that measure complexity grows more slowly than $n^{\epsilon}$ for all $\epsilon > 0$, and their proof actually shows that discrete spectrum implies that measure complexity is bounded for some metric.
\end{remark}

\subsection{Adelic groups}\label{int6}

The MEF of a low-complexity minimal subshift can be interpreted in purely algebraic terms as a group rotation, a viewpoint which relates to both class field theory and Lie theory. 
Before discussing the general case, a (relatively) simple example may be helpful. Let $\mathcal{M}_{2} = S^1 \times \mathbb{Z}_{2}$ 
(here and elsewhere, $\mathbb{Z}_{2}$ are the $2$-adic integers) and consider a distinguished element 
$(\alpha,a_{2}) \in \mathcal{M}_2$. One can define a skew product action on $\mathcal{M}_{2}$ by
\[
(\theta,z) \mapsto \left\{ \begin{array}{ll} (\theta+\alpha,z+a_{2}) &\text{if $\theta + \alpha < 1$} \\
(\theta+\alpha-1,z+a_{2}+1) &\text{otherwise} \end{array}\right.
\]
This action is not a rotation on $\mathcal{M}_2$ viewed as a product group, but it can be viewed as the restriction of the rotation by $(\alpha, a_2)$ on $(\mathbb{R} \times \mathbb{Q}_{2})/\mathbb{Z}[\nicefrac{1}{2}]$ to its natural fundamental domain 
$\mathcal{M}_2 = S^1 \times \mathbb{Z}_{2}$. We note that the projection to the real coordinate is precisely the factor map onto $S^{1}$ under rotation by $\alpha$.

The \textbf{ring of adeles} $\mathbb{A}$ over $\mathbb{Q}$ is $\mathbb{R} \times \prod_{p} \mathbb{Q}_{p}$ where $p$ ranges over the primes and $\mathbb{Q}_{p}$ are the $p$-adic numbers, restricted to elements where all but finitely many terms lie in $\mathbb{Z}_{p}$). The field $\mathbb{Q}$ sits naturally as a lattice (discrete co-compact subgroup) diagonally in $\mathbb{A}$, and its character group is $\widehat{\mathbb{Q}} = \mathbb{A} / \mathbb{Q}$. 

The eigenvalue groups of the low complexity subshifts in Theorem~\ref{main} involve arbitrary subgroups of $\mathbb{Q}$ containing $\mathbb{Z}$, and describing the MEF via their character groups requires more refined techniques. It's not hard to check that such subgroups are in one-to-one correspondence with sequences $(\ell_p)$ in $\mathbb{Z}_{\geq 0} \cup \{ \infty \}$ indexed by primes $p$, where $Q_{(\ell_p)} := \{ \frac{m}{n} : n = \prod_{p} p^{t_{p}}~\text{such that}~0\leq t \leq \ell_{p} \}$. This case (where infinitely many $\ell_p$ are allowed to be nonzero) is often called the \textbf{adelic} case in the literature. 

Adapting relevant proofs to the adelic setting requires a bit of care since $Q_{(\ell_p)}$ generally does not sit as a lattice in $\mathbb{A}$ (being of infinite covolume). We can define a natural substitute $\mathbb{A}_{(\ell_p)}$ for $\mathbb{A}$, and we then verify that $\widehat{Q_{(\ell_p)}} = \mathbb{A}_{(\ell_p)} / Q_{(\ell_p)}$. This also explains why adelic subgroups arise naturally in connection to odometers (Proposition \ref{odo}): for any odometer $\mathcal{O} = \varprojlim \mathbb{Z}/o_{k}\mathbb{Z}$, if $\ell_{p} = \sup \{ t : p^{t}~\text{divides}~o_{k}~\text{for some $k$} \}$, then $\mathcal{O} \simeq \widehat{Q_{(\ell_p)} / \mathbb{Z}}$. 

Theorem~\ref{main} shows that the character groups in question cannot be purely $p$-adic, i.e. must have nontrivial real component. The phenomenon of nontrivial real component being more `natural' in the study of Lie groups/lattices is not new; one example is the generalization of the Margulis Arithmeticity Theorem (\cite{Margulis91}) proved in (\cite{MR1872530}). 

\subsection{Nilmanifolds and nilsystems}\label{int62}

A quotient $G/\Gamma$ of a nilpotent Lie group $G$ (above $\mathbb{A}_{(\ell_p)}$) by a lattice $\Gamma$ (above $Q_{(\ell_p)}$) is called a \textbf{nilmanifold}, introduced by Mal'cev. When $G$ is a $1$-step nilpotent group (i.e., abelian), a nilmanifold is also a compact abelian group. However, abelian nilmanifolds are a strict subclass of compact abelian groups, since not all groups have the necessary Lie structure. (Any compact abelian group rotation, however, can be viewed as an inverse limit of rotations on abelian nilmanifolds.)

Such rotations are called $1$-step nilsystems; more generally, nilsystems are actions on higher-rank nilmanifolds, which may not even be group rotations, and have been used in
breakthrough work by Host-Kra \cite{MR2150389} and Bergelson-Host-Kra \cite{MR2138068} to prove convergence of nonconventional ergodic averages. Since then, they have proved invaluable in ergodic theory and dynamical systems (e.g.~\cite{MR2122919}, \cite{MR2257397}, \cite{MR2950773}, \cite{MR2901353},  \cite{MR2912715}, \cite{MR3728628}, \cite{MR3839640}, \cite{MR3866908}). 

Nilsystems are often thought of as the `simplest' type of dynamical system. Given this heuristic, the phenomenon that some irrational eigenvalues yield lower complexity than rational alone makes sense; when all eigenvalues are rational, the character group is purely $p$-adic, and so cannot have nilmanifold structure.


Our Lie group can contain $p$-adic parts for infinitely many primes, and we refer to this case as an \textbf{adelic nilmanifold}. This is itself a generalization of the so-called `$S$-adic' theory of nilmanifolds, in which $p$-adic parts can exist for finitely many primes, and which was studied for instance in \cite{bekka} and in \cite{MR3627126} in connection with solenoids.

\subsection{Substitutions and the Pisot conjecture}\label{int4}

As mentioned earlier, Ferenczi proved that all minimal subshifts of linear complexity have a so-called S-adic structure, meaning that all $x \in X$ have a recursive structure coming from a sequence $(\rho_k)$ of substitutions. As was done in \cite{creutzpavlov}, the main component of the proof of Theorem~\ref{main} is a proof (Corollary~\ref{taus}) that low word complexity implies a very special type of S-adic structure, where all substitutions (denoted by $\tau_{m_k,n_k,r_k}$ in the proofs) have a very particular form.

Connections between substitutive structure and discrete spectrum have been known for many years, and the most famous such connection is the so-called \textbf{Pisot conjecture}. A full treatment is beyond our scope here, but informally it states that if $X$ is defined by a single substitution (i.e. all $\rho_k$ are the same in the description above) and if that substitution has the Pisot property (this means that its associated adjacency matrix has Perron eigenvalue which is a Pisot-Vijarayaghavan number) and is algebraically irreducible, then $X$ has measurably discrete spectrum. The conjecture remains open, though there has been substantial progress, including a complete solution for $X$ for alphabet of size $2$ (\cite{bargediamond}, \cite{2pisot}). 

Much more difficult is the general S-adic case; even finding a proper plausible formulation seems quite difficult. There have been multiple impressive recent results in this direction, including a version of S-adic Pisot for two-letter alphabets (\cite{sadic2pisot}). However, their result includes several hypotheses which cannot hold even for all Sturmian subshifts, most notably recurrence, which means that for every $m$, there exists $n$ so that $\rho_k = \rho_{n+k}$ for $1 \leq k \leq m$. In particular, their results seemingly cannot be used to verify discrete spectrum under any complexity hypothesis alone. 

Another hypothesis required for previous versions of the S-adic Pisot conjecture is that such subshifts are \textbf{balanced on words}, see Section~\ref{defs} for a definition. This property is often difficult to verify, but we do so (Theorem~\ref{balanced}) in the course of finding the \textbf{dimension group} for minimal subshifts of low word complexity (Theorem~\ref{dimgroup}), which can be used to characterize \textbf{orbit equivalence} and \textbf{strong orbit equivalence} for these shifts.

Our proof of Theorem~\ref{main} can then be, at least in part, thought of as a direct verification of a version of the S-adic Pisot conjecture for only the restricted class of substitutions appearing in our S-adic decomposition. In fact, the eigenvalue group and the abelian nilmanifold $\mathcal{M}$ and odometer $\mathcal{O}$ appearing in the MEF can be explicitly defined in terms of these substitutions; this is too technical to describe here, but is done in Theorems~\ref{evalgroup} and \ref{MEFprod}. This allows us to explicitly define some simple examples (including traditional substitutions rather than S-adic) which have certain MEFs which, to our knowledge, haven't appeared in the literature. (See Section~\ref{all} for proofs.)

\begin{example} 
If $\rho$ is the substitution on $\{0,1\}$ defined by $\rho(0) = 001$ and $\rho(1) = 00001$, and we define 
$X = \overline{\{\sigma^n x\}}$ by $x = \lim_k \rho^k(0)$, then its MEF is a rotation of the abelian adelic nilmanifold 
$\mathcal{M}_{2}$ as in Section \ref{int6}.
\end{example}

\begin{example} 
If $\rho$ is the substitution on $\{0,1\}$ defined by $\rho(0) = 00000011$ and $\rho(1) = 0000000011$, and we define $X = \overline{\{\sigma^n x\}}$ by $x = \lim_k \rho^k(0)$, then its 
 MEF is the product of the binary odometer with a rotation of $\mathcal{M}_{2}$ as in Section \ref{int6}.
\end{example}

\begin{example} 
Let $\pi$ be the substitution on $\{0,1\}$ defined by $\pi(0) = a$ and $\pi(1) = ab$, let $\omega_1$ be the substitution on $\{0,1\}$ defined by $\omega_1(0) = 001$ and $\omega_1(1) = 00001$, and let $\omega_2$ be the substitution on $\{0,1\}$ defined by $\omega_2(0) = 00001$ and $\omega_2(1) = 0000001$. Define a sequence of substitutions $\rho_k \in \{\omega_1, \omega_2\}$ by 
$\rho_k = \omega_1$ if $2^{k+2}$ divides the length of $(\pi \circ \rho_0 \circ \cdots \circ \rho_{k-1})(1)$, and $\omega_2$ otherwise. (For instance, $\rho_0$ is $\omega_2$ since $2^2$ does not divide the length $2$ of $\pi(1) = ab$, and 
$\rho_1$ is $\omega_1$ since $2^3$ does divide the length $8$ of $(\pi \circ \rho_0)(1) = bbbbbbab$.)
If we define $X = \overline{\{\sigma^n x\}}$ by $x = \lim_k (\pi \circ \rho_0 \circ \cdots \circ \rho_k)(0)$, then 
its MEF is the product of the binary odometer and an irrational circle rotation.
\end{example}

We note that the weak mixing example of \cite{creutzpavlov} in fact was generated by a sequence of Pisot substitutions, and so each individual substitution being Pisot is (unsurprisingly) not enough to guarantee discrete spectrum. This was not the first such weak mixing example; \cite{cassetal} contains one on a three-letter alphabet involving the so-called Arnoux-Rauzy substitutions (and word complexity $p(n) = 2n$). We are not, however, aware of an earlier example with a two-letter alphabet.

\subsection{\texorpdfstring{\nicefrac{3}{2}}{3/2} as a threshold}\label{int5}

Several recent works have demonstrated that $\limsup p(q)/q = 3/2$ is an important threshold for several different types of dynamical properties. First, Theorems 1.2 and 1.3 of \cite{ormespavlov} imply that if a subshift $X$ is transitive and nonminimal and has $\limsup p(q)/q < 3/2$, then it is the orbit closure of a sequence which is eventually periodic in both directions. (In particular, this means that Theorem~\ref{main} automatically applies to all transitive shifts not of this degenerate form.) We can rewrite as the following threshold result.

\begin{theorem}
\[
3/2 = \min\{\limsup p(q)/q \ : \ X\textrm{ is transitive, not minimal, and contains a non-eventually periodic sequence}\}.
\]
\end{theorem}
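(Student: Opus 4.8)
The plan is to prove the equality by establishing the two inequalities $3/2 \le \min$ and $\min \le 3/2$ separately, where "$\min$" denotes the infimum on the right-hand side (part of the claim is that it is attained).

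\textbf{The lower bound.} First I would invoke Theorems 1.2 and 1.3 of \cite{ormespavlov} (already quoted in the excerpt): if $X$ is transitive, nonminimal, and has $\limsup p(q)/q < 3/2$, then $X$ is the orbit closure of a sequence which is eventually periodic in both directions. Any such orbit closure consists of that sequence's shifts together with the (finitely many) periodic sequences forced as limits of its two tails; in particular every point of $X$ is eventually periodic in both directions. Hence no $X$ of this form can contain a non-eventually-periodic sequence, so every $X$ in the set defining the minimum must satisfy $\limsup p(q)/q \ge 3/2$. (One should be slightly careful about the meaning of "non-eventually periodic sequence" — I would take it to mean a point $x \in X$ that is not eventually periodic in both directions, which is exactly the negation of the conclusion of the \cite{ormespavlov} dichotomy; with this reading the implication is immediate.)

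\textbf{The upper bound and attainment.} It remains to exhibit a single subshift $X$ that is transitive, not minimal, contains a sequence which is not eventually periodic, and has $\limsup p(q)/q = 3/2$. The natural candidate is built from a Sturmian sequence: take a Sturmian sequence $s \in \{0,1\}^{\mathbb{Z}}$ (so its orbit closure $S$ is minimal with $p_S(q) = q+1$), pick a word $w$ that does \emph{not} appear in $S$, and form the point $x$ obtained by splicing $w$ into $s$ at the origin (i.e., $x$ agrees with one Sturmian sequence to the left of $0$, displays $w$ somewhere near $0$, and agrees with a Sturmian sequence to the right). Let $X = \overline{\{\sigma^n x\}}$. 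By construction $X$ is transitive (it is an orbit closure), contains $x$ which is not eventually periodic (its two tails are Sturmian, hence non-eventually-periodic), and is not minimal (it properly contains $S$, which is a nonempty subshift since the two Sturmian tails of $x$ force $S \subseteq X$). The only real work is the complexity count: one shows $p_X(q) = p_S(q) + (\text{number of new length-}q\text{ words created by the splice})$, and for an appropriately chosen finite insertion the number of new words grows like $q/2 + O(1)$, giving $p_X(q) = (3/2)q + O(1)$ and hence $\limsup p_X(q)/q = 3/2$. Combined with the lower bound, this shows the minimum is attained and equals $3/2$.

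\textbf{Main obstacle.} The delicate point is the complexity computation for the spliced example: I must choose the inserted block (and the two Sturmian half-sequences) so that exactly the "right" number of new subwords appear — enough to push the limsup up to $3/2$ but not past it. The count of new length-$q$ words passing through a finite anomalous region is governed by how many distinct windows of length $q$ straddle the anomaly, which is roughly linear with slope determined by the length of the region and by how the Sturmian tails on either side extend; controlling this to get slope exactly $1/2$ in the excess term (rather than something smaller, or a slope that makes the limsup exceed $3/2$, which would contradict the lower bound anyway) is where care is needed. An alternative that sidesteps some of this bookkeeping is to cite an explicit example already in the literature (or in \cite{ormespavlov} / \cite{creutzpavlov}) of a transitive nonminimal subshift with $\limsup p(q)/q = 3/2$ containing a non-eventually-periodic point; if such an example is available, the upper bound is immediate and only the \cite{ormespavlov} dichotomy is needed for the lower bound.
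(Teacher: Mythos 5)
Your lower-bound argument is correct and is exactly what the paper does: it quotes Theorems 1.2 and 1.3 of \cite{ormespavlov} to get that a transitive nonminimal $X$ with $\limsup p(q)/q < 3/2$ is the orbit closure of a bi-eventually-periodic point, observes that every point of such an orbit closure is then itself eventually periodic in both directions, and concludes that such an $X$ cannot lie in the set over which the minimum is taken. Your reading of ``non-eventually periodic'' is the right one, and the paper treats the theorem as a direct ``rewrite'' of the \cite{ormespavlov} result, so this half is a match.

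The gap is in your upper bound. The assertion that ``for an appropriately chosen finite insertion the number of new words grows like $q/2 + O(1)$'' is the entire content of the construction, and it is not justified --- and it is not at all clear that it is true for a single finite splice. For a generic finite insertion of a word $w$ into a Sturmian $s$, the number of length-$q$ windows straddling the anomalous region is $q + |w| - 1$, and for a Sturmian the balance property makes it plausible that nearly all of these are \emph{new} (a splice typically destroys balance in almost every window that sees both sides of it), which would give $p_X(q) = 2q + O(1)$ and $\limsup p_X(q)/q = 2$, not $3/2$. Getting the excess term to have slope exactly $1/2$ is not a matter of tuning a finite block; it appears to require a genuinely different construction (for instance, one where the ``defect'' is not confined to a bounded window, so that the fraction of anomalous windows that coincide with words of $S$ can be controlled asymptotically), or simply citing the sharp example supplied by \cite{ormespavlov} itself. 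Your suggested fallback --- citing an explicit example from the literature --- is the correct resolution and is what the paper is implicitly doing; the speculative Sturmian-splice calculation as stated should not be relied on without a proof that it produces the claimed $q/2 + O(1)$ excess.
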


In \cite{Creutz2022b}, it was shown (Theorem C) that every aperiodic rank-one subshift satisfies $\limsup p(q)/q \geq 3/2$, and an example was given there (Theorem D) of an aperiodic rank-one subshift with $\limsup p(q)/q = 3/2$. This immediately implies the following.

\begin{corollary}
\[
3/2 = \min\{\limsup p(q)/q \ : \ X\textrm{ is an aperiodic rank-one subshift}\}.
\]
\end{corollary}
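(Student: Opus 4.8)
The plan is to prove the threshold corollary by combining the two cited ingredients into a single $\min$ statement. First I would establish the lower bound: by Theorem C of \cite{Creutz2022b}, every aperiodic rank-one subshift satisfies $\limsup p(q)/q \geq 3/2$, so $3/2$ is a lower bound for the set $\{\limsup p(q)/q \ : \ X \text{ is an aperiodic rank-one subshift}\}$. (One should remark that ``aperiodic'' here means $X$ is infinite, so that the Morse--Hedlund theorem guarantees $p(q)/q$ is bounded below by $1+1/q \to 1$, hence the $\limsup$ is a well-defined real number and the set is nonempty only once we exhibit a witness.)

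Next I would establish that the lower bound is attained: by Theorem D of \cite{Creutz2022b}, there exists an aperiodic rank-one subshift $X_0$ with $\limsup p(q)/q = 3/2$. This $X_0$ is a member of the set, and its value equals the lower bound $3/2$, so the infimum is achieved and therefore equals $3/2$, which is precisely the claim that the minimum is $3/2$.

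The routine bookkeeping is to confirm the set is nonempty (handled by the Theorem D example), that it is bounded below by $3/2$ (Theorem C), and that a member realizes the bound (again Theorem D) — together these give that the minimum exists and equals $3/2$. I do not expect any genuine obstacle here, since both halves are quoted directly from \cite{Creutz2022b}; the only mild subtlety is purely expository, namely making sure the phrase ``aperiodic rank-one subshift'' is interpreted consistently with the cited source (aperiodic = not a finite union of periodic orbits = infinite minimal rank-one), so that Theorems C and D of \cite{Creutz2022b} apply verbatim. Accordingly the ``proof'' is essentially a one-line deduction: the lower bound from Theorem C and the matching example from Theorem D force the minimum to be exactly $3/2$.
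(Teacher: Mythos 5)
Your proposal matches the paper's argument exactly: the paper also deduces the corollary immediately by citing Theorem C of \cite{Creutz2022b} for the lower bound and Theorem D of \cite{Creutz2022b} for the example attaining it. No further comment needed.
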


Theorem~\ref{main} implies similar results for different dynamical properties. Theorem~\ref{main}, when combined with the weakly mixing example from \cite{creutzpavlov} with $\limsup p(q)/q = 3/2$ mentioned above, yields the following result, which shows that any bound on $\limsup p(q)/q$ which implies existence of eigenvalues automatically implies discrete spectrum.

\begin{corollary}\label{maincor1}
\[
3/2 = \sup\{\limsup p(q)/q \ : \ X \textrm{ has discrete spectrum}\} = \min\{\limsup p(q)/q \ : \ X\textrm{ is weakly mixing}\}.
\]
\end{corollary}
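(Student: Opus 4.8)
The plan is to deduce both displayed equalities from Theorem~\ref{main}, together with the weakly mixing minimal subshift of \cite{creutzpavlov} with $\limsup p(q)/q = 3/2$ and the structure theorem of \cite{ormespavlov} for non-minimal transitive subshifts of slope below $3/2$. Concretely, I would establish two facts: (a) every (transitive) subshift with $\limsup p(q)/q < 3/2$ has measurably discrete spectrum for each of its ergodic invariant measures, and in particular is not weakly mixing; and (b) this first fails at the value $3/2$ itself. Fact (a) yields the bound $\limsup p(q)/q \ge 3/2$ for every weakly mixing subshift, and shows that the hypothesis $\limsup p(q)/q < c$ forces discrete spectrum for every $c \le 3/2$; fact (b) shows the minimum over weakly mixing subshifts is attained at $3/2$ and that no $c > 3/2$ works. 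Reading the first set in the statement as the dual object --- the complexity slopes below which discrete spectrum cannot yet be avoided --- these two facts give both equalities, the second one literally.

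For (a) I would argue by cases, using the paper's standing transitivity convention. If $X$ is infinite and minimal, Theorem~\ref{main} (through Theorem~\ref{discspec}) gives that $X$ is uniquely ergodic with discrete spectrum; since an infinite minimal system has non-atomic invariant measure, it then carries nonconstant eigenfunctions and so is not weakly mixing. If $X$ is infinite, transitive and non-minimal, then Theorems 1.2 and 1.3 of \cite{ormespavlov} force $X$ to be the orbit closure of a sequence that is eventually periodic in both directions, exactly as in the proof of Corollary~\ref{sarcor}; every invariant probability measure is then supported on the (at most two) periodic minimal subsets, so each ergodic measure is a finite rotation --- again discrete spectrum and not weakly mixing. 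If $X$ is finite it is a union of periodic orbits, with the same conclusion. In the minimal case $\limsup p(q)/q < 3$ already gives a unique invariant measure by \cite{boshernitzan}, so there is no ambiguity in speaking of the spectrum of $X$.

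For (b) I would invoke the example of \cite{creutzpavlov}: a minimal subshift that is measurably weakly mixing and satisfies $\limsup p(q)/q = 3/2$; weak mixing precludes discrete spectrum. Hence the minimum in the second equality is attained and, by (a), equals $3/2$; and since this example has slope exactly $3/2 < c$ for every $c > 3/2$, no such $c$ can force discrete spectrum, so the supremum of thresholds forcing discrete spectrum is also exactly $3/2$. For emphasis one can add, via Theorem~\ref{nobetter}, that discrete-spectrum minimal subshifts realize every prescribed value of $\limsup p(q)/q$ in $[1,3/2)$, so the threshold is genuinely approached from below by honest examples.

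I do not expect a serious obstacle: once Theorem~\ref{main} is available the argument is bookkeeping. The only points needing care are the non-minimal case (handled verbatim as in Corollary~\ref{sarcor}) and being explicit that ``weakly mixing'' and ``discrete spectrum'' are taken with respect to the (unique, by \cite{boshernitzan}) invariant measure; the sole genuinely external input is that the \cite{creutzpavlov} example is at once weakly mixing and of complexity slope exactly $3/2$, which we take as given.
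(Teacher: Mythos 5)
Your argument is correct and matches the paper's intended (one-sentence) derivation: Theorem~\ref{main} (via Theorem~\ref{discspec}) forces discrete spectrum for all infinite minimal subshifts below the $3/2$ threshold, the non-minimal transitive case is dispatched via \cite{ormespavlov} exactly as in Corollary~\ref{sarcor}, and the weakly mixing example from \cite{creutzpavlov} at slope exactly $3/2$ shows both that the minimum is attained and that no higher threshold works. You are also right that the supremum in the first equality must be read as the supremum of complexity thresholds $c$ for which $\limsup p(q)/q < c$ guarantees discrete spectrum (the ``dual object'' you describe), consistent with the paper's prose gloss that ``any bound on $\limsup p(q)/q$ which implies existence of eigenvalues automatically implies discrete spectrum.''
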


Surprisingly, the same number is also the complexity threshold for Toeplitz subshifts. Our results already show that $\limsup p(q)/q < 3/2$ precludes $X$ being Toeplitz; all Toeplitz shifts are minimal, and have no irrational continuous eigenvalues, so cannot have the structure of Theorem~\ref{main}. In the other direction, \cite{MR4092862} gives word complexity estimates for a subclass called simple Toeplitz subshifts, and those estimates show that there exist simple Toeplitz subshifts with $\limsup p(q)/q = 3/2$ (this happens whenever the parameter sequence $(n_k)$ from that paper is unbounded). We now have the following.

\begin{theorem}\label{maincor2}
\[
3/2 = \min\{\limsup p(q)/q \ : \ X\textrm{ is Toeplitz}\}.
\]
\end{theorem}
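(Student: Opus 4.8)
The plan is to prove the two inequalities separately: first that every Toeplitz subshift satisfies $\limsup p(q)/q \geq 3/2$, and then that the value $3/2$ is actually realized, so that the infimum is attained.

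For the lower bound, suppose toward a contradiction that $X$ is a Toeplitz subshift with $\limsup p(q)/q < 3/2$. Recall that a Toeplitz subshift is minimal, infinite (aperiodic), and has continuous additive eigenvalue group $E_X$ contained in $(\mathbb{Q},+)$. Thus $X$ is an infinite minimal subshift to which Theorem~\ref{main} applies. But by Corollary~\ref{circlefactor}, $X$ then factors (topologically, through its MEF) onto an irrational rotation of the circle, and the rotation number of that factor is an irrational continuous additive eigenvalue of $X$, contradicting $E_X \subseteq \mathbb{Q}$. (Equivalently, the explicit description of $E_X$ in Theorem~\ref{evalgroup} forces $E_X$ to contain some $\alpha \notin \mathbb{Q}$.) Hence $\limsup p(q)/q \geq 3/2$ for every Toeplitz subshift. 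In particular this shows the infimum in the statement is at least $3/2$, and — consistently — the extremal example below must have $\limsup p(q)/q$ exactly equal to $3/2$, not strictly less, since a strictly smaller value would make it subject to Theorem~\ref{main} and hence not Toeplitz.

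For the matching example I would invoke the family of \textbf{simple Toeplitz subshifts} together with the word-complexity estimates of \cite{MR4092862}. A simple Toeplitz subshift over a two-letter alphabet is determined by a coding sequence with parameters $(n_k)$ governing the successive arithmetic progressions of filled positions, and \cite{MR4092862} provides, up to controlled error, the values of $p(q)$ in terms of the $n_k$: the complexity function is essentially piecewise affine, growing with slope $1$ within each generation and picking up an extra term of order $q/2$ near a generation boundary precisely when the relevant $n_k$ is large. Concretely, I would fix an unbounded sequence $(n_k)$ (say $n_k = k+2$), form the associated two-letter simple Toeplitz subshift $X$, and read off from those estimates that $p(q)/q \to 3/2$ along the subsequence of word-lengths $q$ sitting just past the generation boundaries with $n_k \to \infty$, while $p(q)/q$ does not exceed $3/2$ asymptotically. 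This yields $\limsup p(q)/q = 3/2$ for $X$; combined with the lower bound, the minimum in the statement is attained and equals $3/2$.

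The lower bound is immediate granting Theorem~\ref{main}, so the main obstacle is the example: the estimates of \cite{MR4092862} are two-sided inequalities rather than a closed formula, so one must choose the parameter sequence $(n_k)$, identify exactly which subsequence of word-lengths $q$ realizes the limit superior of $p(q)/q$, and verify that this limit is precisely $3/2$ — neither smaller (impossible by the lower bound) nor, for this particular choice, larger. This is a bookkeeping computation with the generation structure of simple Toeplitz words rather than a conceptual difficulty.
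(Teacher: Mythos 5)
Your proposal is correct and follows essentially the same two-step argument the paper uses: the lower bound comes from Theorem~\ref{main} (via Corollary~\ref{circlefactor}) forcing an irrational continuous eigenvalue, contradicting that Toeplitz subshifts have $E_X \subseteq \mathbb{Q}$, and the matching example comes from the word-complexity estimates of \cite{MR4092862} for simple Toeplitz subshifts with unbounded parameter sequence $(n_k)$. The level of detail you defer to the cited reference for the realization step matches the paper's own treatment.
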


A nearly identical proof shows that $3/2$ is a threshold for irrational continuous eigenvalues.

\begin{corollary}
\[
3/2 = \min\{\limsup p(q)/q \ : \ X\textrm{ is minimal, infinite, and has no irrational continuous eigenvalue}\}.
\]
\end{corollary}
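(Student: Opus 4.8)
The plan is to prove the two inequalities separately, exactly as in the proof of Theorem~\ref{maincor2}. For the bound $\geq 3/2$, I would show that no minimal infinite subshift $X$ with $\limsup p(q)/q < 3/2$ lies in the set on the right-hand side. By Theorem~\ref{main} --- concretely, by Corollary~\ref{circlefactor} --- any such $X$ factors onto an irrational circle rotation; pulling back the coordinate character along this factor map yields a continuous eigenfunction of $(X,\sigma)$ whose additive eigenvalue is the (irrational) rotation number. Hence every subshift in the set satisfies $\limsup p(q)/q \geq 3/2$, so the infimum is at least $3/2$.

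For the reverse inequality it is enough to produce one minimal infinite subshift attaining $\limsup p(q)/q = 3/2$ with no irrational continuous eigenvalue, and two such examples are already at hand in the discussion above. One option is the weakly mixing example of \cite{creutzpavlov} with $\limsup p(q)/q = 3/2$: weak mixing rules out all nontrivial continuous eigenvalues, in particular irrational ones. A second option is any simple Toeplitz subshift whose parameter sequence $(n_k)$ from \cite{MR4092862} is unbounded; these have $\limsup p(q)/q = 3/2$ by the complexity estimates there, and being Toeplitz they have $E_X \subseteq \mathbb{Q}$, hence no irrational continuous eigenvalue. Either way the value $3/2$ is realized, so the infimum is a minimum.

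There is no real obstacle here beyond Theorem~\ref{main} and these cited examples; the only thing that requires (routine) checking --- the same point as for Theorem~\ref{maincor2} --- is that the chosen example realizes the value $3/2$ exactly rather than merely approaching it, and is genuinely minimal and infinite. For the Toeplitz family this is immediate from the estimates in \cite{MR4092862}, and for the weakly mixing example it was verified in \cite{creutzpavlov}. This is precisely why the argument is ``nearly identical'' to that of Theorem~\ref{maincor2}.
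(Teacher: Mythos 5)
Your proof is correct and matches what the paper intends: the lower bound via Corollary~\ref{circlefactor} (any infinite minimal subshift below the $3/2$ threshold factors onto an irrational circle rotation, hence has an irrational continuous eigenvalue), and the upper bound via either the weakly mixing example from \cite{creutzpavlov} or the simple Toeplitz examples from \cite{MR4092862}. The paper only remarks that the proof is ``nearly identical'' to that of Theorem~\ref{maincor2}, and your argument is precisely the fleshed-out version of that remark.
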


\subsection{Summary}

Section~\ref{defs} contains definitions not fully given in the introduction. In Section~\ref{subs}, we describe and prove the S-adic structure for minimal subshifts of low complexity. Sections~\ref{discspecsec}, \ref{eigs}, \ref{MEF}, \ref{orb}, \ref{all}
contain, respectively, proofs of discrete spectrum, the eigenvalue group, the structure of the MEF, classification of orbit equivalence and strong orbit equivalence, and realization of all possible $\mathcal{M}, \mathcal{O}$ for all $\limsup p(q)/q \leq 1.5$.

\section{Definitions}\label{defs}

Let $\mathcal{A}$ be a finite subset of $\mathbb{Z}$; the \textbf{full shift} is the set $\mathcal{A}^\mathbb{Z}$ equipped with the product topology and $\sigma$ is the left shift homeomorphism on $\mathcal{A}^\mathbb{Z}$. 
A \textbf{subshift} is a closed $\sigma$-invariant subset $X \subset \mathcal{A}^\mathbb{Z}$. The \textbf{orbit} of $x \in X$ is the 
set $\{\sigma^n x\}_{n \in \mathbb{Z}}$.
In a slight abuse of notation, we sometimes define $X$ as the orbit closure of a one-sided sequence $y \in \mathcal{A}^\mathbb{N}$; this can be interpreted in the obvious way using natural extensions. 

A \textbf{word} is any element of $\mathcal{A}^n$ for some $n \in \mathbb{N}$, referred to as its \textbf{length} and denoted by
$\len{w}$. For any  
word $v$, the number of occurrences of $v$ as a subword of $w$ is denoted $|w|_{v}$.  We say $\mathcal{A}^* = \bigcup_{n \geq 1} \mathcal{A}^n$ and represent the concatenation of words $w_1, w_2, \ldots, w_n$ by $w_1 w_2 \ldots w_n$.

The \textbf{language} of a subshift $X$ on $\mathcal{A}$, denoted $L(X)$, is the set of all finite words appearing as subwords of points in $X$. For any $q \in \mathbb{N}$, we denote $L_q(X) = L(X) \cap \mathcal{A}^q$, the set of $q$-letter words in $L(X)$, and define the \textbf{word complexity function} of $X$ to be $p(q) := |L_{q}(X)|$. 
For a subshift $X$ and a word $w \in L(X)$ we denote by $[w]$ the clopen subset in $X$ consisting of all $x \in X$ such that $x_{0}\ldots x_{|w|-1} = w$.

A \textbf{substitution} (sometimes called a morphism) is a map 
$\tau: \mathcal{A} \rightarrow \mathcal{B}^*$ for finite alphabets $\mathcal{A}$ and $\mathcal{B}$. Substitutions can be composed when viewed as homomorphisms on the monoid of words under composition, i.e. if $\tau: \mathcal{A} \rightarrow \mathcal{B}^*$ and
$\pi : \mathcal{B} \rightarrow \mathcal{C}^*$, then $\pi \circ \tau: \mathcal{A} \rightarrow \mathcal{C}^*$ can be defined by
$(\pi \circ \tau)(a) = \pi(b_1) \pi(b_2) \ldots \pi(b_k)$, where $\tau(a) = b_1 \ldots b_k$.
When a sequence of substitutions $\tau_k: \mathcal{A} \rightarrow \mathcal{A}^*$ shares the same alphabet, and when there exists $a \in \mathcal{A}$ for which $\tau_k(a)$ begins with $a$ for all $k$, clearly
$(\tau_1 \circ \cdots \circ \tau_k)(a)$ is a prefix of $(\tau_1 \circ \cdots \circ \tau_{k+1})(a)$ for all $k$. In this situation one may then speak of the (right-infinite) limit of $(\tau_1 \circ \cdots \circ \tau_k)(a)$.

For any subshift $X$, there is a convenient way to represent the $n$-language and possible transitions between words in points of $X$ by the \textbf{Rauzy graphs}: the $n$th Rauzy graph of $X$ is the directed graph $G_{X, n}$ with vertex
set $L_{n}(X)$ and directed edges from $w_1 \ldots w_{n}$ to $w_2 \ldots w_{n+1}$ for all $w_1 \ldots w_{n+1} \in L_{n+1}(X)$. 
Then, a vertex with multiple outgoing edges corresponds to a word $w = w_1 \ldots w_n$ which is \textbf{right-special}, meaning that there exist letters $a \neq b$ for which $wa, wb \in L(X)$. \textbf{Left-special} words are defined similarly, and a word is \textbf{bi-special} if it is both left- and right-special. The reader is referred to Section 1 of \cite{creutzpavlov} for more details.

We will sometimes endow a subshift $X$ with a measure $\mu$; any such $\mu$ is understood to be a Borel probability measure which is invariant under $\sigma$. 
A subshift has \text{(measurably) discrete spectrum} with respect to a measure $\mu$ when the eigenfunctions span $L^{2}(X,\mu)$.
It is well-known, e.g.~(\cite{MR648108}, Theorem 3.4), that:
\begin{theorem}\label{isochars}
An ergodic transformation on a standard probability space with discrete spectrum is measure-theoretically isomorphic to the space of characters of its (multiplicative) eigenvalue group, endowed with the Haar measure, under the ergodic ``rotation'' of multiplication by the identity homomorphism.
\end{theorem}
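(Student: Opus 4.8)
The plan is to run the classical Halmos--von Neumann argument: build an explicit unitary between $L^2$ of the system and $L^2$ of the dual-group rotation that intertwines the Koopman operators, and then upgrade it to a point isomorphism. For the setup, let $(X,\mathcal{B},\mu,T)$ be ergodic with discrete spectrum and let $\mathcal{E}=\mathcal{E}_X\subseteq S^1$ be its group of multiplicative eigenvalues. Since $L^2(X,\mu)$ is separable, any orthonormal family of eigenfunctions is countable, so $\mathcal{E}$ is countable. Ergodicity makes every eigenvalue simple and every eigenfunction of constant modulus, so for each $\lambda$ we may choose a unimodular $f_\lambda$ with $f_\lambda\circ T=\lambda f_\lambda$. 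The unimodular eigenfunctions (over all $\lambda\in\mathcal{E}$) form a group $\mathcal{U}_{\mathcal{E}}$ sitting in a short exact sequence $1\to S^1\to\mathcal{U}_{\mathcal{E}}\to\mathcal{E}\to 1$, where the kernel is the group of unimodular constants (again by ergodicity); since $S^1$ is divisible this sequence splits, and a splitting gives a choice of the $f_\lambda$ for which $\lambda\mapsto f_\lambda$ is a homomorphism with $f_1\equiv 1$. Discrete spectrum says precisely that $\{f_\lambda:\lambda\in\mathcal{E}\}$ is an orthonormal basis of $L^2(X,\mu)$.

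Next I would construct the model. Take $G=\widehat{\mathcal{E}}$, the Pontryagin dual of the countable discrete group $\mathcal{E}$, a compact metrizable abelian group, equipped with normalized Haar measure $m$. Pontryagin duality identifies $\widehat{G}$ with $\mathcal{E}$ via the evaluation characters $\widehat\lambda(\chi):=\chi(\lambda)$, which form an orthonormal basis of $L^2(G,m)$. Let $\iota\in G$ be the inclusion homomorphism $\mathcal{E}\hookrightarrow S^1$ (``the identity homomorphism'') and let $R\chi=\iota\cdot\chi$ be the corresponding rotation of $G$; then $\widehat\lambda\circ R=\lambda\,\widehat\lambda$, and $R$ is ergodic because the only character $\lambda\in\widehat{G}=\mathcal{E}$ with $\iota(\lambda)=1$ is $\lambda=1$. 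Defining $\Phi:L^2(G,m)\to L^2(X,\mu)$ by $\Phi\widehat\lambda=f_\lambda$ and extending by linearity yields a surjective isometry (an orthonormal basis maps to an orthonormal basis) satisfying $\Phi\circ U_R=U_T\circ\Phi$ for the Koopman operators.

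Finally I would upgrade $\Phi$ to a point isomorphism. Because $\lambda\mapsto f_\lambda$ and $\lambda\mapsto\widehat\lambda$ are both homomorphisms, $\widehat\lambda\,\widehat\mu=\widehat{\lambda\mu}$ is sent to $f_{\lambda\mu}=f_\lambda f_\mu$, so $\Phi$ is multiplicative on the linear span of the characters, which is a dense unital $*$-subalgebra of $C(G)$ by Stone--Weierstrass. Hence $\Phi$ restricts to an isometric unital $*$-algebra embedding $C(G)\to L^\infty(X,\mu)$ and extends to an isomorphism of the measure algebras of $(G,m)$ and $(X,\mu)$; since both spaces are standard, von Neumann's point-realization theorem produces an invertible measure-preserving bimeasurable map $\phi:X\to G$ (defined $\mu$-a.e.) with $\Phi g=g\circ\phi$, and the intertwining $\Phi U_R=U_T\Phi$ forces $\phi\circ T=R\circ\phi$ a.e., giving the claimed isomorphism onto the character space under rotation by the identity homomorphism. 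The two steps I expect to be genuinely delicate are the cohomological normalization of the $f_\lambda$ (splitting the extension above using divisibility of $S^1$) and the passage from a measure-algebra isomorphism to an honest point map; both are classical, which is why one simply quotes the statement from \cite{MR648108} rather than reproving it.
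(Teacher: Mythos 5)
The paper does not prove this statement; it simply cites it as Theorem~3.4 of \cite{MR648108}, where it is established by exactly the Halmos--von Neumann argument you reconstruct. Your proposal is correct, and the two points you flag as delicate (choosing the section $\lambda\mapsto f_\lambda$ to be multiplicative by splitting the extension $1\to S^1\to\mathcal{U}_{\mathcal{E}}\to\mathcal{E}\to1$ via injectivity of the divisible group $S^1$, and upgrading the multiplicative unitary to a point isomorphism via von Neumann/Rokhlin point realization on standard probability spaces) are precisely the nontrivial steps; your handling of each, including the ergodicity check that the only $\lambda\in\mathcal{E}=\widehat{G}$ with $\iota(\lambda)=1$ is $\lambda=1$, matches the classical proof.
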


A subshift $X$ is \textbf{balanced on words} when for every word $v \in \mathcal{L}(X)$, there exists $C_{v} > 0$ such that for any $w,w^{\prime} \in \mathcal{L}(X)$ with $\len{w} = \len{w^{\prime}}$, $||w|_{v} - |w^{\prime}|_{v}| < C_{v}$, i.e.~the number of occurrences of $v$ in any two words of the same length differs by less than $C_{v}$. We say that $X$ is \textbf{balanced on letters} if the above holds whenever $v$ has length $1$.

\section{Substitutive structure of minimal subshifts with low complexity}\label{subs}

This section is devoted to establishing the following proposition, which establishes a substitutive structure for minimal subshifts with complexity $\limsup \frac{p(q)}{q} < 1.5$.

\begin{proposition}\label{words}
Let $X$ be an infinite minimal subshift with $\limsup \frac{p(q)}{q} < 1.5$.  Then there exist $\nu > 0$ and words $u_{k}$ and $v_{k}$ for $k \geq 0$ such that, writing $p_{k}$ for the maximal common prefix of $u_{k}$ and $v_{k}$ and $s_{k}$ for the maximal common suffix of $v_{k}^{\infty}$ and $v_{k}^{\infty}u_{k}$, the following hold:
\begin{itemize}
\item every $x \in X$ is uniquely decomposable as a concatenation of $v_{k}$ and $u_{k}$;
\item $\len{v_{k}} < \len{u_{k}}$ and $v_{k}$ is a suffix of $u_{k}$;
\item $\len{p_{k}} + \len{s_{k}} < \len{v_{k}} + \len{u_{k}}$;
\item for $k \geq 1$, also $\len{p_{k}} + \len{s_{k}} < 2\len{v_{k}} + \len{v_{k-1}} < 3\len{v_{k}}$; and
\item $p(q) < (1.5-\nu)q$ for all $q \geq \len{v_{0}}$.
\end{itemize}
For each $k$, exactly one of the following holds:
\begin{itemize}
\item there exist positive integers $m_{k} < n_{k}$ such that \[ v_{k+1} = v_{k}^{m_{k}-1}u_{k} \quad\text{and}\quad u_{k+1} = v_{k}^{n_{k}-1}u_{k}; \text{or} \]
\item there exist positive integers $r_{k} < m_{k} < n_{k}$ such that \[ v_{k+1} = v_{k}^{m_{k}-1}u_{k}v_{k}^{r_{k}-1}u_{k} \quad\text{and}\quad u_{k+1} = v_{k}^{n_{k}-1}u_{k}v_{k}^{r_{k}-1}u_{k}. \]
\end{itemize}
\end{proposition}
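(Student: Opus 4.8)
The plan is to build the words $u_k, v_k$ by induction on $k$, extracting them from the Rauzy graph structure of $X$, and tracking how the bi-special words evolve as $q$ grows. The starting point is the Morse--Hedlund bound $p(q) \geq q+1$ together with the hypothesis $\limsup p(q)/q < 3/2$, which lets us fix $\nu > 0$ and $q_0$ with $p(q) < (3/2 - \nu) q$ for all $q \geq q_0$; taking $v_0$ to be a suitable word of length roughly $q_0$ will give the last bullet. For minimal subshifts the number of left-special (equivalently right-special, by passing to reversals) words of each length controls the increments $p(q+1) - p(q)$, and the constraint $\limsup p(q)/q < 3/2$ forces, on a cofinal set of lengths, that there is a unique right-special word and its two right-extensions return to right-special words in a controlled way. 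This is exactly the regime analyzed in \cite{creutzpavlov} for the $4/3$ threshold; the work here is to push the same combinatorial analysis of return words one notch further. Concretely, I would look at the first-return words to the (unique, on good lengths) right-special word $w$: minimality plus the complexity bound should force there to be at most two distinct return words, one strictly shorter than the other, with the shorter a suffix of the longer — these become $v_k$ and $u_k$. Unique decomposability of every $x \in X$ as a concatenation of $v_k$ and $u_k$ then follows from the return-word partition being a genuine (clopen) partition of $X$ under $\sigma$.

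Next I would set up the induction step, passing from level $k$ to level $k+1$ by the standard derivation/desubstitution: replace each occurrence of $v_k$ by a symbol and each occurrence of $u_k$ by another, obtaining a derived subshift $X_k$, which is again minimal and infinite and — this is the key point to verify — still satisfies $\limsup p(q)/q < 3/2$, in fact with the same $\nu$ since the decomposition only coarsens the word structure. Re-running the level-$0$ analysis on $X_k$ produces its own pair of return words, and pulling these back through the substitution $v_k, u_k \mapsto$ letters expresses $v_{k+1}$ and $u_{k+1}$ as concatenations of $v_k$'s and $u_k$'s. The combinatorial bookkeeping of \emph{which} concatenations can occur — given that, at the derived level, we again have a unique right-special letter with two return words, one a suffix of the other — is what yields the dichotomy: either the return word to $v_k$ within the derived system is $v_k^{m_k - 1} u_k$ type (the first case), or the right-special structure closes up only after two blocks ending in $u_k$, giving the $v_k^{m_k-1} u_k v_k^{r_k-1} u_k$ type (the second case). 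The inequalities $\len{v_k} < \len{u_k}$, $v_k$ a suffix of $u_k$, and $\len{p_k} + \len{s_k} < \len{v_k} + \len{u_k}$ are preserved under this step because they are precisely the statements that the pair $(v_k, u_k)$ behaves like a pair of return words to a right-special word (the prefix/suffix overlap being bounded by the recurrence structure), and I would check that the explicit formulas in each case of the dichotomy propagate them. The sharper bound $\len{p_k} + \len{s_k} < 2\len{v_k} + \len{v_{k-1}} < 3\len{v_k}$ for $k \geq 1$ is where the $3/2$ threshold (rather than just finiteness of return words) really enters: it should come from a counting argument showing that if the overlap $p_k, s_k$ were too long relative to $v_k$, the derived subshift at level $k-1$ would have been forced to carry too many right-special words of some length, contradicting $p(q) < (3/2-\nu)q$.

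The main obstacle, I expect, is establishing the dichotomy cleanly and in particular showing that no \emph{third} type of return-word pattern can arise under the $3/2$ bound — in the $4/3$ case of \cite{creutzpavlov} only the first (simpler) case occurs, and widening the window to $3/2$ opens the door to exactly one extra pattern but one must rule out all others. This amounts to a careful case analysis of the Rauzy graph $G_{X,q}$ near the right-special vertex as $q$ ranges over an interval between two "good" lengths: one tracks how the two out-edges from the right-special word wind through the graph before returning, and bounds the total number of vertices (= $p(q)$) in terms of the loop lengths, with $\limsup p(q)/q < 3/2$ cutting off all but the two listed configurations. A secondary technical point is ensuring the derived subshift genuinely inherits the complexity bound with a uniform $\nu$ across all levels (so that the induction does not degrade), which I would handle by relating $p_{X_k}(q)$ to $p_X$ evaluated at a comparable length via the bounded length-distortion of the substitution $v_k, u_k \mapsto$ letters — here the inequality $\len{v_k} < \len{u_k}$ and the suffix relation keep the distortion controlled. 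Once the dichotomy and the inherited bound are in place, all five bulleted inequalities are bookkeeping consequences of the explicit formulas, and the construction terminates (or rather continues indefinitely) giving the sequences $(u_k), (v_k)$ as claimed.
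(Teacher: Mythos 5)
Your strategy for producing $u_0,v_0$ via bi-special words and return words in the Rauzy graph is close to what the paper does (Lemma~\ref{u0v0} combined with Lemma~\ref{6}), so the base case is on the right track. The problem is your inductive step.

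You propose to pass to the derived subshift $X_k$ on $\{0,1\}$ obtained by coding occurrences of $v_k$ and $u_k$, and then ``re-run the level-$0$ analysis'' on $X_k$. For that to work you need $X_k$ to satisfy $\limsup p_{X_k}(q)/q < 3/2$ with a uniform $\nu$, and you dismiss this as following ``since the decomposition only coarsens the word structure.'' That inheritance claim is exactly where the proposal breaks. There is no clean comparison between $p_{X_k}(n)$ and $p_X(q)$: a window of $n$ derived symbols corresponds to a window in $X$ whose length varies between $n|v_k|$ and $n|u_k|$, so a single $n$ at the derived level spreads over a range of lengths at the base level, and right-special derived words are in bijection only with right-special base words aligned to return-word boundaries, not with all of them. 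Low complexity of $X$ therefore does not transfer to low complexity of $X_k$ in the per-level, uniform way you need, and indeed (even when it does transfer in some asymptotic sense) there is no reason the \emph{same} $\nu$ survives. This is not a technicality you can wave away; it is the crux, and the paper deliberately avoids it.

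The paper's actual inductive step (Lemma~\ref{nextstep}) never passes to a derived system. Working entirely inside $X$, it examines the set $S = \{t \geq 0 : u_k v_k^t u_k \text{ appears}\}$ and shows, by direct complexity estimates in $X$ itself, that $|S| \le 3$: if $u v^x u v^x u$ or $v^y u v^y$ or a fourth exponent appeared, Lemmas~\ref{s3} and \ref{2rs} would manufacture two distinct right-special words with short common suffix at some length $q \ge |v_k|$, forcing $p(q)/q \ge 3/2$. This keeps all estimates at the level of $X$, where the hypothesis $p(q) < (3/2-\nu)q$ is available, rather than at the level of a derived subshift where it is not. The sharper bound $|p_k|+|s_k| < 2|v_k|+|v_{k-1}|$ then drops out as bookkeeping from the recursions $p_{k+1} = v_k^{m_k-1}p_k$ and $s_{k+1} = s_k v_{k+1}$ (Remark~\ref{pands}), not from a counting argument on the derived system as you suggest. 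To rescue your approach you would either need a genuine proof that the complexity bound (with uniform $\nu$) passes to the derived subshifts --- which I don't believe is true without more work, and which the authors evidently did not find --- or, better, you should do the case analysis on the return patterns $u v^t u$ directly in $X$ as the paper does.
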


\begin{notation}
For $k$ such that $r_{k}$ does not exist, set $r_{k} = 0$ and for all $k$ set
\[
\bbone_{r_{k}} :=  \left\{ \begin{array}{ll} 1 & r_{k} > 0 \\ 0 & r_{k} = 0 \end{array} \right. .
\]
\end{notation}

The substitutive structure can be explicitly stated as follows:

\begin{corollary}\label{taus}
For integers $0 \leq r < m < n$, define the substitutions $\tau_{m,n,r} :  \{ 0, 1 \} \to \{ 0, 1 \}^{*}$ by
\begin{align*}
\tau_{m,n,0} &:  \left\{ \begin{array}{lr} 0 \mapsto &0^{m-1}1 \\ 1 \mapsto &0^{n-m}0^{m-1}1 \end{array} \right. \quad\quad
\tau_{m,n,r} :  \left\{ \begin{array}{lr} 0 \mapsto &0^{m-1}10^{r-1}1 \\ 1 \mapsto &0^{n-m}0^{m-1}10^{r-1}1 \end{array} \right.~\text{when $r > 0$}.
\end{align*}
Then $X$ is the orbit closure of $\lim \pi \circ \tau_{m_{0},n_{0},r_{0}} \circ \cdots \circ \tau_{m_{k},n_{k},r_{k}}(0)$ for $\pi : \{ 0, 1 \} \to \mathcal{A}^{*}$ for some finite $\mathcal{A}$.
\end{corollary}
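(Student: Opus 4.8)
The plan is to read the two families of substitutions $\tau_{m,n,0}$ and $\tau_{m,n,r}$ ($r>0$) directly off the two cases of the recursion in Proposition~\ref{words}, so that Corollary~\ref{taus} becomes essentially a translation of that recursion into substitution language. Let $\mathcal A$ be the alphabet of $X$ and let $\pi\colon\{0,1\}\to\mathcal A^{*}$ be the substitution $\pi(0)=v_{0}$, $\pi(1)=u_{0}$; write $\Pi_{k}:=\pi\circ\tau_{m_{0},n_{0},r_{0}}\circ\cdots\circ\tau_{m_{k-1},n_{k-1},r_{k-1}}$, so that $\Pi_{0}=\pi$ and $\Pi_{k+1}=\Pi_{k}\circ\tau_{m_{k},n_{k},r_{k}}$.

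The main step is to prove, by induction on $k$, that $\Pi_{k}(0)=v_{k}$ and $\Pi_{k}(1)=u_{k}$. The case $k=0$ is the definition of $\pi$. For the inductive step one applies the monoid homomorphism $\Pi_{k}$ to $\tau_{m_{k},n_{k},r_{k}}(0)$ and $\tau_{m_{k},n_{k},r_{k}}(1)$ and uses the inductive hypothesis: when $r_{k}=0$ this gives $\Pi_{k}(0^{m_{k}-1}1)=v_{k}^{m_{k}-1}u_{k}$ and $\Pi_{k}(0^{n_{k}-1}1)=v_{k}^{n_{k}-1}u_{k}$, which are exactly $v_{k+1}$ and $u_{k+1}$ by the first case of the recursion; when $r_{k}>0$ it gives $v_{k}^{m_{k}-1}u_{k}v_{k}^{r_{k}-1}u_{k}$ and $v_{k}^{n_{k}-1}u_{k}v_{k}^{r_{k}-1}u_{k}$, which are $v_{k+1}$ and $u_{k+1}$ by the second case (here one uses $0^{n_{k}-1}=0^{n_{k}-m_{k}}0^{m_{k}-1}$ to match the way $\tau_{m,n,r}(1)$ is written). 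In particular $\pi\circ\tau_{m_{0},n_{0},r_{0}}\circ\cdots\circ\tau_{m_{k},n_{k},r_{k}}(0)=\Pi_{k+1}(0)=v_{k+1}$ for every $k$.

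Next I would check that these words converge to a right-infinite sequence. Since $m_{k}>r_{k}\ge 1$ in the second family (and $m_{k}\ge 2$ in the first, the alternative $m_{k}=1$ being handled separately), each $\tau_{m_{k},n_{k},r_{k}}(0)$ begins with the letter $0$, so $v_{k}=\Pi_{k}(0)$ is a prefix of $v_{k+1}=\Pi_{k+1}(0)$; as $\len{v_{k}}\to\infty$ (immediate from the recursion and $\len{v_{k}}<\len{u_{k}}$) the prefix limit $x^{*}:=\lim_{k}v_{k}$ exists. (To avoid the edge case entirely it suffices to let $x^{*}$ be any limit point of $(v_{k})_{k}$ in $\mathcal A^{\mathbb N}$; what follows applies verbatim.)

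Finally I would identify $\overline{\mathcal O(x^{*})}$ with $X$. Every finite factor of $x^{*}$ occurs in some $v_{k}$, and $v_{k}\in L(X)$: by the first property in Proposition~\ref{words} every point of $X$ is a concatenation of $v_{k}$'s and $u_{k}$'s, and since $X$ is infinite some point must actually contain a $u_{k}$ (otherwise $X\subseteq\{\sigma^{j}(v_{k}^{\mathbb Z}):0\le j<\len{v_{k}}\}$ would be finite), so $u_{k}\in L(X)$ and hence $v_{k}\in L(X)$ since $v_{k}$ is a suffix of $u_{k}$. Thus the language of $\overline{\mathcal O(x^{*})}$ is contained in $L(X)$, so $\overline{\mathcal O(x^{*})}$ is a nonempty subshift contained in the minimal subshift $X$, whence $\overline{\mathcal O(x^{*})}=X$, which is the assertion. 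I do not expect a serious obstacle here: all the difficulty is already in Proposition~\ref{words}, and the only points needing (minor) care are the matching of the two substitution families with the two recursion cases in the induction and the verification that the iterated images stabilize to an honest one-sided sequence.
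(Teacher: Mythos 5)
Your proposal is correct and is essentially the same as the paper's proof: both define $\pi(0)=v_{0}$, $\pi(1)=u_{0}$ and then show by a straightforward induction, using the recursion for $v_{k+1},u_{k+1}$ from Proposition~\ref{words}, that the iterated substitution sends $0\mapsto v_{k}$ and $1\mapsto u_{k}$. The paper stops there with ``the claim then follows by induction''; you supply the remaining (routine but worth recording) details that the images stabilize to a one-sided sequence --- including correctly flagging the $m_{k}=1$, $r_{k}=0$ edge case where $\tau_{m_{k},n_{k},0}(0)=1$ does not begin with $0$ --- and that its orbit closure is all of $X$ by minimality.
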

\begin{proof}
Define $\pi(0) = v_{0}$ and $\pi(1) = u_{0}$.  Write $\xi_{k} = \pi \circ \tau_{m_{0},n_{0},r_{0}} \circ \cdots \tau_{m_{k-1},n_{k-1},r_{k-1}}$.

It follows immediately from Proposition \ref{words} that if $v_{k} = \xi_{k}(0)$ and $u_{k} = \xi_{k}(1)$ then, when $r_{k} > 0$,
\[
v_{k+1} = v_{k}^{m_{k}-1}u_{k}v_{k}^{r_{k}-1}u_{k}
= (\xi_{k}(0))^{m_{k}-1}\xi_{k}(1)(\xi_{k}(0))^{r_{k}-1}\xi_{k}(1)
= \xi_{k} \circ \tau_{m_{k},n_{k},r_{k}}(0) = \xi_{k+1}(0)
\]
and similarly for $u_{k+1}$.  Similar reasoning applies when $r_{k}=0$.  The claim then follows by induction.
\end{proof}

We first collect several several basic facts established in previous work of the authors.

\begin{definition}
A word $v$ is a \textbf{root} of $w$ if $\len{v} \leq \len{w}$ and $w$ is a suffix of the left-infinite word $v^{\infty}$.
\end{definition}

\begin{lemma}[\cite{Creutz2022} Lemma 5.7]\label{B}
If $w$ and $v$ are words with $\len{v} \leq \len{w}$ such that $wv$ has $w$ as a suffix then $v$ is a root of $w$.
\end{lemma}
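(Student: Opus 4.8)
The plan is to reduce the statement to a short word equation and then peel copies of $v$ off the right-hand end by induction. Since $wv$ has $w$ as a suffix and $\len{wv} = \len{w} + \len{v}$, we may write $wv = tw$, where $t$ is the word consisting of the first $\len{v}$ letters of $wv$, so that $\len{t} = \len{v}$. All of the work is then in proving the following slightly cleaner claim, from which the lemma is immediate: \emph{if $t,v,w$ are words with $\len{t}=\len{v}$ and $tw = wv$, then $w$ is a suffix of the left-infinite word $v^{\infty}$.} Indeed, granting this, the hypothesis $\len{v}\le\len{w}$ says precisely that $v$ is a root of $w$ in the sense just defined.

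I would prove the claim by induction on $\len{w}$, keeping $m := \len{v} = \len{t}$ fixed. For the base case $\len{w}\le m$: comparing the length-$\len{w}$ suffixes of the two sides of $tw = wv$, the left side ends in $w$, while the right side ends in the length-$\len{w}$ suffix of $v$ (here we use $\len{w}\le m$, so this suffix lies entirely inside the final block $v$). Hence $w$ is a suffix of $v$, and a fortiori of $v^{\infty}$. For the inductive step $\len{w} > m$: comparing the length-$m$ suffixes of the two sides, the left side ends in the length-$m$ suffix $s$ of $w$ (which lies inside $w$ since $\len{w}>m$) and the right side ends in $v$, so $s = v$; thus $w = w'v$ for some word $w'$ with $1\le\len{w'} = \len{w}-m < \len{w}$. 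Substituting and cancelling the trailing $v$ from $tw'v = w'vv$ gives $tw' = w'v$, so the induction hypothesis applies to $w'$ and yields that $w'$ is a suffix of $v^{\infty}$; then $w = w'v$ is a suffix of $v^{\infty}v = v^{\infty}$, completing the induction.

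There is no serious obstacle here — this is a standard periodicity fact — but the one point needing a little care is the bookkeeping when $\len{w}$ is not a multiple of $\len{v}$: the induction terminates on a ``partial block'' $w'$ with $\len{w'}<\len{v}$, and it is exactly the base case above that disposes of it. As an alternative to the peeling argument, one can observe that $w$ is simultaneously a prefix and a suffix of the word $X := wv = tw$ of length $\len{w}+\len{v}$; when $\len{v} < \len{w}$ this forces $X$, hence $w$, to be $\len{v}$-periodic, and combined with the observation that the length-$\len{v}$ suffix of $w$ equals $v$ one again concludes that $w$ is a suffix of $v^{\infty}$. Either route is elementary and avoids invoking anything as heavy as the Fine--Wilf theorem.
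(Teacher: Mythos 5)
Your argument is correct: the reduction to the word equation $tw=wv$ with $\len{t}=\len{v}$ is exactly right, the base case $\len{w}\le\len{v}$ correctly identifies $w$ as a suffix of $v$, and the inductive step (peeling a trailing $v$ off $w$ and cancelling it from both sides) is sound, so the conclusion that $w$ is a suffix of $v^{\infty}$ — and hence that $v$ is a root of $w$ — follows. Note that the paper itself gives no proof of this lemma; it is quoted as Lemma 5.7 of \cite{Creutz2022}, so there is no in-paper argument to compare against, but your elementary peeling induction is a complete and standard proof of this periodicity fact.
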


\begin{lemma}[\cite{creutzpavlov} Lemma 2.5]\label{s}
Let $u$ and $v$ be words with $\len{v} < \len{u}$ and let $s$ be the maximal common suffix of $v^{\infty}$ and $v^{\infty}u$. If $\len{s} \geq \len{vu}$ then $u$ and $v$ are multiples of the same word.
\end{lemma}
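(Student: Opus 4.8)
\section*{Proof proposal for Lemma \ref{s}}

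The plan is to reduce the conclusion to the classical fact (Lyndon--Schützenberger) that two words $u,v$ satisfy $uv = vu$ if and only if there are a word $x$ and integers $i,j \geq 1$ with $u = x^{i}$ and $v = x^{j}$; thus it suffices to show that the hypotheses force $uv = vu$.

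First I would extract the weak consequence of $\len{s} \geq \len{vu}$ that falls short of what we want. Since $\len{s} \geq \len{u}$ and $s$ is a common suffix of $v^{\infty}$ and $v^{\infty}u$, the length-$\len{u}$ suffix of $v^{\infty}$ must coincide with the length-$\len{u}$ suffix of $v^{\infty}u$, which is $u$. Hence $u$ is a suffix of $v^{\infty}$, i.e.\ $v$ is a root of $u$. Writing $\len{u} = a\len{v} + b$ with $0 \leq b < \len{v}$ and $a \geq 1$ (using $\len{v} < \len{u}$), this says precisely that $u = \beta v^{a}$, where $\beta$ is the length-$b$ suffix of $v$; and, extending by one more period, the length-$\len{vu}$ suffix of $v^{\infty}$ is $\beta v^{a+1} = uv$.

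Now I would use the full strength of $\len{s} \geq \len{vu}$. On the one side we have just computed that the length-$\len{vu}$ suffix of $v^{\infty}$ is $uv$. On the other side, the length-$\len{vu}$ suffix of $v^{\infty}u$ consists of its final $\len{u}$ letters, namely $u$, preceded by the final $\len{v}$ letters of $v^{\infty}$, namely $v$; so that suffix is $vu$. Since $s$ is a common suffix of $v^{\infty}$ and $v^{\infty}u$ of length at least $\len{vu}$, these two words agree, i.e.\ $uv = vu$, and Lyndon--Schützenberger delivers the claim.

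I do not expect a genuine obstacle; the argument is short and the only care needed is the bookkeeping in the identities $u = \beta v^{a}$ and $uv = \beta v^{a+1}$, which should be spelled out so that "the length-$\len{vu}$ suffix of $v^{\infty}$ equals $uv$" is unambiguous. It is also worth noting that the argument goes through verbatim when the common suffix is infinite (that is, $v^{\infty}u = v^{\infty}$ as left-infinite words), since then $\len{s} \geq \len{vu}$ is vacuously true and both suffix computations remain valid. The one conceptual subtlety is simply to recognize that the weaker conclusion "$u$ is a suffix of $v^{\infty}$" is not enough on its own --- for instance $v = ab$, $u = bab$ has $u$ a suffix of $v^{\infty}$ while $u$ and $v$ are not powers of a common word --- so the extra $\len{v}$ letters of agreement guaranteed by $\len{s} \geq \len{vu}$, which force $vu$ (and not merely $uv$) to be a suffix of $v^{\infty}$, are exactly what makes the proof work.
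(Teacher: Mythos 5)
Your argument is correct: from $\len{s}\geq\len{vu}$ you correctly identify the length-$\len{vu}$ suffixes of $v^{\infty}$ and $v^{\infty}u$ as $uv$ and $vu$ respectively, so the hypothesis forces $uv=vu$, and the commuting-words (Lyndon--Sch\"utzenberger) theorem then gives that $u$ and $v$ are powers of a common word, which is exactly the stated conclusion. Note that this paper only quotes the lemma from \cite{creutzpavlov} without reproducing a proof, so there is nothing here to compare against line by line, but your route is the natural one and the bookkeeping ($u=\beta v^{a}$, $uv=\beta v^{a+1}$, including the case where $\beta$ is empty) is handled correctly.
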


\begin{lemma}[\cite{creutzpavlov} Lemma 2.6]\label{s2}
Let $v$ and $u$ be words with $\len{v} < \len{u}$ which are not multiples of the same word and where $v$ is a suffix of $u$.  Let $s$ be the maximal common suffix of $v^{\infty}$ and $v^{\infty}u$.  Then $s$ is a suffix of any left-infinite concatenation of $u$ and $v$.
\end{lemma}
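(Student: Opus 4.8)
The plan is to prove the stronger assertion that $s$ is a suffix of $v^{\infty}u v^{j}$ for every $j \ge 0$ (writing $v^{j}$ for the concatenation of $j$ copies of $v$) by induction on $j$, and then to read the lemma off from a length count. The first step is to pin down the length of $s$: I claim $\len{v} \le \len{s} < \len{v}+\len{u}$. The lower bound holds because $v$ is a suffix of $u$, hence a common suffix of $v^{\infty}$ and of $v^{\infty}u$; the upper bound holds because $\len{s}\ge\len{vu}$ would, by Lemma~\ref{s}, force $u$ and $v$ to be multiples of a common word, contrary to hypothesis. In particular $s$ is an honest finite word. I also record the defining properties of $s$: it is the suffix of $v^{\infty}$ of length $\len{s}$, it is the suffix of $v^{\infty}u$ of length $\len{s}$, and for every $m \le \len{s}$ the length-$m$ suffixes of $v^{\infty}$ and of $v^{\infty}u$ coincide (each equal to the length-$m$ suffix of $s$).

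For the induction, the base case $j=0$ is the statement ``$s$ is a suffix of $v^{\infty}u$,'' which is immediate from the above. For the inductive step, write $v^{\infty}u v^{j+1} = (v^{\infty}u v^{j})\,v$. Its length-$\len{s}$ suffix is the length-$(\len{s}-\len{v})$ suffix of $v^{\infty}u v^{j}$ followed by $v$ (this makes sense as $\len{s}\ge\len{v}$). By the inductive hypothesis the first factor equals the length-$(\len{s}-\len{v})$ suffix of $s$, which, since $s$ is the length-$\len{s}$ suffix of $v^{\infty}$, equals the length-$(\len{s}-\len{v})$ suffix of $v^{\infty}$; appending $v$ recovers the length-$\len{s}$ suffix of $v^{\infty}$, namely $s$. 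This closes the induction. Note that neither $u$ nor the hypothesis $\len{v}<\len{u}$ re-enters here: once $j=0$ is known, everything is driven by $s$ being a suffix of $v^{\infty}$.

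To finish, let $w$ be a left-infinite concatenation of $u$ and $v$. If every block of $w$ is $v$, then $w=v^{\infty}$ and $s$ is a suffix of $w$ by construction. Otherwise $w$ has a rightmost $u$-block, so $w = z\,u\,v^{j}$ for some left-infinite concatenation $z$ and some $j\ge 0$. If $j\ge 1$, then $\len{u v^{j}}=\len{u}+j\len{v}>\len{s}$, so the length-$\len{s}$ suffix of $w$ equals the length-$\len{s}$ suffix of $u v^{j}$, which equals the length-$\len{s}$ suffix of $v^{\infty}u v^{j}$, which is $s$ by the induction. If $j=0$, so $w=z\,u$, let $b\in\{u,v\}$ be the final block of $z$. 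When $\len{s}\le\len{u}$ the length-$\len{s}$ suffix of $w$ is the length-$\len{s}$ suffix of $u$, which equals the length-$\len{s}$ suffix of $v^{\infty}u$, namely $s$. When $\len{u}<\len{s}<\len{u}+\len{v}$, the length-$\len{s}$ suffix of $w$ is the length-$(\len{s}-\len{u})$ suffix of $b$ followed by $u$; since $\len{s}-\len{u}<\len{v}$ and $v$ is a suffix of both $u$ and $v$, that first factor equals the length-$(\len{s}-\len{u})$ suffix of $v^{\infty}$, so the whole suffix equals the length-$\len{s}$ suffix of $v^{\infty}u$, namely $s$. Hence $s$ is a suffix of $w$ in all cases.

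The argument is almost entirely bookkeeping about suffixes and lengths; the only point requiring care is the behavior at the two boundaries $\len{s}\approx\len{v}$ and $\len{s}\approx\len{u}$, which is exactly why the bounds $\len{v}\le\len{s}<\len{v}+\len{u}$ are isolated at the outset and used repeatedly. The hypothesis that $u$ and $v$ are not multiples of a common word enters only once, through Lemma~\ref{s}, to guarantee that $s$ is finite; the hypothesis that $v$ is a suffix of $u$ is used for the lower bound $\len{s}\ge\len{v}$ and in the final case analysis.
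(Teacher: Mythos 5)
Your proof is correct: the bounds $\len{v}\le\len{s}<\len{u}+\len{v}$ (the upper one via Lemma~\ref{s} and the non-commensurability hypothesis), the induction showing $s$ is a suffix of $v^{\infty}uv^{j}$ for all $j\ge 0$, and the case analysis on the rightmost blocks of the left-infinite concatenation together cover all cases, including the boundary situations $\len{s}\le\len{u}$ and $\len{u}<\len{s}<\len{u}+\len{v}$. Note that this paper only quotes the lemma from \cite{creutzpavlov} (Lemma 2.6) without reproducing its proof, so there is no in-paper argument to compare against; your elementary suffix-bookkeeping argument is in the same spirit as the cited source and stands as a complete, self-contained proof.
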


\begin{lemma}[\cite{creutzpavlov} Lemma 2.7]\label{s3}
Let $v$ and $u$ be words and $s$ be the maximal common suffix of $v^{\infty}$ and $v^{\infty}u$.  Let $y$ and $z$ be suffixes of some (possibly distinct) concatenations of $u$ and $v$, both of length at least $\len{s}$.  Then for any word $w$, the maximal common suffix of $yvw$ and $zuw$ is $sw$.
\end{lemma}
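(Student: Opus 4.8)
The plan is to reduce the statement to the special case $y=z=s$ and then deduce it from the maximality of $s$. As in Lemma~\ref{s2}, on which the argument rests, I work under the standing hypotheses that $v$ is a suffix of $u$, that $\len{v}<\len{u}$, and that $u$ and $v$ are not multiples of a common word (these hold in every application; when $u$ and $v$ are common powers one has $v^{\infty}=v^{\infty}u$ so that $s$ is not a finite word, and the remaining small cases where $s$ is empty are immediate). The first point to record is that $s$ is in fact a suffix of $y$ and of $z$, not merely a word of length at most $\len{y}$ and $\len{z}$: if $y$ is a suffix of a finite concatenation $C$ of copies of $u$ and $v$, then $y$ is a suffix of the left-infinite concatenation $v^{\infty}C$, to which Lemma~\ref{s2} applies to give $s$ as a suffix of $v^{\infty}C$ as well; since $\len{s}\le\len{y}$ and two suffixes of a common word are nested, $s$ is a suffix of $y$, and likewise of $z$.

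Next I would carry out the reduction. Let $\hat y$ and $\hat z$ denote the suffixes of $y$ and $z$ of length exactly $\len{s}$; being suffixes of $y$ and $z$, they are still suffixes of concatenations of $u$ and $v$, and by the previous paragraph $\hat y=s=\hat z$. Write $y=y_{1}s$ and $z=z_{1}s$, so that $yvw=y_{1}(svw)$ and $zuw=z_{1}(suw)$. Comparing letters from the right: if the maximal common suffix of $svw$ and $suw$ is exactly $sw$, then $yvw$ and $zuw$ agree in their last $\len{sw}$ letters, while the position $\len{sw}+1$ from the right --- which, since $\len{v},\len{u}\ge 1$, still lies inside $svw$ and inside $suw$ --- is occupied by the differing letters of $svw$ and $suw$; hence the maximal common suffix of $yvw$ and $zuw$ is also $sw$. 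Thus it suffices to treat the case $y=z=s$.

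For that base case, note that $s$ is a suffix of $v^{\infty}$ and of $v^{\infty}u$ by definition, so $sv$ is a suffix of $v^{\infty}v=v^{\infty}$ and $su$ is a suffix of $v^{\infty}u$. Consequently $s$ is a suffix of each of $sv$ and $su$, and so their maximal common suffix has length at least $\len{s}$. Conversely, any common suffix of $sv$ and $su$ of length greater than $\len{s}$ would be a suffix of $sv$ --- hence of $v^{\infty}$ --- and a suffix of $su$ --- hence of $v^{\infty}u$ --- and therefore a common suffix of $v^{\infty}$ and $v^{\infty}u$ strictly longer than $s$, contradicting the maximality of $s$. Hence the maximal common suffix of $sv$ and $su$ is exactly $s$, and appending $w$ to both shows the maximal common suffix of $svw$ and $suw$ is $sw$, as needed.

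I expect the only real content beyond elementary suffix bookkeeping to be Lemma~\ref{s2}; the one conceptual step is recognizing that the problem collapses to $y=z=s$, after which it is a two-line maximality argument. The places demanding care are the degenerate cases flagged at the outset and the small verification in the reduction that position $\len{sw}+1$ from the right genuinely lies inside both $svw$ and $suw$ --- which is exactly the point at which nonemptiness of $u$ and $v$ enters.
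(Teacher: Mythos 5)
Your argument is correct: establishing via Lemma \ref{s2} that $s$ is a suffix of $y$ and of $z$, and then invoking the maximality of $s$ through the observation that $sv$ and $su$ are suffixes of $v^{\infty}$ and $v^{\infty}u$ respectively, is exactly the intended route, and the standing hypotheses you flag ($v$ a proper suffix of $u$, $u,v$ not powers of a common word) are precisely those in force in every application and in the source statement. Note that the paper itself imports this lemma from \cite{creutzpavlov} without reproving it, and your proof matches that original argument in approach.
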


\begin{lemma}[\cite{creutzpavlov} Lemma 1.4]\label{RSlem}
Let $X$ be a subshift on alphabet $\mathcal{A}$, for all $n$ let $RS_n(X)$ denote the set of right-special words of length $n$ in the language of $X$, and for all right-special $w$, let
$F(w)$ denote the set of letters which can follow $w$, i.e. $\{a \in \mathcal{A} \ : \ wa \in L(X)\}$. Then, for all $q > r$,
\[
p(q) = p(r) + \sum_{i = r}^{q-1} \sum_{w \in RS_i(X)} (|F(w)| - 1).
\]
\end{lemma}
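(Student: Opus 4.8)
The plan is to first establish the one-step identity $p(n+1) = p(n) + \sum_{w \in RS_n(X)}(|F(w)| - 1)$ and then sum it over $n$ from $r$ to $q - 1$. This is essentially a vertex-versus-edge count in the Rauzy graphs $G_{X,n}$: the vertices of $G_{X,n}$ are the words of $L_n(X)$ (so there are $p(n)$ of them), while the edges are indexed by $L_{n+1}(X)$ (so there are $p(n+1)$ of them), and each right-special vertex $w$ contributes $|F(w)|$ outgoing edges rather than one.

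To make this precise I would consider the map $\pi_n \colon L_{n+1}(X) \to L_n(X)$ sending a word of length $n+1$ to its prefix of length $n$. This map is well-defined, and it is surjective: any $w \in L_n(X)$ occurs in some $x \in X$, say $w = x_j x_{j+1} \cdots x_{j+n-1}$, and then $x_j \cdots x_{j+n} \in L_{n+1}(X)$ has $\pi_n$-image $w$. The same argument shows $F(w) \neq \emptyset$, i.e. $|F(w)| \geq 1$ for every $w \in L_n(X)$; this is the only place bi-infiniteness (equivalently, shift-invariance and compactness) of $X$ is used. Moreover, the fiber $\pi_n^{-1}(w)$ is exactly $\{\, wa : a \in F(w)\,\}$, which has cardinality $|F(w)|$. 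Counting $L_{n+1}(X)$ by fibers therefore gives
\[
p(n+1) = |L_{n+1}(X)| = \sum_{w \in L_n(X)} |F(w)| = \sum_{w \in L_n(X)} \bigl( 1 + (|F(w)| - 1) \bigr) = p(n) + \sum_{w \in L_n(X)} (|F(w)| - 1).
\]
Since $|F(w)| - 1 = 0$ precisely when $w$ is not right-special (using $|F(w)| \geq 1$), the last sum is unchanged if we restrict it to $w \in RS_n(X)$, which yields the one-step identity.

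Telescoping over $i$ from $r$ to $q - 1$ then gives, for all $q > r$,
\[
p(q) = p(r) + \sum_{i = r}^{q-1} \bigl( p(i+1) - p(i) \bigr) = p(r) + \sum_{i = r}^{q-1} \sum_{w \in RS_i(X)} (|F(w)| - 1),
\]
as claimed. There is no genuine obstacle in this argument; the only subtlety worth flagging is the surjectivity of $\pi_n$ and the non-emptiness of $F(w)$, both of which come down to the fact that every word in the language of a subshift extends to the right.
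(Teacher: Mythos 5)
Your argument is correct and complete: the fiber count $p(n+1) = \sum_{w \in L_n(X)} |F(w)|$, the observation that $|F(w)| \geq 1$ by right-extendability so only right-special words contribute to $|F(w)|-1$, and the telescoping sum give exactly the stated formula. The paper itself does not reprove this lemma (it is quoted from \cite{creutzpavlov}, Lemma 1.4), and your proof is the standard Rauzy-graph vertex/edge counting argument used there, so there is nothing further to add.
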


\begin{lemma}\label{2rs}
Let $w$ and $y$ be right-special words with $\len{w} \leq \len{y}$ and maximal common suffix $s$.  Then
\[
\frac{p(\len{w}|)}{\len{w}} \geq 1 + \frac{\len{w} - \len{s}}{\len{w}}.
\]
\end{lemma}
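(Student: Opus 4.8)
The plan is to apply the counting formula in Lemma~\ref{RSlem} together with the observation that distinct right-special words of a given length produce distinct right-special words at all larger lengths when we track suffixes. First I would note that since $w$ and $y$ are both right-special and $\len w \le \len y$, the length-$\len w$ suffix of $y$, call it $w'$, is also right-special (a suffix of a right-special word is right-special). We have $w \ne w'$: indeed if $w = w'$ then the maximal common suffix of $w$ and $y$ would be all of $w$, forcing $\len s = \len w$ and making the claimed bound trivial ($p(\len w)/\len w \ge 1$ always, by Morse--Hedlund). So assume $\len s < \len w$, hence $w \ne w'$ and both are right-special words of length $\len w$ with $|F(w)|, |F(w')| \ge 2$.

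The key step is then a downward induction on length from $\len w$ down to $\len s + 1$: at each length $i$ with $\len s < i \le \len w$, the length-$i$ suffixes of $w$ and of $y$ are two \emph{distinct} right-special words (distinct because their maximal common suffix $s$ has length $< i$, and each is right-special as a suffix of a right-special word), so $\sum_{v \in RS_i(X)}(|F(v)| - 1) \ge 2$. Feeding this into Lemma~\ref{RSlem} with $r = \len s$ and $q = \len w$ gives
\[
p(\len w) = p(\len s) + \sum_{i = \len s}^{\len w - 1} \sum_{v \in RS_i(X)} (|F(v)| - 1) \ge p(\len s) + 2(\len w - \len s).
\]
Combining with the Morse--Hedlund bound $p(\len s) \ge \len s + 1 > \len s$ (or just $p(\len s) \ge \len s$, which suffices) yields $p(\len w) \ge \len s + 2(\len w - \len s) = 2\len w - \len s$, and dividing by $\len w$ gives exactly $p(\len w)/\len w \ge 1 + (\len w - \len s)/\len w$.

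The main point to be careful about — not really an obstacle, but the step that needs the cleanest statement — is the claim that the length-$i$ suffixes of $w$ and $y$ are genuinely two distinct vertices in $RS_i(X)$ for every $i$ in the range $(\len s, \len w]$; this is precisely where the definition of $s$ as the \emph{maximal} common suffix is used, since for $i > \len s$ the two length-$i$ suffixes must differ. One should also observe that this argument implicitly uses $X$ infinite only through Morse--Hedlund to bound $p(\len s)$; if $\len s = 0$ the sum still runs from $i = 0$ and $p(0) = 1 \ge 0$, so the bound $p(\len w) \ge 2\len w$ holds and the statement is fine. (Note there is a stray ``$|$'' in the displayed statement of the lemma, ``$p(\len{w}|)$''; it should read $p(\len w)$.)
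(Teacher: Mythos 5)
Your proof is correct and takes essentially the same approach as the paper's: both of you note that for each length strictly between $\len{s}$ and $\len{w}$ the suffixes of $w$ and $y$ of that length are two distinct right-special words, feed this into Lemma~\ref{RSlem}, and finish with a Morse--Hedlund bound on $p(\len{s})$. (You share with the paper the same harmless off-by-one: the two-distinct-right-special-words count applies at lengths $\len{s}+1,\ldots,\len{w}$, while the sum in Lemma~\ref{RSlem} with $r=\len{s}$, $q=\len{w}$ runs over $i=\len{s},\ldots,\len{w}-1$; the honest bound is therefore $p(\len{w}) \geq p(\len{s}) + 1 + 2(\len{w}-\len{s}-1)$, and combined with $p(\len{s}) \geq \len{s}+1$ this still gives $p(\len{w}) \geq 2\len{w}-\len{s}$, so the conclusion is unaffected.)
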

\begin{proof}
For all $\len{s} < q \leq \len{w}$, the suffixes of $w$ and $y$ of length $q$ are distinct and are both right-special so $|\{w \in RS_{q}(X)\}| \geq 2$.  By Lemma \ref{RSlem}, then $p(\len{w}) \geq p(\len{s}) + \sum_{q=\len{s}+1}^{\len{w}} 2 = p(\len{s}) + 2(\len{w}-\len{s})$.
\end{proof}

\begin{lemma}[\cite{creutzpavlov} Lemma 2.8]\label{6}
If $p(q + 1) - p(q) = 1$ then there exists a bi-special word which has length in $[q, q+p(q)]$, has exactly two successors, and is the unique right-special word of its length and also the unique left-special word of its length.
\end{lemma}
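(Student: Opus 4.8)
\emph{Overview.} The plan is to recover the bi-special word from the shapes of the Rauzy graphs $G_{X,q},G_{X,q+1},\ldots$, using that each $G_{X,n}$ is strongly connected (as $X$ is minimal), has all in- and out-degrees at least $1$ (every word of a subshift extends on both sides), and that $G_{X,n+1}$ is a sub-digraph — on the common vertex set $L_{n+1}(X)=E(G_{X,n})$ — of the line graph of $G_{X,n}$. I will also use the standard fact that for an infinite subshift $p$ is strictly increasing.

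\emph{The graph at level $q$.} First I would apply Lemma~\ref{RSlem} with $r=q$: $p(q+1)-p(q)=\sum_{w\in RS_q(X)}(|F(w)|-1)$, with every summand $\geq 1$, so the hypothesis forces a unique right-special word $w_0$ of length $q$, and it has exactly two successors. Thus in $G_{X,q}$ exactly one vertex $w_0$ has out-degree $2$ and the rest out-degree $1$; since in-degrees sum to out-degrees $=|E(G_{X,q})|=|L_{q+1}(X)|=p(q)+1=|V(G_{X,q})|+1$, there is likewise a unique vertex $v_0$ of in-degree $2$, the rest of in-degree $1$. I would then argue that a strongly connected, loopless, simple digraph with this degree sequence is necessarily a \textbf{theta graph} — two internally disjoint directed paths from $w_0$ to $v_0$, of lengths $a,b\geq 1$, together with a directed path of length $c\geq 1$ from $v_0$ to $w_0$ — or, if $w_0=v_0$, two directed cycles meeting only at $w_0$ (the degenerate case, written ``$c=0$''). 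Denote such a graph $\Theta(a,b,c)$; then $a+b+c=|E(G_{X,q})|=p(q)+1$.

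\emph{Propagation.} Next I would show: if $G_{X,n}=\Theta(a,b,c)$ with $c\geq 1$, then $G_{X,n+1}=\Theta(a+1,b+1,c-1)$ (where $\Theta(\cdot,\cdot,0)$ denotes two cycles meeting at a point). Any right-special word of length $n+1$ has a right-special length-$n$ suffix, hence suffix $w_0$, hence is a left-extension of $w_0$; as $w_0$ has in-degree $1$, there is exactly one such word $f$, and its successors lie among the two successors of $w_0$. Since $p$ is strictly increasing, some right-special word of length $n+1$ exists, so it must be $f$, with exactly two successors; hence $p(n+2)-p(n+1)=1$ and $|E(G_{X,n+1})|=p(n+2)=p(n+1)+1$, which equals the number of $2$-edge paths in $\Theta(a,b,c)$, i.e. the edge-count of its line graph. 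As $G_{X,n+1}$ is a sub-digraph of that line graph on the same vertex set, it equals it; and a direct computation identifies the line graph of $\Theta(a,b,c)$ with $\Theta(a+1,b+1,c-1)$ (the return path loses an edge, the two $w_0$--$v_0$ paths each gain one, and when $c=1$ the lone return edge becomes the merged branch vertex). Starting from the graph of the previous step and iterating, $G_{X,q+j}=\Theta(a+j,b+j,c-j)$ for $0\leq j\leq c$.

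\emph{Conclusion, and the expected obstacle.} Taking $j=c$: $G_{X,q+c}=\Theta(a+c,b+c,0)$ is two directed cycles through a single vertex $b^{*}$ (in the degenerate case just take $b^{*}=w_0$, $c=0$). Read as a word of length $q+c$, this $b^{*}$ has out- and in-degree $2$, hence is bi-special with exactly two successors, and it is the only vertex of out-degree $\geq 2$ and the only one of in-degree $\geq 2$ — that is, the unique right-special and the unique left-special word of length $q+c$. Since $a,b\geq 1$, $c=(p(q)+1)-a-b\leq p(q)-1$, so $q+c\in[q,q+p(q)]$, which finishes the proof. The hard part will be the structural claim that strong connectivity plus the stated degree sequence forces the theta-graph shape: one follows the two deterministic forward paths leaving $w_0$ and uses strong connectivity to show that neither can close into an ``escape-proof'' cycle avoiding $w_0$, so each returns to $w_0$, and that together they exhaust all vertices and edges; the same circle of ideas also rules out loops and parallel edges. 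Once that is in place, everything else is routine line-graph bookkeeping.
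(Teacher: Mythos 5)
Your proof is correct and follows essentially the same route as the proof being cited: the paper does not reprove Lemma~\ref{6} but imports it from \cite{creutzpavlov}, where the argument is likewise a Rauzy-graph analysis in which the unique right- and left-special words of length $q$ force $G_{X,q}$ to be a theta or figure-eight graph whose central path shrinks by one at each successive length until the two special words merge into the desired bi-special word within at most $p(q)$ steps. (One cosmetic remark: loops need not be excluded in general, since one of the two cycles at the branch vertex can have length one, e.g.\ when $0^{q+1}\in L(X)$; this degenerate situation is already covered by your figure-eight case and affects nothing.)
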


The starting off point for our construction of the words $v_{k}$ and $u_{k}$ is the following lemma.

\begin{lemma}\label{u0v0}
Let $X$ be an infinite minimal subshift satisfying $\liminf \frac{p(q)}{q} < 2$.  For any $Q > 0$, there exist words $u$ and $v$ with $\len{v} \geq Q$ such that, writing $s$ for the maximal common suffix of $v^{\infty}$ and $v^{\infty}u$,
\begin{itemize}
 \item $u$ and $v$ begin with different letters;
 \item $v$ is a proper suffix of $u$;
 \item $s$ is the unique left-special and unique right-special word of its length;
 \item every word which has $s$ as a suffix is a suffix of a concatenation of $u$ and $v$; and
\item  every $x \in X$ can be written in exactly one way as a concatenation of $u$ and $v$.
  \end{itemize}
\end{lemma}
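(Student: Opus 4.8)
The plan is to extract from the complexity hypothesis a very long bi-special word with rigid special structure, read off $u$ and $v$ as its two ``return words'', and verify the five bullets one at a time.

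\textbf{Step 1: producing a good bi-special word.} Since $\liminf p(q)/q < 2$, we cannot have $p(q+1) - p(q) \geq 2$ for all large $q$ (that would force $\liminf p(q)/q \geq 2$), so there are infinitely many $q$ with $p(q+1) - p(q) = 1$. For each such $q$, Lemma~\ref{6} yields a bi-special word of length in $[q, q + p(q)]$ which has exactly two successors and is simultaneously the unique right-special and the unique left-special word of its length. Letting $q \to \infty$ produces such words of unbounded length; fix one, call it $b$, with $n := \len{b} \geq Q$ (and as large as needed below).

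\textbf{Step 2: the Rauzy graph is a figure-eight, and $u,v$ are the return words.} Work in $G_{X,n}$. The number of edges is $p(n+1) = p(n) + 1 = (\#\text{vertices}) + 1$ and every vertex has out-degree $\geq 1$, so exactly one vertex has out-degree $2$ and the rest out-degree $1$; by the choice of $b$ this vertex is $b$. Counting in-degrees the same way, and using that $b$ is also the unique left-special word of length $n$, the vertex $b$ has in-degree $2$ and all others in-degree $1$. Minimality makes $G_{X,n}$ strongly connected, so following the two edges out of $b$ produces two cycles $C_{0}, C_{1}$ through $b$, vertex-disjoint apart from $b$ (a non-$b$ vertex lies on a unique cycle, having in- and out-degree $1$). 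Reading the edge labels around $C_{0}$ and $C_{1}$ gives two words which, ordered so that $\len{v} \leq \len{u}$, we take as our $u$ and $v$; these are exactly the return words of $b$, in the sense that every $x \in X$, cut at the right endpoints of its occurrences of $b$, is a bi-infinite concatenation of $u$ and $v$ (occurrences of $b$ in $x$ correspond precisely to visits of the corresponding bi-infinite path to the vertex $b$). Since the first edge of each $C_{i}$ leaves $b$ along one of its two outgoing edges, the first letters of $u$ and $v$ are the two distinct letters which may follow $b$, giving the first bullet.

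\textbf{Step 3: the suffix relation and the remaining bullets.} Walk $C_{0}$ and $C_{1}$ backward from $b$. The two edges into $b$ come from its two in-neighbours $a_{0}b_{1}\cdots b_{n-1}$ and $a_{1}b_{1}\cdots b_{n-1}$, where $a_{0} \neq a_{1}$ are the two letters that may precede $b$; these vertices agree except in their first letter. Since every non-$b$ vertex has in-degree $1$, the two backward walks are forced, and one checks inductively that the vertices reached after equally many backward steps still agree on a long enough suffix that the edge labels coincide step for step, at least until the shorter cycle returns to $b$; hence $v$ is a suffix of $u$, and a \emph{proper} suffix since $\len{v} < \len{u}$ (if the two cycles had equal length one replaces $b$ by a longer word from Step~1, or argues directly). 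Now $v$ is a proper suffix of $u$ and $u, v$ begin with different letters, so they are not powers of a common word; Lemmas~\ref{s}, \ref{s2}, \ref{s3} then pin down the maximal common suffix $s$ of $v^{\infty}$ and $v^{\infty}u$, and one identifies $s$ with $b$ (equivalently, shows $\len{s} = n$, forcing $s = b$ since $s$ ends with $b$), which by Step~1 is the unique right-special and unique left-special word of its length. Uniqueness of the $\{u,v\}$-decomposition of a given $x$ follows because each of $u$ and $v$ meets $b$ only at its right end (as $b$ itself, or as a suffix of $b$ when it is shorter than $b$): any $\{u,v\}$-factorization of $x$ has its cut points among the right endpoints of occurrences of $b$, which are exactly the canonical cut points, so the factorization is forced. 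Finally, if a word $w$ has $s = b$ as a suffix, placing a copy of $w$ inside some $x \in X$, the copy of $b$ at the right end of $w$ ends at a canonical cut point, so the left-infinite part of $x$ ending there is a concatenation of $u$ and $v$, and $w$ is a suffix of a sufficiently long finite sub-concatenation.

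\textbf{Main obstacle.} The delicate point is the proper-suffix relation ``$v$ is a suffix of $u$''. The backward-walk synchronization of Step~3 is transparent while the walks stay inside the common $b$-suffix, which always suffices when $\len{v} \leq \len{b}$; extending it — or side-stepping it via a better choice of $b$ among the unboundedly many from Step~1, or an additional argument about $L(X)$ — when the shorter return word exceeds $\len{b}$, together with the bookkeeping needed when $b$ overlaps its own occurrences (so that a return word is a proper suffix of $b$ rather than ending in a full copy of $b$), is the technical heart; it is also what forces $s$ to be exactly $b$.
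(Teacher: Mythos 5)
Your overall architecture matches the paper's: extract a long bi-special word $b$ from Lemma~\ref{6}, realize its Rauzy graph as a figure-eight, take the two return words as $u,v$, and verify the bullets. But the ``main obstacle'' you flag at the end is a genuine gap, not a technicality you can wave past, and it is precisely where the complexity hypothesis does the work. You need to know $\len{v} \leq \len{b}$, and the way to get it is a counting argument you already set up but didn't finish: the Rauzy graph $G_{X,n}$ with $n = \len{b}$ has exactly $p(n)+1$ edges, the two cycles through $b$ are edge-disjoint and cover every edge, so $\len{u}+\len{v} = p(n)+1$; combining this with $p(n) < 2n$ (which holds for infinitely many $n$ satisfying $p(n+1)-p(n)=1$, by the hypothesis) gives $2\len{v} \leq \len{u}+\len{v} < 2n+1$, i.e.\ $\len{v} \leq n$. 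From there Lemma~\ref{B} makes $v$ a root of $b$, hence a suffix of $b$; since $bu$ (or $bz$ in the paper's notation) ends in $b$ and hence in $v$, with $\len{v}\leq\len{u}$, we get that $v$ is a suffix of $u$. The same bound $\len{v}\leq n$ also disposes of the case $\len{u}=\len{v}$ (both would then be length-$\len{v}$ suffixes of $b$, forcing $u=v$), and it is what lets you close the loop on $s=b$: $b$ is a suffix of $s$ because $v$ is a root of $b$ and both return words end in $b$, and $s$ is a suffix of $b$ because $b$ is left-special so the two return paths disagree one letter before $b$. Without the inequality $\len{v}\leq n$ none of the backward-walk synchronization, the properness of the suffix, or the identification of $s$ actually goes through.

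There is a second, smaller gap: you ``fix one, call it $b$, with $n\geq Q$,'' but the lemma needs $\len{v}\geq Q$, not $\len{b}\geq Q$, and these are not the same. The paper fills this with an explicit argument that the shorter return word cannot stay bounded: if $\len{v_q}$ were bounded as $q\to\infty$ then, since the $v_q$ are nested suffixes, some fixed $v$ would be a root of arbitrarily long bi-special words, putting $v^\infty$ in $X$ and contradicting that $X$ is infinite and minimal. You should add that step.
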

\begin{proof}
Since $\limsup \frac{p(q)}{q} < 2$, there exist infinitely many $q$ such that $p(q + 1) - p(q) = 1$ and eventually $p(q) < 2q$.  By Lemma \ref{6}, there are then infinitely many $q$ such that there exists a word $w_{q}$ which is the unique left-special and unique right-special word of length $q$ (which will have exactly two successors).  

Let $y_{q}$ and $z_{q}$, with $\len{y_{q}} \leq \len{z_{q}}$, be the two shortest return words for $w_{q}$ which will be the labels of the two paths from $w_{q}$ to itself in the Rauzy graph $G_{X,\len{w_{q}}}$.  Then $y_{q}$ and $z_{q}$ begin with different letters and every $x \in X$ can be written in exactly one way as a concatenation of $y_{q}$ and $z_{q}$ since every $x \in X$ must label a path in the Rauzy graph.  Since $2\len{y_{q}} \leq \len{y_{q}} + \len{z_{q}} \leq p(\len{w_{q}}) + 1 < 2\len{w_{q}} + 1$, we have $\len{y_{q}} \leq \len{w_{q}}$ so by Lemma \ref{B}, $y_{q}$ is a root of $w_{q}$.   Since $w_{q}z_{q}$ has $w_{q}$ as a suffix, it has $y_{q}$ as a suffix and as $\len{y_{q}} \leq \len{z_{q}}$ then $y_{q}$ is a suffix of $z_{q}$.

If $\len{y_{q}} = \len{z_{q}}$ then $\len{z_{q}} \leq \len{w_{q}}$ so both $y_{q}$ and $z_{q}$ are suffixes of $w_{q}$ of the same length which would imply $y_{q} = z_{q}$ so $\len{y_{q}} < \len{z_{q}}$.
Since $y_{q}$ is a suffix of $w_{q}$, it is also a suffix of $w_{q^{\prime}}$ for $q^{\prime} > q$.  Then $y_{q}$ must be a suffix of $y_{q^{\prime}}$ since $y_{q^{\prime}}$ is a return word for $w_{q^{\prime}}$ hence for $w_{q}$ so $\len{y_{q}} \leq \len{y_{q^{\prime}}}$.  Suppose $\len{y_{q}}$ is bounded.  Then $y_{q} = y_{Q}$ for some fixed $Q$ eventually but that would make $y_{Q}$ a root of $w_{q}$ for arbitrarily long $w_{q}$ making $v_{Q}^{\infty} \in X$ which contradicts that $X$ is infinite and minimal.

Let $s_{q}$ be the maximal common suffix of $y_{q}^{\infty}$ and $y_{q}^{\infty}z_{q}$.  Since $y_{q}$ and $z_{q}$ are return words for $w_{q}$, then $w_{q}y_{q}^{t}z_{q}$ has $w_{q}$ as a suffix for all $t \geq 0$ so $w_{q}$ is a suffix of $y_{q}^{\infty}z_{q}$.  Since $y_{q}$ is a root of $w_{q}$, then $w_{q}$ is a suffix of $s_{q}$.  Since $w_{q}$ is left-special, the two words $y_{q}w_{q}$ and $z_{q}w_{q}$ differ on the letter prior to $w_{q}$.  Therefore $s_{q} = w_{q}$.  Then any word which has $s_{q}$ as a suffix must be a suffix of a concatenation of $y_{q}$ and $z_{q}$ as those are the labels of the two return paths.
\end{proof}

The inductive step in the construction of $v_{k}$ and $u_{k}$ comes from our next lemma.

\begin{lemma}\label{nextstep}
Let $X$ be an infinite minimal subshift, and let $u$ and $v$ be words where all $x \in X$ can be written as a concatenation of $u$ and $v$, $\len{v} < \len{u}$, $v$ is a suffix of $u$ but not a prefix of $u$, and $p(q)/q < 1.5$ for all $q \geq \len{v}$. Let $p$ be the maximal common prefix of $u$ and $v$ and $s$ be the maximal common suffix of $v^{\infty}$ and $v^{\infty}u$.
Provided that $\len{p} + \len{s} < \len{u} + \len{v}$, exactly one of the following holds:
\begin{itemize}
\item there exist positive integers $m < n$ such that every $x \in X$ can be written as a concatenation of $v^{m-1}u$ and $v^{n-1}u$; or
\item there exist positive integers $r < m < n$ such that every $x \in X$ can be written as a concatenation of $v^{m-1}uv^{r-1}u$ and $v^{n-1}uv^{r-1}u$.
\end{itemize}
\end{lemma}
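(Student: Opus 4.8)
The plan is to analyze the structure of $X$ by looking at how the words $u$ and $v$ can be juxtaposed in points of $X$, encoding this via the Rauzy graph at an appropriate scale. First I would pass to the subshift $Y$ obtained by recoding each $x \in X$ as the bi-infinite sequence of $u$'s and $v$'s in its (unique) decomposition; since $\len{v} < \len{u}$ and $v$ is a suffix of $u$, the hypothesis $\len{p} + \len{s} < \len{u} + \len{v}$ combined with Lemma \ref{s} shows $u$ and $v$ are not multiples of a common word, so by Lemma \ref{s2} the maximal common suffix $s$ of $v^\infty$ and $v^\infty u$ is a suffix of every left-infinite concatenation of $u$ and $v$. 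The point of the prefix/suffix hypothesis is that words in $L(X)$ of length roughly $\len{s} + \len{p}$ or so are determined by a bounded window in the recoded sequence, so that $Y$ has word complexity controlled by $p$; in particular the bound $p(q)/q < 1.5$ for $q \geq \len{v}$ should translate to $Y$ being an infinite minimal subshift with $\limsup p_Y(q)/q < 2$ (using Lemma \ref{2rs} to compare special words in $X$, which come in pairs with common suffix $s$, against the recoded structure).

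Next, with $Y$ of complexity below $2$, I would apply the analogue of Lemma \ref{u0v0}/Lemma \ref{6}: infinitely many lengths $q$ have $p_Y(q+1) - p_Y(q) = 1$, hence a unique left- and right-special word $w$ of that length with exactly two successors, whose two shortest return words $y, z$ (with $\len{y} \leq \len{z}$) decompose every point of $Y$ uniquely, satisfy $\len{y} < \len{z}$, and have $y$ a suffix of $z$ and (by Lemma \ref{B}) a root of $w$. Translating back: every $x \in X$ decomposes uniquely as a concatenation of two words $V := y(u,v)$ and $U := z(u,v)$, each itself a concatenation of $u$'s and $v$'s, with $V$ a proper suffix of $U$. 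The heart of the argument is then to identify the shape of $V$ and $U$ as words in the alphabet $\{u,v\}$: I would argue that because $v$ is a suffix (not a prefix) of $u$, the only way the unique-special-word condition in $Y$ can hold is if, in the $\{u,v\}$-alphabet, $V$ and $U$ look like $v^{m-1}u$ and $v^{n-1}u$ (the case with no $r$), or $v^{m-1}u v^{r-1}u$ and $v^{n-1}u v^{r-1}u$ (the case with $r$) — i.e. $U$ and $V$ share the same suffix in the $\{u,v\}$-alphabet, that suffix being a single $u$-block or a $u v^{r-1} u$-block, and they differ only in the leading power of $v$.

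To pin down this shape I would examine the return words of the special word $w$ in $Y$ directly: $w$ is bi-special with exactly two successors, so (as in the proof of Lemma \ref{u0v0}) $w = s_w$, the maximal common suffix of $y^\infty$ and $y^\infty z$; since $y$ is a root of $w$, and $y,z$ both end in $y$, the word $z$ written over $\{u,v\}$ must end in $y$, and one then shows $y$ over $\{u,v\}$ must be of the form $v^{a}u \cdots$ ending appropriately — using that in $X$ the letter $v$ is a suffix of $u$, so boundaries between $u/v$-blocks create the left-special behavior. Reading off which of the two possible "tail shapes" occurs and how $U$ exceeds $V$ in $v$-powers gives the two stated cases, with $m<n$ (from $\len{V} < \len{U}$) and $r < m$ (from minimality/uniqueness of the chosen special word forcing the internal period to be shorter than the external one).

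The main obstacle I expect is the bookkeeping in the translation step: carefully verifying that the complexity bound $p(q)/q < 1.5$ on $X$, together with $\len{p}+\len{s} < \len{u}+\len{v}$, genuinely forces $\limsup p_Y(q)/q < 2$ — this requires quantifying how much "slack" the prefix $p$ and suffix $s$ consume, controlling the lengths at which $Y$'s special words appear versus $X$'s, and ensuring no extra special words are introduced by the recoding. The combinatorial extraction of the two normal forms for $U,V$ over $\{u,v\}$ is the other delicate point, since one must rule out more complicated return-word patterns; here the dichotomy "$v$ is a suffix but not a prefix of $u$" is exactly what restricts the possibilities, and I would lean on Lemmas \ref{s2} and \ref{s3} to control common suffixes of the relevant concatenations.
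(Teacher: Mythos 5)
Your plan recodes $X$ to a subshift $Y$ over the two-letter alphabet $\{u,v\}$ and then tries to run (the proof of) Lemma \ref{u0v0} on $Y$. The paper's proof is quite different and stays entirely inside $X$: it considers the finite set $S = \{ t \geq 0 : uv^{t}u~\text{appears in a concatenation}\}$, uses Lemma \ref{2rs} (combined with Lemmas \ref{s2} and \ref{s3}, which control the maximal common suffixes of right-special words of the form $sv^{x}uv^{y}p$, $sv^{x}uv^{x}uv^{x}p$, etc.) to force $|S| \le 3$ under the complexity bound, and in the $|S|=3$ case additionally rules out $uv^{x}uv^{x}u$ and $v^{y}uv^{y}$ from appearing; the two cases of the lemma then read off directly as $(m,n)=(\min S+1,\max S+1)$ or $(r,m,n)=(x+1,y+1,z+1)$.

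Your route has two genuine gaps. First, the complexity transfer --- that $p(q) < 1.5q$ on $X$ gives $\limsup p_{Y}(q)/q < 2$ --- is asserted but not justified, and it is not obvious: a length-$n$ word in $Y$ corresponds to $X$-words of lengths anywhere in $[\,n\len{v},\,n\len{u}\,]$, and nothing in the hypotheses bounds $\len{u}/\len{v}$, so distinct right-special words of $Y$ of the same $Y$-length can sit at incomparable $X$-lengths, making the desired complexity comparison nontrivial. Second, and more seriously, even if Lemma \ref{u0v0} were applicable to $Y$, it would hand you a pair of return words $y',z'$ over $\{u,v\}$ with no control on their internal structure --- nothing there rules out something like $y' = vuvvu$ --- whereas the conclusion of Lemma \ref{nextstep} demands the decomposition words have exactly the shapes $v^{m-1}u$, $v^{n-1}u$ or $v^{m-1}uv^{r-1}u$, $v^{n-1}uv^{r-1}u$, with at most two $u$-occurrences and the specific coincidence of suffixes. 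You flag this "extraction of the two normal forms" as the delicate point, but the reason you offer (that $v$ is a suffix but not a prefix of $u$) is nowhere near strong enough to force that shape; this extraction \emph{is} the lemma, and it is exactly what the paper's analysis of $S$ carries out. As a minor further note, Lemma \ref{s} combined with $\len{p}+\len{s} < \len{u}+\len{v}$ does not show $u,v$ are not multiples of a common word (that is the wrong direction of Lemma \ref{s}); what does show it is simply the hypothesis that $v$ is a suffix but not a prefix of $u$.
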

\begin{proof}
For brevity, we will use `concatenation' to refer to a concatenation of $u$ and $v$ corresponding to some $x \in X$.
Consider the set $S = \{ t \geq 0 : uv^{t}u \text{ appears in a concatenation} \}$.  If $|S| = 1$ then the subshift would be periodic by minimality contradicting that $X$ is infinite.  If $|S| = 2$ then the first of the two possible conclusions hold by setting $m = \min(S) + 1$ and $n = \max(S) + 1$ since every concatenation is of the form $\ldots u v^{i_{1}} u v^{i_{2}} u v^{i_{3}} u \ldots$ where all $i_{j}$ are either $m-1$ or $n-1$.

So we may assume that $uv^{x}u, uv^{y}u, uv^{z}u$ for $x < y < z$ all appear in the concatenations and take $x$ to be the minimal such value and $y$ to be the next smallest value.

Suppose that $uv^{x}uv^{x}u$ appears in a concatenation.  Then, using Lemma \ref{s}, $sv^{x}uv^{x}uv^{x}p$ is right-special as $uv^{x}uv^{x}u$ must be preceded by $v^{x}$ due to the minimality of $x$ and the $v^{x}ux^{x}u$ pattern cannot continue forever (by minimality) and when it is broken we see $v^{x}uv^{y}$.  Also $sv^{x}uv^{y}p$ is right-special due to $x$ being minimal and $z > y$.  By Lemma \ref{s3}, the maximal common suffix of $sv^{x}uv^{x}uv^{x}p$ and $sv^{x}uv^{y}p$ is $sv^{x}p$.

In the case when $\len{sv^{x}uv^{y}p} \leq \len{sv^{x}uv^{x}uv^{x}p}$, since $x \leq y-1$ and $\len{p} + \len{s} < \len{v} + \len{u}$, by Lemma \ref{2rs},
\begin{align*}
\frac{p(\len{sv^{x}uv^{y}p})}{\len{sv^{x}uv^{y}p}} &\geq 1 + \frac{\len{u} + y\len{v}}{\len{p} + \len{s} + \len{u} + (x+y)\len{v}} 
> 1 + \frac{\len{u} + y\len{v}}{2\len{u} + (x+y+1)\len{v}} \\
&\geq 1 + \frac{\len{u} + y\len{v}}{2\len{u} + (y-1+y+1)\len{v}} = \frac{3}{2}.
\end{align*}

In the case when $\len{sv^{x}uv^{x}uv^{x}p} < \len{sv^{x}uv^{y}p}$, since $\len{u} > \len{v}$, by Lemma \ref{2rs},
\begin{align*}
\frac{p(\len{sv^{x}uv^{x}uv^{x}p})}{\len{sv^{x}uv^{x}uv^{x}p}} &\geq 1 + \frac{2\len{u} + 2x\len{v}}{\len{p} + \len{s} + 2\len{u} + 3x\len{v}} \\
&> 1 + \frac{2\len{u} + 2x\len{v}}{3\len{u} + (3x+1)\len{v}}
> 1 + \frac{2\len{u} + 2x\len{v}}{4\len{u} + 3x\len{v}} \geq \frac{3}{2}.
\end{align*}

Since $p(q) < 1.5q$ for $q \geq \len{v}$, this is a contradiction and therefore $uv^{x}uv^{x}u$ never appears in a concatenation.

Now suppose that $v^{y}uv^{y}$ appears in a concatenation.  Then $sv^{y}uv^{x}p$ is right-special as $uv^{x}u$ must be preceded by $v^{y}$ as $y$ is the next smallest value (and $uv^{x}uv^{x}u$ does not appear).  Also $sv^{x}uv^{y}p$ is right-special as $uv^{z}u$ must be preceded by $v^{x}$ by minimality of $x$.  By Lemma \ref{s3}, the maximal common suffix of $sv^{y}uv^{x}p$ and $sv^{x}uv^{y}p$ is $sv^{x}p$.  Therefore, as $x \leq y-1$, by Lemma \ref{2rs},
\begin{align*}
\frac{p(\len{sv^{y}uv^{x}p})}{\len{sv^{y}uv^{x}p}} &\geq 1 + \frac{\len{u} + y\len{v}}{\len{p} + \len{s} + \len{u} + (x+y)\len{v}} \\
&> 1 + \frac{\len{u} + y\len{v}}{2\len{u} + (x+y+1)\len{v}}
\geq 1 + \frac{\len{u} + y\len{v}}{2\len{u} + (y-1+y+1)\len{v}} = \frac{3}{2}
\end{align*}
which again contradicts that $p(q) < 1.5q$ for $q \geq \len{v}$; therefore $v^{y}uv^{y}$ never appears.  Then every appearance of $uv^{w}u$ for $w > x$ appears as part of $uv^{x}uv^{w}uv^{x}u$.  As $uv^{x}uv^{x}u$ never appears, then every occurrence of $uv^{w}u$ appears as part of $v^{y}uv^{x}uv^{w}uv^{x}uv^{y}$ by the minimality of $y$ as the second smallest possible value.  By Lemma \ref{s2} then $uv^{w}u$ for $w > x$ always appears as part of $sv^{y}uv^{x}uv^{w}uv^{x}uv^{y}p$.

Since $z > x$, then $sv^{y}uv^{x}uv^{z}uv^{x}uv^{y}p$ appears in a concatenation.  That word has $sv^{y}uv^{x}uv^{y}v$ as a prefix (as $z > y$) and $sv^{y}uv^{x}uv^{y}uv^{x}uv^{y}p$, which also appears, has $sv^{y}uv^{x}uv^{y}u$ as a prefix so $sv^{y}uv^{x}uv^{y}p$ is right-special.  If $uv^{w}u$ for $w > z$ also appears then by the same reasoning, $sv^{y}uv^{x}uv^{z}p$ is right-special.  By Lemma \ref{s3}, the maximal common suffix of $sv^{y}uv^{x}uv^{y}p$ and $sv^{y}uv^{x}uv^{z}p$ is $sv^{y}p$ so we would have, by Lemma \ref{2rs},
\begin{align*}
&\frac{p(\len{sv^{y}uv^{x}uv^{y}p})}{\len{sv^{y}uv^{x}uv^{y}p}}
\geq 1 + \frac{(x + y)\len{v} + 2\len{u}}{(x + 2y)\len{v} + 2\len{u} + \len{p} + \len{s}} \\
&\quad\quad > 1 + \frac{(x + y)\len{v} + 2\len{u}}{(x + 2y + 1)\len{v} + 3\len{u}} 
= \frac{3}{2} + \frac{(\frac{1}{2}x-\frac{1}{2})\len{v} + \frac{1}{2}\len{u}}{(x + 2y + 1)\len{v} + 3\len{u}} 
\geq \frac{3}{2} + \frac{\frac{1}{2}\len{u} - \frac{1}{2}\len{v}}{(x + 2y + 1)\len{v} + 3\len{u}} > \frac{3}{2}
\end{align*}
contradicting that $p(q) < 1.5q$ for $q \geq \len{v}$; therefore $|S| = 3$.  Therefore every concatenation is of the form
\[
\ldots uv^{x}uv^{i_{0}}uv^{x}uv^{i_{1}}uv^{x}ux^{i_{2}}uv^{x}uv^{i_{3}}uv^{x}u \ldots
\]
where $i_{j}$ are all either $y$ or $z$.  Setting $r = x+1$ and $m = y+1$ and $n = z+1$ then proves the claim.
\end{proof}

We are now prepared to prove Proposition~\ref{words}.

\begin{proof}[Proof of Proposition \ref{words}]
Since $\limsup \frac{p(q)}{q} < 1.5$, there exists $\nu > 0$ and $q_{0}$ so that $p(q) < (1.5-\nu)q$ for all $q \geq q_{0}$. 
Let $u_{0}$ and $v_{0}$ be the words guaranteed by Lemma \ref{u0v0} such that $\len{v_{0}} \geq q_{0}$.  Then $p_{0}$ is empty as $u_{0}$ and $v_{0}$ begin with different letters.  Lemma \ref{s} implies $\len{s_{0}} < \len{u_{0}v_{0}}$ so $\len{p_{0}} + \len{s_{0}} = \len{s_{0}} < \len{u_{0}} + \len{v_{0}}$.

Proceed by induction assuming we have constructed the words $u_{k}$ and $v_{k}$.  By Lemma \ref{nextstep}, there either exist positive integers $m_{k} < n_{k}$ such that every $x \in X$ can be written as a concatenation of $v_{k+1} := v_{k}^{m_{k}-1}u_{k}$ and $u_{k+1} := v_{k}^{n_{k}-1}u_{k}$ or there exist positive integers $r_{k} < m_{k} < n_{k}$ such that every $x \in X$ can be written as a concatenation of $v_{k+1} := v_{k}^{m_{k}-1}u_{k}v_{k}^{r_{k}-1}u_{k}$ and $u_{k+1} := v_{k}^{n_{k}-1}u_{k}v_{k}^{r_{k}-1}u_{k}$.

Since $v_{k+1}$ has $v_{k}^{m_{k}-1}u_{k}$ as a prefix and $u_{k+1}$ has $v_{k}^{m_{k}-1}v_{k}$ as a prefix (as $m_{k} < n_{k}$), $p_{k+1} = v_{k}^{m_{k}-1}p_{k}$.  Since $v_{k+1}^{\infty}$ has $u_{k}v_{k+1}$ as a suffix and $u_{k+1} = v_{k}^{n_{k}-m_{k}}v_{k+1}$ has $v_{k}v_{k+1}$ as a suffix, by Lemmas \ref{s2} and \ref{s3}, we have $s_{k+1} = s_{k}v_{k+1}$.  Therefore
\[
\len{p_{k+1}} + \len{s_{k+1}} = (m_{k}-1)\len{v_{k}} + \len{v_{k+1}} + \len{p_{k}} + \len{s_{k}}
< (m_{k}-1)\len{v_{k}} + \len{v_{k+1}} + \len{u_{k}} + \len{v_{k}}
= 2\len{v_{k+1}} + \len{v_{k}}
\]
and
as $\len{u_{k+1}} \geq \len{v_{k+1}} + \len{v_{k}}$, then $\len{p_{k+1}} + \len{s_{k+1}} < \len{u_{k+1}} + \len{v_{k+1}}$.

By induction on $k$, each $x \in X$ can be decomposed uniquely into words $v_{k}$ and $u_{k}$. For $k = 0$, this follows from Lemma \ref{u0v0} since
$v_0$ and $u_0$ were constructed using that lemma.  If $x$ can be uniquely represented as a concatenation of $v_{k}$ and $u_{k}$ then the same must be true of $v_{k}^{m_{k-1}}u_{k}$ and $v_{k}^{n_{k}-1}u_{k}$, or of both followed immediately by $v_{k}^{r_{k}-1}u_{k}$ for $k$ for which $r_{k}$ exists.
\end{proof}

\begin{remark}\label{pands}
For all $k$, $p_{k+1} = v_{k}^{m_{k}-1}p_{k}$ and $s_{k+1} = s_{k}v_{k+1}$ as shown in the proof of Proposition \ref{words}.
\end{remark}

\subsection{Complexity estimates}

Having established the substitutive structure of low complexity minimal subshifts, we can now determine what their right-special words are, which allows us to estimate word complexity using Lemma~\ref{RSlem}.

\begin{proposition}\label{rs}
Let $X$ be an infinite minimal subshift satisfying the conclusions of Proposition \ref{words}.  For the words $\{ u_{k} \}$ and $\{ v_{k} \}$, the following hold:
\begin{itemize}
\item the left-infinite word $p_{\infty} = \lim s_{k}p_{k} = \lim s_{0}v_{1}\cdots v_{k+1}v_{k}^{m_{k}-1}v_{k-1}^{m_{k-1}-1}\cdots v_{0}^{m_{0}-1}$ is right-special;
\item for each $k$, the word $s_{k}v_{k}^{n_{k}-2}p_{k}$ is right-special and the maximal common suffix of it and $p_{\infty}$ is $s_{k}v_{k}^{m_{k}-1}p_{k}$; and
\item for $k$ such that $r_{k} > 0$, the word $s_{k}v_{k}^{r_{k}-1}u_{k}v_{k}^{r_{k}-1}p_{k}$ is right-special and the maximal common suffix of it and $p_{\infty}$ and of it and $s_{k}v_{k}^{n_{k}-2}p_{k}$ is $s_{k}v_{k}^{r_{k}-1}p_{k}$.
\end{itemize}
\end{proposition}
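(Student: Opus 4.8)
The plan is to handle $p_\infty$ and the two families of finite words by a single mechanism: in the level-$k$ block decomposition of $X$, the word $s_k$ occurs immediately to the left of every level-$k$ block boundary, and $s_k$ followed by $v_k$ versus $s_k$ followed by $u_k$ first disagree at position $\len{p_k}+1$. First I would record from Remark~\ref{pands} that $s_{k+1}=s_kv_{k+1}$ and $p_{k+1}=v_k^{m_k-1}p_k$, with $p_0$ empty and $s_0,u_0,v_0$ coming from Lemma~\ref{u0v0}; unwinding these gives $s_k=s_0v_1\cdots v_k$ and $p_k=v_{k-1}^{m_{k-1}-1}\cdots v_0^{m_0-1}$, which (after re-indexing) is the product displayed in the first bullet. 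Since every $x\in X$ decomposes uniquely into $v_k,u_k$ blocks and $X$ is infinite, $v_k$ and $u_k$ are not powers of a common word (else $X$ would be a single periodic orbit), so Lemma~\ref{s} gives $\len{s_k}<\len{v_ku_k}$ and Lemma~\ref{s2} gives that $s_k$ is a suffix of every left-infinite concatenation of $v_k,u_k$; hence the part of any $x\in X$ lying to the left of a level-$k$ block boundary ends in $s_k$. Applying this to $s_{k+1}p_{k+1}=s_k\,v_{k+1}v_k^{m_k-1}\,p_k$ (the blocks $v_{k+1}v_k^{m_k-1}$ followed by the partial block $p_k$) shows $s_kp_k$ is a suffix of $s_{k+1}p_{k+1}$, so $p_\infty:=\lim_ks_kp_k$ is a well-defined left-infinite word all of whose finite suffixes lie in $L(X)$.

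Both $v_k$ and $u_k$ occur as blocks (else $X$ is one periodic orbit), so $s_kv_k,s_ku_k\in L(X)$; as $p_k$ is the maximal common prefix of $v_k,u_k$ with $\len{p_k}<\len{v_k}<\len{u_k}$, the $(\len{p_k}+1)$-st letter $a_k$ of $v_k$ differs from the $(\len{p_k}+1)$-st letter $b_k$ of $u_k$, and $s_kp_ka_k,s_kp_kb_k\in L(X)$, so each $s_kp_k$ is right-special. To promote this to $p_\infty$ itself, for each $k$ I would pick points of $X$ realizing these two one-letter extensions of $s_kp_k$ with the branch point at the origin, then pass to a subsequence along which the points converge and (the alphabet being finite) $a_k,b_k$ are constant; the two limit points agree with $p_\infty$ on $(-\infty,-1]$ and differ at $0$, so $p_\infty$ is right-special.

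For the second bullet I would exhibit $s_kv_k^{n_k-2}p_k$ as a common prefix of two words of $L(X)$ ending in different letters. It is a prefix of $s_kv_k^{n_k-1}$ (a run of $v_k$ of length $n_k-1$ occurs inside the level-$(k+1)$ block $u_{k+1}=v_k^{n_k-1}u_k$, beginning at a block boundary hence preceded by $s_k$), giving extension $a_k$; and it is a prefix of $s_kv_k^{n_k-2}u_k$ (from an occurrence $u_kv_k^{n_k-1}u_k$, using $\len{s_k}<\len{v_ku_k}$ to see $s_k$ is a suffix of the initial $u_kv_k$, then appending the remaining $n_k-2$ blocks of the run and the trailing $u_k$), giving extension $b_k\neq a_k$; hence $s_kv_k^{n_k-2}p_k$ is right-special. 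Since $p_\infty$ has suffix $s_{k+1}p_{k+1}=s_kv_k^{m_k-1}u_kv_k^{m_k-1}p_k$ and $m_k-1\le n_k-2$, both $s_kv_k^{n_k-2}p_k$ and $p_\infty$ end in $v_k^{m_k-1}p_k$; writing $s_kv_k^{n_k-2}p_k=(s_kv_k^{n_k-2-m_k})\,v_k\,(v_k^{m_k-1}p_k)$ and $p_\infty=(\cdots)\,u_k\,(v_k^{m_k-1}p_k)$ and applying Lemma~\ref{s3} with $w=v_k^{m_k-1}p_k$ gives maximal common suffix $s_kv_k^{m_k-1}p_k$; the cases $n_k=m_k+1$ (where $s_kv_k^{n_k-2}p_k=s_kv_k^{m_k-1}p_k$ is itself the common suffix) and $m_k=1$ are immediate.

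The third bullet repeats this argument in the $r_k>0$ case of Proposition~\ref{words}, where the $v_k$-runs between consecutive $u_k$'s alternate between length $r_k-1$ and length in $\{m_k-1,n_k-1\}$. Every run has length $\ge r_k-1$, so $s_kv_k^{r_k-1}u_kv_k^{r_k-1}p_k\in L(X)$, and depending on whether the second $v_k^{r_k-1}$ completes a length-$(r_k-1)$ run (so followed by $u_k$) or begins a longer run (so followed by $v_k$) --- both configurations occur by the alternation --- it admits extensions $b_k$ and $a_k$, hence is right-special. From $r_k<m_k<n_k$ one has $m_k-1>r_k-1$ and $n_k\ge r_k+2$, so $p_\infty$ and $s_kv_k^{n_k-2}p_k$ both end in $v_k^{r_k-1}p_k$ preceded by $v_k$, whereas $s_kv_k^{r_k-1}u_kv_k^{r_k-1}p_k$ ends in $v_k^{r_k-1}p_k$ preceded by $u_k$; two applications of Lemma~\ref{s3} with $w=v_k^{r_k-1}p_k$ give maximal common suffix $s_kv_k^{r_k-1}p_k$ each time. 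I expect the main obstacle to be the bookkeeping about where $s_k$ sits relative to block boundaries: since $\len{s_k}$ may exceed $\len{v_k}$, the word $s_k$ can span several level-$k$ blocks, so one must check carefully that the claimed occurrences (especially $s_kv_k^{n_k-2}u_k\in L(X)$ and the two phases in the $r_k>0$ case) genuinely appear, and separately dispose of the small-parameter degeneracies ($m_k=1$, $n_k=m_k+1$, $r_k=1$) so the exponents are read correctly.
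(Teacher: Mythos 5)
Your proposal is correct and follows essentially the same route as the paper's proof: use Remark~\ref{pands} to unwind $s_k,p_k$, use Lemma~\ref{s2} to place $s_k$ to the left of level-$k$ block boundaries (yielding right-specialness of each $s_kp_k$ and hence of $p_\infty$), exhibit each finite right-special word as the common prefix of two witnesses coming from $u_{k+1}=v_k^{n_k-1}u_k$ and from the boundary between a block's trailing $u_kv_k^{r_k-1}u_k$ and the next block's leading $v_k$-run, and then compute each maximal common suffix via Lemma~\ref{s3}. The only cosmetic slip is writing $s_{k+1}p_{k+1}=s_kv_k^{m_k-1}u_kv_k^{m_k-1}p_k$, which is the $r_k=0$ form of $v_{k+1}$; since the argument only uses that $p_\infty$ ends in $(\text{concatenation})u_kv_k^{m_k-1}p_k$, which holds in both cases, this does not affect the proof.
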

\begin{proof}
Clearly $p_{k}$ is right-special as $u_{k} \ne v_{k}$ and $p_{k}$ must be followed by different letters in each due to maximality so Lemma \ref{s2} implies $s_{k}p_{k}$ is right-special.  By Remark \ref{pands}, $s_{k+1}p_{k+1} = s_{k}v_{k+1}v_{k}^{m_{k}-1}p_{k}$, and as this, by Lemma \ref{s2}, has $s_{k}p_{k}$ as a suffix, $p_{\infty}$ exists and is right-special.

Since $v_{k}^{n_{k}-1}u_{k}$ appears in a concatenation (if not then $u_{k+1}$ never appears so the subshift would be periodic) and is preceded by a concatenation of $u_{k}$ and $v_{k}$, by Lemma \ref{s2}, $s_{k}v_{k}^{n_{k}-1}u_{k}$ appears.  Since $s_{k}v_{k}^{n_{k}-2}v_{k}$ is a prefix of that word and $s_{k}v_{k}^{n_{k}-2}u_{k}$ is a suffix of it, $s_{k}v_{k}^{n_{k}-2}p_{k}$ is right-special. 

Since $s_{k+1}p_{k+1} = s_{k}v_{k+1}v_{k}^{m_{k}-1}p_{k}$ has $s_{k}u_{k}v_{k}^{m_{k}-1}p_{k}$ as a suffix, by Lemma \ref{s3}, the maximal common suffix of $p_{\infty}$ and $s_{k}v_{k}^{n_{k}-2}p_{k}$ is then $s_{k}v_{k}^{m_{k}-1}p_{k}$.

For $k$ such that $r_{k} > 0$, the word $v_{k+1} = v_{k}^{m_{k}-1}u_{k}v_{k}^{r_{k-1}}u_{k}$ appears in a concatenation preceded by a concatenation of $u_{k}$ and $v_{k}$ showing that $s_{k}v_{k}^{r_{k}-1}u_{k}v_{k}^{r_{k}-1}u_{k}$ appears (as $r_{k} < m_{k}$).  The word $u_{k}v_{k}^{r_{k-1}}u_{k}v_{k}^{m_{k}-1}$ appears in a concatenation (in fact with the second $u_{k}$ being a suffix of any $v_{k+1}$ or $u_{k+1}$ that appears in a $(k+1)$-concatenation) showing that $s_{k}v_{k}^{r_{k}-1}u_{k}v_{k}^{r_{k}-1}v_{k}$ appears (as $r_{k} < m_{k}$).  Therefore $s_{k}v_{k}^{r_{k}-1}u_{k}v_{k}^{r_{k}-1}p_{k}$ is right-special.

As both $p_{\infty}$ and $s_{k}v_{k}^{n_{k}-2}p_{k}$ have $v_{k}^{m_{k}-1}p_{k}$ as a suffix and $r_{k} < m_{k}$, both have $v_{k}v_{k}^{r_{k}-1}p_{k}$ as a suffix.  By Lemma \ref{s3}, the maximal common suffix of $s_{k}v_{k}^{r_{k}-1}u_{k}v_{k}^{r_{k}-1}p_{k}$ and either of them is then $s_{k}v_{k}^{r_{k}-1}p_{k}$.
\end{proof}

\begin{lemma}\label{only}
Under the hypotheses of Proposition~\ref{rs}, every right-special word of length at least $\len{s_{0}}$ is a suffix of one of those from that proposition, and so for $q > \len{s_0}$, 
\[
p(q+1) - p(q) = 1 + \sum_{k = 0}^{\infty} \bbone_{(\len{s_k v_k^{m_k-1} p_k}, \len{s_k v_k^{n_k-2} p_k}]}(q) + 
\sum_{k = 0}^{\infty} \bbone_{r_{k}} \bbone_{(\len{s_k v_k^{r_k-1} p_k}, \len{s_k v_k^{r_k-1} u_k v_k^{r_k - 1} p_k}]}(q).
\]
\end{lemma}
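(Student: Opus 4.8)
The plan is to read the identity off from Lemma~\ref{RSlem}, which gives $p(q+1)-p(q)=\sum_{w\in RS_q(X)}(|F(w)|-1)$, reducing the claim to a description of the right-special words of each length and their successor sets. I would first settle the multiplicities. If $w$ is right-special with $\len w\ge\len{s_0}$, then, since $s_0$ is the unique right-special word of its length (Lemma~\ref{u0v0}), the length-$\len{s_0}$ suffix of $w$ is $s_0$, so $w$ is a suffix of a concatenation of $u_0$ and $v_0$ (Lemma~\ref{u0v0} again); the decomposition of every point of $X$ into such blocks is unique, so $w$ can be right-special only by ending at a block boundary whose next block is undetermined, and, since $u_0$ and $v_0$ begin with different letters, $|F(w)|=2$. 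Hence $|F(w)|-1=1$ for every relevant $w$, and the lemma reduces to showing that, for $q>\len{s_0}$, the number of right-special words of length $q$ equals $1+\#\{k:q\in I_k\}+\#\{k:r_k>0,\ q\in J_k\}$, where $I_k=(\len{s_kv_k^{m_k-1}p_k},\len{s_kv_k^{n_k-2}p_k}]$ and $J_k=(\len{s_kv_k^{r_k-1}p_k},\len{s_kv_k^{r_k-1}u_kv_k^{r_k-1}p_k}]$.

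For the lower bound I would invoke Proposition~\ref{rs}. The length-$q$ suffix of $p_\infty$ is right-special, giving the summand $1$; for each $k$ with $q\in I_k$ the length-$q$ suffix of $s_kv_k^{n_k-2}p_k$ is right-special and, since the maximal common suffix of $s_kv_k^{n_k-2}p_k$ with $p_\infty$ is $s_kv_k^{m_k-1}p_k$ (whose length is the left endpoint of $I_k$, which $q$ exceeds), this word differs from the $p_\infty$-suffix; likewise for each $k$ with $r_k>0$ and $q\in J_k$, using the maximal common suffix $s_kv_k^{r_k-1}p_k$. These words are pairwise distinct, since each is pinned down by which generator of Proposition~\ref{rs} it is a suffix of together with the length of its maximal common suffix with $p_\infty$, and by Remark~\ref{pands} together with the inequalities $\len{p_k}<\len{v_k}<\len{v_{k+1}}$ the numbers $\len{s_kv_k^{m_k-1}p_k}$ and $\len{s_kv_k^{r_k-1}p_k}$, $k\ge0$, are all distinct. (The intervals $I_k,J_k$ for distinct $k$ need not be disjoint, but this is harmless, as the count is with multiplicity and the words remain distinct.) This gives the desired lower bound on $p(q+1)-p(q)$.

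The matching bound is equivalent to the first assertion of the lemma --- every right-special $w$ with $\len w\ge\len{s_0}$ is a suffix of $p_\infty$, of some $s_kv_k^{n_k-2}p_k$, or of some $s_kv_k^{r_k-1}u_kv_k^{r_k-1}p_k$ with $r_k>0$ --- and this is the heart of the proof. I would establish it by iterating, through the S-adic decomposition, the kind of analysis carried out in Lemma~\ref{nextstep} and Proposition~\ref{rs}. Such a $w$ ends (by the multiplicity argument) at a level-$0$ block boundary with undetermined continuation; reading $w$ through the level-$k$ block decomposition for each $k$ and tracking how the ambiguity in $w$'s continuation is located as $k$ grows, one finds that either the ambiguity persists at every level, in which case $w$ must be a suffix of $p_\infty=\lim s_kp_k$ by the explicit form of $p_\infty$ in Proposition~\ref{rs}, or there is a largest level $k$ at which the ambiguity is ``about'' the level-$(k+1)$ block structure. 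In the latter case, a case analysis of the level-$k$ boundaries internal to $v_{k+1}=v_k^{m_k-1}u_k(v_k^{r_k-1}u_k)^{\bbone_{r_k}}$ and $u_{k+1}=v_k^{n_k-1}u_k(v_k^{r_k-1}u_k)^{\bbone_{r_k}}$ --- using the constraint that maximal runs of $v_k$ in concatenations have length $m_k-1$ or $n_k-1$, and $r_k-1$ when $r_k>0$ (Proposition~\ref{words}) --- shows that the only positions at which the continuation of $w$ can be undetermined force $w$, after appending the common prefix $p_k$ of $u_k$ and $v_k$ and matching suffixes via Lemmas~\ref{s2} and \ref{s3}, to be a suffix of $s_kv_k^{n_k-2}p_k$ or of $s_kv_k^{r_k-1}u_kv_k^{r_k-1}p_k$.

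I expect this last step to be the main obstacle. It demands a careful enumeration of the level-$k$ boundaries internal to $v_{k+1}$ and $u_{k+1}$, a determination of exactly which of them can be the right end of a right-special word of $X$ (the only place the run-length constraint, hence Proposition~\ref{words}, is used), and a clean handling of the left and right ``partial block'' corrections relating the synchronizing suffix $s_k$ and the common prefix $p_k$ to the substitutive images (cf.\ Remark~\ref{pands}), culminating in the verification that the resulting right-special words of $X$ have lengths sweeping out precisely $I_k$ and $J_k$. Granting this, Lemma~\ref{RSlem} together with the lower bound above and the fact that $|F(w)|=2$ for all relevant $w$ yields the displayed formula for $p(q+1)-p(q)$.
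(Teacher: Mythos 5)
Your scaffolding is sound and matches the paper's implicit reasoning: reducing via Lemma~\ref{RSlem} to counting right-special words; observing that each right-special word $w$ of length $\geq\len{s_0}$ has $s_0$ as a suffix, hence is a suffix of a level-$0$ concatenation and ends at a block boundary, so $|F(w)|=2$; using Proposition~\ref{rs} together with the maximal-common-suffix lengths to get the lower bound; and noting that distinct contributors are distinguished by the lengths $\len{s_kv_k^{m_k-1}p_k}$, $\len{s_kv_k^{r_k-1}p_k}$, which are strictly increasing in $k$ (with $r_k<m_k$ giving the interleaving). Your plan for the upper bound also correctly matches the paper's strategy: iterate through the block levels, identify the maximal level where the ambiguity sits, and case-analyze the boundaries internal to $v_{k+1}$ and $u_{k+1}$.

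However, the first assertion of the lemma --- that every right-special word of length $\geq\len{s_0}$ is a suffix of $p_\infty$, some $s_kv_k^{n_k-2}p_k$, or some $s_kv_k^{r_k-1}u_kv_k^{r_k-1}p_k$ --- is the entire load-bearing content, and you explicitly leave it as a sketch. The paper carries it out in two pieces that your proposal gestures at but does not execute. First, an induction on $k$ establishing that every right-special word of length at least $\len{s_kp_k}$ has $s_kp_k$ as a suffix and is a suffix of a $u_k,v_k$-concatenation followed by $p_k$; this requires parsing how $v_ku_k$ can sit inside a level-$(k+1)$ concatenation (using that it appears only as a suffix of $v_{k+1}$ or of $v_{k+1}v_k^{m_k-1}u_k$, that $v_{k+1}u_k$ forces $m_k=1$, and Remark~\ref{pands}). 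Second, for $w$ not a suffix of $p_\infty$, taking $k$ maximal with $\len{w}\geq\len{s_kp_k}$, writing $w=yp_k$, and showing $y$ shares a suffix with $v_k^{m_k-1}$ or with $v_k^{r_k-1}u_kv_k^{r_k-1}$, then ruling out longer lengths by impossibility arguments (e.g.\ that $s_kv_k^{n_k}$, $u_kv_k^{r_k-1}u_kv_k^{r_k-1}u_k$, and $v_k^{r_k}u_kv_k^{r_k}$ never occur in a concatenation). Without these two pieces carried out, the lemma is unproven; what you have is a correct and honest plan, but the ``careful enumeration'' you flag as the main obstacle is exactly the proof.
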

\begin{proof}
By Lemma \ref{u0v0}, $s_{0}$ is the unique right-special word of its length.  Therefore every right-special word of length at least $\len{s_{0}}$ has $s_{0}$ as a suffix.  Lemma \ref{u0v0} also implies every right-special word of length at least $\len{s_{0}}$ is a suffix of a concatenation of $u_{0}$ and $v_{0}$.  Assume that every right-special word of length at least $s_{k}p_{k}$ is a suffix of a concatenation of $u_{k}$ and $v_{k}$ followed by $p_{k}$.  Let $w$ be a right-special word of length at least $\len{s_{k+1}p_{k+1}}$.  Then $w$ is a suffix of a concatenation of $u_{k}$ and $v_{k}$ followed by $p_{k}$ so has $s_{k}p_{k}$ as  a suffix.

By Remark \ref{pands}, $s_{k}p_{k} = s_{k-1}v_{k}p_{k}$ so $w$ has $v_{k}p_{k}$ as a suffix and also $\len{w} \geq \len{s_{k+1}p_{k+1}} = \len{s_{k}v_{k+1}v_{k}^{m_{k}-1}p_{k}}$.  Since $v_{k}u_{k}$ only appears in a concatenation of $v_{k}$ and $u_{k}$ as a suffix either of $v_{k+1}$ or of $v_{k+1}v_{k}^{m_{k-1}}u_{k}$, then $w$ either has $v_{k+1}v_{k}^{m_{k-1}}p_{k}$ as a suffix or has $v_{k+1}p_{k}$ as a suffix.  Since $v_{k+1}u_{k}$ only appears when $m_{k} = 1$ and since $w$ is right-special, in both cases $w$ has $v_{k+1}v_{k}^{m_{k}-1}p_{k}$ as a suffix.  As the $v_{k+1}$ is preceded by a concatenation of $u_{k+1}$ and $v_{k+1}$ of length at least $\len{s_{k}}$, by Remark \ref{pands}, then $w$ has $s_{k}v_{k+1}v_{k}^{m_{k}-1}p_{k} = s_{k+1}p_{k+1}$ as a suffix.  By induction, every right-special word with length at least $\len{s_{k}p_{k}}$ has $s_{k}p_{k}$ as a suffix and is a suffix of a concatenation of $u_{k}$ and $v_{k}$ followed by $p_{k}$.

Let $w$ be any right-special word with $\len{w} \geq \len{s_{0}} = \len{s_{0}p_{0}}$ which is not a suffix of $p_{\infty}$.  Let $k$ maximal such that $\len{w} \geq \len{s_{k}p_{k}}$.  Then $w = yp_{k}$ where $y$ is a suffix of a concatenation of $u_{k}$ and $v_{k}$ of length at least $s_{k}$.  Since $w$ is right-special, $yu_{k}$ and $yv_{k}$ must both appear in a concatenation.  So $y$ must share a suffix either with $v_{k}^{m_{k}-1}$ or with $v_{k}^{r_{k}-1}u_{k}v_{k}^{r_{k}-1}$ (in which case $r_{k} > 0$).

When $y$ shares a suffix with $v_{k}^{m_{k}-1}$, as $w$ is not a suffix of $p_{\infty}$, then $y$ has $s_{k}v_{k}^{m_{k}-1}$ as a proper suffix and shares a suffix with $s_{k}v_{k}^{n_{k}-1}$.  If $\len{y} > \len{s_{k}v_{k}^{n_{k}-2}}$ then $w$ being right-special would force $s_{k}v_{k}^{n_{k-1}}p_{k}$ or $s_{k}u_{k}v_{k}^{n_{k}-1}p_{k}$ to be right-special but $s_{k}v_{k}^{n_{k}}$  never appears in a concatenation.  So when $y$ shares a suffix with $v_{k}^{m_{k}-1}$, $w$ is a suffix of $s_{k}v_{k}^{n_{k}-2}p_{k}$.

When $y$ shares a suffix with $v_{k}^{r_{k-1}}u_{k}v_{k}^{r_{k}-1}$, since $w$ is not a suffix of $p_{\infty}$ it must be that $y$ has $s_{k}v_{k}^{r_{k-1}}$ as a proper suffix.  If $w$ is not a suffix of $s_{k}v_{k}^{r_{k}-1}u_{k}v_{k}^{r_{k-1}}p_{k}$ then $y$ must have either $u_{k}v_{k}^{r_{k}-1}u_{k}v_{k}^{r_{k}-1}$ or $v_{k}^{r_{k}}u_{k}v_{k}^{r_{k-1}}$ as a suffix.  In the first case $w$ being right-special would force $u_{k}v_{k}^{r_{k}-1}u_{k}v_{k}^{r_{k}-1}u_{k}$ to appear, which is impossible, and in the second case it would force $v_{k}^{r_{k}}u_{k}v_{k}^{r_{k}}$ to appear in a concatenation, also impossible.  So in the case $y$ shares a suffix with $v_{k}^{r_{k}-1}u_{k}v_{k}^{r_{k}-1}$, $w$ is a suffix of $s_{k}v_{k}^{r_{k}-1}u_{k}v_{k}^{r_{k}-1}p_{k}$.

The right-special words of any length $n > \len{s_0}$ are then: the suffix of length $n$ of $p_{\infty}$, 
the suffix of length $n$ of some $s_k v_k^{n_k-2} p_k$ (which exists and is different from the first word
iff $n \in (\len{s_k v_k^{m_k-1} p_k}, \len{s_k v_k^{n_k-2} p_k}]$), and 
the suffix of length $n$ of some $(\len{s_k v_k^{r_k-1} p_k}, \len{s_k v_k^{r_k-1} u_k v_k^{r_k - 1} p_k}]$ for which $r_k > 0$
(which exists and is different from the first and second words iff $n \in (\len{s_k v_k^{r_k-1} p_k}, \len{s_k v_k^{r_k-1} u_k v_k^{r_k - 1} p_k}]$). 
The complexity difference formula is now an immediate consequence of Lemma~\ref{RSlem}, along with the observation that
there is no overlap between the right-special words for distinct $k$, since by Lemma \ref{s3}, the maximal common suffix of $s_{k}v_{k}p_{k}$ and $s_{k+1}v_{k+1}p_{k+1}$ is $s_{k}p_{k}$ (recall that $v_{k+1}$ has $u_{k}$ as a suffix).
\end{proof}

Knowing the set of right-special words, we can write an explicit formula for the complexity function at some specific lengths, which determine $\limsup p(q)/q$. 

\begin{corollary}\label{sum}
Let $X$ be an infinite minimal subshift satisfying the conclusions of Proposition \ref{words}.  Set $C = p(\len{s_{0}}) - \len{s_{0}}$.  Then for every $k$, writing $\ell_{k} = \left\{ \begin{array}{ll} \min(\len{v_{k}^{n_{k}-r_{k}-1}},\len{v_{k}^{r_{k}-1}u_{k}}) &\text{if $r_{k} > 0$} \\ 0 &\text{otherwise} \end{array}\right.$,
\[
p(\len{s_{k}v_{k}^{n_{k}-2}p_{k}})
= \len{s_{k}v_{k}^{n_{k}-2}p_{k}} + \bbone_{r_{k}}\ell_{k} + \sum_{j=0}^{k} (n_{j} - m_{j} - 1)\len{v_{j}} + \sum_{j=0}^{k-1} ((r_{j}-1)\len{v_{j}} + \len{u_{j}})\bbone_{r_{j}} + C
\]
and for $k$ such that $r_{k} > 0$,
\begin{multline*}
p(\len{s_{k}v_{k}^{r_{k}-1}u_{k}v_{k}^{r_{k}-1}p_{k}})
= \len{s_{k}v_{k}^{r_{k}-1}u_{k}v_{k}^{r_{k}-1}p_{k}} + \sum_{j=0}^{k-1} (n_{j} - m_{j} - 1)\len{v_{j}} + \sum_{j=0}^{k} ((r_{j}-1)\len{v_{j}} + \len{u_{j}})\bbone_{r_{j}} \\
+ \left|(\len{s_k v_k^{m_k-1} p_k}, \len{s_k v_k^{n_k-2} p_k}] \cap (1, \len{s_k v_k^{r_k-1} u_k v_k^{r_k - 1} p_k}] \right| + C
\end{multline*}
and $\limsup \frac{p(q)}{q}$ is attained along some subsequence of these values. 
\end{corollary}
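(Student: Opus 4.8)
The plan is to integrate the increment formula of Lemma~\ref{only} and then to read the $\limsup$ statement off from the resulting monotonicity of $q\mapsto p(q)-q$.

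\textbf{Telescoping.} Abbreviate $I_k=(\len{s_kv_k^{m_k-1}p_k},\len{s_kv_k^{n_k-2}p_k}]$ and, when $r_k>0$, $I_k'=(\len{s_kv_k^{r_k-1}p_k},\len{s_kv_k^{r_k-1}u_kv_k^{r_k-1}p_k}]$. Summing the identity of Lemma~\ref{only} over the integers $q$ with $\len{s_0}\leq q<N$ (legitimate by Lemma~\ref{RSlem}) gives, for any length $N\geq\len{s_0}$,
\[
p(N)=N+C+\sum_{k\geq 0}\bigl|\,I_k\cap[\len{s_0},N)\,\bigr|+\sum_{k:\,r_k>0}\bigl|\,I_k'\cap[\len{s_0},N)\,\bigr|,
\]
since the ``$1$'' in Lemma~\ref{only} contributes $N-\len{s_0}$, which combines with $p(\len{s_0})=\len{s_0}+C$ to give $N+C$. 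By Remark~\ref{pands} the relevant lengths are $|I_k|=(n_k-m_k-1)\len{v_k}$, $|I_k'|=(r_k-1)\len{v_k}+\len{u_k}$, and $\len{s_kv_k^{n_k-2}p_k}-\len{s_kv_k^{r_k-1}p_k}=(n_k-r_k-1)\len{v_k}$, so the two formulas reduce to counting lattice points in these intervals cut off at $N$.

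\textbf{Which intervals are active.} From the proof of Lemma~\ref{only}, each length carries at most one right-special word of ``$n_k$-type'' and at most one of ``$r_k$-type'', so the $\{I_k\}$ are pairwise disjoint and so are the $\{I_k'\}_{r_k>0}$; moreover $s_kp_k$ is a proper suffix of $s_{k+1}p_{k+1}$ (Remark~\ref{pands}) and $v_{k+1}$ ends in $u_k$, which (together with disjointness) forces the ordering $\sup(I_k\cup I_k')\leq\inf(I_{k+1}\cup I_{k+1}')$; for a fixed $k$ the two intervals share their common left part, with $\inf I_k'<\inf I_k$ (as $r_k<m_k$) while $\sup I_k'$ may lie on either side of $\sup I_k$. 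Taking $N=\len{s_kv_k^{n_k-2}p_k}=\sup I_k$: every $I_j$ and $I_j'$ with $j<k$ lies in $[\len{s_0},N)$ and contributes its full length $(n_j-m_j-1)\len{v_j}$ resp. $((r_j-1)\len{v_j}+\len{u_j})\bbone_{r_j}$; every $I_j,I_j'$ with $j>k$ is disjoint from $[\len{s_0},N)$; $I_k$ contributes $(n_k-m_k-1)\len{v_k}$; and $I_k'$, being cut off at $N$, contributes $\min(|I_k'|,\ \len{s_kv_k^{n_k-2}p_k}-\len{s_kv_k^{r_k-1}p_k})=\ell_k$. Summing gives the first formula. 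Taking instead $N=\len{s_kv_k^{r_k-1}u_kv_k^{r_k-1}p_k}=\sup I_k'$, now $I_k'$ is fully inside $[\len{s_0},N)$ (so the $\bbone_{r_j}$-sum runs up to $j=k$) while $I_k$ is the truncated interval, contributing $|I_k\cap(1,N]|=|(\len{s_kv_k^{m_k-1}p_k},\len{s_kv_k^{n_k-2}p_k}]\cap(1,\len{s_kv_k^{r_k-1}u_kv_k^{r_k-1}p_k}]|$; summing gives the second formula.

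\textbf{The $\limsup$.} Put $g(q)=p(q)-q$. By Lemma~\ref{only}, $g(q+1)-g(q)=\bbone_{\bigcup_k I_k}(q)+\sum_{k:r_k>0}\bbone_{I_k'}(q)\geq 0$, so $g$ is non-decreasing, its set of increase is $\bigcup_k I_k\cup\bigcup_{k:r_k>0}I_k'$, and it jumps by at most $2$ (each length has at most three right-special words). Since $p(q)/q=1+g(q)/q$, it suffices to show $\limsup_q g(q)/q$ is realized along $\len{s_kv_k^{n_k-2}p_k}$, $\len{s_kv_k^{r_k-1}u_kv_k^{r_k-1}p_k}$. On a maximal run where $g$ is constant, $g(q)/q$ strictly decreases; on a maximal run where $g$ increases it is piecewise linear with breakpoints only among the endpoints of the $I_k,I_k'$, so $g(q)/q$ is monotone between breakpoints, and at a left endpoint the slope of $g$ only goes up, so $g(q)/q$ has no local maximum there. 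Hence every local maximum of $g(q)/q$ — and therefore the running supremum, and the $\limsup$ — occurs at some right endpoint $\len{s_kv_k^{n_k-2}p_k}$ or $\len{s_kv_k^{r_k-1}u_kv_k^{r_k-1}p_k}$; these lengths are unbounded because $\len{v_k}\to\infty$, so the $\limsup$ is attained along a subsequence of them.

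\textbf{Main obstacle.} The final $\limsup$ step is soft. The real work is the middle paragraph: extracting disjointness, the ordering of the $I_k$, and the within-level overlap of $I_k$ and $I_k'$ from Proposition~\ref{words}, Remark~\ref{pands} and Lemma~\ref{only}, and then handling the half-open endpoints at the length $N$ carefully so that the two truncated contributions come out exactly as $\ell_k$ and as the stated intersection cardinality.
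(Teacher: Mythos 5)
Your approach is the same as the paper's: telescope the increment formula of Lemma~\ref{only} via Lemma~\ref{RSlem}, and then observe that $p(q)/q$ is increasing exactly on the intervals $I_k$, $I_k'$ and decreasing elsewhere, so the $\limsup$ is attained along the right endpoints. The paper's own proof is very brief (``fairly immediate'') and only records the interpretation of $\ell_k$ as the count of elements of $I_k'$ below $\len{s_k v_k^{n_k-2}p_k}$ together with the right-endpoint observation; you fill in the intermediate bookkeeping, and your $\limsup$ paragraph is a correct, more explicit version of the paper's one-line argument.

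The one place where your argument is not quite airtight is precisely what you flag as ``the real work'': the ordering $\sup(I_k\cup I_k')\leq\inf(I_{k+1}\cup I_{k+1}')$. You deduce it from Remark~\ref{pands}, ``$v_{k+1}$ ends in $u_k$,'' and the disjointness from Lemma~\ref{only}. But those facts alone do not yield it. If one takes $I_{k+1}$ to mean the interval with endpoints $\len{s_{k+1}v_{k+1}^{m_{k+1}-1}p_{k+1}}$ and $\len{s_{k+1}v_{k+1}^{n_{k+1}-2}p_{k+1}}$, then a direct computation using Remark~\ref{pands} gives
\[
\inf I_{k+1}-\sup I_k \;=\; m_{k+1}\len{v_{k+1}} - (n_k-m_k-1)\len{v_k},
\]
and when $n_k=2m_k+2$ (which Proposition~\ref{svp} permits), the constraint $n_{k-1}=m_{k-1}+1$, $r_{k-1}=0$ forces $\len{u_k}=\len{v_k}+\len{v_{k-1}}<2\len{v_k}$, so that for $m_{k+1}=1$ one gets $\sup I_k-\inf I_{k+1}=\len{v_k}-\len{v_{k-1}}>0$. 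The step is rescued only because $m_{k+1}=1$ together with Proposition~\ref{svp} applied at level $k+1$ forces $n_{k+1}=2$, so $I_{k+1}$ is the empty interval, and Proposition~\ref{rk} forces $r_{k+1}=0$, so $I_{k+1}'$ is absent; then one separately checks $\inf I_{k+2}>\sup I_k$ using $\len{v_{k+2}}>\len{v_k}$. In other words, the ordering genuinely uses the restrictions from Propositions~\ref{svp} and \ref{rk}, not just Remark~\ref{pands} and disjointness, and is not literally true as stated for the parameter-interval $(\alpha_{k+1},\beta_{k+1}]$ (only vacuously, because that interval is empty when the inequality between endpoints reverses). Since you correctly identify this as the obstacle and the rest of the computation (the truncated contributions being $\ell_k$ resp. the intersection cardinality, and the $\limsup$ being at right endpoints) matches the paper, I would call this a correct skeleton with one step asserted rather than proved.
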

\begin{proof}
This is a fairly immediate corollary of Lemmas \ref{RSlem} and \ref{only}; 
we note only that $\ell_k$ is, when $r_k > 0$, the number of elements of 
$(\len{s_k v_k^{r_k-1} p_k}, \len{s_k v_k^{r_k-1} u_k v_k^{r_k - 1} p_k}]$ which are less than $\len{s_k v_k^{n_k-2} p_k}$.

The limsup must be attained along a subsequence of the indicated sequences since 
they are the right endpoints of the intervals in the characteristic functions from Lemma~\ref{only}.
\end{proof}

\begin{remark}\label{infsum}
We could make a similar formulation of $\liminf p(q)/q$ using the left endpoints of the intervals from Lemma~\ref{only}, but since we do not have need of that in this work, we do not do so here.
\end{remark}

\subsection{Restrictions on the substitutions}\label{ineqs}

By Corollary~\ref{taus}, the complexity hypothesis $\limsup \frac{p(q)}{q} < 1.5$ ensures that $X$ is defined
by substitutions $\tau_{m_{k}, n_{k}, r_{k}}$. In this section, we give some restrictions on how these integers are related.

Throughout this section, $X$ is an infinite minimal subshift with $\limsup \frac{p(q)}{q} < 1.5$ and $\nu > 0$ and the sequences of words $\{ u_{k} \}, \{ v_{k} \}, \{ p_{k} \}, \{ s_{k} \}$ and integers $\{ m_{k} \}, \{ n_{k} \}, \{ r_{k} \}$ are from Proposition \ref{words}.

\begin{lemma}\label{pandsbounds}
For all $k$,
\begin{align*}
\len{s_{k}} + \len{p_{k}} &< (m_{k-3}+2)\len{v_{k-3}} + m_{k-2}\len{v_{k-2}} + m_{k-1}\len{v_{k-1}} + \len{v_{k}}; \\
\len{s_{k}} + \len{p_{k}} &< (m_{k-2}+2)\len{v_{k-2}} + m_{k-1}\len{v_{k-1}} + \len{v_{k}};~\text{and} \\
\len{s_{k}} + \len{p_{k}} &< (m_{k-1}+2)\len{v_{k-1}} + \len{v_{k}}.
\end{align*}
\end{lemma}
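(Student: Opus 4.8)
The plan is to reduce all three inequalities to repeated unfolding of a single linear recursion for the quantity $a_k := \len{s_k} + \len{p_k}$, each unfolding being terminated by the estimate $\len{p_j} + \len{s_j} < 2\len{v_j} + \len{v_{j-1}} < 3\len{v_j}$ that Proposition~\ref{words} already supplies for $j \geq 1$. First I would record the length recursion: by Remark~\ref{pands} we have $p_k = v_{k-1}^{m_{k-1}-1} p_{k-1}$ and $s_k = s_{k-1} v_k$, and therefore
\[
a_k \;=\; (m_{k-1}-1)\len{v_{k-1}} + \len{v_k} + a_{k-1} \qquad (k \geq 1).
\]

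For the third inequality I would unfold this once and substitute $a_{k-1} < 3\len{v_{k-1}}$, which gives $a_k < (m_{k-1}+2)\len{v_{k-1}} + \len{v_k}$ immediately. For the second, unfold twice: because the $(k-1)$-step of the recursion contributes an extra copy of $\len{v_{k-1}}$, the coefficient of $\len{v_{k-1}}$ is promoted from $m_{k-1}-1$ to $m_{k-1}$, so
\[
a_k \;=\; m_{k-1}\len{v_{k-1}} + (m_{k-2}-1)\len{v_{k-2}} + \len{v_k} + a_{k-2},
\]
and substituting $a_{k-2} < 3\len{v_{k-2}}$ gives the bound. For the first inequality, unfold a third time in exactly the same fashion to obtain
\[
a_k \;=\; m_{k-1}\len{v_{k-1}} + m_{k-2}\len{v_{k-2}} + (m_{k-3}-1)\len{v_{k-3}} + \len{v_k} + a_{k-3},
\]
and substitute $a_{k-3} < 3\len{v_{k-3}}$.

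There is no substantive obstacle here; the argument is bookkeeping with the recursion. The only point requiring attention is that the $3\len{v_j}$ estimate from Proposition~\ref{words} holds only for $j \geq 1$, so the three displayed inequalities hold for $k \geq 2$, $k \geq 3$, and $k \geq 4$ respectively. If one prefers, one can carry the sharper estimate $a_j < 2\len{v_j} + \len{v_{j-1}}$ through each substitution and absorb the trailing $\len{v_{j-1}}$ via the monotonicity $\len{v_{j-1}} < \len{v_j}$ (which holds since $v_j$ has $u_{j-1}$ as a suffix with $\len{u_{j-1}} > \len{v_{j-1}}$), obtaining the same bounds.
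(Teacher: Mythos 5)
Your proof is correct and takes essentially the same approach as the paper's: both unfold the length recursion $\len{s_k}+\len{p_k} = (m_{k-1}-1)\len{v_{k-1}} + \len{v_k} + \len{s_{k-1}}+\len{p_{k-1}}$ from Remark~\ref{pands} one, two, and three steps, then terminate with the bound $\len{s_j}+\len{p_j} < 3\len{v_j}$ from Proposition~\ref{words}. Your explicit caveat about the range of valid $k$ is a reasonable refinement not spelled out in the paper, though note the ``respectively'' in that sentence pairs the thresholds with the order you discussed them (third, second, first), which is the reverse of the lemma's display order — stating it so that the first displayed inequality (involving $v_{k-3}$) needs $k \geq 4$ would avoid confusion.
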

\begin{proof}
By Remark \ref{pands} applied three times and that $\len{s_{k-3}} + \len{p_{k-3}} < 3\len{v_{k-3}}$,
\begin{align*}
\len{s_{k}} + \len{p_{k}} &=  \len{s_{k-1}} + \len{v_{k}} + \len{p_{k-1}} + (m_{k-1}-1)\len{v_{k-1}} \\
&= \len{s_{k-2}} + \len{v_{k-1}} + \len{v_{k}} + \len{p_{k-2}} + (m_{k-2}-1)\len{v_{k-2}} + (m_{k-1}-1)\len{v_{k-1}}  \\
&= \len{s_{k-3}} + (m_{k-3}-1)\len{v_{k-3}} + (m_{k-2}-1)\len{v_{k-2}} + m_{k-1}\len{v_{k-1}} + \len{v_{k}} + \len{p_{k-3}} + \len{v_{k-2}} \\
&< (m_{k-3}+2)\len{v_{k-3}} + m_{k-2}\len{v_{k-2}} + m_{k-1}\len{v_{k-1}} + \len{v_{k}}.
\end{align*}
Since $\len{s_{k-1}} + \len{p_{k-1}} < 3\len{v_{k-1}}$, $\len{s_{k-1}} + (m_{k-1}-1)\len{v_{k-1}} + \len{p_{k-1}} + \len{v_{k}} < (m_{k-1}+2)\len{v_{k-1}} + \len{v_{k}}$ and since $\len{s_{k-2}} + \len{p_{k-2}} < 3\len{v_{k-2}}$, $\len{s_{k-2}} + (m_{k-2}-1)\len{v_{k-2}} + m_{k-1}\len{v_{k-1}} + \len{p_{k-2}} + \len{v_{k}} < (m_{k-2}+2)\len{v_{k-2}} + m_{k-1}\len{v_{k-1}} + \len{v_{k}}$.
\end{proof}

\begin{proposition}\label{svp}
For $k \geq 2$ such that $n_{k} > 2m_{k}$, exactly one of the following holds:
\begin{enumerate}[(i)\hspace{5pt}]
\item\label{svp+2} $n_{k} = 2m_{k}+2$, $n_{k-1} = m_{k-1} + 1$, $n_{k-2} \leq \frac{4}{3}m_{k-2}+1$ and $r_{k} = 0$ and $r_{k-1} = 0$;
\item\label{svp+1a} $n_{k} = 2m_{k} + 1$, $n_{k-1} \leq 2m_{k-1}$ and $r_{k} = 0$; or
\item\label{svp+1b} $n_{k} = 2m_{k} + 1$, $m_{k-1} = 1$, $n_{k-1} = 3$, $n_{k-2} = m_{k-2} + 1$ and $r_{k} = r_{k-1} = r_{k-2} = 0$.
\end{enumerate}
\end{proposition}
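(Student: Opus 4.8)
The plan is to evaluate $p(q)/q$ at the lengths $Q_{k} := \len{s_{k}v_{k}^{n_{k}-2}p_{k}}$, at which Corollary~\ref{sum} gives the exact value. Subtracting $Q_{k}$ and separating the level-$k$ contribution, $p(Q_{k}) - Q_{k} = (n_{k}-m_{k}-1)\len{v_{k}} + \bbone_{r_{k}}\ell_{k} + B_{k}$, where $B_{k} := \sum_{j<k}(n_{j}-m_{j}-1)\len{v_{j}} + \sum_{j<k}((r_{j}-1)\len{v_{j}}+\len{u_{j}})\bbone_{r_{j}} + C \geq 0$, while $Q_{k} = \len{s_{k}}+\len{p_{k}}+(n_{k}-2)\len{v_{k}}$. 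Since $Q_{k}\geq\len{v_{k}}\geq\len{v_{0}}$, Proposition~\ref{words} gives $p(Q_{k})/Q_{k} < 3/2$, i.e. $2\bigl((n_{k}-m_{k}-1)\len{v_{k}}+\bbone_{r_{k}}\ell_{k}+B_{k}\bigr) < \len{s_{k}}+\len{p_{k}}+(n_{k}-2)\len{v_{k}}$. Dropping the non-negative terms $\bbone_{r_{k}}\ell_{k}$ and $B_{k}$ and using the crude bound $\len{s_{k}}+\len{p_{k}}<3\len{v_{k}}$ from Proposition~\ref{words} gives $(n_{k}-2m_{k})\len{v_{k}}<3\len{v_{k}}$, hence $n_{k}\leq 2m_{k}+2$; with the hypothesis $n_{k}>2m_{k}$ this leaves only $n_{k}\in\{2m_{k}+1,2m_{k}+2\}$. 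If instead $r_{k}>0$, then since $n_{k}>2m_{k}$ both terms in the minimum defining $\ell_{k}$ exceed $\len{v_{k}}$ (using $r_{k}<m_{k}<n_{k}$ and $\len{u_{k}}>\len{v_{k}}$), so $\ell_{k}>\len{v_{k}}$; retaining this term and repeating the computation yields $n_{k}\leq 2m_{k}$, a contradiction. Hence $r_{k}=0$ always, which settles the $r_{k}=0$ clause in all three conclusions and reduces the proof to separating $n_{k}=2m_{k}+2$ from $n_{k}=2m_{k}+1$.

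The refinement uses Lemma~\ref{pandsbounds} in place of $\len{s_{k}}+\len{p_{k}}<3\len{v_{k}}$, together with the identity $\len{u_{j}}=\len{v_{j}}+(n_{j-1}-m_{j-1})\len{v_{j-1}}$ (immediate from the description of $u_{j},v_{j}$ in Proposition~\ref{words}, valid for $j\geq 1$), which gives $\len{v_{j}}>m_{j-1}\len{v_{j-1}}$ always and $\len{v_{j}}=m_{j-1}\len{v_{j-1}}+(n_{j-2}-m_{j-2})\len{v_{j-2}}$ when $r_{j-1}=0$. When $n_{k}=2m_{k}+2$ (and $r_{k}=0$) the ratio inequality tightens to $2\len{v_{k}}+2B_{k}<\len{s_{k}}+\len{p_{k}}$; inserting the third inequality of Lemma~\ref{pandsbounds} forces $r_{k-1}=0$ (if $r_{k-1}>0$ then $B_{k}\geq\len{u_{k-1}}>\len{v_{k-1}}$ and $\len{v_{k}}>(m_{k-1}+1)\len{v_{k-1}}$, overshooting), and then $n_{k-1}=m_{k-1}+1$ (if $n_{k-1}-m_{k-1}-1\geq 1$ the term $(n_{k-1}-m_{k-1}-1)\len{v_{k-1}}$ inside $B_{k}$ overshoots); with these in hand, inserting the second inequality of Lemma~\ref{pandsbounds} and the identity for $\len{v_{k}}$ gives $2(n_{k-2}-m_{k-2}-1)<2m_{k-2}+2-n_{k-2}$, i.e. $3n_{k-2}<4(m_{k-2}+1)$, which by integrality is exactly $n_{k-2}\leq\frac{4}{3}m_{k-2}+1$ --- this is conclusion~(\ref{svp+2}). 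When $n_{k}=2m_{k}+1$ the inequality only gives $\len{v_{k}}+2B_{k}<\len{s_{k}}+\len{p_{k}}$, which imposes nothing further if $n_{k-1}\leq 2m_{k-1}$ (conclusion~(\ref{svp+1a})); if $n_{k-1}>2m_{k-1}$, applying the first part of the argument at level $k-1$ gives $r_{k-1}=0$ and $n_{k-1}\in\{2m_{k-1}+1,2m_{k-1}+2\}$, whence $B_{k}\geq m_{k-1}\len{v_{k-1}}$ and the third inequality of Lemma~\ref{pandsbounds} forces $m_{k-1}=1$, so $n_{k-1}\in\{3,4\}$; the value $4$ is impossible, as it would give $B_{k}\geq 2\len{v_{k-1}}$ while $2B_{k}<3\len{v_{k-1}}$, so $n_{k-1}=3$, and then the second inequality of Lemma~\ref{pandsbounds} (now with the level-$(k-1)$ part of $B_{k}$ equal to $\len{v_{k-1}}$) forces $r_{k-2}=0$ and $n_{k-2}=m_{k-2}+1$ --- this is conclusion~(\ref{svp+1b}).

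Finally I would record mutual exclusivity --- conclusion~(\ref{svp+2}) has $n_{k}=2m_{k}+2$ whereas (\ref{svp+1a}) and (\ref{svp+1b}) have $n_{k}=2m_{k}+1$, and (\ref{svp+1a}) has $n_{k-1}\leq 2m_{k-1}$ whereas (\ref{svp+1b}) has $n_{k-1}=3>2=2m_{k-1}$ --- and exhaustiveness, which is just the case split used above ($n_{k}=2m_{k}+2$; or $n_{k}=2m_{k}+1$ with $n_{k-1}\leq 2m_{k-1}$; or $n_{k}=2m_{k}+1$ with $n_{k-1}>2m_{k-1}$). The main obstacle I anticipate is not any individual inequality but the bookkeeping: tracking the chain $\len{v_{k}}\to\len{v_{k-1}}\to\len{v_{k-2}}$ through the recursion while ensuring that each time Lemma~\ref{pandsbounds} is invoked the correct lower bound on $B_{k}$ is available (which depends on whether the relevant $r_{j}$ and the gap $n_{k-1}-m_{k-1}$ have already been pinned down), and handling the boundary index $k=2$, where $\len{u_{0}}$ satisfies no further recursion but still obeys $\len{u_{0}}>\len{v_{0}}$, which is all the argument needs there.
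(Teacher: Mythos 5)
Your proposal is correct and follows essentially the same approach as the paper: evaluate $p(q)/q$ at $Q_k = |s_k v_k^{n_k-2} p_k|$ via Corollary~\ref{sum}, use the crude bound $|s_k|+|p_k|<3|v_k|$ to force $r_k=0$ and $n_k\in\{2m_k+1,2m_k+2\}$, and then refine with the progressively tighter bounds of Lemma~\ref{pandsbounds} to pin down the constraints on $m_{k-1},n_{k-1},r_{k-1}$ and, one level deeper, on $m_{k-2},n_{k-2},r_{k-2}$. The only notable organizational difference is in the case $n_k=2m_k+1$, $n_{k-1}>2m_{k-1}$: you first re-run the ``crude'' step at level $k-1$ to get $r_{k-1}=0$ and $n_{k-1}\le 2m_{k-1}+2$ and only then use the level-$k$ inequality to force $m_{k-1}=1$ and rule out $n_{k-1}=4$, whereas the paper extracts $r_{k-1}=0$, $a=0$, $m_{k-1}=1$ all at once from a single refined inequality at level $k$ --- but this is a cosmetic reordering of the same computations, not a different method.
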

\begin{proof}
Let $k$ such that $n_{k} > 2m_{k}$.
Since $\len{p_{k}} + \len{s_{k}} < 3\len{v_{k}}$, Corollary \ref{sum} implies
\begin{align*}
\frac{p(\len{s_{k}v_{k}^{n_{k}-2}p_{k}})}{\len{s_{k}v_{k}^{n_{k}-2}p_{k}}}
&\geq 1 + \frac{(n_{k} - m_{k} - 1 + \bbone_{r_{k}})\len{v_{k}}}{(n_{k}-2)\len{v_{k}} + \len{p_{k}} + \len{s_{k}}} 
> 1 + \frac{n_{k} - m_{k} - 1 + \bbone_{r_{k}}}{n_{k}-2 + 3}
= \frac{3}{2} + \frac{\frac{1}{2}n_{k} - m_{k} - \frac{3}{2} + \bbone_{r_{k}}}{n_{k}+1}
\end{align*}
and therefore $n_{k} - 2m_{k} - 3 + 2\cdot\bbone_{r_{k}} < 0$.  So if $r_{k} > 0$ then $n_{k} < 2m_{k} + 1$, a contradiction, and if not then $n_{k} < 2m_{k} + 3$.

By Corollary \ref{sum} and Lemma \ref{pandsbounds},
\begin{align*}
\frac{p(\len{s_{k}v_{k}^{n_{k}-2}p_{k}})}{\len{s_{k}v_{k}^{n_{k}-2}p_{k}}}
&\geq 1 + \frac{(n_{k} - m_{k} - 1)\len{v_{k}} + (n_{k-1} - m_{k-1} - 1 + \bbone_{r_{k-1}})\len{v_{k-1}}}{(n_{k}-2)\len{v_{k}} + \len{p_{k}} + \len{s_{k}}} \\
&> 1 + \frac{(n_{k} - m_{k} - 1)\len{v_{k}} + (n_{k-1} - m_{k-1} - 1 + \bbone_{r_{k-1}})\len{v_{k-1}}}{(n_{k}-2)\len{v_{k}} + \len{v_{k}} + (m_{k-1}+2)\len{v_{k-1}}} \\
&= \frac{3}{2} + \frac{(\frac{1}{2}n_{k} - m_{k} - \frac{1}{2})\len{v_{k}} + (n_{k-1} - \frac{3}{2}m_{k-1} - 2 + \bbone_{r_{k-1}})\len{v_{k-1}}}{(n_{k}-1)\len{v_{k}} + (m_{k-1}+2)\len{v_{k-1}}}
\end{align*}
and therefore $(\frac{1}{2}n_{k} - m_{k} - \frac{1}{2})\len{v_{k}} + (n_{k-1} - \frac{3}{2}m_{k-1} - 2 + \bbone_{r_{k-1}})\len{v_{k-1}} < 0$.

If $n_{k} = 2m_{k} + 2$ then $\frac{1}{2}\len{v_{k}} + (n_{k-1} - \frac{3}{2}m_{k-1} - 2 + \bbone_{r_{k-1}})\len{v_{k-1}} < 0$ and since, $\len{v_{k}} = \len{v_{k-1}^{m_{k-1}-1}u_{k-1}} > m_{k-1}\len{v_{k-1}}$, then $n_{k-1} - m_{k-1} - 2 + \bbone_{r_{k-1}} < 0$ so $r_{k-1} = 0$ and $n_{k-1} = m_{k-1} + 1$.  By Corollary \ref{sum} and Lemma \ref{pandsbounds},
\begin{align*}
\frac{p(\len{s_{k}v_{k}^{n_{k}-2}p_{k}})}{\len{s_{k}v_{k}^{n_{k}-2}p_{k}}}
&\geq 1 + \frac{(m_{k}+1)\len{v_{k}} + (n_{k-2} - m_{k-2} - 1)\len{v_{k-2}}}{(n_{k}-2)\len{v_{k}} + \len{v_{k}} + \len{v_{k-1}} + (m_{k-2}+2)\len{v_{k-2}}} \\
&= \frac{3}{2} + \frac{\frac{1}{2}\len{v_{k}} - \frac{1}{2}\len{v_{k-1}} + (n_{k-2} - \frac{3}{2}m_{k-2} - 2)\len{v_{k-2}}}{(2m_{k}+1)\len{v_{k}} + \len{v_{k-1}} + (m_{k-2}+2)\len{v_{k-2}}}.
\end{align*}
Since $\len{v_{k}} - \len{v_{k-1}} \geq \len{u_{k-1}} - \len{v_{k-1}} = (n_{k-2} - m_{k-2})\len{v_{k-2}}$, this implies $\frac{3}{2}n_{k-2} - 2m_{k-2} - 2 < 0$ meaning that $n_{k-2} \leq \frac{4}{3}m_{k-2} + 1$, putting us in case (\ref{svp+2}).

So we may assume from here on that $n_{k} = 2m_{k} + 1$.  Since $r_{k} = 0$, if $n_{k-1} \leq 2m_{k-1}$ then we are in case (\ref{svp+1a}).  So we may assume from here on that $n_{k-1} = 2m_{k-1} + a + 1$ for some $a \geq 0$.
The above gives that $n_{k-1} - \frac{3}{2}m_{k-1} - 2 + \bbone_{r_{k-1}} < 0$ so $\frac{1}{2}m_{k-1} + a - 1 + \bbone_{r_{k-1}} < 0$.  Then $r_{k-1} = 0$ and $\frac{1}{2}m_{k-1} + a < 1$ meaning that $m_{k-1} = 1$ and $a = 0$ so $n_{k-1} = 3$.  
By Corollary \ref{sum} and Lemma \ref{pandsbounds},
\begin{align*}
\frac{p(\len{s_{k}v_{k}^{n_{k}-2}p_{k}})}{\len{s_{k}v_{k}^{n_{k}-2}p_{k}}}
&\geq 1 + \frac{m_{k}\len{v_{k}} + (n_{k-1}-m_{k-1}-1)\len{v_{k-1}} + (n_{k-2} - m_{k-2} - 1 + \bbone_{r_{k-2}})\len{v_{k-2}}}{(n_{k}-2)\len{v_{k}} + \len{p_{k}} + \len{s_{k}}} \\
&> 1 + \frac{m_{k}\len{v_{k}} + \len{v_{k-1}} + (n_{k-2} - m_{k-2} - 1 + \bbone_{r_{k-2}})\len {v_{k-2}}}{(2m_{k}-1)\len{v_{k}} + (m_{k-2} + 2)\len{v_{k-2}} + \len{v_{k-1}} + \len{v_{k}}} \\
&= \frac{3}{2} + \frac{\frac{1}{2}\len{v_{k-1}} + (n_{k-2} - \frac{3}{2}m_{k-2} - 2 + \bbone_{r_{k-2}})\len{v_{k-2}}}{2m_{k}\len{v_{k}} + \len{v_{k-1}} + (m_{k-2}+2)\len{v_{k-2}}}
\end{align*}
and, since $\len{v_{k-1}} > m_{k-2}\len{v_{k-2}}$, therefore $n_{k-2} - m_{k-2} - 2 + \bbone_{r_{k-2}} < 0$.  Then $r_{k-2} = 0$ and $n_{k-2} = m_{k-2} + 1$, putting us in case (\ref{svp+1b}).
\end{proof}

\begin{proposition}\label{rk}
For $k \geq 2$ such that $r_{k+1} > 0$, $n_{k} \leq \frac{4}{3}m_{k} + 1$ and exactly one of the following holds:
\begin{enumerate}[(i)\hspace{5pt}]
\item\label{rk3/2} $n_{k} \leq \frac{3}{2}m_{k}$;
\item\label{rk35} $m_{k}=3$, $n_{k}=5$, $n_{k-1} = m_{k-1} + 1$ and $r_{k} = 0$ and $r_{k-1} = 0$;
\item\label{rk12a} $m_{k}=1$, $n_{k}=2$, $n_{k-1} \leq 2m_{k-1}$ and $r_{k} = 0$; or
\item\label{rk12b} $m_{k}=1$, $n_{k}=2$, $m_{k-1}=1$, $n_{k-1}=3$, $n_{k-2} = m_{k-2} + 1$ and $r_{k} = 0$ and $r_{k-1} = 0$.
\end{enumerate}
\end{proposition}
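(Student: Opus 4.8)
The plan follows the proof of Proposition~\ref{svp} closely. Since $r_{k+1}>0$, the third item of Proposition~\ref{rs} makes the word $W:=s_{k+1}v_{k+1}^{r_{k+1}-1}u_{k+1}v_{k+1}^{r_{k+1}-1}p_{k+1}$ right-special, and since $\len{W}>\len{s_0}$ the second formula of Corollary~\ref{sum} at index $k+1$ computes $p(\len{W})$ exactly: $p(\len{W})-\len{W}$ is a sum of manifestly nonnegative terms which includes $(n_k-m_k-1)\len{v_k}$ (the $j=k$ term of the first sum), the quantity $\big((r_k-1)\len{v_k}+\len{u_k}\big)\bbone_{r_k}$ (its $j=k$ counterpart in the second sum), and $(r_{k+1}-1)\len{v_{k+1}}+\len{u_{k+1}}$ (the $j=k+1$ term of the second sum, present precisely because $r_{k+1}>0$). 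As $\len{W}=\len{s_{k+1}}+2(r_{k+1}-1)\len{v_{k+1}}+\len{u_{k+1}}+\len{p_{k+1}}$, writing $2\big(p(\len W)-\len W\big)<\len W$ (valid by the complexity hypothesis, as $p(q)<\frac{3}{2}q$ for $q\geq\len{v_0}$) and cancelling the common $2(r_{k+1}-1)\len{v_{k+1}}$ leaves $\len{u_{k+1}}+2\cdot(\text{any chosen subset of the remaining nonnegative terms})<\len{s_{k+1}}+\len{p_{k+1}}$. The structural point that makes this usable is that every bound from Lemma~\ref{pandsbounds} has the form $\len{s_{k+1}}+\len{p_{k+1}}<(\text{expression in }m_{k-1},m_{k-2},\dots)+\len{v_{k+1}}$, whose $\len{v_{k+1}}$ cancels against the one in the identity $\len{u_{k+1}}=\len{v_{k+1}}+(n_k-m_k)\len{v_k}$ (valid since $u_{k+1}=v_k^{\,n_k-m_k}v_{k+1}$ in both cases of Proposition~\ref{words}), so everything reduces to an inequality purely in $\len{v_k},\len{v_{k-1}},\dots$. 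Keeping only $(n_k-m_k-1)\len{v_k}$ and the two-term bound $\len{s_{k+1}}+\len{p_{k+1}}<(m_k+2)\len{v_k}+\len{v_{k+1}}$ gives $3n_k<4m_k+4$, i.e.\ $n_k\leq\frac{4}{3}m_k+1$. If moreover $n_k\leq\frac{3}{2}m_k$ we are in case~(\ref{rk3/2}); otherwise $\frac{3}{2}m_k<n_k\leq\frac{4}{3}m_k+1$ forces $m_k\leq 5$, and running through $m_k\in\{1,\dots,5\}$ eliminates all but $(m_k,n_k)\in\{(1,2),(3,5)\}$. For these two cases, additionally keeping the term $\big((r_k-1)\len{v_k}+\len{u_k}\big)\bbone_{r_k}\geq\len{v_k}$ (when $r_k>0$, using $\len{u_k}>\len{v_k}$) sharpens the inequality to $3n_k+2r_k<4m_k+4$, impossible for $r_k\geq1$; so $r_k=0$ in both.

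For $(m_k,n_k)=(3,5)$ the term $(n_k-m_k-1)\len{v_k}=\len{v_k}$ survives. Keeping it together with the $j=k-1$ terms, pairing with the three-term bound $\len{s_{k+1}}+\len{p_{k+1}}<(m_{k-1}+2)\len{v_{k-1}}+m_k\len{v_k}+\len{v_{k+1}}$, and using $\len{v_k}>m_{k-1}\len{v_{k-1}}$, one gets $n_{k-1}\leq m_{k-1}+1$, hence $n_{k-1}=m_{k-1}+1$; and if $r_{k-1}>0$ then $\len{v_k}>(m_{k-1}+1)\len{v_{k-1}}$ together with the extra $j=k-1$ contribution (which exceeds $2\len{v_{k-1}}$) overshoots, a contradiction, so $r_{k-1}=0$ — this is case~(\ref{rk35}).

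For $(m_k,n_k)=(1,2)$ one has instead $v_{k+1}=u_k$, so $\len{u_{k+1}}=\len{v_k}+\len{v_{k+1}}$ and $(n_k-m_k-1)\len{v_k}=0$. The analogous computation with the three-term bound gives: if $r_{k-1}>0$, then $n_{k-1}\leq 2m_{k-1}$ — case~(\ref{rk12a}); if $r_{k-1}=0$, then $2n_{k-1}<3m_{k-1}+4$, so either $n_{k-1}\leq 2m_{k-1}$ (case~(\ref{rk12a})) or $n_{k-1}>2m_{k-1}$, which forces $m_{k-1}=1$, $n_{k-1}=3$. In this last situation one more refinement — now keeping the $j=k-2$ terms, using $v_k=u_{k-1}$ and $n_{k-1}=3$, the four-term (first) bound of Lemma~\ref{pandsbounds} at index $k+1$, and $\len{v_{k-1}}>m_{k-2}\len{v_{k-2}}$ — yields $n_{k-2}=m_{k-2}+1$, which is case~(\ref{rk12b}).

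The main obstacle is bookkeeping discipline: at each stage one must retain exactly the correct subset of the nonnegative terms of Corollary~\ref{sum}'s formula and match it with the right inequality of Lemma~\ref{pandsbounds} (two terms on the right for conclusions about $m_k,n_k$, three for $n_{k-1}$, four for $n_{k-2}$), while tracking which of $n_j-m_j-1$, $r_j-1$, $\bbone_{r_j}$ vanish. The one genuine edge case is that the first inequality of Lemma~\ref{pandsbounds} at index $k+1$ presupposes $k\geq3$, so the boundary case $k=2$ of the $(m_k,n_k)=(1,2)$, $n_{k-1}=3$ branch must be redone by hand, replacing that bound by the crude estimate $\len{p_0}+\len{s_0}=\len{s_0}<\len{v_0}+\len{u_0}$ from Proposition~\ref{words} and repeating that single computation.
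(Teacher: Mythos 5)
Your proposal follows the paper's proof essentially step for step: estimate $p(\len{W})/\len{W}$ at $W=s_{k+1}v_{k+1}^{r_{k+1}-1}u_{k+1}v_{k+1}^{r_{k+1}-1}p_{k+1}$ using Corollary~\ref{sum}, combine with the appropriate inequality of Lemma~\ref{pandsbounds} (two-, three-, or four-term depending on whether you want constraints at level $k$, $k-1$, or $k-2$), and repeatedly use $\len{u_{k+1}}-\len{v_{k+1}}=(n_k-m_k)\len{v_k}$ and $\len{v_j}>m_{j-1}\len{v_{j-1}}$ to reduce to integer inequalities. The minor variations you introduce — deriving $3n_k<4m_k+4$ first and then sharpening to $3n_k+2r_k<4m_k+4$ rather than carrying $\bbone_{r_k}$ through from the start, and enumerating $m_k\in\{1,\dots,5\}$ instead of first squeezing $m_k\leq3$ — both land in the right place.

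One imprecision worth flagging: in the $(m_k,n_k)=(3,5)$, $r_{k-1}>0$ branch you claim the $j=k-1$ contribution ``exceeds $2\len{v_{k-1}}$.'' That contribution is $(r_{k-1}-1)\len{v_{k-1}}+\len{u_{k-1}}=r_{k-1}\len{v_{k-1}}+(n_{k-2}-m_{k-2})\len{v_{k-2}}$, which for $r_{k-1}=1$ need not exceed $2\len{v_{k-1}}$. The contradiction still goes through — combining $\len{v_k}>(m_{k-1}+1)\len{v_{k-1}}$ with the term $2\len{u_{k-1}}$ that survives after the cancellations already gives $2\len{u_{k-1}}<\len{v_{k-1}}$, absurd — and the paper gets there more cleanly by tracking $\bbone_{r_{k-1}}$ through the algebra and using only $\len{v_k}>m_{k-1}\len{v_{k-1}}$. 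But the ``$>2\len{v_{k-1}}$'' claim as written is not correct and you should not rely on it.

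Your edge-case observation is a good catch: the first inequality of Lemma~\ref{pandsbounds} at index $k+1$ relies on $\len{s_{k-2}}+\len{p_{k-2}}<3\len{v_{k-2}}$, which Proposition~\ref{words} only supplies for $k-2\geq1$, i.e.~$k\geq3$; the paper does not remark on this when invoking it in the $(1,2)$/$n_{k-1}=3$ branch at $k=2$. Your proposed patch — replace $\len{s_0}+\len{p_0}<3\len{v_0}$ by $\len{s_0}<\len{u_0}+\len{v_0}$ and rerun the computation — does work: the extra $\len{u_0}$ in the denominator bound is exactly cancelled by the identity $\len{v_1}-\len{u_0}=(m_0-1)\len{v_0}$, and the final inequality reduces to $n_0-m_0-\tfrac{3}{2}+\bbone_{r_0}<0$, again forcing $n_0=m_0+1$ and $r_0=0$.
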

\begin{proof}
Let $k \geq 2$ such that $r_{k+1} > 0$.  For brevity, we will write
\[
P_{k} := \frac{p(\len{s_{k+1}v_{k+1}^{r_{k+1}-1}u_{k+1}v_{k+1}^{r_{k+1}-1}p_{k+1}})}{\len{s_{k+1}v_{k+1}^{r_{k+1}-1}u_{k+1}v_{k+1}^{r_{k+1}-1}p_{k+1}}}.
\]
By Corollary \ref{sum} and Lemma \ref{pandsbounds},
\begin{align*}
P_{k} &> 1 + \frac{(r_{k+1}-1)\len{v_{k+1}} + \len{u_{k+1}} + (n_{k} - m_{k} - 1 + \bbone_{r_{k}})\len{v_{k}}}{2(r_{k+1}-1)\len{v_{k+1}} + \len{u_{k+1}} + \len{v_{k+1}} + (m_{k}+2)\len{v_{k}}}  \\
&= \frac{3}{2} + \frac{\frac{1}{2}\len{u_{k+1}}-\frac{1}{2}\len{v_{k+1}} + (n_{k} - \frac{3}{2}m_{k} - 2 + \bbone_{r_{k}})\len{v_{k}}}{(2r_{k+1}-1)\len{v_{k+1}} + \len{u_{k+1}} + (m_{k}+2)\len{v_{k}}}
\end{align*}
and, as $\len{u_{k+1}} - \len{v_{k+1}} = (n_{k}-m_{k})\len{v_{k}}$, therefore $\frac{3}{2}n_{k} - 2m_{k} - 2 + \bbone_{r_{k}} < 0$.  As $n_{k}$ is an integer, then $3n_{k} + 2\cdot\bbone_{r_{k}} \leq 4m_{k} + 3$ so in particular $n_{k} \leq \frac{4}{3}m_{k} + 1$.

Assume that $n_{k} \geq \frac{3m_{k}+1}{2}$ as otherwise we are in case (\ref{rk3/2}).  Then $\frac{9m_{k}+3}{2} + 2\cdot\bbone_{r_{k}} \leq 4m_{k} + 3$ meaning that $\frac{m_{k}}{2} + 2 \cdot \bbone_{r_{k}} \leq \frac{3}{2}$.  So $r_{k} = 0$ and $m_{k} \leq 3$.  The only possibilities for $(m_{k},n_{k})$ are then $(3,5)$ or $(1,2)$ since $3n_{k} \leq 4m_{k} + 3$ (and $(2,3)$ is ruled out by the assumption that $n_{k} \geq \frac{1}{2}(3m_{k}+1)$).

We now estimate using Corollary \ref{sum} again, knowing that $r_{k} = 0$.
By Corollary \ref{sum} and Lemma \ref{pandsbounds},
\begin{align*}
P_{k} &>
1 + \frac{(r_{k+1}-1)\len{v_{k+1}} + \len{u_{k+1}} + (n_{k}-m_{k}-1)\len{v_{k}} + (n_{k-1} - m_{k-1} - 1 + \bbone_{r_{k-1}})\len{v_{k-1}}}{2(r_{k+1}-1)\len{v_{k+1}} + \len{u_{k+1}} + \len{v_{k+1}} + m_{k}\len{v_{k}} + (m_{k-1}+2)\len{v_{k-1}}} \\
&= \frac{3}{2} + \frac{\frac{1}{2}\len{u_{k+1}} - \frac{1}{2}\len{v_{k+1}} + (n_{k} - \frac{3}{2}m_{k} - 1)\len{v_{k}} + (n_{k-1} - \frac{3}{2}m_{k-1} - 2 + \bbone_{r_{k-1}})\len{v_{k-1}}}{(2r_{k+1}-1)\len{v_{k+1}} + \len{u_{k+1}} + m_{k}\len{v_{k}} + (m_{k-1}+2)\len{v_{k-1}}}
\end{align*}
and, as $\len{u_{k+1}} - \len{v_{k+1}} = (n_{k}-m_{k})\len{v_{k}}$, then
\[
\left(\frac{3}{2}n_{k} - 2m_{k} - 1\right)\len{v_{k}} + (n_{k-1} - \frac{3}{2}m_{k-1} - 2 + \bbone_{r_{k-1}})\len{v_{k-1}} < 0. \tag{$\dagger$}
\]

Consider first when $(m_{k},n_{k}) = (3,5)$.  Then $(\dagger)$ gives that $\frac{1}{2}\len{v_{k}} + (n_{k-1} - \frac{3}{2}m_{k-1} - 2 + \bbone_{r_{k-1}})\len{v_{k-1}} < 0$ and, as $\len{v_{k}} > m_{k-1}\len{v_{k-1}}$, then $n_{k-1} - m_{k-1} - 2 + \bbone_{r_{k-1}} < 0$ meaning $r_{k-1} = 0$ and $n_{k-1} = m_{k-1} + 1$ so we are in case (\ref{rk35}).

Assume from here on that $(m_{k},n_{k}) = (1,2)$.  If $n_{k-1} \leq 2m_{k-1}$ then we are in case (\ref{rk12a}) so we may also assume $n_{k-1} = 2m_{k-1} + 1 + a$ for some $a \geq 0$.  Then $(\dagger)$ gives that $n_{k-1} - \frac{3}{2}m_{k-1} - 2 + \bbone_{r_{k-1}} < 0$ meaning that  that $\frac{1}{2}m_{k-1} + a - 1 + \bbone_{r_{k-1}} < 0$.  Then $r_{k-1} = 0$ and $a = 0$ and $m_{k-1} = 1$ and so $n_{k-1} = 3$.
By Corollary \ref{sum}, as $n_{k} - m_{k} - 1 = 0$ and $n_{k-1} - m_{k-1} - 1 = 1$ and $r_{k} = 0$ and $r_{k-1} = 0$, and Lemma \ref{pandsbounds},
\begin{align*}
P_{k} &> 1 + \frac{(r_{k+1}-1)\len{v_{k+1}} + \len{u_{k+1}} + \len{v_{k-1}} + (n_{k-2} - m_{k-2} - 1 + \bbone_{r_{k-2}})\len{v_{k-2}}}{2(r_{k+1}-1)\len{v_{k+1}} + \len{u_{k+1}} + \len{v_{k+1}} + \len{v_{k}} + \len{v_{k-1}} + (m_{k-2} + 2)\len{v_{k-2}}} \\
&= \frac{3}{2} + \frac{\frac{1}{2}\len{u_{k+1}} - \frac{1}{2}\len{v_{k+1}} - \frac{1}{2}\len{v_{k}} + \frac{1}{2}\len{v_{k-1}} + (n_{k-2} - \frac{3}{2}m_{k-2} - 2 + \bbone_{r_{k-2}})\len{v_{k-2}}}{(2r_{k+1}-1)\len{v_{k+1}} + \len{u_{k+1}} + \len{v_{k}} + \len{v_{k-1}} + (m_{k-2}+2)\len{v_{k-2}}}
\end{align*}
and, as $\len{u_{k+1}} - \len{v_{k+1}} = \len{v_{k}}$ and $\len{v_{k-1}} > m_{k-2}\len{v_{k-2}}$, then $n_{k-2} - m_{k-2} - 2 + \bbone_{r_{k-2}} < 0$.  Therefore $r_{k-2} = 0$ and $n_{k-2} = m_{k-2} + 1$, putting us in case (\ref{rk12b}).
\end{proof}

\begin{remark}
Any specific substitution of the form $\tau_{m,n,r}$ in Corollary \ref{taus}, with parameters compatible with Propositions \ref{svp} and \ref{rk}, can be used infinitely often in the construction of a subshift with $\limsup \frac{p(q)}{q} < 1.5$.  Indeed, preceding that specific substitution by enough substitutions of the form $\tau_{m,m+1,0}$ for appropriate $m$ will provide such a subshift; we do not elaborate further as we do not make use of this.
\end{remark}

\begin{remark}
The above reasoning can also be used to show that certain substitutions are ruled out at various complexity cutoffs (we omit proofs since we will not use these facts):
\begin{itemize}
\item $\tau_{m,2m+2,0}$ cannot occur when $\limsup \frac{p(q)}{q} < 1.4$;
\item $\tau_{n,m,r}$, $r > 0$ cannot occur when $\limsup \frac{p(q)}{q} < \frac{4}{3}$ (c.f.~\cite{creutzpavlov}); and
\item $\tau_{m,2m+1,0}$ cannot occur when $\limsup \frac{p(q)}{q} < 1.25$.
\end{itemize}
We also note that Aberkane proved a slightly different substitutive structure for $\limsup \frac{p(q)}{q} < \frac{4}{3}$ in \cite{aber}.
\end{remark}

Our last fact regarding the substitutive structure is that $\limsup \frac{p(q)}{q} < 1.5$ imposes a bound on $\frac{n_{k}}{m_{k}}$.  Recall that $\frac{p(q)}{q} < (1.5 - \nu)q$ for all $q \geq \len{v_{0}}$.

\begin{proposition}\label{thedelta}
There exists $\delta > 0$ and $N \in \mathbb{N}$ such that for all $k \geq 2$ where $m_{k} \geq N$, if $r_{k+1} > 0$ then $n_{k} < \frac{3-\delta}{2}m_{k}$ and if $r_{k+1} = 0$ then $n_{k} < (2 - \delta)m_{k}$.
\end{proposition}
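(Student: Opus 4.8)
The plan is to rerun the complexity estimates already developed — Corollary~\ref{sum} for the case $r_{k+1}=0$, and the bound $n_k\leq\frac{4}{3}m_k+1$ extracted in Proposition~\ref{rk} for the case $r_{k+1}>0$ — but to keep track of the quantitative slack $\nu$ coming from $p(q)<(1.5-\nu)q$ (valid for $q\geq\len{v_0}$) rather than only $p(q)<1.5q$. Both estimates are of the form $n_k\leq c\,m_k+O(1)$ with $c<2$; the additive $O(1)$ term becomes negligible relative to $m_k$ exactly once $m_k\geq N$, and that is precisely where a genuine multiplicative gap $\delta$ emerges. This is why the statement is restricted to $m_k\geq N$ and not all $k$.

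For the case $r_{k+1}=0$ (in fact for all $k\geq 2$), consider the right-special word of length $q_k:=\len{s_kv_k^{n_k-2}p_k}$. By Remark~\ref{pands}, $s_{k+1}=s_kv_{k+1}$, so $\len{s_k}\geq\len{v_k}$ for $k\geq 1$; since $\len{v_k}$ is strictly increasing, $q_k\geq\len{v_k}>\len{v_0}$ for $k\geq 2$, so the complexity hypothesis applies at length $q_k$. By Corollary~\ref{sum}, dropping every nonnegative correction term except the $j=k$ summand $(n_k-m_k-1)\len{v_k}$ gives $p(q_k)\geq q_k+(n_k-m_k-1)\len{v_k}$, while $q_k=(n_k-2)\len{v_k}+\len{p_k}+\len{s_k}<(n_k+1)\len{v_k}$ since $\len{p_k}+\len{s_k}<3\len{v_k}$. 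Combining,
\[
1.5-\nu \;>\; \frac{p(q_k)}{q_k} \;\geq\; 1+\frac{n_k-m_k-1}{n_k+1},
\]
which rearranges to $(\tfrac12+\nu)n_k<m_k+\tfrac32-\nu$, hence $n_k<\tfrac{2}{1+2\nu}m_k+\tfrac{3}{1+2\nu}$. Setting $\delta_2:=\tfrac{2\nu}{1+2\nu}>0$, one checks that $\tfrac{2}{1+2\nu}m_k+\tfrac{3}{1+2\nu}<(2-\delta_2)m_k$ exactly when $m_k>\tfrac{3}{2\nu}$, so $N_2:=\lceil 3/(2\nu)\rceil+1$ suffices for this case.

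For the case $r_{k+1}>0$, no new estimate is needed: Proposition~\ref{rk} already gives $n_k\leq\tfrac43 m_k+1$ for all $k\geq 2$ with $r_{k+1}>0$. Since $\tfrac43<\tfrac32$, fixing $\delta_1:=\tfrac16$ we have $\tfrac43 m_k+1<\tfrac{3-\delta_1}{2}m_k=\tfrac{17}{12}m_k$ whenever $m_k>12$, so $N_1:=13$ works.

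Finally, take $\delta:=\min(\delta_1,\delta_2)>0$ and $N:=\max(N_1,N_2)$. For $k\geq 2$ with $m_k\geq N$, the estimate of the second paragraph gives $n_k<(2-\delta_2)m_k\leq(2-\delta)m_k$ regardless of $r_{k+1}$, and when additionally $r_{k+1}>0$ the third paragraph gives the stronger $n_k<\tfrac{3-\delta_1}{2}m_k\leq\tfrac{3-\delta}{2}m_k$, as required. The proof involves no serious obstacle; the only points requiring care are (a) checking that the complexity bound $p(q)<(1.5-\nu)q$ may legitimately be invoked at length $q_k$ — handled by the monotonicity of $\len{v_k}$ together with $\len{s_k}\geq\len{v_k}$ — and (b) bookkeeping the two thresholds and gaps so that a single pair $(\delta,N)$ serves both cases.
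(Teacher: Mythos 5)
Your proof is correct and follows essentially the same route as the paper's: invoke Proposition~\ref{rk} to get $n_k\leq\frac43m_k+1$ when $r_{k+1}>0$, and deploy Corollary~\ref{sum} at the length $\len{s_kv_k^{n_k-2}p_k}$ against the quantitative bound $p(q)<(1.5-\nu)q$ for the other case. The only differences are stylistic: you derive an explicit bound $n_k<\frac{2}{1+2\nu}m_k+\frac{3}{1+2\nu}$ and then compare against $(2-\delta_2)m_k$, whereas the paper argues by contradiction from $n_k\geq(2-\delta)m_k$ using the same starting inequality $p(q_k)/q_k>1+\frac{n_k-m_k-1}{n_k+1}$; and you are slightly more explicit about why the complexity bound legitimately applies at $q_k$ (via $\len{s_k}\geq\len{v_k}$ from Remark~\ref{pands}), and about the fact that the complexity-derived bound does not actually depend on whether $r_{k+1}$ vanishes. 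Both are minor clarifications rather than new ideas.
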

\begin{proof}
By Proposition \ref{rk}, if $r_{k+1} > 0$ then $n_{k} \leq \frac{4}{3}m_{k} + 1$ so if $m_{k} \geq 8$ then $\frac{n_{k}}{m_{k}} \leq \frac{4}{3} + \frac{1}{8} = \frac{3}{2} - \frac{1}{24}$.  Take $\frac{1}{24} > \delta > 0$ and $N \geq 8$ such that $\frac{\delta N + 3}{2(2 - \delta)N + 2} \leq \nu$.

Suppose that there exists $k$ with $r_{k} = 0$ and $m_{k} \geq N$ and $n_{k} \geq (2 - \delta)m_{k}$.  Then, since $\frac{\frac{1}{2}n - m - \frac{3}{2}}{n+1}$ is increasing with $n$, by Corollary \ref{sum} and Remark \ref{pands},
\begin{align*}
\frac{p(\len{s_{k}v_{k}^{n_{k}-2}p_{k}})}{\len{s_{k}v_{k}^{n_{k}-2}p_{k}}}
&> 1 + \frac{(n_{k} - m_{k} - 1)\len{v_{k}}}{(n_{k}-2)\len{v_{k}} + 3\len{v_{k}}} 
= \frac{3}{2} + \frac{\frac{1}{2}n_{k} - m_{k} - \frac{3}{2}}{n_{k} + 1} 
\geq \frac{3}{2} - \frac{\delta m_{k} + 3}{2(2-\delta)m_{k} + 2}.
\end{align*}
Therefore, as $\frac{\delta m + 3}{2(2-\delta)m+2}$ is decreasing with $m$,
$
\frac{p(\len{s_{k}v_{k}^{n_{k}-2}p_{k}})}{\len{s_{k}v_{k}^{n_{k}-2}p_{k}}} > \frac{3}{2} - \frac{\delta N + 3}{2(2-\delta)N + 2} \geq \frac{3}{2} - \nu
$
contradicting that $p(q) < (1.5-\nu)q$ for all $q \geq \len{v_{0}}$.
\end{proof}

\section{Discrete spectrum}\label{discspecsec}

The first consequence we derive from the substitutive structure and inequalities established in Section~\ref{ineqs} is that infinite minimal low complexity subshifts have (measurably) discrete spectrum.

\begin{theorem}\label{discspec}
Every infinite minimal subshift with $\limsup \frac{p(q)}{q} < 1.5$ has discrete spectrum.
\end{theorem}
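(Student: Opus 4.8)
The plan is to derive discrete spectrum from the S-adic presentation of Corollary~\ref{taus} by showing, with the help of the arithmetic restrictions of Section~\ref{ineqs}, that $(X,\sigma)$ with its unique invariant measure $\mu$ is measurably isomorphic to an equicontinuous system; by Theorem~\ref{isochars} this is exactly discrete spectrum. First I would attach to each level $k$ the Kakutani--Rokhlin partition $\mathcal{P}_k$ coming from the unique decomposition (Proposition~\ref{words}) of points of $X$ into the words $v_k$ and $u_k$: the base is $B_k=\{x\in X:\text{a level-}k\text{ block begins at coordinate }0\}$, and the first-return map to $B_k$ has exactly the two return times $\len{v_k}$ and $\len{u_k}$, so $\mathcal{P}_k$ is a union of two Rokhlin towers of those heights. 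Since $\len{v_k}\to\infty$ and the $\mathcal{P}_k$ refine one another, they generate the Borel $\sigma$-algebra of $X$, so the entire question is how $\mathcal{P}_{k+1}$ sits over $\mathcal{P}_k$.

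The crucial point is that consecutive levels fit together \emph{almost} like an odometer. From $v_{k+1}=v_k^{m_k-1}u_k$ and $u_{k+1}=v_k^{n_k-1}u_k$ (and their $r_k>0$ analogues) each level-$(k+1)$ tower is obtained by stacking $m_k-1$ or $n_k-1$ copies of the $v_k$-tower together with a single copy of the $u_k$-tower, and the latter is comparatively short. I would compute the $\mu$-measures of all the towers from the products of the incidence matrices of $\tau_{m_k,n_k,r_k}$ and then show, using Proposition~\ref{thedelta} (the ratios $n_k/m_k$ are bounded away from $2$, and from $3/2$ when $r_{k+1}>0$) together with the extremal dichotomies of Propositions~\ref{svp} and~\ref{rk}, that $\sum_k \mu(E_k)<\infty$, where $E_k$ is the ``defect set'' of points whose position inside the level-$(k+1)$ tower falls in the $u_k$-subtower rather than in one of the $v_k$-subtowers. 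This summability is exactly where $\limsup p(q)/q<3/2$ is used at full strength: it forces the incidence products to have a genuine spectral gap, so that the defect is geometrically negligible on average.

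Given $\sum_k\mu(E_k)<\infty$, Borel--Cantelli shows $\mu$-a.e.\ $x$ lies outside $E_k$ for all large $k$; for such $x$ the position in its level-$k$ tower is, for large $k$, governed purely by the (odometer-like) base counts, so the functions recording this position converge and the $\sigma$-algebra they generate is, modulo $\mu$, that of a rotation on a compact abelian group — equivalently, the canonical factor map $\phi\colon X\to\mathrm{MEF}$ has singleton fibres off a $\mu$-null set (the pairs $\phi$ fails to separate are contained, up to the obvious permutation, in $\limsup_k E_k$, a $\mu\times\mu$-null set). Hence $(X,\sigma,\mu)$ is measurably isomorphic to an equicontinuous system, and Theorem~\ref{isochars} gives discrete spectrum. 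The main obstacle is the estimate $\sum_k\mu(E_k)<\infty$ itself: the lone $u_k$-subtower, and the near-critical parameter configurations catalogued in Propositions~\ref{svp}--\ref{rk} (in particular the $r_k>0$ cases, which do not occur below complexity $4/3$), are precisely where a second incidence eigenvalue of modulus one could appear, so one must exploit the quantitative gap $\delta$ and the exact shape of those dichotomies rather than the crude bound $n_k\le 2m_k$.
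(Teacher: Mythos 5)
Your overall strategy -- showing that the canonical factor map to the MEF is a.e.\ injective and then invoking Theorem~\ref{isochars} -- is a legitimate route to discrete spectrum, but the key quantitative claim you rely on is false, and the failure is not in a corner case but in the simplest examples covered by the theorem.

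The defect set $E_k$ you define is the part of the level-$(k+1)$ tower lying over the single $u_k$-subtower rather than over the $v_k$-subtowers. Its measure is comparable to $\len{u_k}/\len{v_{k+1}}$, and this quantity is generically bounded away from zero, so $\sum_k\mu(E_k)$ diverges and Borel--Cantelli does not apply. Concretely, when $m_k=1$, $n_k=2$, $r_k=0$ for all $k$ (the Fibonacci/Sturmian case, where $\limsup p(q)/q=1$), one has $v_{k+1}=u_k$, so \emph{all} of the $v_{k+1}$-tower lies in the $u_k$-subtower, and the $u_k$-part of $u_{k+1}=v_ku_k$ has proportion $\len{u_k}/(\len{v_k}+\len{u_k})>\tfrac12$; thus $\mu(E_k)>\tfrac12$ for every $k$. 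The same phenomenon persists whenever $\len{v_{k-1}}/\len{v_k}$ does not tend to $0$, which the complexity hypothesis does not enforce. Note also a sign slip in your set-up: $\len{v_k}<\len{u_k}$, so the $u_k$-tower is the \emph{taller} one, not "comparatively short," and one cannot treat it as a negligible appendage. No bound from Propositions~\ref{svp}, \ref{rk}, or \ref{thedelta} repairs this, since those control the \emph{ratio} $n_k/m_k$, not the measure occupied by $u_k$.

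The quantity that actually decays (and is the engine of the paper's Proposition~\ref{Pmain}) is not the measure of the $u_k$-columns but the \emph{Hamming defect}: the number of positions at which $u_kv_k$ and $v_ku_k$ disagree, bounded in Lemma~\ref{dvu} by $2\len{u_0}\,2^{\sum\bbone_{r_j}}\prod_{j<k}(n_j-m_j)$. Dividing by $\len{v_k}$, this is controlled by the summable $\epsilon_k$ of Proposition~\ref{Pmain}, even though $\len{u_k}/\len{v_{k+1}}$ stays bounded below. The paper's proof uses this to show mean almost periodicity (for a syndetic set of $s$, the upper density of $\{t:x_t\ne(\sigma^sx)_t\}$ is small), and then appeals to the classical fact that mean almost periodicity implies discrete spectrum; the a.e.\ injectivity of the MEF map (Proposition~\ref{almostoneone}) is derived afterwards as a consequence of discrete spectrum, not the other way around. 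To salvage your approach you would need to replace "position falls in the $u_k$-subtower" with a genuinely small set tied to mismatch positions rather than to $u_k$ occurrences; at that point you are essentially reconstructing the mean-almost-periodicity argument.
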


(We remark that finite transitive subshifts have unique measure supported on a periodic orbit, and the same is true for infinite transitive subshifts with $\limsup p(q)/q < 1.5$ by \cite{ormespavlov}, and so Theorem~\ref{discspec} in fact applies to all transitive subshifts.)

The key ingredient in this proof is the following proposition, which proves exponential decay of a sequence related to the substitutive structure, and which plays the same role in our analysis as exponential decay played in Host's \cite{MR873430} proof of the existence of eigenfunctions for subshifts coming from certain single substitutions.

\begin{proposition}\label{Pmain}
Let $X$ be an infinite minimal subshift with $\limsup \frac{p(q)}{q} < 1.5$.  Let $m_{k}$, $n_{k}$ and $r_{k}$ be the sequences from Proposition \ref{words}.  Then there exists $\epsilon_{k}$ with $\sum_{k=0}^{\infty} \epsilon_{k} < \infty$ such that for all $k$,
\[
\frac{2^{\sum_{j=0}^{k}\bbone_{r_{j}}} \prod_{j=0}^{k-1} (n_{j} - m_{j})}{\len{v_{k}}} < \epsilon_{k}.
\]
\end{proposition}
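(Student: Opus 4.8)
### Proof plan for Proposition~\ref{Pmain}

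The plan is to control the quantity
\[
A_{k} := \frac{2^{\sum_{j=0}^{k}\bbone_{r_{j}}}\prod_{j=0}^{k-1}(n_{j}-m_{j})}{\len{v_{k}}}
\]
by tracking how it evolves from $k$ to $k+1$, and then showing the ratio $A_{k+1}/A_{k}$ is eventually summable, in fact eventually bounded by a geometric series, which gives $\sum_k A_k < \infty$ and hence the existence of the desired $\epsilon_k$ (just take $\epsilon_k = A_k + 2^{-k}$, say, to be safe). First I would record the recursion for $\len{v_{k+1}}$ coming from Proposition~\ref{words}: since $v_{k+1}$ is a concatenation of $m_k$ copies of $v_k$-blocks and one or two $u_k$-blocks, and $\len{u_k} = \len{v_k} + (n_{k-1}-m_{k-1})\len{v_{k-1}}$ (when $k\ge 1$), one gets
\[
\len{v_{k+1}} = \bigl(m_{k} - 1 + \bbone_{r_{k}}(r_{k}-1+1)\bigr)\len{v_{k}} + (1+\bbone_{r_{k}})\len{u_{k}} \geq m_{k}\len{v_{k}}
\]
(using $r_k < m_k$ and $\len{u_k} > \len{v_k}$); the precise form matters less than the two bounds $\len{v_{k+1}} \geq m_k \len{v_k}$ always, and a sharper bound when $r_k > 0$. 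Then
\[
\frac{A_{k+1}}{A_{k}} = \frac{2^{\bbone_{r_{k+1}}}(n_{k}-m_{k})\len{v_{k}}}{\len{v_{k+1}}}.
\]

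The core of the argument is then a case analysis on the combinatorial parameters, driven entirely by Propositions~\ref{svp}, \ref{rk}, and crucially \ref{thedelta}. Fix the $\delta > 0$ and $N$ from Proposition~\ref{thedelta} and consider $k \geq 2$ with $m_k \geq N$. If $r_{k+1} = 0$, then $n_k < (2-\delta)m_k$, so $n_k - m_k < (1-\delta)m_k$, and combining with $\len{v_{k+1}} \geq m_k\len{v_k}$ gives $A_{k+1}/A_k < (1-\delta)$ — geometric decay with no factor of $2$ to worry about. If $r_{k+1} > 0$, then $n_k < \frac{3-\delta}{2}m_k$, so $n_k - m_k < \frac{1-\delta}{2}m_k$; the factor $2^{\bbone_{r_{k+1}}} = 2$ appears, but $2\cdot\frac{1-\delta}{2} = 1-\delta$, so again $A_{k+1}/A_k < (1-\delta)$ using just $\len{v_{k+1}} \geq m_k \len{v_k}$. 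So once $m_k \geq N$, consecutive ratios are bounded by $1-\delta < 1$.

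The remaining obstacle — and I expect this to be the fiddly part — is the finitely-many-but-unbounded-in-position set of indices $k$ with $m_k < N$. These can occur infinitely often (e.g. the exceptional cases in Propositions~\ref{svp} and \ref{rk} force things like $m_{k-1}=1$, $n_{k-1}=3$), so one cannot simply absorb them into a constant. The key observation is that a "bad" step with $m_k < N$ contributes a factor $A_{k+1}/A_k = 2^{\bbone_{r_{k+1}}}(n_k - m_k)\len{v_k}/\len{v_{k+1}} \leq 2(n_k - m_k)/m_k \leq 2N$ (crudely), which is bounded but $> 1$; so I must show bad steps cannot cluster. For this I would invoke the structure in Propositions~\ref{svp}/\ref{rk}: whenever $r_{k+1} > 0$ (the only way to pick up a factor of $2$) with $n_k$ not already $\leq \frac{3}{2}m_k$, Proposition~\ref{rk} forces $(m_k, n_k) \in \{(3,5),(1,2)\}$ and simultaneously pins down $n_{k-1} = m_{k-1}+1$ (cases (ii), (iv)) or $n_{k-1} \le 2m_{k-1}$ with $r_{k-1} = 0$; an index with $n_{k-1} = m_{k-1}+1$ has $n_{k-1} - m_{k-1} = 1$, contributing ratio $2^{\bbone_{r_k}}\len{v_{k-1}}/\len{v_k} \le 1$ (indeed $\le \len{v_{k-1}}/(m_{k-1}\len{v_{k-1}}) \cdot 2^{\bbone_{r_k}}$, and $r_k = 0$ in those cases), so each costly step is preceded by a "free" step that more than compensates. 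Packaging this: group the indices into blocks, show each maximal run of small-$m_k$ indices has product of ratios bounded by an absolute constant $c$ and is immediately followed (or preceded) by at least one genuinely contracting step, OR more simply, show $\limsup_k A_k^{1/k} < 1$ by noting that the density of "bad" steps is controlled and each bad step costs at most $\log(2N)$ while a positive-density set of steps gains at least $-\log(1-\delta) > 0$. Once $\sum_k A_k < \infty$ is established, set $\epsilon_k := A_k + 2^{-k}$ (or just $\epsilon_k = \max(A_k, 2^{-k})$) to get a summable sequence strictly dominating $A_k$, completing the proof. The main obstacle is thus the bookkeeping for the low-$m_k$ exceptional cases; everything else is a direct consequence of Proposition~\ref{thedelta}.
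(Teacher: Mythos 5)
Your overall framing matches the paper's: you set $\beta_{k} := A_{k+1}/A_{k} = \frac{2^{\bbone_{r_{k+1}}}(n_{k}-m_{k})\len{v_{k}}}{\len{v_{k+1}}}$ and aim to show the product $\prod\beta_{j}$ decays geometrically, which is precisely the quantity the paper tracks. Your handling of the large-$m_k$ case via Proposition~\ref{thedelta} is essentially the paper's Lemma~\ref{delta}, and that part is sound.

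However, there is a genuine gap in your treatment of indices with $m_{k} < N$, and the repair you sketch is built on a false premise. You assert that ``the density of `bad' steps is controlled,'' but nothing in Propositions~\ref{svp}, \ref{rk}, or \ref{thedelta} bounds the density of indices with $m_{k} < N$ --- indeed such indices can have density one. The paper's own Example~1.2 has $m_{k}=3$ and $n_{k}=5$ for \emph{every} $k$, so \emph{all} indices are ``bad'' under your dichotomy and your density argument collapses. Your alternative remark that ``each costly step is preceded by a free step that more than compensates'' points in the right direction, but it is precisely here that the real work of the paper lives, and it cannot be dispatched as an afterthought. The key fact you are missing is the paper's Lemma~\ref{toinfty}: writing $\beta_{k}=\frac{a_{k+1}}{b_{k}+\beta_{k-1}}$ (the paper's Lemma~\ref{2.4}, a continued-fraction-type recursion), if $a_{k+1}\le b_{k}$ and both $\beta_{k},\beta_{k-1}\ge 1-\delta$ then $b_{k}\ge(1-\delta)^{2}/\delta$. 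So when $b_{k}$ is small, two consecutive $\beta$'s cannot both be near $1$; the decay for small $m_{k}$ comes from this arithmetic rigidity, not from the bad steps being sparse. The paper then needs a further structural fact (Lemma~\ref{allbetas}) covering the cases $a_{k+1}>b_{k}$, namely that there is always some $i_{k}\in\{0,1,2\}$ with $a_{k-i_{k}+1}\le b_{k-i_{k}}$ and $\prod_{j=k-i_{k}}^{k}\beta_{j}<1-\tfrac12(1-\beta_{k-i_{k}})$; this requires the entire case analysis of Lemmas~\ref{2n} through \ref{2m+2lemma}, driven by Propositions~\ref{svp} and \ref{rk}. Assembling these into the final geometric bound (Lemma~\ref{betaconvexp}) shows that every window of at most four consecutive indices contributes a factor at most $1-\delta/2$. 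None of this is in your proposal, so as written it does not yield the proposition.
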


The proof of Proposition~\ref{Pmain} will first require a few technical lemmas.
Throughout this section, let $X$ be an infinite minimal subshift with $\limsup \frac{p(q)}{q} < 1.5$ and $u_{k}$ and $v_{k}$ be the words from Proposition \ref{words}.

We define some auxiliary sequences which will be crucial throughout the remainder of the paper. For all $k \geq 0$, define
\begin{equation}\label{ab}
a_{k+1} = 2^{\bbone_{r_{k+1}}}(n_{k} - m_{k}), \quad b_{k} = m_{k} + r_{k}, \textrm{ and } a_{0} = 2^{\bbone_{r_{0}}}.
\end{equation}

Also set $\beta_{k} = \frac{a_{k+1}\len{v_{k}}}{\len{v_{k+1}}} > 0$ for $k \geq 0$.

\begin{lemma}\label{a2}
For all $k \geq 2$, $a_{k+1} \leq b_{k} + 2$.  If $a_{k+1} = b_{k} + 2$ then $r_{k+1} = 0$ and $n_{k} = 2m_{k} + 2$.
\end{lemma}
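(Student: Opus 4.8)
The plan is a short case analysis on the value of $r_{k+1}$, feeding in the restrictions on the substitution parameters already established in Propositions~\ref{svp} and~\ref{rk}. Recall from~(\ref{ab}) that $a_{k+1} = 2^{\bbone_{r_{k+1}}}(n_{k} - m_{k})$ and $b_{k} = m_{k} + r_{k}$, and that $1 \le m_{k} < n_{k}$ by Proposition~\ref{words}; throughout we use the standing assumption $k \ge 2$ so that both Propositions~\ref{svp} and~\ref{rk} are applicable.

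First I would dispose of the case $r_{k+1} > 0$, where $a_{k+1} = 2(n_{k} - m_{k})$. Since $k \ge 2$ and $r_{k+1} > 0$, Proposition~\ref{rk} gives $n_{k} \le \frac{4}{3}m_{k} + 1$, so $2(n_{k} - m_{k}) \le \frac{2}{3}m_{k} + 2 < m_{k} + r_{k} + 2 = b_{k} + 2$ (using $m_{k} \ge 1$). Thus $a_{k+1} < b_{k}+2$ strictly, so the equality case cannot occur when $r_{k+1} > 0$.

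Next, the case $r_{k+1} = 0$, where $a_{k+1} = n_{k} - m_{k}$. The key observation here is that every one of the three alternatives in Proposition~\ref{svp} lists $r_{k} = 0$; contrapositively, if $r_{k} > 0$ (and $k \ge 2$) then $n_{k} \le 2m_{k}$. So if $r_{k} > 0$ we get $a_{k+1} = n_{k} - m_{k} \le m_{k} < m_{k} + r_{k} + 2 = b_{k} + 2$, again strictly. If instead $r_{k} = 0$, then either $n_{k} \le 2m_{k}$ outright, or $n_{k} > 2m_{k}$ and Proposition~\ref{svp} forces $n_{k} \in \{2m_{k}+1, 2m_{k}+2\}$; either way $n_{k} \le 2m_{k} + 2$, hence $a_{k+1} = n_{k} - m_{k} \le m_{k} + 2 = b_{k} + 2$, with equality precisely when $n_{k} = 2m_{k} + 2$.

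Combining: $a_{k+1} \le b_{k} + 2$ in all cases, and if equality holds then the $r_{k+1} > 0$ branch is excluded (it was strict), so $r_{k+1} = 0$; within that branch the $r_{k} > 0$ subcase was also strict, so $r_{k} = 0$ and $n_{k} = 2m_{k} + 2$ — exactly the claimed characterization. There is essentially no analytic content here; the only point requiring care is extracting ``$r_{k} > 0 \Rightarrow n_{k} \le 2m_{k}$'' from Proposition~\ref{svp} by noticing that all of its cases record $r_{k} = 0$, and bookkeeping the hypothesis $k \ge 2$ so that Propositions~\ref{svp} and~\ref{rk} genuinely apply.
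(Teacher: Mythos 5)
Your proof is correct and follows essentially the same structure as the paper's: split on whether $r_{k+1}$ is positive, then invoke Propositions~\ref{svp} and~\ref{rk}. The only real difference is that you use the aggregate bound $n_k \le \tfrac{4}{3}m_k + 1$ from Proposition~\ref{rk} for the $r_{k+1}>0$ branch where the paper works through the three subcases individually, and you add an (unnecessary but harmless) subsplit on $r_k$ in the $r_{k+1}=0$ branch where the paper simply uses $m_k \le b_k$.
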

\begin{proof}
By Proposition \ref{svp},  $n_{k} \leq 2m_{k} + 2$ so if $r_{k+1} = 0$ then $a_{k+1} = n_{k} - m_{k} \leq m_{k} + 2 \leq b_{k} + 2$.

If $r_{k+1} > 0$ then by Proposition \ref{rk}, either $n_{k} \leq \frac{3}{2} m_{k}$ or $(m_{k},n_{k}) = (1,2)$ or $(m_{k},n_{k}) = (3,5)$, all of which preclude $a_{k+1} = b_{k} + 2$.  If $n_{k} \leq \frac{3}{2}m_{k}$ then $a_{k+1} = 2(n_{k} - m_{k}) \leq m_{k} \leq b_{k}$.  If $(m_{k},n_{k}) = (1,2)$ then $a_{k+1} = 2(2-1) = 2 = m_{k} + 1 \leq b_{k} + 1$.   If $(m_{k},n_{k}) = (3,5)$ then $a_{k+1} = 2(5-3) = 4 = 3 + 1 \leq b_{k} + 1$.
\end{proof}

\begin{lemma}\label{vrec}
For all $k \geq 1$,
\[
\len{v_{k+1}} = b_{k}\len{v_{k}} + a_{k}\len{v_{k-1}}.
\]
\end{lemma}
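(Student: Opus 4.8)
The plan is to unwind the definitions of $v_{k+1}$ and $u_{k}$ from Proposition~\ref{words} and reduce the claim to a one-line arithmetic identity. The key preliminary observation, valid for all $k \geq 1$, is that
\[
\len{u_{k}} - \len{v_{k}} = (n_{k-1} - m_{k-1})\len{v_{k-1}}.
\]
Indeed, applying Proposition~\ref{words} at index $k-1$: if $r_{k-1} = 0$ then $v_{k} = v_{k-1}^{m_{k-1}-1}u_{k-1}$ and $u_{k} = v_{k-1}^{n_{k-1}-1}u_{k-1}$, while if $r_{k-1} > 0$ then $v_{k} = v_{k-1}^{m_{k-1}-1}u_{k-1}v_{k-1}^{r_{k-1}-1}u_{k-1}$ and $u_{k} = v_{k-1}^{n_{k-1}-1}u_{k-1}v_{k-1}^{r_{k-1}-1}u_{k-1}$. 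In either case the two words differ only in the length of their leading run of copies of $v_{k-1}$ (namely $m_{k-1}-1$ versus $n_{k-1}-1$), which gives the displayed identity.

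Next I would split into the two cases of Proposition~\ref{words} at index $k$. If $r_{k} = 0$, then $v_{k+1} = v_{k}^{m_{k}-1}u_{k}$, so $\len{v_{k+1}} = (m_{k}-1)\len{v_{k}} + \len{u_{k}}$; substituting the identity above yields $\len{v_{k+1}} = m_{k}\len{v_{k}} + (n_{k-1}-m_{k-1})\len{v_{k-1}}$. Since $r_{k} = 0$ we have $b_{k} = m_{k}$ and $\bbone_{r_{k}} = 0$, whence $a_{k} = n_{k-1}-m_{k-1}$, matching the claimed formula. If instead $r_{k} > 0$, then $v_{k+1} = v_{k}^{m_{k}-1}u_{k}v_{k}^{r_{k}-1}u_{k}$, so $\len{v_{k+1}} = (m_{k}+r_{k}-2)\len{v_{k}} + 2\len{u_{k}}$; substituting the identity gives $\len{v_{k+1}} = (m_{k}+r_{k})\len{v_{k}} + 2(n_{k-1}-m_{k-1})\len{v_{k-1}}$. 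Now $b_{k} = m_{k}+r_{k}$ and $\bbone_{r_{k}} = 1$, so $a_{k} = 2(n_{k-1}-m_{k-1})$, and the formula follows again.

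This is a purely computational argument with no genuine obstacle. The only points requiring care are that the factor $2^{\bbone_{r_{k}}}$ in $a_{k}$ is keyed to the index $k$ (the substitution producing $v_{k+1}$) rather than $k-1$, and that the case split governing the expansion of $v_{k+1}$ is on $r_{k}$, whereas the expansion of $\len{u_{k}}$ depends on $r_{k-1}$ — but harmlessly, since the difference $\len{u_{k}} - \len{v_{k}}$ comes out the same in both sub-cases.
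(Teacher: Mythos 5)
Your proof is correct and follows essentially the same route as the paper: establish $\len{u_{k}} - \len{v_{k}} = (n_{k-1} - m_{k-1})\len{v_{k-1}}$, then split on $r_k = 0$ versus $r_k > 0$ to expand $\len{v_{k+1}}$ and substitute. Your explicit remarks on the indexing of $2^{\bbone_{r_k}}$ and on the harmlessness of the mismatched case split for $\len{u_k}$ make the argument slightly more careful than the paper's compressed version, but the content is identical.
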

\begin{proof}
Since $\len{u_{k+1}} - \len{v_{k+1}} = (n_{k} - m_{k})\len{v_{k}}$,
if $r_{k} > 0$ then
\begin{align*}
\len{v_{k+1}} &= (m_{k}+r_{k}-2)\len{v_{k}} + 2\len{u_{k}} = (m_{k}+r_{k})\len{v_{k}} + 2(\len{u_{k}} - \len{v_{k}}) \\
&= (m_{k}+r_{k})\len{v_{k}} + 2(n_{k-1}-m_{k-1})\len{v_{k-1}} = b_{k}\len{v_{k}} + a_{k}\len{v_{k-1}}
\end{align*}
and if $r_{k} = 0$ then $\len{v_{k+1}} = (m_{k}-1)\len{v_{k}} + \len{u_{k}} = m_{k}\len{v_{k}} + (n_{k-1} - m_{k-1})\len{v_{k-1}} = b_{k}\len{v_{k}} + a_{k}\len{v_{k-1}}$.
\end{proof}

\begin{lemma}\label{2.4}
For $k \geq 1$,
\[
\beta_{k} = \frac{a_{k+1}}{b_{k} + \beta_{k-1}}.
\]
\end{lemma}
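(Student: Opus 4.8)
The plan is to simply unwind the definitions and substitute the recursion from Lemma~\ref{vrec}. By definition, $\beta_{k} = \frac{a_{k+1}\len{v_{k}}}{\len{v_{k+1}}}$, and for $k \geq 1$ Lemma~\ref{vrec} gives $\len{v_{k+1}} = b_{k}\len{v_{k}} + a_{k}\len{v_{k-1}}$. Substituting this into the denominator and then dividing numerator and denominator by $\len{v_{k}}$ yields
\[
\beta_{k} = \frac{a_{k+1}\len{v_{k}}}{b_{k}\len{v_{k}} + a_{k}\len{v_{k-1}}} = \frac{a_{k+1}}{b_{k} + \frac{a_{k}\len{v_{k-1}}}{\len{v_{k}}}}.
\]
Finally, observe that $\frac{a_{k}\len{v_{k-1}}}{\len{v_{k}}}$ is exactly $\beta_{k-1}$ by the defining formula for $\beta$ with index $k-1$ (which is valid since $k-1 \geq 0$), so the right-hand side equals $\frac{a_{k+1}}{b_{k} + \beta_{k-1}}$, as claimed.

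There is essentially no obstacle here: the only points requiring a moment's care are that Lemma~\ref{vrec} is stated for $k \geq 1$ (matching the hypothesis of this lemma) and that $\beta_{k-1}$ is defined precisely when $k-1 \geq 0$, so the manipulation is legitimate for all $k \geq 1$. One could also note in passing that all quantities involved are strictly positive (the $a_j$, $b_j$ are positive integers and the $\len{v_j}$ are positive), so no division-by-zero issue arises.
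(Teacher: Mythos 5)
Your proof is correct and follows exactly the same route as the paper's: substitute the recursion from Lemma~\ref{vrec} into the denominator of $\beta_{k}$, divide through by $\len{v_{k}}$, and recognize the resulting $\frac{a_{k}\len{v_{k-1}}}{\len{v_{k}}}$ as $\beta_{k-1}$. The extra remarks about index validity and positivity are fine but not needed.
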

\begin{proof}
$
\beta_{k} = \frac{a_{k+1}\len{v_{k}}}{b_{k}\len{v_{k}} + a_{k}\len{v_{k-1}}}
= \frac{a_{k+1}}{b_{k} + a_{k}\frac{\len{v_{k-1}}}{\len{v_{k}}}} = \frac{a_{k+1}}{b_{k} + \beta_{k-1}}
$.
\end{proof}

The next several lemmas establish that the $\beta_{k}$ or products of them are always less than one, the first step in establishing the desired exponential decay.

\begin{lemma}\label{2n}
If $k \geq 1$ and $a_{k+1} \leq b_{k}$ then $\beta_{k} < 1$.
\end{lemma}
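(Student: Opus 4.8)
The plan is to apply the recursion for $\beta_k$ established in Lemma~\ref{2.4} together with the positivity of the $\beta$-sequence built into its definition. Specifically, for $k \geq 1$ we have $\beta_k = \dfrac{a_{k+1}}{b_k + \beta_{k-1}}$, and $\beta_{k-1} = \dfrac{a_k \len{v_{k-1}}}{\len{v_k}} > 0$ (this is where the hypothesis $k \geq 1$ is needed, so that $\beta_{k-1}$ is defined, and all the $a_j$ and word-lengths are positive).

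From the hypothesis $a_{k+1} \leq b_k$ we then estimate
\[
\beta_k = \frac{a_{k+1}}{b_k + \beta_{k-1}} \leq \frac{b_k}{b_k + \beta_{k-1}} < 1,
\]
the last inequality being strict precisely because $\beta_{k-1} > 0$. This completes the argument.

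There is essentially no obstacle here; the lemma is an immediate consequence of Lemma~\ref{2.4} and the strict positivity of $\beta_{k-1}$, and it serves only as a base case / building block for the subsequent lemmas (handling the case $a_{k+1} \leq b_k$, with the remaining cases $a_{k+1} = b_k + 1$ and $a_{k+1} = b_k + 2$ presumably treated separately using Lemmas~\ref{a2} and~\ref{vrec} and the finer structure from Propositions~\ref{svp} and~\ref{rk}). If one wanted to be careful about edge cases, the only thing to verify is that $b_k > 0$, which holds since $b_k = m_k + r_k \geq m_k \geq 1$.
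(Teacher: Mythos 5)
Your proof is correct and is essentially the paper's argument: both invoke Lemma~\ref{2.4} and the strict positivity of $\beta_{k-1}$, the only cosmetic difference being that you first replace $a_{k+1}$ by $b_k$ in the numerator and then drop $\beta_{k-1}$, whereas the paper first drops $\beta_{k-1}$ from the denominator and then uses $a_{k+1}/b_k \le 1$.
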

\begin{proof}
Since $\beta_{k-1} > 0$, by Lemma \ref{2.4},
$
\beta_{k} = \frac{a_{k+1}}{b_{k} + \beta_{k-1}} < \frac{a_{k+1}}{b_{k}} \leq 1
$.
\end{proof}

\begin{lemma}\label{trick}
If $k \geq 1$ and $a_{k+1} \leq b_{k} + 1$ and $\beta_{k-1} < 1$ then $\beta_{k}\beta_{k-1} < 1 - \frac{1 - \beta_{k-1}}{2} < 1$.
\end{lemma}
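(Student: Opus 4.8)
The plan is to feed the recursion $\beta_{k}=\frac{a_{k+1}}{b_{k}+\beta_{k-1}}$ from Lemma~\ref{2.4} into the product $\beta_{k}\beta_{k-1}$ and reduce the claim to an elementary polynomial inequality. First I would write $\beta_{k}\beta_{k-1}=\frac{a_{k+1}\beta_{k-1}}{b_{k}+\beta_{k-1}}$, and, since $\beta_{k-1}>0$ and $a_{k+1}\le b_{k}+1$ by hypothesis, bound this above by $\frac{(b_{k}+1)\beta_{k-1}}{b_{k}+\beta_{k-1}}$; note every denominator here is strictly positive because $\beta_{j}>0$ for all $j$. It then suffices to show $\frac{(b_{k}+1)\beta_{k-1}}{b_{k}+\beta_{k-1}}<\frac{1+\beta_{k-1}}{2}$, since $\frac{1+\beta_{k-1}}{2}=1-\frac{1-\beta_{k-1}}{2}$, and this last quantity is $<1$ because $\beta_{k-1}<1$.

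For the displayed inequality I would clear the positive denominators: it is equivalent to $2(b_{k}+1)\beta_{k-1}<(1+\beta_{k-1})(b_{k}+\beta_{k-1})$, and expanding and collecting terms this becomes $0<(b_{k}-\beta_{k-1})(1-\beta_{k-1})$. Both factors are strictly positive: $1-\beta_{k-1}>0$ is exactly the hypothesis $\beta_{k-1}<1$, and $b_{k}-\beta_{k-1}>0$ because $b_{k}=m_{k}+r_{k}\ge m_{k}\ge1$ (the $m_{k}$ of Proposition~\ref{words} are positive integers) while $\beta_{k-1}<1$. Chaining the two estimates gives $\beta_{k}\beta_{k-1}<1-\frac{1-\beta_{k-1}}{2}<1$, as required.

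There is no genuine obstacle here: this lemma uses none of the structural Propositions~\ref{svp} or~\ref{rk}, only the recursion of Lemma~\ref{2.4} together with positivity of the $\beta_{j}$. The one point requiring a little care is bookkeeping — one must keep $b_{k}+\beta_{k-1}>0$ and $\beta_{k-1}>0$ in view throughout so that multiplying out preserves the strict inequalities, and check that the factorization $(b_{k}-\beta_{k-1})(1-\beta_{k-1})$ really emerges with both factors positive (rather than both negative, which would be consistent with the product being positive but would not follow from the hypotheses).
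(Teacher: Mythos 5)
Your proof is correct and follows essentially the same route as the paper's: both feed the hypothesis $a_{k+1}\le b_k+1$ into the identity $\beta_k\beta_{k-1}=\frac{a_{k+1}\beta_{k-1}}{b_k+\beta_{k-1}}$ from Lemma~\ref{2.4} and reduce to an elementary inequality in $b_k$ and $\beta_{k-1}$ using $\beta_{k-1}<1$ and $b_k\ge 1$. The only cosmetic difference is the final algebra --- you cross-multiply and exhibit the factorization $(b_k-\beta_{k-1})(1-\beta_{k-1})>0$, whereas the paper rewrites the bound as $1-\frac{b_k(1-\beta_{k-1})}{b_k+\beta_{k-1}}$ and invokes monotonicity of $b\mapsto\frac{b(1-\beta_{k-1})}{b+1}$ together with $b_k\ge1$.
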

\begin{proof}
Since $\beta_{k-1} < 1$, $1 - \beta_{k-1} > 0$ and since $\frac{b(1-\beta_{k-1})}{b+1}$ is then increasing with $b$, by Lemma \ref{2.4},
\[
\beta_{k}\beta_{k-1} = \frac{a_{k+1}\beta_{k-1}}{b_{k} + \beta_{k-1}} \leq \frac{(b_{k}+1)\beta_{k-1}}{b_{k} + \beta_{k-1}}
= 1 - \frac{b_{k}(1 - \beta_{k-1})}{b_{k} + \beta_{k-1}} < 1 - \frac{b_{k}(1 - \beta_{k-1})}{b_{k}+1} \leq 1 - \frac{1 - \beta_{k-1}}{2} < 1. \qedhere
\]
\end{proof}

\begin{lemma}\label{2m+1}
If $k \geq 2$ and $n_{k} = 2m_{k} + 1$ and $n_{k-1} \leq 2m_{k-1}$ then $a_{k} \leq b_{k-1}$ and $\beta_{k}\beta_{k-1} < 1 - \frac{1 - \beta_{k-1}}{2} < 1$.
\end{lemma}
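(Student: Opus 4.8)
The plan is to reduce the statement to the monotonicity facts already established (Lemmas~\ref{2n} and~\ref{trick}); the only genuine work is extracting $r_k = 0$ from the hypotheses, after which both assertions follow from short computations with the recursions defining $a_k$, $b_k$, $\beta_k$.

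First I would apply Proposition~\ref{svp}, which is available because $n_k = 2m_k + 1 > 2m_k$ and $k \geq 2$. Of its three alternatives, the first ($n_k = 2m_k + 2$) is incompatible with $n_k = 2m_k + 1$, and the third forces $n_{k-1} = 3 > 2 = 2m_{k-1}$, contradicting the hypothesis $n_{k-1} \leq 2m_{k-1}$; hence we are in the second alternative, and in particular $r_k = 0$. By the definition of $a_k$ this gives $a_k = n_{k-1} - m_{k-1}$, so $a_k \leq 2m_{k-1} - m_{k-1} = m_{k-1} \leq m_{k-1} + r_{k-1} = b_{k-1}$, which is the first claim. Feeding $a_k \leq b_{k-1}$ into Lemma~\ref{2n} at index $k-1$ (legitimate since $k - 1 \geq 1$) then yields $\beta_{k-1} < 1$.

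For the second claim I would invoke Lemma~\ref{trick} at index $k$, whose hypotheses are $a_{k+1} \leq b_k + 1$ and $\beta_{k-1} < 1$; the latter is now known. For the former, observe that if $r_{k+1} > 0$ then Proposition~\ref{rk} (applicable since $k \geq 2$) would give $n_k \leq \frac{4}{3}m_k + 1 < 2m_k + 1$, contradicting $n_k = 2m_k + 1$; so $r_{k+1} = 0$, and then $a_{k+1} = n_k - m_k = m_k + 1 = b_k + 1$ using $r_k = 0$. Lemma~\ref{trick} then delivers exactly $\beta_k\beta_{k-1} < 1 - \frac{1-\beta_{k-1}}{2} < 1$.

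The only step needing any care is the use of Proposition~\ref{svp} to pin down $r_k = 0$ (together with the symmetric use of Proposition~\ref{rk} to get $r_{k+1} = 0$); everything afterward is bookkeeping with the defining recursions, so I anticipate no real obstacle.
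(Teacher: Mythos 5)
Your proposal is correct and follows essentially the same route as the paper: use Proposition~\ref{svp} to extract $r_k = 0$, the hypothesis $n_{k-1} \leq 2m_{k-1}$ to get $a_k = n_{k-1} - m_{k-1} \leq m_{k-1} \leq b_{k-1}$, Lemma~\ref{2n} at index $k-1$ for $\beta_{k-1} < 1$, Proposition~\ref{rk} to force $r_{k+1} = 0$, compute $a_{k+1} = b_k + 1$, and invoke Lemma~\ref{trick}. The only cosmetic difference is that you explicitly rule out case~(\ref{svp+1b}) using the hypothesis $n_{k-1} \leq 2m_{k-1}$, whereas the paper simply notes $r_k = 0$ holds in both (\ref{svp+1a}) and (\ref{svp+1b}) and leaves the $a_k \leq b_{k-1}$ verification implicit.
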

\begin{proof}
By Proposition \ref{rk}, $r_{k+1} = 0$.
By Proposition \ref{svp} (cases (\ref{svp+1a}) and (\ref{svp+1b})), $r_{k} = 0$ so by Lemma \ref{2n}, $\beta_{k-1} < 1$.  As $a_{k+1} = n_{k} - m_{k} = m_{k} + 1 = b_{k} + 1$, Lemma \ref{trick} gives the claim.
\end{proof}

\begin{lemma}\label{2m+1b}
If $k \geq 2$ and $n_{k} = 2m_{k} + 1$ and $n_{k-1} > 2m_{k-1}$ then $a_{k-1} \leq b_{k-2}$ and $\beta_{k}\beta_{k-1}\beta_{k-2} < 1 - \frac{1 - \beta_{k-2}}{2} < 1$.
\end{lemma}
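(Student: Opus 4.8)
The plan is to use Proposition~\ref{svp} to pin down all the substitution parameters near index $k$, read off the relevant values of $a_{j}$ and $b_{j}$ from (\ref{ab}), and then run these through the recursion $\beta_{j} = a_{j+1}/(b_{j}+\beta_{j-1})$ of Lemma~\ref{2.4}, combining with Lemmas~\ref{2n} and \ref{trick} in the spirit of the preceding lemmas.

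First, since $k \geq 2$ and $n_{k} = 2m_{k}+1 > 2m_{k}$, Proposition~\ref{svp} applies. Its case~(i) is excluded because $n_{k} = 2m_{k}+1 \neq 2m_{k}+2$, and its case~(ii) is excluded because it requires $n_{k-1} \leq 2m_{k-1}$, contrary to hypothesis; so we are in case~(iii), which gives $m_{k-1} = 1$, $n_{k-1} = 3$, $n_{k-2} = m_{k-2}+1$, and $r_{k} = r_{k-1} = r_{k-2} = 0$. Also $n_{k} = 2m_{k}+1 > \frac{4}{3}m_{k}+1$, so the contrapositive of Proposition~\ref{rk} forces $r_{k+1} = 0$ (exactly as in Lemma~\ref{2m+1}). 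Substituting into (\ref{ab}) we read off $a_{k-1} = n_{k-2}-m_{k-2} = 1$, $a_{k} = n_{k-1}-m_{k-1} = 2$, $a_{k+1} = n_{k}-m_{k} = m_{k}+1$, together with $b_{k-2} = m_{k-2}$, $b_{k-1} = 1$, $b_{k} = m_{k}$. In particular $a_{k-1} = 1 \leq m_{k-2} = b_{k-2}$, which is the first assertion.

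Next I would assemble the chain of $\beta$-inequalities. From $a_{k-1} \leq b_{k-2}$ and Lemma~\ref{2n} we get $\beta_{k-2} < 1$ whenever $k \geq 3$; in the boundary case $k = 2$ one instead checks directly that $\beta_{0} = a_{1}\len{v_{0}}/\len{v_{1}} = \len{v_{0}}/\len{v_{1}} < 1$, since here $a_{1} = 1$ (as $n_{0} = m_{0}+1$ and $r_{1} = 0$) and $\len{v_{1}} = (m_{0}-1)\len{v_{0}} + \len{u_{0}} \geq \len{u_{0}} > \len{v_{0}}$ by Proposition~\ref{words}. Then Lemma~\ref{2.4} gives $\beta_{k-1} = a_{k}/(b_{k-1}+\beta_{k-2}) = 2/(1+\beta_{k-2}) > 1$, and hence $\beta_{k} = a_{k+1}/(b_{k}+\beta_{k-1}) = (m_{k}+1)/(m_{k}+\beta_{k-1}) < 1$. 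Separately, $a_{k} = 2 = b_{k-1}+1$ together with $\beta_{k-2} < 1$ lets us apply Lemma~\ref{trick} at index $k-1$ to get $\beta_{k-1}\beta_{k-2} < 1 - \frac{1-\beta_{k-2}}{2} < 1$. Multiplying this positive quantity by $\beta_{k} < 1$ yields $\beta_{k}\beta_{k-1}\beta_{k-2} < 1 - \frac{1-\beta_{k-2}}{2} < 1$, completing the proof.

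The only genuinely delicate points are the index bookkeeping for the cited lemmas — most notably the $k = 2$ case, where $\beta_{0} < 1$ is not a special case of Lemma~\ref{2n} and has to be verified by hand — and the observation that here $\beta_{k-1}$ is \emph{greater} than $1$ (unlike the small $\beta$'s appearing in the preceding lemmas), which is exactly what forces $\beta_{k} < 1$ and keeps the triple product under control. Everything else is routine substitution into the recursions.
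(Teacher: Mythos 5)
Your proof is correct, and your endgame takes a somewhat different route from the paper's. The setup is identical: Proposition~\ref{svp} case~(iii) pins down the nearby parameters, Proposition~\ref{rk} forces $r_{k+1}=0$, and the values $a_{k+1}=b_k+1$, $a_k=2$, $b_{k-1}=1$, $a_{k-1}=1$ follow, giving $a_{k-1}\le b_{k-2}$ at once. The paper then substitutes these into Lemma~\ref{2.4} and evaluates the triple product $\beta_k\beta_{k-1}\beta_{k-2}$ in one stroke, rewriting the result as $1-(b_k+2)(1-\beta_{k-2})/(b_k+b_k\beta_{k-2}+2)$ and estimating the denominator by $2(b_k+2)$. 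You instead factor the product: you observe that $\beta_{k-1}=2/(1+\beta_{k-2})>1$, deduce from this that $\beta_k=(m_k+1)/(m_k+\beta_{k-1})<1$, apply Lemma~\ref{trick} one index down (using $a_k=b_{k-1}+1$ and $\beta_{k-2}<1$) to bound $\beta_{k-1}\beta_{k-2}$ by $1-\tfrac12(1-\beta_{k-2})$, and multiply by $\beta_k<1$. The factored version is more modular — it reuses Lemma~\ref{trick} rather than redoing the algebra — and makes the mechanism visible: a $\beta_{k-1}>1$ pushing $\beta_k$ below $1$, in the same spirit as Lemma~\ref{2m+1}. You also explicitly patch the boundary case $k=2$, where $\beta_0<1$ falls outside the literal $k\ge1$ hypothesis of Lemma~\ref{2n}; the paper leaves this implicit (the needed inequality $\len{v_1}>m_0\len{v_0}$ indeed follows from $\len{u_0}>\len{v_0}$ and $r_0=0$, as you note). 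Both proofs are correct; yours is arguably cleaner.
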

\begin{proof}
By Proposition \ref{rk}, $r_{k+1} = 0$.  By Proposition \ref{svp} 
case (\ref{svp+1b}), $r_{k} = 0$ and $m_{k-1} = 1$ and $n_{k-1} = 3$ and $r_{k-1} = 0$ and $n_{k-2} = m_{k-2} + 1$.  So $a_{k+1} = b_{k} + 1$ and $a_{k} = 2$ and $b_{k-1} = 1$ and $a_{k-1} = 1$.  By Lemma \ref{2n}, then $\beta_{k-2} < 1$.  Then by Lemma \ref{2.4},
\begin{align*}
\beta_{k}\beta_{k-1}\beta_{k-2} &= \frac{b_{k}+1}{b_{k} + \frac{2}{1 + \beta_{k-2}}} \frac{2}{1 + \beta_{k-2}} \beta_{k-2} 
= \frac{(2b_{k} + 2)\beta_{k-2}}{b_{k} + b_{k}\beta_{k-2} + 2}
= 1 - \frac{(b_{k}+2)(1 - \beta_{k-2})}{b_{k} + b_{k}\beta_{k-2} + 2}
\end{align*}
and since $\beta_{k-2} < 1$ implies $b_{k} + 2 + b_{k}\beta_{k-2} < 2b_{k} + 2 < 2(b_{k} + 2)$, then
$
\beta_{k}\beta_{k-1}\beta_{k-2} < 1 - \frac{1 - \beta_{k-2}}{2} < 1
$.
\end{proof}

\begin{lemma}\label{rcase}
If $k \geq 2$ and $r_{k+1} > 0$, $n_{k} > \frac{3}{2}m_{k}$ and $n_{k-1} \leq 2m_{k-1}$ then $a_{k} \leq b_{k-1}$ and $\beta_{k}\beta_{k-1} < 1 - \frac{1 - \beta_{k-1}}{2} < 1$.
\end{lemma}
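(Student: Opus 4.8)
The plan is to invoke Proposition~\ref{rk} to pin down the possible shapes of $(m_k,n_k)$, read off that $a_{k+1}=b_k+1$, obtain $\beta_{k-1}<1$ from Lemma~\ref{2n}, and then finish with Lemma~\ref{trick}. Since $r_{k+1}>0$, Proposition~\ref{rk} places us in one of its four cases; the hypothesis $n_k>\tfrac{3}{2}m_k$ rules out case~(\ref{rk3/2}), leaving cases~(\ref{rk35}), (\ref{rk12a}), and~(\ref{rk12b}). The additional hypothesis $n_{k-1}\leq 2m_{k-1}$ eliminates case~(\ref{rk12b}): there $m_{k-1}=1$ and $n_{k-1}=3>2=2m_{k-1}$. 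So we are in case~(\ref{rk35}), where $(m_k,n_k)=(3,5)$ and $r_k=0$, or in case~(\ref{rk12a}), where $(m_k,n_k)=(1,2)$ and $r_k=0$.

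In both surviving cases $r_k=0$, hence $b_k=m_k$, and since $r_{k+1}>0$ we have $a_{k+1}=2(n_k-m_k)$. A direct check gives $a_{k+1}=2(5-3)=4=3+1=b_k+1$ in case~(\ref{rk35}) and $a_{k+1}=2(2-1)=2=1+1=b_k+1$ in case~(\ref{rk12a}); so in either case $a_{k+1}=b_k+1$.

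Next, again using $r_k=0$, we have $a_k=n_{k-1}-m_{k-1}$, and the hypothesis $n_{k-1}\leq 2m_{k-1}$ yields $a_k\leq m_{k-1}\leq m_{k-1}+r_{k-1}=b_{k-1}$, which is the first assertion. Applying Lemma~\ref{2n} with index $k-1$ (valid since $k\geq 2$) gives $\beta_{k-1}<1$. Finally, since $a_{k+1}\leq b_k+1$ and $\beta_{k-1}<1$, Lemma~\ref{trick} gives $\beta_k\beta_{k-1}<1-\tfrac{1-\beta_{k-1}}{2}<1$, as claimed.

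There is no real obstacle here; the one point deserving attention is that the hypothesis $n_{k-1}\leq 2m_{k-1}$ does double duty — it is exactly what kills the otherwise-problematic case~(\ref{rk12b}) of Proposition~\ref{rk} (where $n_{k-1}=3$ would make the bound $a_k\leq b_{k-1}$ fail) and simultaneously supplies the estimate $a_k\leq b_{k-1}$ needed to feed Lemma~\ref{2n}.
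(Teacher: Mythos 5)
Your proof is correct and follows essentially the same route as the paper's: reduce via Proposition~\ref{rk} to cases~(\ref{rk35}) and~(\ref{rk12a}), observe $a_{k+1}=b_k+1$, get $\beta_{k-1}<1$ from Lemma~\ref{2n}, and close with Lemma~\ref{trick}. You spell out a bit more explicitly than the paper why $a_k\leq b_{k-1}$ (via $r_k=0$ so $a_k=n_{k-1}-m_{k-1}\leq m_{k-1}$), but the underlying argument is identical.
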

\begin{proof}
By Proposition \ref{rk}, $r_{k} = 0$ and either $(m_{k},n_{k})=(1,2)$ or $(m_{k},n_{k})=(3,5)$.  By Lemma \ref{2n}, since $n_{k-1} \leq 2m_{k-1}$, then $\beta_{k-1} < 1$.  When $m_{k}=1,n_{k}=2$, we have $a_{k+1} = 2$ and $b_{k} = 1$ and when $m_{k}=3,n_{k}=5$, we have $a_{k+1} = 4$ and $b_{k} = 3$ so Lemma \ref{trick} gives the claim.
\end{proof}

\begin{lemma}\label{rcaseb}
If $k \geq 2$ and $r_{k+1} > 0$, $n_{k} > \frac{3}{2}m_{k}$ and $n_{k-1} > 2m_{k-1}$ then $a_{k-1} \leq b_{k-2}$ and $\beta_{k}\beta_{k-1}\beta_{k-2} < 1 - \frac{3(1 - \beta_{k-2})}{4} < 1$.
\end{lemma}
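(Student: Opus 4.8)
The plan is to mirror the structure of Lemmas~\ref{2m+1}--\ref{rcase}: use Proposition~\ref{rk} to pin down every relevant parameter exactly, read off the auxiliary quantities from \eqref{ab}, and then compute the product of the $\beta$'s directly from the recursion in Lemma~\ref{2.4}.

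First I would apply Proposition~\ref{rk}, which is available since $k \geq 2$ and $r_{k+1} > 0$, and observe that only its case~(\ref{rk12b}) is compatible with the hypotheses: $n_k > \frac{3}{2}m_k$ excludes case~(\ref{rk3/2}); the assumption $n_{k-1} > 2m_{k-1}$ excludes case~(\ref{rk12a}) outright, and it also excludes case~(\ref{rk35}), since there $n_{k-1} = m_{k-1}+1 \leq 2m_{k-1}$. So case~(\ref{rk12b}) holds, giving $m_k = 1$, $n_k = 2$, $m_{k-1} = 1$, $n_{k-1} = 3$, $n_{k-2} = m_{k-2}+1$, and $r_k = r_{k-1} = 0$.

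Next I would substitute these values into \eqref{ab}: since $r_{k+1} > 0$, $a_{k+1} = 2(n_k - m_k) = 2$ and $b_k = m_k + r_k = 1$; since $r_k = 0$, $a_k = n_{k-1} - m_{k-1} = 2$ and $b_{k-1} = m_{k-1} + r_{k-1} = 1$; since $r_{k-1} = 0$, $a_{k-1} = n_{k-2} - m_{k-2} = 1$. In particular $a_{k-1} = 1 \leq m_{k-2} + r_{k-2} = b_{k-2}$, which is the first assertion, and Lemma~\ref{2n} then gives $\beta_{k-2} < 1$ (when $k = 2$ one instead notes directly that $\beta_0 = \len{v_0}/\len{v_1} < 1$, since $\len{v_1} \geq \len{u_0} > \len{v_0}$). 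Two applications of Lemma~\ref{2.4} yield
\[
\beta_{k-1} = \frac{2}{1 + \beta_{k-2}}, \qquad \beta_k = \frac{2}{1 + \beta_{k-1}} = \frac{2(1 + \beta_{k-2})}{3 + \beta_{k-2}},
\]
and hence $\beta_k \beta_{k-1} \beta_{k-2} = \frac{4\beta_{k-2}}{3 + \beta_{k-2}}$.

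Finally I would check the claimed bound. Since $1 - \frac{3(1 - \beta_{k-2})}{4} = \frac{1 + 3\beta_{k-2}}{4}$, the inequality $\frac{4\beta_{k-2}}{3 + \beta_{k-2}} < \frac{1 + 3\beta_{k-2}}{4}$ clears denominators to $0 < 3(1 - \beta_{k-2})^2$, which holds because $\beta_{k-2} < 1$; and $\frac{1 + 3\beta_{k-2}}{4} < 1$ for the same reason. I do not expect a genuine obstacle here: the only care required is the case elimination in Proposition~\ref{rk} (in particular ruling out case~(\ref{rk35})) and the minor $k = 2$ edge case in the application of Lemma~\ref{2n}.
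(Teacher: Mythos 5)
Your proof is correct and follows essentially the same route as the paper's: use Proposition~\ref{rk} case~(\ref{rk12b}) to pin down $m_k, n_k, m_{k-1}, n_{k-1}, n_{k-2}, r_k, r_{k-1}$, read off $a_{k+1}=a_k=2$, $b_k=b_{k-1}=1$, $a_{k-1}=1$, then apply Lemma~\ref{2.4} twice to get $\beta_k\beta_{k-1}\beta_{k-2}=\frac{4\beta_{k-2}}{3+\beta_{k-2}}$ and check the bound using $\beta_{k-2}<1$. One genuine addition you make is the $k=2$ edge case: Lemma~\ref{2n} is stated for index $\geq 1$, so it does not directly yield $\beta_0<1$, yet the paper cites it without comment; your direct observation that $a_1 = 1$ forces $\beta_0 = \len{v_0}/\len{v_1} < 1$ (since $\len{v_1}\geq\len{u_0}>\len{v_0}$) correctly closes that small gap. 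Your verification of $\frac{4\beta_{k-2}}{3+\beta_{k-2}} < \frac{1+3\beta_{k-2}}{4}$ by clearing denominators to $0 < 3(1-\beta_{k-2})^2$ is a clean equivalent to the paper's rewriting as $1-\frac{3(1-\beta_{k-2})}{3+\beta_{k-2}}$ followed by bounding the denominator by $4$.
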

\begin{proof}
By Proposition \ref{rk} case (\ref{rk12b}), $m_{k} = 1$ and $n_{k} = 2$ and $m_{k-1} = 1$ and $n_{k-1} = 3$ and $n_{k-2} = m_{k-2} + 1$ and $r_{k} = 0$ and $r_{k-1} = 0$.  So $a_{k+1} = 2$ and $b_{k} = 1$ and $a_{k} = 2$ and $b_{k-1} = 1$ and, by Lemma \ref{2n}, $\beta_{k-2} < 1$.  Then
\begin{align*}
\beta_{k}\beta_{k-1}\beta_{k-2} &= \frac{2}{1 + \frac{2}{1 + \beta_{k-2}}} \frac{2}{1 + \beta_{k-2}} \beta_{k-2}
= \frac{4\beta_{k-2}}{3 + \beta_{k-2}} = 1 - \frac{3 - 3\beta_{k-2}}{3 + \beta_{k-2}}
\end{align*}
and since $\beta_{k-2} < 1$ implies $3 + \beta_{k-2} < 4$, we have $\beta_{k}\beta_{k-1}\beta_{k-2} < 1 - \frac{3(1 - \beta_{k-2})}{4} < 1$.
\end{proof}

\begin{lemma}\label{2m+2lemma}
If $k \geq 2$ and $n_{k} = 2m_{k}+2$ then $a_{k-1} \leq b_{k-2}$ and $\beta_{k}\beta_{k-1}\beta_{k-2} < 1 - \frac{2(1 - \beta_{k-2})}{3} < 1$.
\end{lemma}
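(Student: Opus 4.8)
The plan is to follow the same template as Lemmas \ref{2m+1b} and \ref{rcaseb}: pin down the exact values of the auxiliary quantities $a_{\bullet},b_{\bullet}$ forced by the hypothesis $n_{k}=2m_{k}+2$, use them to verify $a_{k-1}\le b_{k-2}$ (whence $\beta_{k-2}<1$ via Lemma \ref{2n}), and then collapse $\beta_{k}\beta_{k-1}\beta_{k-2}$ using the recursion of Lemma \ref{2.4}. First I would note that, since $n_{k}=2m_{k}+2>2m_{k}$ and $k\ge 2$, Proposition \ref{svp} applies, and as $n_{k}\ne 2m_{k}+1$ we are forced into case (\ref{svp+2}): thus $r_{k}=r_{k-1}=0$, $n_{k-1}=m_{k-1}+1$, and $n_{k-2}\le\frac43 m_{k-2}+1$. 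Also $2m_{k}+2>\frac43 m_{k}+1$, so the contrapositive of Proposition \ref{rk} gives $r_{k+1}=0$. Feeding this into \eqref{ab}: $a_{k+1}=n_{k}-m_{k}=m_{k}+2$, $b_{k}=m_{k}$, $a_{k}=n_{k-1}-m_{k-1}=1$, $b_{k-1}=m_{k-1}$, $a_{k-1}=n_{k-2}-m_{k-2}$, and $b_{k-2}=m_{k-2}+r_{k-2}$.

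For the first assertion, if $m_{k-2}\ge 2$ then $a_{k-1}\le\frac13 m_{k-2}+1\le m_{k-2}\le b_{k-2}$, while if $m_{k-2}=1$ then integrality forces $n_{k-2}=2$, so again $a_{k-1}=1\le b_{k-2}$; either way $a_{k-1}\le b_{k-2}$, and Lemma \ref{2n} yields $\beta_{k-2}<1$. (When $k=2$ Lemma \ref{2n} is not literally available for the index $0$, but one checks $\beta_{0}<1$ directly: $\len{v_{1}}>m_{0}\len{v_{0}}$ since $\len{u_{0}}>\len{v_{0}}$, while $a_{1}=n_{0}-m_{0}\le\frac13 m_{0}+1$ by case (\ref{svp+2}), so $\beta_{0}=a_{1}\len{v_{0}}/\len{v_{1}}<(\frac13 m_{0}+1)/m_{0}\le 1$ when $m_{0}\ge 2$, and when $m_{0}=1$ we get $n_{0}=2$, $a_{1}=1$, so $\beta_{0}<\len{v_{0}}/\len{v_{1}}<1$.)

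Finally I would apply Lemma \ref{2.4} twice to obtain $\beta_{k-1}=\frac{a_{k}}{b_{k-1}+\beta_{k-2}}=\frac{1}{m_{k-1}+\beta_{k-2}}$ and $\beta_{k}=\frac{a_{k+1}}{b_{k}+\beta_{k-1}}=\frac{m_{k}+2}{m_{k}+\beta_{k-1}}$, so that
\[
\beta_{k}\beta_{k-1}\beta_{k-2}=\frac{(m_{k}+2)\beta_{k-2}}{m_{k}m_{k-1}+m_{k}\beta_{k-2}+1}\le\frac{(m_{k}+2)\beta_{k-2}}{m_{k}(1+\beta_{k-2})+1},
\]
using $m_{k-1}\ge 1$. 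Writing $q=m_{k}\ge 1$ and $\gamma=\beta_{k-2}\in(0,1)$, clearing the positive denominators shows the desired bound $\frac{(q+2)\gamma}{q(1+\gamma)+1}<1-\frac{2(1-\gamma)}{3}=\frac{1+2\gamma}{3}$ is equivalent to $q(1+2\gamma^{2})+1-4\gamma>0$, whose left side is at least $2\gamma^{2}-4\gamma+2=2(1-\gamma)^{2}>0$ because $q\ge 1$. This gives the stated strict inequality, and the concluding ``$<1$'' follows since $\beta_{k-2}<1$. There is no genuine obstacle here; the only care needed is the bookkeeping of the $2^{\bbone_{r_{\bullet}}}$ factors in \eqref{ab} when translating the case hypotheses of Propositions \ref{svp} and \ref{rk}, and the small-$k$ ($k=2$) edge case in establishing $\beta_{k-2}<1$.
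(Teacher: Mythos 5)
Your proof is correct and follows essentially the same route as the paper's: Propositions \ref{svp} and \ref{rk} force $r_{k+1}=r_k=r_{k-1}=0$, $n_{k-1}=m_{k-1}+1$, and $n_{k-2}\leq\frac{4}{3}m_{k-2}+1$ (hence $a_{k-1}\leq b_{k-2}$), Lemma \ref{2.4} collapses the triple product to $\frac{(b_k+2)\beta_{k-2}}{b_{k-1}b_k + b_k\beta_{k-2}+1}$, and the final bound is a short algebraic check --- you cross-multiply and reduce to $q(1+2\gamma^2)+1-4\gamma\geq 2(1-\gamma)^2>0$, whereas the paper observes the expression is decreasing in $b_k$ and $b_{k-1}$ and sets both to $1$; the two are equivalent. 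Your explicit verification of $\beta_0<1$ in the $k=2$ edge case (where Lemma \ref{2n}, stated for indices $\geq 1$, doesn't literally apply) is a worthwhile bit of care that the paper leaves implicit, and your direct argument using $\len{v_1}>m_0\len{v_0}$ together with $a_1=n_0-m_0\leq\frac{1}{3}m_0+1$ is correct.
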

\begin{proof}
By Proposition \ref{rk}, $r_{k+1} = 0$.
By Proposition \ref{svp} case (\ref{svp+2}), $r_{k} = 0$ and $r_{k-1} = 0$ and $n_{k-1} = m_{k-1} + 1$, so $a_{k} = 1$, and $n_{k-2} \leq \frac{4}{3}m_{k-2} + 1$ so $n_{k-2} \leq 2m_{k-2}$.  Therefore $a_{k+1} = b_{k} + 2$ and $a_{k} = 1$ and $\beta_{k-2} < 1$ by Lemma \ref{2n}.  Observe that
\[
\beta_{k}\beta_{k-1}\beta_{k-2} = \frac{b_{k} + 2}{b_{k} + \frac{1}{b_{k-1} + \beta_{k-2}}} \frac{1}{b_{k-1} + \beta_{k-2}}\beta_{k-2}
= \frac{(b_{k} + 2)\beta_{k-2}}{b_{k-1}b_{k} + b_{k}\beta_{k-2} + 1}
\]
which is decreasing in both $b_{k}$ and $b_{k-1}$ so
\[
\beta_{k}\beta_{k-1}\beta_{k-2} \leq \frac{(b_{k}+2)\beta_{k-2}}{b_{k} + b_{k}\beta_{k-2}+1} \leq \frac{3\beta_{k-2}}{2 + \beta_{k-2}} = 1 - \frac{2(1 - \beta_{k-2})}{2 + \beta_{k-2}} < 1 - \frac{2(1 - \beta_{k-2})}{3}. \qedhere
\]
\end{proof}

We now combine all of the above lemmas bounding $\beta_{k}$ or products of them by $1$ into a single statement.

\begin{lemma}\label{allbetas}
For every $k\geq 2$ there exists $0 \leq i_{k} \leq 2$ such that $a_{k-i_{k}+1} \leq b_{k-i_{k}}$ and $\prod_{j=k-i_{k}}^{k} \beta_{j} < 1 - \frac{1}{2}(1 - \beta_{k-i_{k}}) < 1$.
\end{lemma}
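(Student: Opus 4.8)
The plan is to prove Lemma~\ref{allbetas} by a short case analysis on the size of $a_{k+1}$ relative to $b_k$, quoting in each case the appropriate one of the product estimates already established (Lemmas~\ref{2n}, \ref{2m+1}, \ref{2m+1b}, \ref{rcase}, \ref{rcaseb}, \ref{2m+2lemma}). Since $a_{k+1}$ and $b_k$ are positive integers and $a_{k+1}\le b_k+2$ by Lemma~\ref{a2}, exactly one of the following holds: $a_{k+1}\le b_k$, or $a_{k+1}=b_k+1$, or $a_{k+1}=b_k+2$. I treat these three possibilities in turn, always with the standing assumption $k\ge 2$.

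If $a_{k+1}\le b_k$, take $i_k=0$: the first required inequality is precisely the hypothesis, and Lemma~\ref{2n} gives $\beta_k<1$, which after clearing the denominator is exactly $\beta_k<1-\frac{1}{2}(1-\beta_k)$. If $a_{k+1}=b_k+2$, then Lemma~\ref{a2} forces $r_{k+1}=0$ and $n_k=2m_k+2$, so Lemma~\ref{2m+2lemma} applies and yields $i_k=2$: it gives $a_{k-1}\le b_{k-2}$ together with $\beta_k\beta_{k-1}\beta_{k-2}<1-\frac{2}{3}(1-\beta_{k-2})$, and since the lemma also records $\beta_{k-2}<1$ we have $1-\frac{2}{3}(1-\beta_{k-2})\le 1-\frac{1}{2}(1-\beta_{k-2})$, as needed.

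The remaining case $a_{k+1}=b_k+1$ I split according to $r_{k+1}$. If $r_{k+1}=0$, then $a_{k+1}=n_k-m_k$, so $n_k=2m_k+r_k+1>2m_k$; Proposition~\ref{svp} then forces $r_k=0$, whence $b_k=m_k$ and $n_k=2m_k+1$. If $r_{k+1}>0$, then $a_{k+1}=2(n_k-m_k)$, so $n_k=\frac{1}{2}(3m_k+r_k+1)>\frac{3}{2}m_k$. In either subcase I now split on whether $n_{k-1}\le 2m_{k-1}$ or $n_{k-1}>2m_{k-1}$, and the four resulting situations are exactly the hypotheses of Lemma~\ref{2m+1} and Lemma~\ref{2m+1b} (when $r_{k+1}=0$, using $n_k=2m_k+1$) and of Lemma~\ref{rcase} and Lemma~\ref{rcaseb} (when $r_{k+1}>0$, using $n_k>\frac{3}{2}m_k$). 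These yield $i_k=1$ in the two ``$n_{k-1}\le 2m_{k-1}$'' cases and $i_k=2$ in the two ``$n_{k-1}>2m_{k-1}$'' cases; in each case the stated bound on $\prod_{j=k-i_k}^{k}\beta_j$ is either exactly $1-\frac{1}{2}(1-\beta_{k-i_k})$ (Lemmas~\ref{2m+1} and \ref{2m+1b}) or the strictly stronger $1-\frac{3}{4}(1-\beta_{k-i_k})$ (Lemma~\ref{rcaseb}, whose ``$<1$'' clause again gives $\beta_{k-i_k}<1$), and each cited lemma also supplies the needed $a_{k-i_k+1}\le b_{k-i_k}$.

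There is no substantial obstacle here: all of the real work is in the preceding lemmas, and this statement is just the bookkeeping step that repackages them uniformly. The only points that require care are verifying that the three-way split on $a_{k+1}$ is exhaustive (which is Lemma~\ref{a2} together with integrality), checking that the strict inequalities $n_k>2m_k$ and $n_k>\frac{3}{2}m_k$ really hold so that the correct Proposition (\ref{svp} or \ref{rk}) — already invoked inside the auxiliary lemmas — is available, and noting that every bound of the form $1-c(1-\beta_{k-i_k})$ appearing above, with $c\in\{\frac{1}{2},\frac{2}{3},\frac{3}{4}\}$, dominates the target $1-\frac{1}{2}(1-\beta_{k-i_k})$, which is immediate because $1-\beta_{k-i_k}>0$ in every case.
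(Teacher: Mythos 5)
Your proposal is correct and takes essentially the same approach as the paper: a case split on whether $a_{k+1}$ equals $b_k$, $b_k+1$, or $b_k+2$ (justified by Lemma~\ref{a2}), then within $a_{k+1}=b_k+1$ a further split on $r_{k+1}$ and on $n_{k-1}$ versus $2m_{k-1}$, dispatching each branch to the appropriate auxiliary lemma. The one place you are slightly more careful than the paper is the subcase $a_{k+1}=b_k+1$, $r_{k+1}=0$: the paper silently writes $n_k-m_k=m_k+1$, which presupposes $r_k=0$, whereas you explicitly derive $n_k>2m_k$ and invoke Proposition~\ref{svp} to get $r_k=0$ before concluding $n_k=2m_k+1$; this is a genuine (if small) gap-fill rather than a divergence.
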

\begin{proof}
For $k$ such that $a_{k+1} \leq b_{k}$, set $i_{k} = 0$.  Lemma \ref{2n} gives that $\beta_{k} < 1$.  Then $1 - \frac{1}{2}(1 - \beta_{k-i_{k}}) = \frac{1}{2} + \frac{\beta_{k}}{2} > \beta_{k} = \prod_{j=k-i_{k}}^{k} \beta_{j}$.

By Lemma \ref{a2}, $a_{k+1} \leq b_{k} + 2$ and they are only equal when $n_{k} = 2m_{k} + 2$ and $r_{k+1} = 0$.  For $k$ such that $a_{k+1} = b_{k} + 2$, set $i_{k} = 2$ and the claim follows from Lemma \ref{2m+2lemma}.

Let $k$ such that $a_{k+1} = b_{k} + 1$.  Consider first when $r_{k+1} > 0$.  Then $2(n_{k} - m_{k}) = b_{k} + 1 \geq m_{k} + 1$ so $n_{k} > \frac{3}{2}m_{k}$.  If $n_{k-1} \leq 2m_{k-1}$ then set $i_{k} = 1$ and the claim follows from Lemma \ref{rcase}; if $n_{k-1} > 2m_{k-1}$ then set $i_{k} = 2$ and the claim follows from Lemma \ref{rcaseb}.

Now consider when $r_{k+1} = 0$ so $n_{k} - m_{k} = m_{k} + 1$.  If $n_{k-1} \leq 2m_{k-1}$ then set $i_{k} = 1$ and the claim follows from Lemma \ref{2m+1}; if $n_{k-1} > 2m_{k-1}$ then set $i_{k} = 2$ and the claim follows from Lemma \ref{2m+1b}.
\end{proof}

Our next pair of lemmas reframes the
bound on $\frac{n_{k}}{m_{k}}$ established in Proposition~\ref{thedelta} in terms of $a_{k},b_{k}$, and $\beta_{k}$.

\begin{lemma}\label{toinfty}
For $k \geq 2$, if $a_{k + 1} \leq b_{k}$ and $\beta_{k} \geq 1 - \delta$ and $\beta_{k-1} \geq 1 - \delta$ then $b_{k} \geq \frac{(1 - \delta)^{2}}{\delta}$.
\end{lemma}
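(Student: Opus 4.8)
The plan is to reduce everything to the recursion for $\beta_{k}$ from Lemma~\ref{2.4} and then substitute the hypothesized bounds. By Lemma~\ref{2.4}, $\beta_{k} = \frac{a_{k+1}}{b_{k} + \beta_{k-1}}$, so clearing the denominator gives $\beta_{k} b_{k} + \beta_{k} \beta_{k-1} = a_{k+1}$. Using the hypothesis $a_{k+1} \leq b_{k}$, this rearranges to $b_{k}(1 - \beta_{k}) \geq \beta_{k} \beta_{k-1}$, i.e.\ $b_{k} \geq \frac{\beta_{k}\beta_{k-1}}{1 - \beta_{k}}$ (note $1 - \beta_{k} > 0$ since $\beta_{k} < 1$ whenever $a_{k+1} \leq b_{k}$, by Lemma~\ref{2n}).

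Next I would feed in the two lower bounds $\beta_{k} \geq 1 - \delta$ and $\beta_{k-1} \geq 1 - \delta$. The numerator satisfies $\beta_{k}\beta_{k-1} \geq (1-\delta)^{2}$, and since $\beta_{k} \geq 1 - \delta$ forces $1 - \beta_{k} \leq \delta$, the denominator satisfies $\frac{1}{1 - \beta_{k}} \geq \frac{1}{\delta}$. Combining, $b_{k} \geq \frac{(1-\delta)^{2}}{\delta}$, which is exactly the claim.

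This argument is essentially a one-line calculation, so there is no real obstacle; the only point requiring a moment's care is checking that $1 - \beta_{k}$ is strictly positive so that the division is legitimate, which follows from Lemma~\ref{2n} applied under the standing hypothesis $a_{k+1} \leq b_{k}$. (Implicitly this lemma will be used in the sequel precisely to conclude that if $b_{k}$ stays bounded then infinitely often $\beta_{k}$ or $\beta_{k-1}$ drops below $1 - \delta$, which will drive the exponential decay in Proposition~\ref{Pmain}.)
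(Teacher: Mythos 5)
Your proof is correct and follows essentially the same route as the paper: both start from Lemma~\ref{2.4} and combine $a_{k+1}\le b_k$ with the two lower bounds $\beta_k,\beta_{k-1}\ge 1-\delta$; the paper chains the inequalities $1-\delta\le\beta_k\le \frac{b_k}{b_k+\beta_{k-1}}\le\frac{b_k}{b_k+1-\delta}$ and cross-multiplies, while you isolate $b_k$ first, but the algebra is the same. (Your appeal to Lemma~\ref{2n} to justify $1-\beta_k>0$ is fine, though it is also forced directly by $b_k(1-\beta_k)\ge\beta_k\beta_{k-1}>0$.)
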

\begin{proof}
Since $1 - \delta \leq \beta_{k} \leq \frac{b_{k}}{b_{k} + \beta_{k-1}} \leq \frac{b_{k}}{b_{k} + 1 - \delta}$, then $b_{k}(1 - \delta) + (1-\delta)^{2} \leq b_{k}$ so
$(1 - \delta)^{2} \leq \delta b_{k}$.
\end{proof}

\begin{lemma}\label{delta}
There exists $\delta > 0$ such that for $k \geq 2$, if $a_{k+1} \leq b_{k}$ then at least one of $\beta_{k} < 1 - \delta$ or $\beta_{k-1} < 1 - \delta$.
\end{lemma}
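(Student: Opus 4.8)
The plan is to prove this by contradiction: assume that for some $k \geq 2$ with $a_{k+1} \leq b_{k}$ we have both $\beta_{k} \geq 1-\delta$ and $\beta_{k-1} \geq 1-\delta$, and derive a contradiction once $\delta$ is chosen small enough. The leverage comes from Lemma~\ref{toinfty}: the contradiction hypothesis forces $b_{k}$, and hence $m_{k}$, to be large, which in turn lets us invoke Proposition~\ref{thedelta} at index $k$ to bound $n_{k} - m_{k}$ (hence $a_{k+1}$) strictly below $m_{k}$ by a fixed proportion, pushing $\beta_{k}$ safely under $1-\delta$.

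Concretely, I would first fix the constants $\delta_{0} > 0$ and $N$ furnished by Proposition~\ref{thedelta}, and then choose $\delta \in (0,\delta_{0}]$ small enough that $(1-\delta)^{2}/\delta > 2N$ (possible since $(1-\delta)^{2}/\delta \to \infty$ as $\delta \to 0^{+}$). Under the contradiction hypothesis, Lemma~\ref{toinfty} gives $b_{k} \geq (1-\delta)^{2}/\delta > 2N$; since $r_{k} < m_{k}$ always (for $r_{k} > 0$ this is part of Proposition~\ref{words}, and $r_{k} = 0 < m_{k}$ otherwise), we get $b_{k} = m_{k} + r_{k} < 2m_{k}$, so $m_{k} > N$. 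Now Proposition~\ref{thedelta} applies at index $k$: if $r_{k+1} > 0$ then $n_{k} < \frac{3-\delta_{0}}{2}m_{k}$, so $a_{k+1} = 2(n_{k} - m_{k}) < (1-\delta_{0})m_{k}$; and if $r_{k+1} = 0$ then $n_{k} < (2-\delta_{0})m_{k}$, so $a_{k+1} = n_{k} - m_{k} < (1-\delta_{0})m_{k}$. Either way $a_{k+1} < (1-\delta_{0})m_{k} \leq (1-\delta_{0})b_{k}$, and feeding this into the recursion of Lemma~\ref{2.4}, together with $\beta_{k-1} > 0$, yields
\[
\beta_{k} = \frac{a_{k+1}}{b_{k} + \beta_{k-1}} < \frac{(1-\delta_{0})b_{k}}{b_{k} + \beta_{k-1}} < \frac{(1-\delta_{0})b_{k}}{b_{k}} = 1 - \delta_{0} \leq 1 - \delta,
\]
contradicting $\beta_{k} \geq 1-\delta$. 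Hence no such $k$ exists, which is the assertion.

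I do not anticipate a real obstacle; the only point demanding care is the quantifier order on $\delta$, which must be selected \emph{after} $\delta_{0}$ and $N$ are fixed and small enough to meet the two competing demands $\delta \leq \delta_{0}$ and $(1-\delta)^{2}/\delta > 2N$ — these are compatible since both hold for all sufficiently small $\delta$. Everything else is routine bookkeeping with the definitions $a_{k+1} = 2^{\bbone_{r_{k+1}}}(n_{k} - m_{k})$, $b_{k} = m_{k} + r_{k}$ and the recursion $\beta_{k} = a_{k+1}/(b_{k} + \beta_{k-1})$.
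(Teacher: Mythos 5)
Your proof is correct and follows essentially the same path as the paper's: obtain $\delta_0, N$ from Proposition~\ref{thedelta}, choose $\delta \leq \delta_0$ so that $(1-\delta)^2/\delta$ dominates $2N$, then combine Lemma~\ref{toinfty} (to force $m_k$ large from the contradiction hypothesis) with Proposition~\ref{thedelta} and the recursion $\beta_k = a_{k+1}/(b_k + \beta_{k-1})$. You run the argument by contradiction where the paper does a two-case split on $m_k \gtrless N$, but the ingredients, the choice of $\delta$, and the use of $r_k < m_k$ to get $b_k < 2m_k$ are all identical.
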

\begin{proof}
By Proposition \ref{thedelta}, there exists $\delta_{0} > 0$ and $N$ such that for $k \geq 2$ and $m_{k} \geq N$, if $r_{k+1} > 0$ then $n_{k} < \frac{3-\delta_{0}}{2}m_{k}$ in which case $a_{k+1} = 2(n_{k}-m_{k}) < (1 - \delta_{0})m_{k} \leq (1 - \delta_{0})b_{k}$ and if $r_{k} = 0$ then $n_{k} < (2 - \delta_{0})m_{k}$ in which case $a_{k+1} = n_{k} - m_{k} < (1 - \delta_{0})m_{k} \leq (1 - \delta_{0})b_{k}$.
So for $k$ such that $m_{k} \geq N$, by Lemma \ref{2.4}, $\beta_{k} = \frac{a_{k+1}}{b_{k} + \beta_{k-1}} < 1 - \delta_{0}$.

Let $0 < \delta \leq \delta_{0}$ such that $\frac{(1-\delta)^{2}}{2\delta} \geq N$.  Let $k$ such that $a_{k+1} \leq b_{k}$ and $\beta_{k-1} \geq 1- \delta$.  By the above, if $m_{k} \geq N$ then $\beta_{k} < 1 - \delta_{0} \leq 1 - \delta$.  If $m_{k} < N$ then, as $r_{k} < m_{k}$, $b_{k} < 2m_{k} < 2N \leq \frac{(1-\delta)^{2}}{\delta}$ so, by Lemma \ref{toinfty}, $\beta_{k} < 1 - \delta$.
\end{proof}

We are now ready to prove exponential decay of the $\beta_k$, from which Proposition~\ref{Pmain} quickly follows.

\begin{lemma}\label{betaconvexp}
There exists $0 < \kappa < 1$ and $C > 0$ so that for all $k$, we have
$
\prod_{j=0}^{k} \beta_{j} < C\kappa^{k}.
$
\end{lemma}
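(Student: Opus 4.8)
The plan is to bundle the factors $\beta_0,\dots,\beta_k$ into consecutive blocks of length at most $4$, each having $\beta$-product bounded by a fixed constant $\kappa_0<1$; since such a decomposition uses at least $(k-3)/4$ blocks, exponential decay of $\prod_{j=0}^k\beta_j$ follows at once.

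I would fix $\delta>0$ as in Lemma~\ref{delta}, set $\kappa_0=1-\delta/2$, and construct the blocks greedily from the top down. Starting from a current index $c\ge 4$, apply Lemma~\ref{allbetas} to get $i_c\in\{0,1,2\}$ with $a_{m+1}\le b_m$ and $\prod_{j=m}^c\beta_j<1-\tfrac12(1-\beta_m)$, where $m:=c-i_c\ge 2$. If $\beta_m<1-\delta$, I take the block $[m,c]$, whose $\beta$-product is then $<1-\tfrac12\delta=\kappa_0$. If instead $\beta_m\ge 1-\delta$, then since $a_{m+1}\le b_m$ and $m\ge 2$, Lemma~\ref{delta} forces $\beta_{m-1}<1-\delta$, and I take the enlarged block $[m-1,c]$: its $\beta$-product is $\beta_{m-1}\prod_{j=m}^c\beta_j<(1-\delta)\cdot 1=\kappa_0$, using that $\prod_{j=m}^c\beta_j<1$ (which holds because $\beta_m<1$ by Lemma~\ref{2n} together with $a_{m+1}\le b_m$, and then Lemma~\ref{allbetas}). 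In both cases the block has length $\le 4$, $\beta$-product $<\kappa_0$, and ends at $c$; I then set the new current index to the block's left endpoint minus one and repeat while it remains $\ge 4$.

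This procedure terminates at some $c_*\in\{0,1,2,3\}$, having partitioned $\{c_*+1,\dots,k\}$ into $t\ge (k-3)/4$ blocks, each with $\beta$-product $<\kappa_0$, so
\[
\prod_{j=0}^k\beta_j=\Bigl(\prod_{j=0}^{c_*}\beta_j\Bigr)\prod_{\text{blocks}}\Bigl(\prod_{j\in\text{block}}\beta_j\Bigr)<C_1\,\kappa_0^{\,t}\le C_1\,\kappa_0^{\,(k-3)/4},
\]
where $C_1:=\max_{0\le c\le 3}\prod_{j=0}^c\beta_j<\infty$ depends only on $\beta_0,\dots,\beta_3$ (each $\beta_j>0$). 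Taking $\kappa:=\kappa_0^{1/4}\in(0,1)$ and $C:=C_1\kappa_0^{-3/4}$ (enlarged if needed to also cover the finitely many $k<4$) then yields $\prod_{j=0}^k\beta_j<C\kappa^k$ for all $k$.

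The one point requiring care — and the step I expect to be the main obstacle — is precisely the dichotomy on whether $\beta_m$ is close to $1$: Lemma~\ref{allbetas} by itself only gives a block product $<1-\tfrac12(1-\beta_m)$, which is useless as $\beta_m\to 1$, so one genuinely needs Lemma~\ref{delta} to fold in the extra factor $\beta_{m-1}$ and recover a gap below $1$ that is uniform in $k$. I would also stress that the individual $\beta_j$ need not be bounded — only the products over complete blocks are controlled — which is exactly why the argument must be organized block-by-block rather than by estimating factors one at a time.
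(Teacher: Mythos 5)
Your proof is correct and takes essentially the same approach as the paper: both partition the index set into blocks of length at most four by invoking Lemma~\ref{allbetas} for the base block and Lemma~\ref{delta} to append the extra factor $\beta_{m-1}$ precisely when $\beta_{m}$ is close to $1$, then bound each block's product by $\kappa_0 = 1 - \delta/2$. The only cosmetic differences are that you organize the decomposition greedily from the top rather than by induction (the paper's inductive step is the same recursion unrolled), and your dichotomy on $\beta_m$ cleanly merges the paper's separate $i_k=0$ and $i_k>0$ cases; also note the small slip that in the enlarged-block case the product is $<(1-\delta)\cdot 1 = 1-\delta < \kappa_0$, not $=\kappa_0$.
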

\begin{proof}
By Lemma \ref{delta}, there exists $\delta > 0$ such that for $k \geq 2$, if $a_{k+1} \leq b_{k}$ then at least one of $\beta_{k} < 1 - \delta$ or $\beta_{k-1} < 1 - \delta$.
Let $k \geq 3$ such that $\beta_{k} \geq 1 - \delta$.  By Lemma \ref{allbetas}, there exists $0 \leq i_{k} \leq 2$ such that $\prod_{j=k-i_{k}}^{k}\beta_{j} < 1 - \frac{1}{2}(1 - \beta_{k-i_{k}}) < 1$ and $a_{k-i_{k}+1} \leq b_{k - i_{k}}$.

If $i_{k} = 0$ then we have $a_{k+1} \leq b_{k}$ so $\beta_{k-1} < 1 - \delta$ (since $\beta_{k} \geq 1 - \delta$). By Lemma \ref{2n}, we have $\beta_{k} < 1$ and therefore $\beta_{k}\beta_{k-1} < 1 - \delta$.

If $i_{k} > 0$ then $a_{k-i_{k}+1} \leq b_{k-i_{k}}$ so at least one of $\beta_{k-i_{k}} < 1 - \delta$ or $\beta_{k-i_{k}-1} < 1 - \delta$ holds.  If $\beta_{k-i_{k}} < 1 - \delta$ then $\prod_{j=k-i_{k}}^{k} \beta_{j} < 1 - \frac{\delta}{2}$ and if $\beta_{k-i_{k}-1} < 1 - \delta$ then, as Lemma \ref{2n} implies $\beta_{k-i_{k}} < 1$, we have $\prod_{j=k-i_{k}-1}^{k}\beta_{j} < \beta_{k-i_{k}-1} < 1 - \delta$.

So for all $k \geq 5$, there exists $0 \leq i_{k}^{\prime} \leq 3$ such that $\prod_{j=k-i_{k}^{\prime}}^{k} \beta_{j} < 1 - \frac{\delta}{2}$.  Set $\kappa_{0} = 1 - \frac{\delta}{2}$.

Let $C > \max\{ \prod_{j=0}^{k}\beta_{j}\kappa_{0}^{-k/4} : 0 \leq k \leq 5 \}$.  Then $\prod_{j=0}^{k} \beta_{j} < C\kappa_{0}^{k/4}$ for $0 \leq k \leq 5$.

Assume now that for some $k \geq 6$ we have $\prod_{j=0}^{k^{\prime}} \beta_{j} < C\kappa_{0}^{k^{\prime}/4}$ for all $k^{\prime} < k$.  Then, since $i_{k}^{\prime} \leq 3$,
\[
\prod_{j=0}^{k}\beta_{j} = \Big{(}\prod_{j=k-i_{k}^{\prime}}^{k}\beta_{j}\Big{)} \Big{(}\prod_{j=0}^{k-i_{k}^{\prime}-1} \beta_{j}\Big{)}
< \kappa_{0} \cdot C \kappa_{0}^{(k-i_{k}^{\prime} - 1)/4}
= C \kappa_{0}^{(k + 4 - i_{k}^{\prime} - 1)/4}
\leq C \kappa_{0}^{k/4}
\]
so the claim follows by induction and setting $\kappa = \kappa_{0}^{1/4}$.
\end{proof}

\begin{proof}[Proof of Proposition \ref{Pmain}]
Since $a_{k+1} = 2^{\bbone_{r_{k+1}}}(n_{k} - m_{k})$, 
\[
2^{\sum_{j=0}^{k}\bbone_{r_{j}}}\prod_{j=0}^{k-1} (n_{j} - m_{j}) = \prod_{j=0}^{k-1} a_{j+1}
= \frac{\len{v_{k}}}{\len{v_{0}}} \prod_{j=0}^{k-1} \frac{a_{j+1}\len{v_{j}}}{\len{v_{j+1}}}
= \frac{\len{v_{k}}}{\len{v_{0}}} \prod_{j=0}^{k-1} \beta_{j}
\]
so by Lemma \ref{betaconvexp}, there exists $C > 0$ and $0 < \kappa < 1$ such that for all $k$,
\[
\frac{2^{\sum_{j=0}^{k}\bbone_{r_{j}}}\prod_{j=0}^{k-1} (n_{j} - m_{j})}{\len{v_{k}}}
= \frac{1}{\len{v_{0}}}\prod_{j=0}^{k-1} \beta_{j}
< \frac{C}{\len{v_{0}}} \kappa^{k-1}
\]
meaning $\epsilon_{k} := \frac{C}{\len{v_{0}}} \kappa^{k-1}$ proves the claim.
\end{proof}

The other ingredient needed to prove discrete spectrum is a bound on how much the words $v_{k}u_{k}$ and $u_{k}v_{k}$ differ.

\begin{lemma}\label{dvu}
For all $k \geq 0$, the words $u_{k}v_{k}$ and $v_{k}u_{k}$ differ on a number of locations less than
\[
2\len{u_{0}} 2^{\sum_{j=0}^{k-1}\bbone_{r_{j}}}(n_{0} - m_{0})(n_{1}-m_{1})\cdots (n_{k-1}-m_{k-1}).
\]
\end{lemma}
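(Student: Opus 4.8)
The plan is to prove the bound by induction on $k$, working with $d_k := d(u_k v_k, v_k u_k)$, where $d(\cdot,\cdot)$ is the Hamming distance between two words of equal length. Throughout I will use that $d$ obeys the triangle inequality together with the obvious cancellation identities $d(xw,xw')=d(w,w')$ and $d(wx,w'x)=d(w,w')$ whenever $\len{w}=\len{w'}$.

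First I would isolate two elementary word-rearrangement estimates. \textbf{(1) Commutation:} for any words $x,y$ and any $t\ge 0$, $d(x^t y,\,y x^t)\le t\,d(xy,yx)$; this comes from inducting on $t$ via $d(x^t y, yx^t)\le d(x^t y, x^{t-1}yx)+d(x^{t-1}yx, yx^t)$, where the first term equals $d(xy,yx)$ after cancelling the common prefix $x^{t-1}$, and the second equals $d(x^{t-1}y, yx^{t-1})$ after cancelling the common suffix $x$. \textbf{(2) Shuffling:} if $W$ is any concatenation of copies of $v_k$ and $u_k$ containing exactly $N$ blocks equal to $u_k$, then $d(v_k W,\,W v_k)\le N\,d_k$; this comes from inducting on the number of blocks of $W$, writing $W=cW'$ and passing through $c\,v_k\,W'$, so that the first comparison reduces (cancel the suffix $W'$) to $d(v_k c, c v_k)$, which is $0$ when $c=v_k$ and $d_k$ when $c=u_k$, and the second reduces (cancel the prefix $c$) to the shorter instance $d(v_k W', W' v_k)$.

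The induction itself is then short. The base case $k=0$ is immediate from $d_0\le\len{u_0 v_0}=\len{u_0}+\len{v_0}<2\len{u_0}$, which is precisely the asserted bound (the product and the exponent-sum being empty). For the inductive step, the structural input I would extract from Proposition~\ref{words} is that in both of its two cases one has $u_{k+1}=v_k^{n_k-m_k}\,v_{k+1}$. Hence $u_{k+1}v_{k+1}=v_k^{n_k-m_k}v_{k+1}v_{k+1}$ and $v_{k+1}u_{k+1}=v_{k+1}v_k^{n_k-m_k}v_{k+1}$ share their final $\len{v_{k+1}}$ coordinates, so $d_{k+1}=d(v_k^{n_k-m_k}v_{k+1},\,v_{k+1}v_k^{n_k-m_k})$. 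Now I apply (1) with $x=v_k$, $y=v_{k+1}$, $t=n_k-m_k$, and then (2) to $W=v_{k+1}$, which when written as a concatenation of $u_k$'s and $v_k$'s contains exactly $2^{\bbone_{r_k}}$ copies of $u_k$ (one if $r_k=0$, two if $r_k>0$); this yields $d_{k+1}\le (n_k-m_k)\,2^{\bbone_{r_k}}\,d_k$, and substituting the inductive hypothesis for $d_k$ gives exactly the stated bound for $k+1$.

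I do not anticipate a genuine obstacle; the only thing requiring care is the triangle-inequality bookkeeping in estimates (1) and (2) — keeping straight which common prefix or suffix is cancelled at each step — and the count of $u_k$-occurrences in $v_{k+1}$. It is worth noting that nothing in this argument uses that $v_k$ is a suffix of $u_k$; only the recursions for $u_{k+1},v_{k+1}$ from Proposition~\ref{words} and the definition of $d_k$ enter.
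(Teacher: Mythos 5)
Your proof is correct and takes a genuinely different route from the paper. The paper proceeds by a direct computation on $d(v_{k+1}u_{k+1},\,u_{k+1}v_{k+1})$, splitting into the cases $r_k=0$ and $r_k>0$, cancelling the appropriate prefixes and suffixes, and telescoping via a chain of intermediate words in which a single $v_k$ is moved past a single $u_k$ at each step. You instead notice the unified identity $u_{k+1}=v_k^{\,n_k-m_k}\,v_{k+1}$ (which holds in both cases of Proposition~\ref{words}) and use it to reduce $d_{k+1}$ to $d\bigl(v_k^{\,n_k-m_k}v_{k+1},\,v_{k+1}v_k^{\,n_k-m_k}\bigr)$ after cancelling only the final $v_{k+1}$; then your commutation and shuffling lemmas do the rest. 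The paper's argument is more hands-on but requires writing out both cases explicitly; yours buys a genuinely case-free inductive step by absorbing the factor $2^{\bbone_{r_k}}$ into a count of $u_k$-blocks in $v_{k+1}$. Both approaches telescope to the same quantity — $(n_k-m_k)\cdot 2^{\bbone_{r_k}}$ copies of $d(u_kv_k,v_ku_k)$ — but you package the combinatorics in two reusable estimates rather than inlining them. A minor point worth recording if you write this up: your base case $d_0\le\len{u_0}+\len{v_0}<2\len{u_0}$ does tacitly use $\len{v_0}<\len{u_0}$ from Proposition~\ref{words}, so the remark at the end that ``nothing uses that $v_k$ is a suffix of $u_k$'' is right, but the length inequality from the same proposition is used at $k=0$.
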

\begin{proof}
Let $d$ be the Hamming distance: the metric defined on pairs of words of the same length by $d(w,x) = |\{ 0 \leq t < \len{w} : w_{t} \neq x_{t} \}|$.  Note that for words $w$ and $x$ of the same length and any words $p$ and $s$, we have $d(pws,pxs) = d(w,x)$.

Since $\len{u_{0}v_{0}} < 2\len{u_{0}}$, the claim is immediate for $k = 0$.  Assume the claim holds for $k$.

Consider first the case when $r_{k} = 0$.  Then, using the triangle inequality,
\begin{align*}
d(v_{k+1}u_{k+1},u_{k+1}v_{k+1})
&= d(v_{k}^{m_{k}-1}u_{k}v_{k}^{n_{k}-1}u_{k}, v_{k}^{n_{k}-1}u_{k}v_{k}^{m_{k}-1}u_{k}) \\
&= d(u_{k}v_{k}^{n_{k}-m_{k}}, v_{k}^{n_{k}-m_{k}}u_{k}) \\
&\leq \sum_{j=0}^{n_{k}-m_{k}-1} d(v_{k}^{j}u_{k}v_{k}^{n_{k}-m_{k}-j}, v_{k}^{j+1}u_{k}v_{k}^{n_{k}-m_{k}-j-1}) \\
&= \sum_{j=0}^{n_{k}-m_{k}-1} d(u_{k}v_{k},v_{k}u_{k}) \\
&< (n_{k}-m_{k}) 2\len{u_{0}} 2^{\sum_{j=0}^{k-1}\bbone_{r_{j}}}(n_{0} - m_{0})(n_{1}-m_{1})\cdots (n_{k-1}-m_{k-1}) \\
&= 2\len{u_{0}} 2^{\sum_{j=0}^{k}\bbone_{r_{j}}}(n_{0} - m_{0})(n_{1}-m_{1})\cdots (n_{k}-m_{k}).
\end{align*}
Now consider the case when $r_{k} > 0$.  Here
\begin{align*}
d(v_{k+1}u_{k+1},u_{k+1}v_{k+1})
&= d(v_{k}^{m_{k-1}}u_{k}v_{k}^{r_{k}-1}u_{k}v_{k}^{n_{k}-1}u_{k}v_{k}^{r_{k}-1}u_{k}, v_{k}^{n_{k}-1}u_{k}v_{k}^{r_{k}-1}u_{k}v_{k}^{m_{k}-1}u_{k}v_{k}^{r_{k}-1}u_{k}) \\
&= d(u_{k}v_{k}^{r_{k}-1}u_{k}v_{k}^{n_{k}-m_{k}}, v_{k}^{n_{k}-m_{k}}u_{k}v_{k}^{r_{k}-1}u_{k}) \\
&\leq \sum_{j=0}^{n_{k}-m_{k}-1} \Big{(}d(v_{k}^{j}u_{k}v_{k}^{r_{k}-1}u_{k}v_{k}^{n_{k}-m_{k}-j}, v_{k}^{j}u_{k}v_{k}^{r_{k}}u_{k}v_{k}^{n_{k}-m_{k}-j-1}) \\
&\quad\quad\quad\quad\quad\quad + d(v_{k}^{j}u_{k}v_{k}^{r_{k}}u_{k}v_{k}^{n_{k}-m_{k}-j-1}, v_{k}^{j+1}u_{k}v_{k}^{r_{k}-1}u_{k}v_{k}^{n_{k}-m_{k}-j-1}) \Big{)} \\
&= \sum_{j=0}^{n_{k}-m_{k}-1} 2d(u_{k}v_{k},v_{k}u_{k}) \\
&< 2(n_{k}-m_{k}) 2\len{u_{0}} 2^{\sum_{j=0}^{k-1}\bbone_{r_{j}}}(n_{0} - m_{0})(n_{1}-m_{1})\cdots (n_{k-1}-m_{k-1}) \\
&= 2\len{u_{0}} 2^{\sum_{j=0}^{k}\bbone_{r_{j}}}(n_{0} - m_{0})(n_{1}-m_{1})\cdots (n_{k}-m_{k}).
\end{align*}
Therefore the claim follows by induction.
\end{proof}

We are now in a position to prove discrete spectrum.

\begin{proof}[Proof of Theorem \ref{discspec}]
Let $X$ be an infinite minimal subshift with $\limsup \frac{p(q)}{q} < 1.5$.  Let $v_{k}$ and $u_{k}$ be the words from Proposition \ref{words} with corresponding $m_{k}$, $n_{k}$ and $r_{k}$.

A subshift $(X, \sigma)$ is \emph{mean almost periodic} if for all $\epsilon > 0$ and all $x \in X$, there exists a syndetic set $S$ so that for all $s \in S$, $x$ and $\sigma^s x$ differ on a set of locations with upper density\footnote{The upper density of $D \subseteq \mathbb{N}$ is $\limsup_{N,M} \frac{1}{N}|D \cap \{ M+1, M+2, \ldots, M+N\}|$.} less than $\epsilon$. Mean almost periodicity implies discrete spectrum; see e.g.~Theorem 2.8 \cite{MR2569181}.

Let $x \in X$.  Then $x$ can be written as a bi-infinite concatenation of the words $u_{k}$ and $v_{k}$.  Without loss of generality, we may assume that $x$ contains $u_{k+1}$ starting at the origin, since for any $i, j$, the set of locations where
$\sigma^i x$ and $\sigma^j(\sigma^i x)$ differ is just a shift of the set of locations where
$x$ and $\sigma^j x$ differ. Then, decomposing $x$ into $u_k$ and $v_k$, we have
\begin{align*}
x &= \ldots v_k w_1 w_2 \ldots \\
\sigma^{\len{v_{k}}}x &= \ldots w_1 w_2 \ldots
\end{align*}
where each $w_{j} \in \{u_k, v_k\}$.

By definition, $x$ does not contain three consecutive $u_k$, i.e. there does not exist $j$ so that $w_j = w_{j+1} = w_{j+2} = u_k$. 
We can then decompose $x$ into blocks of the form $v_k^i u_k^j$, $i > 0$, $j \in \{1,2\}$, and then each such block corresponds to a block $v_k^{i-1} u_k^j v_k$ (of the same length) within $\sigma^{\len{v_k}} x$. Therefore, these blocks occur at the same locations in
$x$ and $\sigma^{\len{v_k}} x$, and the set of locations at which $x$ and $\sigma^{\len{v_k}} x$ differ is the union of 
such locations in these pairs of blocks. This number of differences in such a pair is $d(v_k^i u_k^j, v_k^{i-1} u_k^j v_k) = d(v_k u_k^j, u_k^j v_k)$, which is bounded from above by 
$4\len{u_{0}} 2^{\sum_{j=0}^{k-1}\bbone_{r_{j}}}(n_{0} - m_{0})(n_{1}-m_{1})\cdots (n_{k-1}-m_{k-1})$ by Lemma~\ref{dvu}. Since each of $u_{k+1}$ and $v_{k+1}$ contains at most two occurrences of $u_k$, the density of locations
where a $u_k$ starts in $x$ is bounded from above by $\frac{2}{|v_{k+1}|}$. Putting all of this together, 
\[
\overline{d}(\{ t : x_{t} \ne (\sigma^{\len{v_{k}}}x)_{t} \}) \leq
\frac{8\len{u_{0}} 2^{\sum_{j=0}^{k-1}\bbone_{r_{j}}}(n_{0} - m_{0})(n_{1}-m_{1})\cdots (n_{k-1}-m_{k-1})}{\len{v_{k+1}}}
\]
so by Proposition \ref{Pmain},
\[
\overline{d}(\{ t : x_{t} \ne (\sigma^{\len{v_{k}}}x)_{t} \}) < \frac{8\len{u_{0}} \len{v_{k}}}{\len{v_{k+1}}}\epsilon_{k}
\]
where $\sum_{k=0}^{\infty} \epsilon_{k} < \infty$.
Let
\[
S_{k} = \Big{\{} \sum_{i=k}^{\ell} p_{i}\len{v_{i}} : \ell > k, 0 \leq p_{i} < \frac{\len{v_{i+1}}}{\len{v_{i}}} \Big{\}}
\]
and observe that for all $t \geq 1$, there exists $0\leq m \leq \len{v_{k}}$ such that $t-m \in S_{k}$ so $S_{k}$ is syndetic.
Write $D_{s} = \{ t : x_{t} \ne (\sigma^{s}x)_{t} \}$.  For $s \in S_{k}$,
by the subadditivity of $\overline{d}$,
\[
\overline{d}(D_{s}) \leq \sum_{i=k}^{\ell} \overline{d}(D_{p_{i}\len{v_{i}}}) \leq \sum_{i=k}^{\ell} p_{i} 
\overline{d}(D_{\len{v_{i}}})
< \sum_{i=k}^{\ell} \frac{\len{v_{i+1}}}{\len{v_{i}}} \frac{8\len{u_{0}}\len{v_{i}}}{\len{v_{i+1}}}\epsilon_{i} \leq 8\len{u_{0}}\sum_{i=k}^{\infty} \epsilon_{i}.
\]
Since $\sum \epsilon_{k} < \infty$, then $\lim_{k} \sup_{s\in S_{k}} \overline{d}(D_{s}) = 0$ so $X$ is mean almost periodic, and therefore has discrete spectrum.
\end{proof}

\section{The additive eigenvalue group}\label{eigs}

In this section, we explicitly compute the additive continuous eigenvalue group for low complexity minimal subshifts in terms of the $a_{k}$ and $b_{k}$ defined in Section \ref{discspecsec}, which is the first step in characterizing the maximal equicontinuous factor. The main tools are the exponential decay already established and approximation arguments along a similar line of reasoning as in \cite{creutzpavlov}, though more complex.

Throughout this section, let $X$ be an infinite minimal subshift with $\limsup p(q)/q < 1.5$, which therefore satisfies the conclusions of Propositions \ref{words}, \ref{svp}, \ref{rk} and \ref{Pmain}.

Let $u_{k}$ and $v_{k}$ be the words from Proposition \ref{words} and $(a_{k})$ and $(b_{k})$ as in (\ref{ab}). 
Any reference to measure refers to the unique $\sigma$-invariant measure $\mu$.  By minimality, the measure of any nonempty open set is positive.

We first introduce the following notation for subgroups of $(\mathbb{Q}, +)$.

\begin{definition}
Let $0 \leq \ell_{p} \leq \infty$ for each prime $p \in \mathbb{P}$.  The $(\ell_{p})$-subgroup of $\mathbb{Q}$ is
\[
Q_{(\ell_{p})} = \{ q \in \mathbb{Q} : \exists p_{1},\ldots,p_{t} \in \mathbb{P}~\text{such that}~p_{1}\cdots p_{t}q \in \mathbb{Z}~\text{and}~|\{ 1 \leq i \leq t : p_{i} = p \}| \leq \ell_{p}~\text{for all $p \in \mathbb{P}$} \}
\]
\end{definition}

That $Q_{(\ell_{p})}$ is a group under addition is easily verified.

The purpose of this section is to prove the following explicit description of the eigenvalue group.  Our description requires introducing the following standard notation.

\begin{notation}
For a prime $p$ and $a \in \mathbb{Q}_{p}$ a $p$-adic number, the \textbf{$p$-adic fractional part} is
\[
\{ a \}_{p} = \sum_{t = -m}^{-1} a_{t}p^{t}
\]
where $a = \sum_{t=-m}^{\infty} a_{t}p^{t}$ is the $p$-adic expansion of $a_{p}$.
\end{notation}

Note that $\{ a + a^{\prime} \}_{p} = \{ a \}_{p} + \{ a^{\prime} \}_{p}\pmod{\mathbb{Z}}$ and that for $q \in \mathbb{Q}$, $q = \sum_{p} \{ q \}_{p}\pmod{\mathbb{Z}}$.

\begin{theorem}\label{evalgroup}
For each prime $p \in \mathbb{P}$, let
\begin{align*}
L_{X}(p) &= \sup \{ t \geq 0 : p^{t}~\text{divides}~\frac{\len{v_{0}}a_{0}\cdots a_{k}}{\gcd(\len{v_{k}},\len{v_{k+1}})}~\text{for some $k \geq 0$} \} \\
R_{X}(p) &= \sup \{ t \geq 0 : p^{t}~\text{divides}~\gcd(\len{v_{k}},\len{v_{k+1}})~\text{for some $k \geq 0$} \}
\end{align*}
and let $Q_{X}$ be the $(L_{X}(p))$-subgroup of $\mathbb{Q}$ and $R_{X}$ be the $(R_{X}(p))$-subgroup of $\mathbb{Q}$.
Let
\[
\alpha = \frac{\lambda}{\len{u_{0}}\lambda + \len{v_{0}}(1 - \lambda)}
\quad\text{where}\quad
\lambda = \cfrac{a_0}{b_0+\frac{a_1}{b_1+\cdots}}.
\]
Then there exist $e_{p} \in \mathbb{Q}_{p}$ for each prime $p$
such that
\[
E_{X} = \left\{ q\alpha + \sum_{p} \{ qe_{p} \}_{p} + r : q \in Q_{X}, r \in R_{X} \right\}.
\]
In addition, all measurable eigenfunctions are continuous. 
\end{theorem}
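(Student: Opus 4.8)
The plan is to prove the displayed description of $E_X$ by a double inclusion and then deduce continuity of measurable eigenfunctions as an immediate corollary. For the inclusion "$\supseteq$" (every element of the displayed set is a continuous eigenvalue) I would construct the eigenfunctions by hand. For $x \in X$ and $k \ge 0$, let $j_k(x)$ be the distance from the origin of $x$ to the start of the level-$k$ block containing it, in the unique decomposition of $x$ into the words $v_k,u_k$ of Proposition~\ref{words}. By Lemma~\ref{vrec} the start of any level-$k$ block inside a level-$(k+1)$ block sits at an offset of the form $a\len{v_k}+b\len{u_k}$ with $0\le b\le 2$ and $0\le a\le n_k$, and $\len{u_k}-\len{v_k}=(n_{k-1}-m_{k-1})\len{v_{k-1}}$; hence, given $\gamma=q\alpha+\sum_p\{qe_p\}_p+r$, to see that $f(x):=\lim_k e^{2\pi i\gamma\,j_k(x)}c_k$ exists (for a suitable choice of normalizing constants $c_k$ depending only on $k$) and that the convergence is \emph{uniform} in $x$, it suffices to bound $\|\gamma\len{v_k}\|$ and $\|\gamma(\len{u_k}-\len{v_k})\|$ against the number $O(\len{v_{k+1}}/\len{v_k})$ of level-$k$ blocks in a level-$(k+1)$ block. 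This is exactly where Proposition~\ref{Pmain} and Lemma~\ref{dvu} enter: the shape of $\alpha$ as $\cfrac{a_0}{b_0+\frac{a_1}{b_1+\cdots}}$ forces $\|\alpha\len{v_k}\|$ to decay like $\prod_{j<k}\beta_j$, and the $p$-adic corrections $\{qe_p\}_p$ (respectively the rational part $r\in R_X$) are chosen so that $\|\gamma\len{v_k}\|$ inherits the same summable decay, once one records which powers of $p$ eventually divide $\len{v_0}a_0\cdots a_k$ (respectively $\gcd(\len{v_k},\len{v_{k+1}})$), i.e.\ once one knows $L_X(p)$ and $R_X(p)$. Uniform convergence gives $f\in C(X)$ with $f\circ\sigma=e^{2\pi i\gamma}f$, so $\gamma\in E_X$.

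For the reverse inclusion, let $\gamma$ be a measurable eigenvalue with eigenfunction $f$, normalized so $|f|\equiv 1$. Working with the clopen tower bases $B_k$ (points at the start of a level-$k$ block), whose first-return times under $\sigma$ are exactly $\{\len{v_k},\len{u_k}\}$, unique ergodicity applied to the Birkhoff averages of $f$ over the level-$k$ columns forces that, for a.e.\ $x\in B_{k+1}$, the numbers $e^{2\pi i\gamma\,\ell}f(x)$ taken over the admissible offsets $\ell$ of level-$k$ block starts inside the level-$(k+1)$ block at $x$ must be close --- this is the analogue here of Host's exponential-decay argument in \cite{MR873430}. Quantifying this discrepancy and summing over $k$, with Proposition~\ref{Pmain} again ensuring that the combinatorial multiplicities are dominated by $1/\len{v_k}$, yields that $\|\gamma\len{v_k}\|\to 0$ and $\|\gamma(\len{u_k}-\len{v_k})\|\to 0$, indeed summably after dividing by the block counts. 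Substituting $\len{u_k}-\len{v_k}=(n_{k-1}-m_{k-1})\len{v_{k-1}}$ and $\len{v_{k+1}}=b_k\len{v_k}+a_k\len{v_{k-1}}$ back into these constraints, one extracts: (i) a real drift $\alpha$ with $\|\alpha\len{v_k}\|\to 0$, which the recursion pins down, modulo $\mathbb{Q}$, to the continued fraction in the statement; (ii) the precise set of primes and multiplicities that may appear in denominators of the rational part of $\gamma$, governed by $L_X(p)$ and $R_X(p)$, forcing $q\in Q_X$ and $r\in R_X$; and (iii) the $p$-adic phases $e_p$, obtained as limits in $\mathbb{Q}_p$ of the partial offset data, so that the only admissible $\gamma$ are of the form $q\alpha+\sum_p\{qe_p\}_p+r$.

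I expect the genuinely adelic bookkeeping in the "$\subseteq$" direction to be the main obstacle: one must track a real drift and countably many $p$-adic drifts simultaneously, show that the necessary conditions found are also sufficient (no obstruction survives beyond those recorded by $L_X(p)$ and $R_X(p)$), and check that the $p$-adic fractional parts assemble into well-defined $e_p\in\mathbb{Q}_p$ whose pairing with an \emph{arbitrary} $q\in Q_X$ --- not just $q\in\mathbb{Z}$ --- is precisely $\sum_p\{qe_p\}_p$. Once the equality for $E_X$ is in hand, continuity of all measurable eigenfunctions is immediate: the right-hand side consists entirely of \emph{continuous} eigenvalues by the "$\supseteq$" construction, so every measurable eigenvalue is continuous; and since $(X,\sigma)$ is ergodic each eigenvalue has a one-dimensional eigenspace, so any measurable eigenfunction agrees a.e.\ with the continuous one produced above.
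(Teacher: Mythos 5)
Your overall architecture is correct in spirit and shares the paper's main ingredients: Host-style approximate eigenfunctions built from the distance $j_k(x)$ to the start of a level-$k$ block, uniform convergence controlled by the exponential decay in Proposition~\ref{Pmain}, and a Rokhlin-tower/martingale argument for the forward inclusion. The continuity deduction at the end is exactly how the paper closes the proof. However, there are two substantive gaps.

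First, the order of operations is circular. You present ``$\supseteq$'' as the first step, but the statement only asserts that \emph{some} $e_p$ exist; you cannot verify that $q\alpha+\sum_p\{qe_p\}_p+r$ is an eigenvalue until you have specified the $e_p$, and you propose to extract them from the ``$\subseteq$'' direction. The paper avoids this by splitting the constructive half differently: it first shows (Proposition~\ref{gen}) that for every $q\in Q_X$ there exists a \emph{rational} $r_q$ with $q\alpha+r_q\in E_X$ --- proved algebraically from the identity $\alpha_{k+1}=(-1)^k\frac{\len{v_k}}{\len{v_0}a_0\cdots a_k}\alpha+r_k$ and the fact that each $\alpha_k$ is already an eigenvalue --- and separately (Proposition~\ref{rational}) that $R_X\subseteq E_X$; only later are the $r_q$ reinterpreted modulo $R_X$ as $\sum_p\{qe_p\}_p$. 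This gives the ``$\supseteq$'' inclusion without ever having the $e_p$ in hand.

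Second, and more seriously, you identify as ``the main obstacle'' precisely the step you do not carry out, and this step is the genuine mathematical content beyond the Host-type argument. The paper shows (Proposition~\ref{homomorph}) that $q\mapsto q\alpha+\phi(q)$ defines an isomorphism $Q_X\to E_X/R_X$ for a homomorphism $\phi:Q_X\to\mathbb{Q}/R_X$ with $\phi(1)=\phi(0)$, and then (Proposition~\ref{phi}) classifies all such homomorphisms: $Q_X/\mathbb{Z}$ and $\mathbb{Q}/R_X$ are torsion groups, so $\phi$ factors over primes into maps $\tilde\phi_p:p^{-\ell_p}\mathbb{Z}/\mathbb{Z}\to\mathbb{Z}[\nicefrac1p]/p^{-r_p}\mathbb{Z}$, and one builds $e_p\in\mathbb{Q}_p$ as a $p$-adic Cauchy limit of $p^n c_{p,n}$ where $c_{p,n}\in\tilde\phi_p(p^{-n})$. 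Your sketch (``obtained as limits in $\mathbb{Q}_p$ of the partial offset data'') gestures at this but does not supply the torsion-group decomposition or the verification that $\{qe_p\}_p$ recovers $\phi$ on all of $Q_X$, which is not automatic when $Q_X$ is a genuine adelic group (infinitely many nonzero $\ell_p$). Relatedly, the ``$\subseteq$'' direction also needs the statement that the \emph{rational} measurable eigenvalues are exactly $R_X$ (the paper's Proposition~\ref{ratmeas}), which is a separate and fairly delicate tower argument that your sketch does not address; without it you cannot conclude $r\in R_X$ rather than merely $r\in\mathbb{Q}$.
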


Before proceeding, we establish that the $L_{X}(p)$ are integers.

\begin{lemma}\label{div}
For all $k \geq 0$, $\gcd(\len{v_{k}},\len{v_{k+1}})$ divides $\len{v_{0}}a_{0}\cdots a_{k}$.
\end{lemma}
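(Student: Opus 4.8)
The plan is a short induction on $k$ built on the linear recurrence $\len{v_{k+1}} = b_{k}\len{v_{k}} + a_{k}\len{v_{k-1}}$ from Lemma~\ref{vrec}. Write $g_{k} = \gcd(\len{v_{k}},\len{v_{k+1}})$; the goal is $g_{k} \mid \len{v_{0}}a_{0}\cdots a_{k}$. First record that every $a_{k}$ is a positive integer, since $a_{0} = 2^{\bbone_{r_{0}}}$ and $a_{k+1} = 2^{\bbone_{r_{k+1}}}(n_{k}-m_{k})$ with $n_{k} > m_{k}$, so that the products in question are nonzero integers. The base case $k=0$ is immediate: $g_{0}$ divides $\len{v_{0}}$, which divides $\len{v_{0}}a_{0}$.

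For the inductive step I would take $k \geq 1$ (exactly the range in which Lemma~\ref{vrec} is stated) and argue as follows. Since $g_{k}$ divides $\len{v_{k}}$ it divides $a_{k}\len{v_{k}}$, and since $g_{k}$ divides $\len{v_{k+1}}$ it divides $\len{v_{k+1}} - b_{k}\len{v_{k}} = a_{k}\len{v_{k-1}}$. Hence $g_{k}$ divides $\gcd(a_{k}\len{v_{k}},\, a_{k}\len{v_{k-1}}) = a_{k}\gcd(\len{v_{k}},\len{v_{k-1}}) = a_{k}g_{k-1}$, using only the elementary identity $\gcd(ca,cb)=c\gcd(a,b)$. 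Applying the induction hypothesis $g_{k-1} \mid \len{v_{0}}a_{0}\cdots a_{k-1}$ then gives $g_{k} \mid \len{v_{0}}a_{0}\cdots a_{k}$, closing the induction.

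There is essentially no obstacle: the proof is purely arithmetic, and the only points requiring a moment of care are that the recurrence of Lemma~\ref{vrec} applies for $k\geq 1$ (so the $k=0$ case is handled separately, as above) and that $\gcd$ distributes over the common factor $a_{k}$. This lemma then justifies that $L_{X}(p)$ in Theorem~\ref{evalgroup} is a well-defined element of $\mathbb{Z}_{\geq 0}\cup\{\infty\}$, since $\len{v_{0}}a_{0}\cdots a_{k}/\gcd(\len{v_{k}},\len{v_{k+1}})$ is an integer for every $k$.
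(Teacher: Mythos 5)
Your proof is correct and follows the same inductive strategy as the paper's: use the recurrence from Lemma~\ref{vrec} to show $\gcd(\len{v_{k}},\len{v_{k+1}})$ divides $a_{k}\gcd(\len{v_{k-1}},\len{v_{k}})$, then induct. Your bookkeeping is marginally cleaner --- the paper introduces an auxiliary $g_{0}=\gcd(\len{v_{0}},\len{u_{0}}-\len{v_{0}})$ and invokes coprimality of $\len{v_{k}}/g_{k}$ and $\len{v_{k-1}}/g_{k}$, whereas you handle the base case directly from $\gcd(\len{v_{0}},\len{v_{1}})\mid\len{v_{0}}$ and use only $\gcd(ca,cb)=c\gcd(a,b)$ --- but the core argument is identical.
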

\begin{proof}
Set $g_{0} = \gcd(\len{v_{0}},\len{u_{0}}-\len{v_{0}})$ and $g_{k} = \gcd(\len{v_{k}},\len{v_{k-1}})$ for $k \geq 1$.  Then $g_{0}$ divides $\len{v_{0}}$ and $g_{k+1} = g_{k}\gcd(b_{k}\frac{\len{v_{k}}}{g_{k}} + a_{k}\frac{\len{v_{k-1}}}{g_{k}},\frac{\len{v_{k}}}{g_{k}}) = g_{k} \gcd(a_{k}\frac{\len{v_{k-1}}}{g_{k}},\frac{\len{v_{k}}}{g_{k}})$ and since $\gcd(\frac{\len{v_{k}}}{g_{k}},\frac{\len{v_{k-1}}}{g_{k}}) = 1$, then $g_{k+1}$ divides $g_{k}a_{k}$ so by induction $g_{k+1}$ divides $\len{v_{0}}a_{0}\cdots a_{k}$ for all $k$.
\end{proof}

\subsection{Additive continuous eigenvalues}

Our first step is establishing the existence of a family of irrational additive continuous eigenvalues, all of which are explicitly defined in terms of generalized continued fractions using $a_{k}$ and $b_{k}$.

\begin{proposition}\label{irrat}
$\lambda$ and $\alpha$ as defined in Theorem~\ref{evalgroup} are irrational.
\end{proposition}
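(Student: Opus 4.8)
The plan is to show that $\lambda$ is irrational, from which irrationality of $\alpha$ follows immediately, since $\alpha = \lambda / (\len{u_0}\lambda + \len{v_0}(1-\lambda))$ is a Möbius image of $\lambda$ with integer coefficients and nonzero determinant $\len{v_0}\len{u_0} - \len{v_0}(\len{u_0}-\len{v_0}) = \len{v_0}^2 \neq 0$ (actually $\len{v_0}\cdot\len{v_0}$ after simplification), hence rational iff $\lambda$ is. So the heart of the matter is the generalized continued fraction $\lambda = \cfrac{a_0}{b_0+\cfrac{a_1}{b_1+\cdots}}$.

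First I would recall the connection to the recurrence $\len{v_{k+1}} = b_k \len{v_k} + a_k \len{v_{k-1}}$ from Lemma~\ref{vrec} and the relation $\beta_k = a_{k+1}\len{v_k}/\len{v_{k+1}} = a_{k+1}/(b_k + \beta_{k-1})$ from Lemma~\ref{2.4}. One checks that the tail continued fractions $\lambda_k := \cfrac{a_k}{b_k + \cfrac{a_{k+1}}{b_{k+1}+\cdots}}$ satisfy $\lambda_k = a_k/(b_k + \lambda_{k+1})$, and that $\lambda_k$ equals $\lim_{j} a_k \len{v_{j}}/\len{v_{j+1}}$ along the natural convergents — in other words $\lambda_k$ is (up to a shift of index) the limit of the $\beta$-type quantities, so in particular $\lambda_k \in (0,1)$ for all $k$ by the bounds already established (Lemma~\ref{betaconvexp} gives $\prod \beta_j \to 0$, but more simply Lemmas~\ref{2n}, \ref{a2} etc. give $0 < \lambda_k < 1$, or even $\lambda_k \le 1$, with the key point being $\lambda_k$ bounded away from degenerate behavior). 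The convergence of the continued fraction itself needs to be noted — this follows from $a_k, b_k$ being positive integers together with the exponential decay of $\prod \beta_j$ established in Lemma~\ref{betaconvexp}, which controls the difference of successive convergents.

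The main step is the irrationality argument itself. Suppose for contradiction that $\lambda = \lambda_0$ is rational. Using $\lambda_k = a_k/(b_k + \lambda_{k+1})$ with $a_k, b_k \in \mathbb{Z}_{>0}$, one shows inductively that every tail $\lambda_k$ is rational, say $\lambda_k = c_k/d_k$ in lowest terms with $d_k \ge 1$; then $\lambda_{k+1} = a_k/\lambda_k - b_k = (a_k d_k - b_k c_k)/c_k$, so $d_{k+1}$ divides $c_k$, giving $d_{k+1} \le c_k < d_k$ (the strict inequality because $0 < \lambda_k < 1$ forces $c_k < d_k$, and $c_k \ge 1$). This produces a strictly decreasing sequence of positive integers $d_k$, a contradiction. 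The one subtlety to handle carefully is ensuring $c_k \ge 1$, i.e. that no tail $\lambda_k$ is zero: since $a_k \ge 1$ and the continued fraction converges, $\lambda_k > 0$ for all $k$ (a tail being zero would force $a_k = 0$). I would also double-check the degenerate edge case $b_k + \lambda_{k+1}$ dividing $a_k$ evenly, which is absorbed by the lowest-terms bookkeeping. I expect this descent to be the crux; everything else (convergence of the continued fraction, the Möbius reduction from $\lambda$ to $\alpha$) is routine given the estimates already in hand.
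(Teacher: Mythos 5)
Your reduction from $\alpha$ to $\lambda$ via a nonzero-determinant M\"obius transformation is fine (the paper does the same in one line, noting $\len{u_0} > \len{v_0}$), and your observation that $\lambda_k > 0$ for all $k$ is correct. But the heart of your argument --- the infinite descent on the denominators $d_k$ of the tails $\lambda_k$ --- rests on the claim that $0 < \lambda_k < 1$ for all $k$, and that claim is neither established nor true in general.

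You justify $\lambda_k < 1$ by gesturing at ``Lemmas~\ref{2n}, \ref{a2} etc.,'' but those lemmas do not give this. Lemma~\ref{a2} bounds $a_{k+1}$ against $b_k$ (with an index shift), and even then only yields $a_{k+1} \le b_k + 2$, allowing $a_{k+1} > b_k$. Nothing in the paper gives $a_k \le b_k$. Concretely, in case~(\ref{rk12a}) of Proposition~\ref{rk} one has $m_k = 1$, $n_k = 2$, $r_k = 0$, $n_{k-1} \le 2m_{k-1}$, so $b_k = 1$ while $a_k = n_{k-1} - m_{k-1}$ can equal $m_{k-1}$, which is unbounded. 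Then $\lambda_k = a_k/(b_k + \lambda_{k+1}) > a_k/2$, which exceeds $1$. Once some $\lambda_k > 1$, you get $c_k > d_k$ and the chain $d_{k+1} \le c_k < d_k$ breaks. More fundamentally, descent alone cannot work here because generalized continued fractions $\cfrac{a_0}{b_0 + \cfrac{a_1}{b_1 + \cdots}}$ with positive integer entries \emph{can} converge to rationals (e.g.\ $\cfrac{2}{1+\cfrac{2}{1+\cdots}} = 1$); irrationality must come from some quantitative feature of this particular $(a_k,b_k)$.

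The paper's proof supplies exactly that missing ingredient. It keeps the unreduced numerator/denominator pair $(p_{-2},p_{-1}) = (q,p)$ and propagates it by $p_{k+1} = -b_{k+1}p_k + a_{k+1}p_{k-1}$ (so that $\lambda_k = p_k/p_{k-1}$, forcing $p_k \ge 1$ by positivity of the tails), and then establishes the telescoping identity
\[
p_{k+1}\len{v_{k+1}} + p_k\len{v_{k+2}} = \bigl(p\len{u_0} + (q-p)\len{v_0}\bigr)\,a_0 a_1 \cdots a_{k+1}.
\]
Dividing by $\len{v_{k+1}}$ and invoking Proposition~\ref{Pmain} (which gives $a_0\cdots a_{k+1}/\len{v_{k+1}} < \epsilon_{k+1} \to 0$, the same exponential decay driving the discrete-spectrum result) yields $1 \le p_{k+1} < (\text{const})\cdot\epsilon_{k+1} \to 0$, a contradiction. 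Your proposal never uses this decay and so cannot close the gap; you would need to replace the pointwise claim $\lambda_k < 1$ with an appeal to $\prod_j \beta_j \to 0$ or equivalently Proposition~\ref{Pmain}.
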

\begin{proof}
Suppose that $\lambda \in \mathbb{Q}$ so $0 < \lambda = \frac{p}{q}$ for some $p,q \in \mathbb{Z}$ with $p,q > 0$.  Define the sequence $(p_{k})$ by $p_{-2} = q$ and $p_{-1} = p$ and for $k \geq -1$, $p_{k+1} = -b_{k+1}p_{k} + a_{k+1}p_{k-1}$.  By construction, $\lambda = \lambda_{0} = \frac{p_{-1}}{p_{-2}}$.  Assume that $\lambda_{k+1} = \frac{p_{k}}{p_{k-1}}$.  Then, since $\lambda_{k+1} = \frac{a_{k+1}}{b_{k+1} + \frac{a_{k+2}}{b_{k+2} + \cdots}} =
\frac{a_{k+1}}{b_{k+1} + \lambda_{k+2}}$,
\[
\frac{p_{k+1}}{p_{k}} = -b_{k+1} + a_{k+1}\frac{p_{k-1}}{p_{k}} = -b_{k+1} + \frac{a_{k+1}}{\lambda_{k+1}} = \lambda_{k+2}
\]
so by induction, $\lambda_{k} = \frac{p_{k}}{p_{k-1}}$ for all $k$.  In particular, $p_{k} > 0$ for all $k$ since $\lambda_{k} > 0$ and $p_{-1} > 0$ so if there were a minimal $k$ such that $p_{k} < 0$ then that $\lambda_{k} < 0$).

Now observe that
\begin{align*}
p_{k+1}\len{v_{k+1}} + p_{k}\len{v_{k+2}} &= -b_{k+1}p_{k}\len{v_{k+1}} + a_{k+1}p_{k-1}\len{v_{k+1}} + p_{k}b_{k+1}\len{v_{k+1}} + p_{k}a_{k+1}\len{v_{k}} \\
&= a_{k+1}(p_{k}\len{v_{k}} + p_{k-1}\len{v_{k+1}})
\end{align*}
and since $p_{-1}(\len{u_{0}} - \len{v_{0}}) + p_{-2}\len{v_{0}} = p\len{u_{0}} + (q-p)\len{v_{0}}$, by induction then
\[
p_{k+1}\len{v_{k+1}} + p_{k}\len{v_{k+2}} = (p\len{u_{0}} + (q-p)\len{v_{0}})a_{0}a_{1}\cdots a_{k+1}
< (p\len{u_{0}} + (q-p)\len{v_{0}})\epsilon_{k+1}\len{v_{k+1}}
\]
where $\epsilon_{k+1}$ is as in Proposition \ref{Pmain}.  Since $p_{k}, p_{k+1} \geq 1$,
\[
\len{v_{k+1}} < p_{k+1}\len{v_{k+1}} + p_{k}\len{v_{k+2}} < (p\len{u_{0}} + (q-p)\len{v_{0}})\epsilon_{k+1}\len{v_{k+1}}
\]
but then
\[
\frac{1}{p\len{u_{0}} + (q-p)\len{v_{0}}} < \epsilon_{k+1} \to 0
\]
which is impossible. Therefore $\lambda \notin \mathbb{Q}$ hence $\alpha \notin \mathbb{Q}$ (as $\len{u_{0}} > \len{v_{0}}$).
\end{proof}

\begin{definition}
For all $k \geq 0$, define
\[
\lambda_{k} = \frac{a_{k}}{b_{k} + \frac{a_{k+1}}{b_{k+1} + \cdots}} \textrm{ and } 
\alpha_{k} = \frac{1}{\len{v_{k}} + \len{v_{k-1}}\lambda_{k}}
\]
(where $\len{v_{-1}}$ is defined as $\len{u_{0}} - \len{v_{0}}$, obtained by reversing the recursion from Lemma~\ref{vrec}). 

The \textbf{eigenvalue family} is the set $\{\alpha_k\}_{k \geq 0}$.
\end{definition}

We adapt the argument of Host \cite{MR873430} using `approximate eigenfunctions' and deduce convergence to an actual eigenfunction from the exponential decay of Proposition~\ref{Pmain}.

\begin{proposition}\label{2piqalpha}
For $k \geq 0$, $\alpha_{k}$ is an additive continuous eigenvalue.
\end{proposition}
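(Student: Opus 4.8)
The plan is to adapt Host's approximate‑eigenfunction argument (\cite{MR873430}): build a sequence of continuous ``almost eigenfunctions'' from the level‑$k$ block structure of Proposition~\ref{words} and show, using the exponential decay of Proposition~\ref{Pmain} (in the sharp form of Lemma~\ref{betaconvexp}), that they converge uniformly to a genuine continuous eigenfunction with eigenvalue $\alpha_k$.

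First I would set up the functions. For each $j\ge 0$ let $N_j\colon X\to\mathbb{Z}_{\ge 0}$ record the offset of coordinate $0$ inside the level‑$j$ block containing it: writing $x$ (uniquely, by Proposition~\ref{words}) as a bi‑infinite concatenation of copies of $v_j$ and $u_j$, let $N_j(x)$ be such that the block covering coordinate $0$ begins at coordinate $-N_j(x)$. By recognizability of the $S$‑adic structure — which holds since $X$ is aperiodic and minimal, and which is the one external input I would invoke — each $N_j$ is locally constant, hence continuous, and $N_j(\sigma x)=N_j(x)+1$ unless coordinate $0$ ends a level‑$j$ block, in which case $N_j(\sigma x)=0$. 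Set $f_j(x)=e^{2\pi i\alpha_k N_j(x)}$. Then $f_j\circ\sigma=e^{2\pi i\alpha_k}f_j$ away from level‑$j$ block boundaries, and at a boundary the two sides differ by $e^{2\pi i\alpha_k\len{w}}$ for some $w\in\{v_j,u_j\}$, so
\[
\|f_j\circ\sigma-e^{2\pi i\alpha_k}f_j\|_\infty\le 2\pi\big(\mathrm{dist}(\alpha_k\len{v_j},\mathbb{Z})+\mathrm{dist}(\alpha_k\len{u_j},\mathbb{Z})\big).
\]

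The core of the argument is to bound $\mathrm{dist}(\alpha_k\len{v_j},\mathbb{Z})$ and the increments $N_{j+1}-N_j$ geometrically. Since $(\len{v_j})$ solves the recursion $\len{v_{j+1}}=b_j\len{v_j}+a_j\len{v_{j-1}}$ of Lemma~\ref{vrec}, for $j\ge k-1$ one has $\len{v_j}=A_j\len{v_k}+B_j\len{v_{k-1}}$ with integer sequences $A_j,B_j$ solving the same recursion, $A_k=1,A_{k-1}=0,B_k=0,B_{k-1}=1$; using $\alpha_k(\len{v_k}+\len{v_{k-1}}\lambda_k)=1$ this gives the exact identity $\alpha_k\len{v_j}-A_j=\alpha_k\len{v_{k-1}}(B_j-A_j\lambda_k)$. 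The sequence $e_j:=B_j-A_j\lambda_k$ again solves the recursion, with $e_{k-1}=1,e_k=-\lambda_k$, and a short induction using $\lambda_j(b_j+\lambda_{j+1})=a_j$ shows $|e_j|=\prod_{l=k}^{j}\lambda_l$; the telescoping relation $\len{v_{j+1}}+\len{v_j}\lambda_{j+1}=\tfrac{a_j}{\lambda_j}(\len{v_j}+\len{v_{j-1}}\lambda_j)$ (as in Lemma~\ref{2.4}) then yields $\prod_{l=k}^{j}\lambda_l=\big(\prod_{l=k}^{j}a_l\big)\big/\big(\alpha_k(\len{v_{j+1}}+\len{v_j}\lambda_{j+1})\big)$. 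Combining, and using $\prod_{l=0}^{j-1}a_{l+1}=(\len{v_j}/\len{v_0})\prod_{l=0}^{j-1}\beta_l$ from the proof of Proposition~\ref{Pmain} together with Lemma~\ref{betaconvexp}, one gets
\[
\mathrm{dist}(\alpha_k\len{v_j},\mathbb{Z})\le \frac{\len{v_{k-1}}\prod_{l=k}^{j}a_l}{\len{v_{j+1}}}\le C_k\,\kappa^{\,j}
\]
for a constant $C_k$ depending only on $k$. The same bookkeeping, via $\len{u_j}=\len{v_j}+(n_{j-1}-m_{j-1})\len{v_{j-1}}$, bounds $\mathrm{dist}(\alpha_k\len{u_j},\mathbb{Z})$; and since $N_{j+1}-N_j$ is a nonnegative integer combination of $\len{v_j}$ and $\len{u_j}$ with at most $n_j+r_j$ terms, while $n_j+r_j=O(m_j)=O(\len{v_{j+1}}/\len{v_j})$ by Lemma~\ref{vrec} and the bounds of Propositions~\ref{svp}--\ref{rk}, the extra polynomial factor is absorbed and $\mathrm{dist}(\alpha_k(N_{j+1}-N_j),\mathbb{Z})=O(\kappa^{\,j})$ as well.

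Finally I would conclude: $|f_{j+1}(x)-f_j(x)|\le 2\pi\,\mathrm{dist}(\alpha_k(N_{j+1}(x)-N_j(x)),\mathbb{Z})$ is summable uniformly in $x$, so $(f_j)$ converges uniformly to a continuous $f$ with $|f|\equiv 1$; since $\|f_j\circ\sigma-e^{2\pi i\alpha_k}f_j\|_\infty\to 0$ and $f_j\circ\sigma\to f\circ\sigma$ uniformly, $f(\sigma x)=e^{2\pi i\alpha_k}f(x)$ for every $x\in X$, so $f$ is a continuous eigenfunction and $\alpha_k$ is a continuous additive eigenvalue. I expect the main obstacle to be the estimate of the third paragraph — turning the generalized‑continued‑fraction definition of $\alpha_k$ into the geometric decay of $\mathrm{dist}(\alpha_k\len{v_j},\mathbb{Z})$, and in particular controlling the block‑count factor $n_j+r_j$ against $\prod\beta_l$ — with the recognizability needed for continuity of the $f_j$ being the only genuinely external ingredient.
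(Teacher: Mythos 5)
Your proposal is correct and follows essentially the same approach as the paper's own proof: both construct Host-style approximate eigenfunctions $f_j(x) = e^{2\pi i \alpha_k N_j(x)}$ from the position of coordinate $0$ within the level-$j$ block (the paper denotes $N_j(x)$ by $j(x,j)$ and the cut-point set by $B_j$), and both reduce the uniform Cauchy property to a geometric bound on $\mathrm{dist}(\alpha_k|v_j|,\mathbb{Z})$ obtained from the length recursion and Proposition~\ref{Pmain}. Your bookkeeping via the sequences $A_j, B_j, e_j$ and the exact identity $\alpha_k|v_j| - A_j = \alpha_k|v_{k-1}|\prod_{l=k}^j(-\lambda_l)$ is the same computation the paper performs with convergents $c_j, e_j, d_j$ and the determinant identity $e_{j+1}d_j - e_j d_{j+1} = (-1)^j|v_{k-1}|a_k\cdots a_{j+1}$. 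Two small presentational differences: (1) you conclude the eigenvalue equation $f\circ\sigma = e^{2\pi i\alpha_k}f$ via uniform convergence together with $\|f_j\circ\sigma - e^{2\pi i\alpha_k}f_j\|_\infty \to 0$, while the paper argues it holds on a set of full measure and then invokes full support of $\mu$ plus continuity of $f$; your route is slightly cleaner. (2) You invoke ``recognizability'' as an external input, whereas in the paper the needed local constancy of $N_j$ is a direct consequence of the unique decomposability established in Proposition~\ref{words} and Lemma~\ref{u0v0}, so no additional input is required. Your concern about controlling the block-count factor $n_j + r_j$ is exactly the point the paper addresses with the estimate $p d_k/d_{k+1} \le 4$ for $p \le 2b_{k+1}+2$; your argument via $n_j + r_j = O(|v_{j+1}|/|v_j|)$ is the same.
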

\begin{proof}
Fix $k_{0} \geq 0$ and let $k \geq k_{0}$.
Every $x \in X$ can be written in a unique way as a concatenation of $u_{k}$ and $v_{k}$; we will refer to such as a $k$-concatenation.  Let $B_{k}$ be the set of $x \in X$ such that $x$ has $u_{k}$ or $v_{k}$ at the origin when written as a $k$-concatenation.  Let $j(x,k)$ be the minimal $j \geq 0$ such that $\sigma^{-j}x \in B_{k}$.

Consider $x$ which has $u_{k}$ at the origin.  If $r_{k} = 0$ then one of $\sigma^{-(m_{k}-1)\len{v_{k}}}x$ or $\sigma^{-(n_{k}-1)\len{v_{k}}}x$ is in $B_{k+1}$.  If $r_{k} > 0$ then one of $\sigma^{-(m_{k}+r_{k}-2)\len{v_{k}} - \len{u_{k}}}x$ or $\sigma^{-(n_{k}+r_{k}-2)\len{v_{k}} - \len{u_{k}}}x$ is in $B_{k+1}$.
For $x$ that has $v_{k}$ at the origin, there exists $1 \leq p < n_{k}+r_{k}$ such that $\sigma^{-p\len{v_{k}}}x$ or $\sigma^{-p\len{v_{k}} - \len{u_{k}}}x$ is in $B_{k+1}$.  Therefore for every $x \in X$, we have
$j(x,k+1) - j(x,k) = p\len{v_{k}}$ or $j(x,k+1) - j(x,k) = p\len{v_{k}} + \len{u_{k}}$
for some $1 \leq p < n_{k} + r_{k}$.

Since $\len{u_{k}} = \len{v_{k}} + (n_{k-1}-m_{k-1})\len{v_{k-1}}$, in the latter case, $j(x,k+1) - j(x,k) = (p+1)\len{v_{k}} + (n_{k-1}-m_{k-1})\len{v_{k-1}}$.  Therefore
\[
j(x,k+1) - j(x,k) = p\len{v_{k}} + p^{\prime}\len{v_{k-1}} \tag{$\ddagger$}
\]
 for some $1 \leq p \leq n_{k} + r_{k}$ and $p^{\prime} = 0$ or $p^{\prime} = n_{k-1} - m_{k-1} < n_{k-1}$.

Let $f_{k}(x) = \exp(2\pi i \alpha_{k_{0}} j(x,k))$.
Each $f_{k}$ is `approximately' an eigenfunction: $f_{k}(\sigma x) = \exp(2\pi i \alpha_{k_{0}})f_{k}(x)$ except when $\sigma x \in B_{k}$ and $\mu(B_{k}) \to 0$ (since $\sigma^{i}B_{k}$ are disjoint for at least $0 \leq i < \len{v_{k}}$ and $\len{v_{k}} \to \infty$).  Observe that
\begin{align*}
|f_{k}(x) - f_{k+1}(x)| &= |\exp(2\pi i \alpha_{k_{0}} j(x,k)) - \exp(2\pi i \alpha_{k_{0}} j(x,k+1))| \\
&= |\exp(2\pi i \alpha_{k_{0}} j(x,k))(1 - \exp(2\pi i \alpha_{k_{0}} (p \len{v_{k}} + p^{\prime}\len{v_{k-1}}))| \\
&= |1 - \exp(2\pi i \alpha_{k_{0}} (p \len{v_{k}} + p^{\prime}\len{v_{k-1}})| \\
&\leq |1 - \exp(2\pi i \alpha_{k_{0}} p \len{v_{k}})| + |\exp(2\pi i \alpha_{k_{0}} p \len{v_{k}}) - \exp(2\pi i \alpha_{k_{0}} (p \len{v_{k}} + p^{\prime}\len{v_{k-1}})| \\
&= |1 - \exp(2\pi i \alpha_{k_{0}} p \len{v_{k}})| + |1 - \exp(2\pi i \alpha_{k_{0}} p^{\prime}\len{v_{k-1}})| \\
&\leq 2\pi \langle \alpha_{k_{0}} p \len{v_{k}}\rangle + 2\pi \langle \alpha_{k_{0}} p^{\prime} \len{v_{k-1}}\rangle.
\end{align*}

Suppose we knew that there exist $\epsilon_{k}^{\prime} > 0$ with $\sum \epsilon_{k}^{\prime} < \infty$ such that $\max_{1 \leq p \leq n_{k}+r_{k}} \langle \alpha_{k_{0}}p\len{v_{k}}\rangle < \epsilon_{k}^{\prime}$.
Since $\langle \alpha_{k_{0}} p^{\prime} \len{v_{k-1}}\rangle = 0$ when $p^{\prime} = 0$ and $\langle \alpha_{k_{0}}p^{\prime}\len{v_{k-1}} \rangle < \epsilon_{k-1}^{\prime}$ when $p^{\prime} > 0$, then we would have $\sum_{k=K}^{\infty} |f_{k+1}(x) - f_{k}(x)| < \sum_{k=K}^{\infty} (\epsilon_{k}^{\prime} + \epsilon_{k-1}^{\prime})$ which tends to zero uniformly over $x \in X$.  So then the $f_{k}(x)$ are uniformly Cauchy in the sup norm, and as each $f_{k}(x)$ is continuous, they converge to a continuous limit $f(x)$.  Since $f_{k}(\sigma x) = \exp(2\pi i \alpha_{k_{0}})f_{k}(x)$ on sets approaching full measure (and the unique invariant measure necessarily has full support), by continuity $f(\sigma x) = \exp(2\pi i \alpha_{k_{0}})f(x)$ for all $x$.  We will now show that such $\epsilon_{k}^{\prime}$ exist.

Set $d_{k} = \len{v_{k+k_{0}+1}}$ for $k \geq -2$ (if $k_{0}=0$ then set $d_{-2} = \len{u_{0}} - \len{v_{0}}$ as $v_{-1}$ is undefined).  Then $d_{k+1} = b_{k+k_{0}+1}d_{k} + a_{k+k_{0}+1}d_{k-1}$.  Define sequences $(c_{k})$ and $(e_{k})$ by $c_{-2} = 1$, $c_{-1}=0$, $e_{-2}=0$, $e_{-1}=1$ and the same recursion relation $c_{k+1} = b_{k+k_{0}+1}c_{k} + a_{k+k_{0}+1}c_{k-1}$ and $e_{k+1} = b_{k+k_{0}+1}e_{k} + a_{k+k_{0}+1}e_{k-1}$.  Standard continued fraction theory shows that $\frac{c_{k}}{e_{k}} \to \lambda_{k_{0}}$.  Since the sequences are all defined by the same linear recurrence relation, $d_{k} = d_{-2}c_{k} + d_{-1}e_{k}$ for all $k$.  Then
\[
\lim \frac{e_{k}}{d_{k}} = \lim \left(d_{-1} + d_{-2}\frac{c_{k}}{e_{k}}\right)^{-1} = (d_{-1} + d_{-2} \lambda_{k_{0}})^{-1} = \frac{1}{d_{-1} + d_{-2}\lambda_{k_{0}}} = \alpha_{k_{0}}
\]

It is easily verified by induction that $e_{k+1}d_{k} - e_{k}d_{k+1} = (-1)^{k}\len{v_{k_{0}-1}}a_{k_{0}}a_{k_{0}+1} \cdots a_{k_{0}+k+1}$ for all $k$.  

Since $e_{k+1}d_{k} - e_{k}d_{k+1}$ alternates sign, $\frac{e_{2k}}{d_{2k}}$ approaches $\alpha_{k_{0}}$ from below and $\frac{e_{2k+1}}{d_{2k+1}}$ approaches $\alpha_{k_{0}}$ from above.  Therefore 
\[
\left|\alpha_{k_{0}} - \frac{e_{k}}{d_{k}}\right| < \left|\frac{e_{k+1}}{d_{k+1}} - \frac{e_{k}}{d_{k}}\right|
= \frac{\len{v_{k_{0}-1}}a_{k_{0}} \cdots a_{k_{0}+k+1}}{d_{k}d_{k+1}}
\]

Then $\left|\alpha_{k_{0}}d_{k} - e_{k}\right| < \frac{\len{v_{k_{0}-1}}a_{k_{0}} \cdots a_{k_{0}+k+1}}{d_{k+1}}$ so for any $p$,
\[
\langle p\alpha_{k_{0}}d_{k}\rangle < \frac{p\len{v_{k_{0}-1}}a_{k_{0}}\cdots a_{k+k_{0}+1}}{d_{k+1}}
\]
By Proposition \ref{Pmain}, there exists $\epsilon_{k}$ with $\sum \epsilon_{k} < \infty$ such that $\frac{a_{0}\cdots a_{k+k_{0}+1}}{d_{k}} < \epsilon_{k+k_{0}}$.

For $p \leq n_{k+1} + r_{k+1} \leq 2m_{k+1} + 2 + r_{k+1} \leq 2b_{k+1} + 2$, we have $\frac{pd_{k}}{d_{k+1}} \leq \frac{(2b_{k+1}+2)d_{k}}{b_{k+1}d_{k}} \leq 4$.  Then 
\[
\langle p\alpha_{k_{0}}d_{k}\rangle < \frac{pd_{k}\len{v_{k_{0}-1}}\epsilon_{k+k_{0}}}{d_{k+1} a_{0} \cdots a_{k_{0}-1}}
\leq 4 \frac{\len{v_{k_{0}-1}}}{a_{0}\cdots a_{k_{0}-1}}\epsilon_{k+k_{0}}.
\]
Setting $\epsilon_{k}^{\prime} = 4\len{v_{k_{0}-1}}(a_{0}\cdots a_{k_{0}-1})^{-1}\epsilon_{k+k_{0}}$ completes the proof.
\end{proof}

\begin{corollary}\label{alphairrat}
$\alpha$ is an additive continuous eigenvalue.
\end{corollary}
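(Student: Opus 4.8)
The plan is to identify $\alpha$ with a small \emph{integer} multiple of one of the eigenvalues $\alpha_k$ that Proposition~\ref{2piqalpha} already produces, and then to invoke that the continuous additive eigenvalue group $E_X$ is closed under multiplication by integers.

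First I would note that the recursion of Lemma~\ref{vrec} persists at the bottom index: with the convention $\len{v_{-1}} = \len{u_0}-\len{v_0}$ (already used in the definition of $\alpha_k$) one checks directly, splitting into the cases $r_0 = 0$ and $r_0 > 0$ and using $a_0 = 2^{\bbone_{r_0}}$ and $b_0 = m_0 + r_0$, that $\len{v_1} = b_0\len{v_0} + a_0\len{v_{-1}}$. Next, the defining relation $\lambda_0 = \frac{a_0}{b_0 + \lambda_1}$ gives $b_0 + \lambda_1 = a_0/\lambda_0$. Substituting both facts into $\alpha_1 = \frac{1}{\len{v_1} + \len{v_0}\lambda_1}$ and simplifying yields $\alpha_1 = \frac{\lambda_0}{a_0(\len{v_0} + \lambda_0\len{v_{-1}})}$. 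On the other hand, unwinding the formula for $\alpha$ in Theorem~\ref{evalgroup} (recalling that $\lambda = \lambda_0$) gives $\alpha = \frac{\lambda_0}{\len{v_0} + \lambda_0(\len{u_0}-\len{v_0})} = \frac{\lambda_0}{\len{v_0} + \lambda_0\len{v_{-1}}}$. Comparing the two expressions gives $\alpha = a_0\alpha_1$.

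Finally, since $a_0 = 2^{\bbone_{r_0}} \in \{1,2\}$ is a positive integer, since $\alpha_1 \in E_X$ by Proposition~\ref{2piqalpha}, and since $E_X$ is a group, we conclude $\alpha = a_0\alpha_1 \in E_X$, i.e.\ $\alpha$ is a continuous additive eigenvalue. I do not expect any real obstacle: all the analytic content (the exponential-decay/uniformly-Cauchy argument) is already contained in Proposition~\ref{2piqalpha}, and the only point requiring care is the bookkeeping at the index $k = 0$ — in particular that $\alpha$ is \emph{not} literally $\alpha_0$ (indeed $\alpha = \lambda_0\alpha_0$ with $\lambda_0$ irrational), but rather the integer multiple $a_0\alpha_1$.
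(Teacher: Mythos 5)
Your proposal is correct and takes essentially the same approach as the paper: both derive the identity $\alpha = a_0\alpha_1$ from the relation $\lambda_0 = \frac{a_0}{b_0+\lambda_1}$ together with the recursion $\len{v_1} = b_0\len{v_0} + a_0\len{v_{-1}}$ at the bottom index, then conclude via Proposition~\ref{2piqalpha}. The only cosmetic difference is that you manipulate $\alpha_1$ into the form $\frac{\lambda_0}{a_0(\len{v_0}+\lambda_0\len{v_{-1}})}$ and compare, while the paper rewrites $\alpha$ directly as $\frac{a_0}{\len{v_1}+\lambda_1\len{v_0}} = a_0\alpha_1$; the underlying algebra is identical.
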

\begin{proof}
Since $\lambda = \frac{a_{0}}{b_{0} + \lambda_{1}}$, 
\begin{align*}
\alpha &= \frac{\lambda}{\len{v_{0}} + (\len{u_{0}}-\len{v_{0}})\lambda}
= \frac{1}{\lambda^{-1}\len{v_{0}} + \len{u_{0}}-\len{v_{0}}}
= \frac{1}{\frac{b_{0}+\lambda_{1}}{a_{0}}\len{v_{0}} + \len{u_{0}} - \len{v_{0}}} \\
&= \frac{a_{0}}{b_{0}\len{v_{0}} + \lambda_{1}\len{v_{0}} + a_{0}(\len{u_{0}} - \len{v_{0}})}
= \frac{a_{0}}{\len{v_{1}} + \lambda_{1}\len{v_{0}}} = a_{0}\alpha_{1}. \qedhere
\end{align*}
By Proposition \ref{2piqalpha}, $\alpha_{1}$ is a continuous additive eigenvalue so $\alpha$ is as well.
\end{proof}

\begin{corollary}\label{circlefactor}
There is a continuous factor map $(X,\sigma) \to (S^{1},R_{\alpha})$ where $R_{\alpha}$ denotes rotation by $\exp(2\pi i \alpha)$.  The same holds for $(S^{1},R_{\alpha_{k}})$ for each $k$.
\end{corollary}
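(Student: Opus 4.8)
The plan is to invoke the standard principle that an irrational continuous additive eigenvalue produces a continuous factor onto the corresponding circle rotation, and then to check that each member of the eigenvalue family is indeed irrational. Fix an irrational continuous additive eigenvalue $\gamma$ of $(X,\sigma)$ and choose a continuous eigenfunction $f \in C(X)$, not identically zero, with $f(\sigma x) = e^{2\pi i \gamma} f(x)$ for all $x$. Since $|f|$ is continuous, $\sigma$-invariant, and $X$ is minimal, $|f|$ is a nonzero constant, so after rescaling we may assume $f \colon X \to S^{1}$. The identity $f \circ \sigma = R_{\gamma} \circ f$, where $R_{\gamma}$ is rotation by $e^{2\pi i \gamma}$, then exhibits $f$ as a continuous factor map from $(X,\sigma)$ onto $(f(X), R_{\gamma}|_{f(X)})$.

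Next I would observe that $f(X)$ is a nonempty compact $R_{\gamma}$-invariant subset of $S^{1}$; since $\gamma \notin \mathbb{Q}$, the rotation $R_{\gamma}$ is a minimal homeomorphism of $S^{1}$, so $f(X) = S^{1}$. Hence $f$ is a continuous factor map $(X,\sigma) \to (S^{1}, R_{\gamma})$. Applying this with $\gamma = \alpha$, which is a continuous additive eigenvalue by Corollary~\ref{alphairrat} and irrational by Proposition~\ref{irrat}, gives the first assertion. For the second assertion, $\alpha_{k}$ is a continuous additive eigenvalue by Proposition~\ref{2piqalpha}, so it only remains to see that each $\alpha_{k}$ is irrational.

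To that end I would note that $\alpha_{k} = (\len{v_{k}} + \len{v_{k-1}}\lambda_{k})^{-1}$ with $\len{v_{k}}, \len{v_{k-1}} \in \mathbb{Z}_{>0}$, so $\alpha_{k} \notin \mathbb{Q}$ provided $\lambda_{k} \notin \mathbb{Q}$; and from the relation $\lambda_{k} = a_{k}/(b_{k} + \lambda_{k+1})$, i.e. $\lambda_{k+1} = a_{k}/\lambda_{k} - b_{k}$ with $a_{k}, b_{k} \in \mathbb{Z}_{>0}$, one sees that $\lambda_{k} \in \mathbb{Q}$ if and only if $\lambda_{k+1} \in \mathbb{Q}$. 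Since $\lambda_{0} = \lambda \notin \mathbb{Q}$ by Proposition~\ref{irrat}, induction yields $\lambda_{k} \notin \mathbb{Q}$ for every $k$, completing the argument. There is no genuine obstacle here: all the substantive work lies in the already-established Propositions~\ref{irrat} and~\ref{2piqalpha} and Corollary~\ref{alphairrat}, and this corollary is simply the routine packaging of those facts, the only mild point being the short induction establishing irrationality of the whole eigenvalue family rather than just $\alpha$.
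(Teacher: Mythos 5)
Your proof is correct and follows essentially the same approach as the paper, which also takes a continuous eigenfunction $f_\alpha:X\to S^1$ and observes it is a factor map. You fill in two details the paper leaves implicit: that minimality forces $|f|$ to be a nonzero constant and the image to be all of $S^1$, and that each $\alpha_k$ is irrational (the paper only states irrationality of $\alpha$ directly; your induction via $\lambda_{k+1}=a_k/\lambda_k-b_k$, or alternatively the identity $\alpha_{k+1}=(-1)^{k}\frac{\len{v_k}}{\len{v_0}a_0\cdots a_k}\alpha+r_k/a_k$ from the proof of Proposition~\ref{gen}, both give this quickly).
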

\begin{proof}
Let $f_{\alpha} : X \to S^{1}$ be a continuous eigenfunction for $\exp(2\pi i \alpha)$.  Then $f_{\alpha}(\sigma x) = \exp(2\pi i \alpha)f_{\alpha}(x)$ so $f_{\alpha}$ is the factor map.  The same reasoning applies to $\alpha_{k}$.
\end{proof}

Next we prove that every element of $Q_{X}\alpha$ is, up to a rational, an element of the additive continuous eigenvalue group.

\begin{proposition}\label{gen}
For all $q \in Q_{X}$ there exists $r_{q} \in \mathbb{Q}$ such that $q\alpha + r _{q} \in E_{X}$.
\end{proposition}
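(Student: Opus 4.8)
The plan is to reduce to the case $q = 1/p^{t}$ and then manufacture the required eigenvalue out of the eigenvalue family $\{\alpha_{j}\}_{j\ge 0}$, every member of which lies in $E_{X}$ by Proposition~\ref{2piqalpha}. Throughout, write $v_{p}(\cdot)$ for the $p$-adic valuation. Since $E_{X}$ is a group containing $\mathbb{Z}$ and all the $\alpha_{j}$, and since $Q_{X}$ is generated as a group by $\mathbb{Z}$ together with the elements $1/p^{t}$ for primes $p$ and $t\le L_{X}(p)$, it suffices to prove the following: for each prime $p$ and each $t$ for which $p^{t}$ divides $D_{k}:=\len{v_{0}}a_{0}\cdots a_{k}/\gcd(\len{v_{k}},\len{v_{k+1}})$ for some $k$ (which is a positive integer by Lemma~\ref{div}, and whose divisibility by $p^{t}$ is precisely the condition $t\le L_{X}(p)$), there is $r\in\mathbb{Q}$ with $\frac{\alpha}{p^{t}}+r\in E_{X}$. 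The general case then follows by taking $\mathbb{Z}$-linear combinations of such $q$ and adding the corresponding rationals.

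Next I would record the algebraic identities underlying the eigenvalue family. From $\lambda_{j}=a_{j}/(b_{j}+\lambda_{j+1})$, Lemma~\ref{vrec}, and the definition of $\alpha_{j}$ one obtains $\alpha_{j}=(b_{j}+\lambda_{j+1})\alpha_{j+1}$, hence $\lambda_{j}\alpha_{j}=a_{j}\alpha_{j+1}$, and hence the two relations
\[
\alpha_{j}=b_{j}\alpha_{j+1}+a_{j+1}\alpha_{j+2}
\qquad\text{and}\qquad
\len{v_{j}}\alpha_{j}+a_{j}\len{v_{j-1}}\alpha_{j+1}=1 .
\]
Iterating the left-hand relation from $\alpha=a_{0}\alpha_{1}$ (as derived in the proof of Corollary~\ref{alphairrat}) produces integers $P_{j},Q_{j}$ with $\alpha=P_{j}\alpha_{j+1}+Q_{j}\alpha_{j+2}$, where $P_{-1}=0$, $P_{0}=a_{0}$, $P_{j+1}=b_{j+1}P_{j}+a_{j+1}P_{j-1}$, and $Q_{j}=a_{j+1}P_{j-1}$. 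In particular $P_{j}$ obeys the same linear recursion as $\len{v_{j+1}}$ (Lemma~\ref{vrec}), so passing to the associated $2\times 2$ matrix products and comparing determinants yields the cross-identity $P_{j-1}\len{v_{j+1}}-P_{j}\len{v_{j}}=\pm\, a_{0}\cdots a_{j}\len{v_{0}}$.

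The core of the argument is the following. Fix a large index $k'$ and set $T=v_{p}(\len{v_{0}}a_{0}\cdots a_{k'})$, so $p^{T}$ divides $P_{k'-1}\len{v_{k'+1}}-P_{k'}\len{v_{k'}}$ by the cross-identity. Then $\frac{1}{p^{T}}\bigl(P_{k'-1}\len{v_{k'+1}}-P_{k'}\len{v_{k'}}\bigr)\alpha_{k'+1}$ is an integer multiple of $\alpha_{k'+1}\in E_{X}$; substituting $\len{v_{k'+1}}\alpha_{k'+1}=1-a_{k'+1}\len{v_{k'}}\alpha_{k'+2}$ and then $\alpha=P_{k'}\alpha_{k'+1}+Q_{k'}\alpha_{k'+2}$ collapses this expression exactly to $\frac{\len{v_{k'}}\alpha-P_{k'-1}}{p^{T}}$, which is therefore in $E_{X}$. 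Reducing $\len{v_{k'}}$ and $P_{k'-1}$ modulo $p^{T}$ and absorbing the leftover integer multiple of $\alpha$ (which lies in $E_{X}$) shows $\frac{c\alpha}{p^{T}}\in E_{X}+\mathbb{Q}$ with $0\le c<p^{T}$ and $c\equiv\len{v_{k'}}\pmod{p^{T}}$; writing $c=p^{v}\tilde c$ with $p\nmid\tilde c$ and $v=\min(v_{p}(\len{v_{k'}}),T)$, a Bézout argument (solving $e\tilde c\equiv 1\pmod{p^{T-v}}$ and using $\alpha\in E_{X}$) upgrades this to $\frac{\alpha}{p^{T-v}}\in E_{X}+\mathbb{Q}$, provided $v<T$.

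To conclude, given $p$ and $t$ with $p^{t}\mid D_{k}$, we have $v_{p}(D_{k})=v_{p}(\len{v_{0}}a_{0}\cdots a_{k})-\min\!\bigl(v_{p}(\len{v_{k}}),v_{p}(\len{v_{k+1}})\bigr)\ge t$; since $v_{p}(\len{v_{0}}a_{0}\cdots a_{k})$ is nondecreasing in $k$, taking $k'=k$ or $k'=k+1$ (whichever index realizes that minimum) gives an index with $T-v_{p}(\len{v_{k'}})\ge t$, hence $v_{p}(\len{v_{k'}})<T$, and the previous step yields $\frac{\alpha}{p^{T-v_{p}(\len{v_{k'}})}}\in E_{X}+\mathbb{Q}$; as $E_{X}+\mathbb{Q}$ is a group and $T-v_{p}(\len{v_{k'}})\ge t$, also $\frac{\alpha}{p^{t}}\in E_{X}+\mathbb{Q}$, which is the desired conclusion for $q=1/p^{t}$. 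The main obstacle is the exact cancellation in the core step, combined with the realization that one should clear the \emph{full} available power $p^{T}$ rather than merely $p^{t}$: this is precisely what lets the $p$-adic valuation that is eaten by $\gcd(\len{v_{k}},\len{v_{k+1}})$ in the definition of $L_{X}(p)$ be recovered by moving to a neighboring index $k'$, and it is what prevents the argument from stalling when $p$ divides all of the $\len{v_{k'}}$.
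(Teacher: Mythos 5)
Your proposal is correct, and it runs along essentially the same track as the paper's proof: both arguments turn on expressing $\alpha_{k+1}$ as a $\mathbb{Q}$-affine function of $\alpha$ with coefficient $\pm\frac{\len{v_k}}{\len{v_0}a_0\cdots a_k}$, then invoke Proposition~\ref{2piqalpha} to conclude $\frac{\len{v_k}}{\len{v_0}a_0\cdots a_k}\alpha \in E_X + \mathbb{Q}$. The paper reaches this formula by manipulating $\alpha_k\lambda_k$ directly and inducting; you reach it via the continued-fraction convergents $P_j$ and the cross-determinant identity $P_{k-1}\len{v_{k+1}} - P_k\len{v_k} = \pm\len{v_0}a_0\cdots a_k$, which is the same computation packaged in matrix form. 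The only genuine divergence is in the finishing step, which the paper leaves implicit: the paper's intent is to combine the relations at indices $k$ and $k+1$ via B\'ezout on $\len{v_k}$ and $\len{v_{k+1}}$, pulling out $\gcd(\len{v_k},\len{v_{k+1}})$ to hit the denominator $D_k$; you instead choose a single index $k'\in\{k,k+1\}$ realizing $\min(v_p(\len{v_k}),v_p(\len{v_{k+1}}))$, then run a B\'ezout argument modulo $p^T$ on the residue $\tilde c$. Both finishing arguments are valid, and yours has the merit of spelling out the arithmetic that the paper suppresses; neither yields anything not obtainable from the other.
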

\begin{proof}
Since $\lambda_{k} = \frac{a_{k}}{b_{k} + \lambda_{k+1}}$, we have that $\lambda_{k+1} = a_{k}\lambda_{k}^{-1} - b_{k}$.  Therefore, for $k \geq 1$,
\[
\alpha_{k+1} = \frac{1}{\len{v_{k+1}} + \len{v_{k}}\lambda_{k+1}}
= \frac{1}{b_{k}\len{v_{k}} + a_{k}\len{v_{k-1}} + a_{k}\len{v_{k}}\lambda_{k}^{-1} - b_{k}\len{v_{k}}}
= \frac{\lambda_{k}}{a_{k}(\len{v_{k}} + \len{v_{k-1}}\lambda_{k})} = \frac{\alpha_{k}\lambda_{k}}{a_{k}}.
\]

We claim now that $\alpha_{k}\lambda_{k} = (-1)^{k}\frac{\len{v_{k}}}{\len{v_{0}}a_{0}\cdots a_{k-1}}\alpha + r_{k}$ for some $r_{k} \in \mathbb{Q}$.
Clearly $\alpha_{0}\lambda_{0} = \alpha = \frac{\len{v_{0}}}{\len{v_{0}}}\alpha + 0$.  Observe that
\[
\alpha_{1}\lambda_{1} = \frac{\lambda_{1}}{\len{v_{1}} + \len{v_{0}}\lambda_{1}}
= \frac{a_{0}\lambda^{-1} - b_{0}}{b_{0}\len{v_{0}} + a_{0}(\len{u_{0}} - \len{v_{0}}) + a_{0}\lambda^{-1}\len{v_{0}} - b_{0}\len{v_{0}}}
= \frac{a_{0} - b_{0}\lambda}{a_{0}(\len{v_{0}} + (\len{u_{0}}-\len{v_{0}})\lambda)}
\]
so we have
\begin{align*}
\alpha_{1}\lambda_{1} + \frac{\len{v_{1}}}{\len{v_{0}}a_{0}}\alpha - \frac{1}{\len{v_{0}}}
&= \frac{\len{v_{0}}(a_{0} - b_{0}\lambda) + \len{v_{1}}\lambda - a_{0}(\len{v_{0}} + (\len{u_{0}}-\len{v_{0}})\lambda)}{a_{0}\len{v_{0}}(\len{v_{0}} + (\len{u_{0}}-\len{v_{0}})\lambda)} \\
&= \frac{-b_{0}\lambda\len{v_{0}} + b_{0}\lambda\len{v_{0}} + a_{0}\lambda(\len{u_{0}}-\len{v_{0}}) - a_{0}(\len{u_{0}}-\len{v_{0}})\lambda}{a_{0}\len{v_{0}}(\len{v_{0}} + (\len{u_{0}}-\len{v_{0}})\lambda)}
= 0.
\end{align*}
Assume that $\alpha_{k}\lambda_{k} = (-1)^{k}\frac{\len{v_{k}}}{\len{v_{0}}a_{0}\cdots a_{k-1}}\alpha + r_{k}$ and likewise for $k-1$.  Then
\begin{align*}
\alpha_{k+1}\lambda_{k+1} &= \frac{a_{k}\lambda_{k}^{-1} - b_{k}}{b_{k}\len{v_{k}} + a_{k}\len{v_{k-1}} + a_{k}\len{v_{k}}\lambda_{k}^{-1} - b_{k}\len{v_{k}}}
= \frac{a_{k} - b_{k}\lambda_{k}}{a_{k}(\len{v_{k}} + \len{v_{k-1}}\lambda_{k})}
= \alpha_{k} - \frac{b_{k}\alpha_{k}\lambda_{k}}{a_{k}} \\
&= \frac{\alpha_{k-1}\lambda_{k-1}}{a_{k-1}} - \frac{b_{k}\alpha_{k}\lambda_{k}}{a_{k}}
= (-1)^{k-1}\frac{\len{v_{k-1}}}{\len{v_{0}}a_{0}\cdots a_{k-1}}\alpha + \frac{r_{k-1}}{a_{k-1}} - (-1)^{k}\frac{b_{k}\len{v_{k}}}{\len{v_{0}}a_{0}\cdots a_{k}}\alpha - \frac{b_{k}r_{k}}{a_{k}} \\
&= (-1)^{k+1} \frac{a_{k}\len{v_{k-1}} + b_{k}\len{v_{k}}}{\len{v_{0}}a_{0}\cdots a_{k}}\alpha + \frac{r_{k-1}}{a_{k-1}} - \frac{b_{k}r_{k}}{a_{k}} = (-1)^{k+1} \frac{\len{v_{k+1}}}{\len{v_{0}}a_{0}\cdots a_{k}}\alpha + \frac{r_{k-1}}{a_{k-1}} - \frac{b_{k}r_{k}}{a_{k}} 
\end{align*}
so by induction, the claim holds.  Then
\[
\alpha_{k+1} = \frac{\alpha_{k}\lambda_{k}}{a_{k}} = (-1)^{k}\frac{\len{v_{k}}}{\len{v_{0}}a_{0}\cdots a_{k}}\alpha + \frac{r_{k}}{a_{k}}. \qedhere
\]
\end{proof}

We now prove that all rationals with denominator an eventual common divisor of $\len{u_k}$ and $\len{v_k}$ are additive continuous eigenvalues.

\begin{proposition}\label{rational}
A rational number $\nicefrac{m}{n}$ is an additive continuous eigenvalue if $n$ eventually divides the lengths of both $u_{k}$ and $v_{k}$, equivalently the lengths of both $v_{k}$ and $v_{k+1}$.
\end{proposition}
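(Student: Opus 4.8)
The plan is to build an explicit continuous eigenfunction of eigenvalue $\nicefrac1n$ out of the level-$k$ concatenation structure of $X$, for $k$ large enough that $n$ divides all the word lengths involved. The eigenfunction will simply record, modulo $n$, the offset of the origin within its level-$k$ word; the point is that once $n$ divides the word lengths this offset transforms exactly, with no error terms, so that the only genuine issue is showing the resulting function is continuous.

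Concretely, first I would record the equivalence asserted in the statement: if $n$ divides $\len{v_k}$ and $\len{v_{k+1}}$ for all large $k$, then since (by Proposition~\ref{words}) $v_{k+1}$ is a concatenation of a known number of copies of $v_k$ and of $u_k$, $n$ also divides $\len{u_k}$; the reverse implication is immediate. So fix $K$ with $n \mid \len{v_k}$ and $n \mid \len{u_k}$ for all $k \geq K$ and work throughout with the (unique) level-$K$ decomposition of points of $X$ into copies of $u_K$ and $v_K$. For $x \in X$ let $j(x)$ denote the distance from the origin back to the nearest level-$K$ word-boundary at a coordinate $\leq 0$, so that the level-$K$ word of $x$ containing coordinate $0$ begins at coordinate $-j(x)$ (this is $j(x,K)$ from the proof of Proposition~\ref{2piqalpha}). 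Since consecutive level-$K$ word-boundaries of $x$ differ by $\len{u_K}$ or $\len{v_K}$, both multiples of $n$, all level-$K$ word-boundaries of $x$ are congruent modulo $n$; hence $c(x) := j(x) \bmod n$ is well-defined, and a short case analysis (on whether coordinate $0$ is the last letter of its level-$K$ word) gives $j(\sigma x) \equiv j(x)+1 \pmod n$, so that $c(\sigma x) \equiv c(x)+1 \pmod n$ with no error term whatsoever. Granting that $c$ is continuous, the function $f(x) = \exp(2\pi i\, c(x)/n)$ then satisfies $f(\sigma x) = \exp(2\pi i/n)\,f(x)$, so $\nicefrac1n \in E_X$; as $E_X$ is a subgroup of $(\mathbb{R},+)$ containing $\mathbb{Z}$, it contains $\nicefrac mn = m\cdot\nicefrac1n$, which is the claim.

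The real content, and the step I expect to be the main obstacle, is continuity of $c$ --- equivalently, local constancy of the level-$K$ decomposition (``recognizability''). I would establish recognizability of the level-$k$ decomposition for all $k$ by induction on $k$. For $k=0$: by Lemma~\ref{u0v0}, $s_0$ is the unique left-special and unique right-special word of its length, and $u_0,v_0$ are the return words of $s_0$, so the level-$0$ word-boundaries of any $x$ are precisely the starting positions of occurrences of $s_0$; since $X$ is minimal these occurrences are syndetic, hence $j(\cdot,0)$ is determined by a bounded coordinate window and is locally constant. For the inductive step it suffices to show that each substitution $\tau_{m_k,n_k,r_k}$ of Corollary~\ref{taus} is combinatorially recognizable --- i.e. that in a point of its image one can read off the pre-image decomposition from a bounded window --- since composing this with recognizability of the level-$k$ decomposition yields recognizability at level $k+1$. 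This is elementary for these particular substitutions: each image word ($v_k^{m_k-1}u_k$ and $v_k^{n_k-1}u_k$, or their $r_k>0$ analogues) ends in $u_k$, and the lengths of the runs of $v_k$'s lying between consecutive $u_k$'s take only the few distinct values $m_k-1,\ n_k-1$ (and, when $r_k>0$, the strictly smaller value $r_k-1$), so the start of each level-$(k+1)$ word is pinned down by a bounded number of surrounding level-$k$ blocks. Carrying the induction up to level $K$ gives the required recognizability, hence continuity of $c$, completing the proof.
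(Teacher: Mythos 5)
Your proof is essentially the same construction as the paper's: the paper defines $B$ to be the (asserted-clopen) set of $x$ whose level-$k$ block containing the origin begins at a coordinate divisible by $n$, observes $\sigma^{n}B = B$, and writes down the continuous eigenfunction $\sum_{j=0}^{n-1}\chi_{\sigma^{j}B}\,e^{2\pi i j/n}$; this is precisely your $f(x)=\exp(2\pi i\,c(x)/n)$, since $x\in\sigma^{j}B$ iff $j(x,k)\equiv j\pmod n$. Where you differ is that the paper simply declares $B$ clopen without proof, whereas you supply the justification — continuity of $c$, i.e.\ recognizability/local constancy of the level-$k$ decomposition — via the induction on $k$ starting from Lemma~\ref{u0v0}; that is a real gap in the paper's exposition, and your argument correctly fills it. One minor nit in your parenthetical on the equivalence: when $r_k>0$ you have $\len{v_{k+1}}=(m_k+r_k-2)\len{v_k}+2\len{u_k}$, so $n\mid\len{v_k},\len{v_{k+1}}$ only directly gives $n\mid 2\len{u_k}$; the cleaner route to $n\mid\len{u_k}$ is via $\len{u_k}=\len{v_k}+(n_{k-1}-m_{k-1})\len{v_{k-1}}$, which additionally uses $n\mid\len{v_{k-1}}$. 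This remark is not load-bearing for your eigenfunction construction, which only uses the direction that is stated as the hypothesis.
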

\begin{proof}
Assume that $n$ divides the length of $u_{k}$ and $v_{k}$ for some $k$.  Let $B$ be the clopen set of $x\in X$ such that as a $k$-concatenation, $\sigma^{sn}x$ has $v_{k}$ or $u_{k}$ at the origin for some integer $s$.  Then $\sigma^{n}B = B$, and so $e^{2\pi i/n}$ has continuous eigenfunction $\sum_{k = 0}^{n-1} \chi_{\sigma^k B} e^{2\pi i k/n}$.  Therefore $n^{-1}$ is an additive continuous eigenvalue so $\nicefrac{m}{n}$ also is.
\end{proof}

\begin{proposition}\label{contones}
The group of additive continuous eigenvalues $E_X$ contains $\{q \alpha + r_q + r \ : \ q \in Q_X, r \in R_X\}$.
\end{proposition}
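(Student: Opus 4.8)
The plan is to use that $E_X$ is a subgroup of $(\mathbb{R},+)$, so that it suffices to check two things separately: that $q\alpha + r_q \in E_X$ for every $q \in Q_X$, and that $R_X \subseteq E_X$. Granting both, every element of the displayed set, being of the form $(q\alpha + r_q) + r$ with $q\alpha + r_q \in E_X$ and $r \in R_X \subseteq E_X$, lies in $E_X$ by closure under addition. The first statement is exactly Proposition~\ref{gen}, so I would spend no effort there; the real content is the inclusion $R_X \subseteq E_X$.

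To obtain $R_X \subseteq E_X$, I would first record a monotonicity property, with respect to divisibility, of the sequence $g_k := \gcd(\len{v_k},\len{v_{k+1}})$. If an integer $d$ divides both $\len{v_k}$ and $\len{v_{k+1}}$, then the recursion $\len{v_{k+2}} = b_{k+1}\len{v_{k+1}} + a_{k+1}\len{v_k}$ from Lemma~\ref{vrec} shows $d$ divides $\len{v_{k+2}}$, hence $d$ divides $g_{k+1}$; iterating gives that $d$ divides $g_{k'}$ for every $k' \geq k$. In particular $g_k$ divides $g_{k+1}$ for all $k$, and for each prime $p$ the exponent of $p$ in $g_k$ is nondecreasing in $k$ with supremum $R_X(p)$.

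Next, given $r \in R_X$, I would write $r = a/b$ in lowest terms. By the definition of the $(R_X(p))$-subgroup, the exponent of each prime $p$ in $b$ is at most $R_X(p)$, so for each of the finitely many primes $p$ dividing $b$ there is an index $k_p$ at which the full $p$-power of $b$ already divides $g_{k_p}$. Setting $K = \max_{p \mid b} k_p$ and invoking the monotonicity above, every such prime power, and therefore $b$ itself, divides $g_k = \gcd(\len{v_k},\len{v_{k+1}})$ for all $k \geq K$. Proposition~\ref{rational} (applied with denominator $b$, using the ``$v_k$ and $v_{k+1}$'' formulation of its hypothesis) then gives that $r$ is an additive continuous eigenvalue, i.e.\ $r \in E_X$, which completes the argument.

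I expect the only genuinely non-trivial point to be the ``simultaneity'' issue: $R_X(p)$ is a supremum taken independently for each prime $p$, so a priori the various prime-power factors of a given denominator $b$ are witnessed at different indices, whereas Proposition~\ref{rational} needs a single index at which $b$ divides both $\len{v_k}$ and $\len{v_{k+1}}$. The divisibility-monotonicity of $g_k$, which is why it is worth isolating as a short observation, is exactly what bridges this gap, and it is an immediate induction off Lemma~\ref{vrec}. Everything else is routine combination of Propositions~\ref{gen} and \ref{rational} with the group property of $E_X$.
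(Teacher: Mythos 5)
Your proof is correct and takes essentially the same route as the paper, which disposes of this proposition in one line by citing Propositions~\ref{gen} and~\ref{rational}. The one genuine addition you make is the divisibility-monotonicity of $\gcd(\len{v_k},\len{v_{k+1}})$ (an immediate induction from Lemma~\ref{vrec}), which is exactly what is needed to pass from the per-prime suprema defining $R_X$ to the single-index ``eventually divides'' hypothesis of Proposition~\ref{rational}; the paper treats this as immediate, and you are right to flag and close that small gap.
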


\begin{proof}
This is an immediate consequence of Propositions~\ref{gen} and \ref{rational}.
\end{proof}

\subsection{Additive measurable eigenvalues}

We now prove that every additive measurable eigenvalue is contained in $Q_X \alpha + \mathbb{Q}$.

\begin{lemma}\label{TT}
Define Rokhlin towers by, setting $u_{k}^{\prime}$ such that $u_{k} = u_{k}^{\prime}v_{k}$,
\begin{align*}
B_k &= \{ x \in X : x~\text{as a $(k+1)$-concatenation has $v_{k+1}$ at the origin, possibly as a suffix of $u_{k+1}$} \}, \\
B_k^{\prime} &= \{ x \in X : x~\text{as a $(k+1)$-concatenation has $u_{k+1}^{\prime}$ at the origin, as a prefix of $u_{k+1}$} \},
\end{align*}
and $T_k = \bigsqcup_{j = 0}^{\len{v_{k+1}} - 1} \sigma^j B_k$ and $T_k^{\prime} =  \bigsqcup_{j = 0}^{\len{u_{k+1}} - \len{v_{k+1}} - 1} \sigma^j B_k^{\prime}$.

Then for all $k$, $T_{k} \sqcup T_{k}^{\prime} = X$ and $\mu(T_{k}) \geq \frac{1}{4}$.
\end{lemma}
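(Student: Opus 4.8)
The plan is to prove the combinatorial identity $T_k \sqcup T_k^{\prime} = X$ first, and then obtain the measure bound from it using only the $\sigma$-invariance of $\mu$ together with the length inequalities established in Section~\ref{ineqs}.

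\emph{The tiling.} Fix $x \in X$ and write it as its unique $(k+1)$-concatenation of $v_{k+1}$ and $u_{k+1}$. Since $v_{k+1}$ is a suffix of $u_{k+1}$ (Proposition~\ref{words}), each block $u_{k+1}$ factors as $u_{k+1}^{\prime}$ followed by a copy of $v_{k+1}$; call that terminal copy, together with every standalone $v_{k+1}$-block, a \emph{$v_{k+1}$-occurrence}, and call the $u_{k+1}^{\prime}$-prefix of each $u_{k+1}$-block a \emph{$u^{\prime}$-occurrence}. Every coordinate of $x$ lies in exactly one $v_{k+1}$-occurrence or exactly one $u^{\prime}$-occurrence. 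If coordinate $0$ lies $j$ coordinates into a $v_{k+1}$-occurrence then $\sigma^{-j}x \in B_k$ with $0 \leq j < \len{v_{k+1}}$, so $x \in T_k$; if it lies $j$ coordinates into a $u^{\prime}$-occurrence then $\sigma^{-j}x \in B_k^{\prime}$ with $0 \leq j < \len{u_{k+1}} - \len{v_{k+1}}$, so $x \in T_k^{\prime}$. This shows $T_k \cup T_k^{\prime} = X$, and $T_k \cap T_k^{\prime} = \emptyset$ because $v_{k+1}$-occurrences and $u^{\prime}$-occurrences occupy disjoint coordinates. For the internal disjointness of $T_k$ I will check --- a short inspection of the four kinds of adjacent block pairs, using $\len{u_{k+1}} > \len{v_{k+1}}$ --- that the starting coordinates of distinct $v_{k+1}$-occurrences differ by at least $\len{v_{k+1}}$, which forces the sets $\sigma^{j}B_k$, $0 \leq j < \len{v_{k+1}}$, to be pairwise disjoint; likewise distinct $u_{k+1}$-blocks start at least $\len{u_{k+1}} > \len{u_{k+1}} - \len{v_{k+1}}$ coordinates apart, so the $\sigma^{j}B_k^{\prime}$, $0 \leq j < \len{u_{k+1}} - \len{v_{k+1}}$, are pairwise disjoint. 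Hence $T_k \sqcup T_k^{\prime} = X$.

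\emph{The measure bound.} By $\sigma$-invariance of $\mu$ and this disjointness, $\mu(T_k) = \len{v_{k+1}}\,\mu(B_k)$ and $\mu(T_k^{\prime}) = (\len{u_{k+1}} - \len{v_{k+1}})\,\mu(B_k^{\prime})$, and these sum to $1$. Translating the start of a $u_{k+1}$-block forward by $\len{u_{k+1}} - \len{v_{k+1}}$ coordinates lands on the start of its terminal $v_{k+1}$, so $\sigma^{\len{u_{k+1}} - \len{v_{k+1}}}B_k^{\prime} \subseteq B_k$ and therefore $\mu(B_k^{\prime}) \leq \mu(B_k)$; hence $\mu(T_k^{\prime}) \leq \frac{\len{u_{k+1}} - \len{v_{k+1}}}{\len{v_{k+1}}}\,\mu(T_k)$. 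It thus suffices to prove $\len{u_{k+1}} - \len{v_{k+1}} \leq 3\len{v_{k+1}}$, for then $1 = \mu(T_k) + \mu(T_k^{\prime}) \leq 4\,\mu(T_k)$.

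\emph{The length inequality.} Here $\len{u_{k+1}} - \len{v_{k+1}} = (n_k - m_k)\len{v_k}$, while $\len{v_{k+1}} > m_k\len{v_k}$ since $v_{k+1}$ has $u_k$ as a suffix and $\len{u_k} > \len{v_k}$; so I must show $(n_k - m_k)\len{v_k} \leq 3\len{v_{k+1}}$. For $k \geq 1$ the complexity computation at the start of the proof of Proposition~\ref{svp} applies --- it uses only $\len{p_k} + \len{s_k} < 3\len{v_k}$, valid for all $k \geq 1$ by Proposition~\ref{words}, together with Corollary~\ref{sum} --- and gives $n_k \leq 2m_k + 2$, whence $(n_k - m_k)\len{v_k} \leq (m_k+2)\len{v_k} \leq 3m_k\len{v_k} < 3\len{v_{k+1}}$. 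For $k = 0$, where $p_0$ is empty and only $\len{s_0} < \len{u_0} + \len{v_0}$ (Lemma~\ref{s}) is available, I will run the same kind of estimate: applying $p(q) < \frac{3}{2}q$ for $q \geq \len{v_0}$ to the right-special word $s_0 v_0^{n_0-2}$, whose maximal common suffix with $p_\infty$ equals $s_0 v_0^{m_0-1}$ by Proposition~\ref{rs} (via Lemma~\ref{2rs}), produces $\len{u_0} > (n_0 - 2m_0 - 1)\len{v_0}$; together with $\len{v_1} \geq (m_0-1)\len{v_0} + \len{u_0}$ this yields $(n_0 - m_0)\len{v_0} \leq 3\len{v_1}$ after a short case split into $n_0 \leq 2m_0 + 1$ (where $(n_0-m_0)\len{v_0} \leq (m_0+1)\len{v_0} \leq 3m_0\len{v_0} < 3\len{v_1}$) and $n_0 \geq 2m_0 + 2$ (where $\len{v_1} > (n_0 - m_0 - 2)\len{v_0}$ and $n_0 - m_0 \geq 3$). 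The only genuinely delicate point is this $k = 0$ case: Proposition~\ref{svp} does not cover it and $\len{s_0}$ is not a priori controlled by $\len{v_0}$, so the estimate has to be organized to survive an arbitrarily large ratio $\len{u_0}/\len{v_0}$; everything else is bookkeeping with the block structure.
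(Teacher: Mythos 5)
Your proof is correct and follows the same route as the paper's: the same unique-decomposition argument for the tiling $T_k \sqcup T_k^{\prime} = X$, the same observation $\sigma^{\len{u_{k+1}}-\len{v_{k+1}}}B_k^{\prime} \subseteq B_k$ (hence $\mu(B_k^{\prime}) \leq \mu(B_k)$, which the paper leaves implicit), and the same target inequality $\len{u_{k+1}}-\len{v_{k+1}} \leq 3\len{v_{k+1}}$. You are, however, more careful than the paper on a point that matters: the paper justifies the length inequality via $n_{k} \leq 2m_{k}+2$ (it writes $n_{k+1}$, an index typo), but Proposition~\ref{svp} is only stated for $k \geq 2$, while Lemma~\ref{TT} is asserted for all $k$. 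You correctly observe that the opening complexity estimate in the proof of Proposition~\ref{svp} already yields $n_k \leq 2m_k + 2$ for all $k \geq 1$, since it uses only $\len{p_k}+\len{s_k} < 3\len{v_k}$ (valid for $k\geq 1$) and Corollary~\ref{sum}; and you then supply the genuinely necessary separate argument for $k = 0$, where $\len{s_0}$ is not controlled by $\len{v_0}$ and one only knows $\len{s_0}<\len{u_0}+\len{v_0}$. Your $k=0$ estimate via Lemma~\ref{2rs} and Proposition~\ref{rs}, giving $\len{u_0}>(n_0-2m_0-1)\len{v_0}$, and the ensuing two-case split with the inequality $3(n_0-m_0-2)\geq n_0-m_0$ when $n_0-m_0\geq 3$, both check out.
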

\begin{proof}
Every $x \in X$ is uniquely decomposable as a concatenation of $u_{k+1}$ and $v_{k+1}$ hence of $u_{k+1}^{\prime}$ and $v_{k+1}$ so the levels of the towers are disjoint and union to the entire space.  Since $n_{k+1} \leq 2m_{k+1} + 2$, $\len{u_{k+1}} - \len{v_{k+1}} = (n_{k}-m_{k})\len{v_{k}} \leq (m_{k} + 2)\len{v_{k}}$ and $\len{v_{k+1}} = (m_{k}-1)\len{v_{k}} + \len{u_{k}} > m_{k}\len{v_{k}}$, then $\frac{\len{u_{k+1}} - \len{v_{k+1}}}{\len{v_{k+1}}} < \frac{m_{k}+2}{m_{k}} \leq 3$.  Therefore $\mu(T_k) \geq \frac{1}{4}\mu(T_{k} \sqcup T_{k}^{\prime})$.
\end{proof}

\begin{proposition}\label{2pigamma}
Let $\gamma$ be an additive measurable eigenvalue. Then there exists $q \in Q_{X}$ and $r \in \mathbb{Q}$ such that $\gamma = q\alpha + r$.
\end{proposition}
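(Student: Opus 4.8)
The plan is to use the Rokhlin tower structure of Lemma~\ref{TT} together with the exponential decay from Proposition~\ref{Pmain} to force the eigenvalue equation $f(\sigma x) = e^{2\pi i\gamma}f(x)$ to constrain $\gamma$ very tightly. First I would take a measurable eigenfunction $f$ with eigenvalue $\gamma$; by standard facts $|f|$ is constant, so normalize $|f| = 1$. The key quantities to track are the oscillations of $f$ along the base of the towers: iterating the eigenvalue equation $\len{v_{k+1}}$ times inside a single column of $T_k$ shows that $f$ restricted to the base $B_k$ is, up to the factor $e^{2\pi i\gamma \len{v_{k+1}}}$, almost invariant under the first-return map of that column. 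Because $\mu(T_k) \geq 1/4$ is bounded below while the tower height $\len{v_{k+1}} \to \infty$, I would apply an $L^2$ (or Lebesgue-density) argument: the functions $x \mapsto e^{-2\pi i\gamma j}f(\sigma^j x)$ on the levels must agree well enough that $e^{2\pi i\gamma \len{v_{k+1}}}$ is forced close to $1$, i.e. $\langle \gamma \len{v_{k+1}}\rangle \to 0$. The same reasoning applied to the $u_{k+1}$-return (or to $T_k'$) gives $\langle \gamma \len{u_{k+1}}\rangle \to 0$, hence $\langle \gamma(n_k - m_k)\len{v_k}\rangle \to 0$ as well since $\len{u_{k+1}} - \len{v_{k+1}} = (n_k-m_k)\len{v_k}$.

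Next I would convert these approximation statements into an arithmetic conclusion using the recursion $\len{v_{k+1}} = b_k\len{v_k} + a_k\len{v_{k-1}}$ from Lemma~\ref{vrec} and the continued-fraction machinery already set up in the proof of Proposition~\ref{2piqalpha}. Concretely, $\gamma$ and $\alpha$ (more precisely the eigenvalue family $\alpha_k$) satisfy parallel linear recursions driven by $a_k, b_k$; knowing $\langle \gamma\len{v_k}\rangle \to 0$ and $\langle \gamma\len{v_{k-1}}\rangle \to 0$ with the decay rate $a_0\cdots a_{k}/\len{v_k} < \epsilon_k$ from Proposition~\ref{Pmain} forces $\gamma\len{v_k}$ to lie within summably-small distance of an integer $c_k$, and the integers $c_k$ obey the recursion $c_{k+1} = b_k c_k + a_k c_{k-1} \pmod{1}$-type relation. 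Comparing the pair $(\gamma, \text{something})$ against the known solution $(\alpha, \cdot)$ of the same recursion (this is exactly the two-dimensional solution space spanned by the sequences $(c_k)$ and $(e_k)$ in the proof of Proposition~\ref{2piqalpha}) shows $\gamma$ must be a rational combination of $\alpha_0 = \alpha$ and the ``rational direction'', i.e. $\gamma = q\alpha + r$; tracking which $q$ can occur — namely $q$ whose denominator divides $\len{v_0}a_0\cdots a_k / \gcd(\len{v_k},\len{v_{k+1}})$ for some $k$, using Lemma~\ref{div} — pins down $q \in Q_X$.

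The main obstacle I expect is the first step: extracting $\langle \gamma\len{v_{k+1}}\rangle \to 0$ from a merely \emph{measurable} eigenfunction. For a continuous eigenfunction this is essentially immediate from uniform continuity and the shrinking of $\mu(B_k)$, but measurably one must argue more carefully — e.g. via a density-point argument inside the tower columns, or by using that $f \in L^2$ and the near-periodicity of the column return map, to conclude that the phase $e^{2\pi i\gamma\len{v_{k+1}}}$ cannot stay bounded away from $1$ without contradicting that $f$ is a genuine (nonzero, $L^2$) eigenfunction. This is the analogue of the standard fact that for rank-one or finite-rank systems the eigenvalues are detected by the tower heights, and I would model the argument on that literature (and on the treatment in \cite{creutzpavlov}). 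Once the approximation statement is in hand, the continued-fraction bookkeeping is routine given Propositions~\ref{Pmain}, \ref{2piqalpha}, \ref{gen} and Lemma~\ref{div}, and the conclusion $\gamma = q\alpha + r$ with $q \in Q_X$ follows; combined with Proposition~\ref{contones} this will eventually yield the full description in Theorem~\ref{evalgroup}, though the present proposition only asserts the containment direction.
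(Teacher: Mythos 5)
Your high-level strategy matches the paper's: use the Rokhlin towers of Lemma~\ref{TT}, extract information about $\gamma$ from the tower heights $d_k = \len{v_{k+1}}$, propagate the recursion $d_{k+1} = b_{k+1}d_k + a_{k+1}d_{k-1}$, and conclude $\gamma = q\alpha + r$ with $q\in Q_X$ via Lemma~\ref{div}. But the middle step you flag as the ``main obstacle'' contains a genuine gap, and your sketch of it is quantitatively wrong. From $\langle\gamma d_k\rangle\to 0$ alone you cannot conclude that the nearest integers $c_k$ to $\gamma d_k$ inherit the recursion: writing $\gamma d_k = c_k + \eta_k$, the recursion gives $\gamma d_{k+1} - (b_{k+1}c_k + a_{k+1}c_{k-1}) = b_{k+1}\eta_k + a_{k+1}\eta_{k-1}$, so you need $|\eta_k|$ smaller than roughly $1/b_{k+1}$, which does not follow from $\eta_k\to 0$ since $b_{k+1}$ is unbounded. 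Your attempt to supply this decay via Proposition~\ref{Pmain} is misplaced: the $\epsilon_k$ there control the combinatorics of the subshift (the ratio $a_0\cdots a_{k-1}/\len{v_k}$), not the approximation error $\eta_k$, and Proposition~\ref{Pmain} is not invoked in the paper's proof of this statement at all.

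What the paper actually does is (i) approximate $f$ by conditional expectations $f_k = \mathbb{E}[f\,|\,\mathcal{F}_k]$ onto the tower $\sigma$-algebras and invoke martingale convergence (this is the precise version of your ``density-point'' idea), and then (ii) --- this is the missing ingredient --- show that for \emph{every} $0 < i \leq \max(1, 0.5\,b_{k+1})$ the set $\sigma^{id_k}B_k\cap B_k$ has measure bounded below by a fixed fraction of $\mu(B_k)$, so that $\langle i d_k\gamma\rangle$ is small uniformly over this entire range of $i$, not merely for $i=1$. It is precisely this range of multiples that permits the bootstrap to $|\gamma - c_k'/d_k| < 0.05/d_{k+1}$, i.e.\ $|\eta_k| < 0.05\,d_k/d_{k+1}$, which is the quantitative strength needed to force $c_{k+1}' = b_{k+1}c_k' + a_{k+1}c_{k-1}'$. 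Once that is in hand, the remainder of your plan (extend the recursion backwards, compare with the continued-fraction solutions $(c_k)$ and $(e_k)$, apply Lemma~\ref{div}) is sound, modulo the small slip that $\alpha\ne\alpha_0$ (in fact $\alpha = a_0\alpha_1$ by Corollary~\ref{alphairrat}). But the martingale step and the uniform-over-$i$ estimate are where essentially all the work of this proof lies, and your proposal does not supply them.
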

\begin{proof}
Let $f$ be a measurable eigenfunction with eigenvalue $\exp(2\pi i \gamma)$.  Let $B_{k}$ and $B_{k}^{\prime}$ as in Lemma \ref{TT}.
For each $k$, define 
\begin{align*}
f_k(x) &=\sum_{j=0}^{\len{v_{k+1}}-1}  \frac{1}{\mu(B_k)} \left(\int_{\sigma^j B_k} f \ d\mu\right) \bbone_{\sigma^j B_{k}}(x) 
+ \sum_{j=0}^{\len{u_{k+1}}-\len{v_{k+1}}-1}  \frac{1}{\mu(B_k^{\prime})} \left(\int_{\sigma^j B_k^{\prime}} f \ d\mu\right) \bbone_{\sigma^j B_{k}^{\prime}}(x).
\end{align*}
Let $\mathcal{F}_{k}$ be the $\sigma$-algebra generated by the sets $\sigma^{j}B_{k}$, $0 \leq j < \len{v_{k+1}}$, and $\sigma^{j}B_{k}^{\prime}$, $0 \leq j < \len{u_{k+1}^{\prime}}$.  Since $\mu(T_k \sqcup T_k^{\prime}) = 1$ and $\mu(B_{k}), \mu(B_{k}^{\prime}) \to 0$ (since $\len{u_{k+1}}-\len{v_{k+1}} \geq \len{v_{k}} \to \infty$), the $\sigma$-algebras $\mathcal{F}_{k}$ converge to the $\sigma$-algebra of all measurable sets.  Since each $f_{k}$ is $\mathcal{F}_{k}$-measurable and $\mathbb{E}[f|\mathcal{F}_{k}] = \mathbb{E}[f_{k+1}|\mathcal{F}_{k}] = f_{k}$, by the Martingale Convergence Theorem, $f_{k}$ converge almost everywhere to $f$.

For all $t \geq 0$, we have that $\sigma^{\len{v_{k+t+1}}}$ takes every occurrence of $v_{k+t+1}$ to an occurrence of $v_{k+t+1}$ except those immediately followed by an occurrence of $u_{k+t+1}$.  Therefore $\sigma^{\len{v_{k+t+1}}}$ takes every occurrence of $v_{k+1}$ in a $v_{k+t+1}$ to an occurrence of $v_{k+1}$ except for those in a $v_{k+t+1}$ immediately followed by a $u_{k+t+1}$.  Likewise, for $0 < i_{k+t+1}$, $\sigma^{i_{k+t+1}\len{v_{k+t+1}}}$ takes every occurrence of a $v_{k+1}$ in a $v_{k+t+1}$ to an occurrence of $v_{k+1}$ except for those in a $v_{k+t+1}$ less than $i_{k+t+1}$ words prior to a $u_{k+t+1}$.

For any $(k+t+2)$-concatenation, since $u_{k+t+2}$ has $v_{k+t+2}$ as a suffix, the concatenation is a concatenation of $v_{k+t+1}^{m_{k+t+1}}u_{k+t+1}^{\prime}$ and $v_{k+t+1}^{n_{k+t+1}}u_{k+t+1}^{\prime}$ and, if $r_{k+t+1} > 0$, $v_{k+t+1}^{r_{k+t+1}}u_{k+t+1}^{\prime}$ where $u_{k+t+1}^{\prime}$ is the prefix of $u_{k+t+1}$ such that $u_{k+t+1} = u^{\prime}v_{k+t+1}$.  Since $n_{k+t+2} \leq 2m_{k+t+2} + 2$, then $\len{u_{k+t+1}^{\prime}} \leq \len{v_{k+t+1}} + 2\len{v_{k+t}} < 3\len{v_{k+t+1}}$ so at least $\frac{1}{4}$ of the $v_{k+1}$ appearing in a $(k+t+1)$-concatenation are in a $v_{k+t+1}$.

Let $\{ i_{k} \}$ such that $0 < i_{k+t} \leq \max(1,0.5b_{k+t})$.  Write $d_{k} = \len{v_{k+1}}$.  For $k+t$ such that $b_{k+t} > 1$, then
\[
\mu(\sigma^{i_{k+t}d_{k+t}}B_{k} \cap B_{k}) \geq \frac{b_{k+t}-i_{k+t}}{b_{k+t}} \left( \frac{1}{4} \mu(B_{k})\right)
\geq \frac{1}{8}\mu(B_{k}).
\]
For $k$ such that $b_{k+t} = 1$, meaning $r_{k+t} = 0$ and $m_{k+t} = 1$, we have that $\sigma^{\len{v_{k+t+1}}} = \sigma^{\len{u_{k+t}}}$ takes every occurrence of $v_{k+t}$ which precedes a $u_{k+t}$ to the $v_{k+t}$ which is a suffix of 
that $u_{k+t}$.  Since $n_{k+t} \leq 4$, at least $\frac{1}{4}$ of the words in a $(k+t+1)$-concatenation are $u_{k+t}$ so at least $\frac{1}{4}$ of the $v_{k+t}$ are taken to a $v_{k+t}$ by $\sigma^{\len{v_{k+t+1}}}$ (since $u_{k+t}$ is always preceded by $v_{k+t}$, possibly as a suffix of another $u_{k+t}$).  Then,
\[
\mu(\sigma^{d_{k+t}}B_{k} \cap B_{k}) \geq  \frac{1}{4} \mu(B_{k}).
\]
Then $f_{k}(\sigma^{i_{k+t}d_{k+t}}x) = f_{k}(x)$ for a set of measure at least $\frac{1}{8}\mu(T_{k}) \geq \frac{1}{32}$.  Since $f_{k} \to f$ almost everywhere, there is then a positive measure set such that for any sufficiently small $\epsilon > 0$ and almost every $x$ in the set, there exists $k$ so that for all $t$, $|f(\sigma^{i_{k+t}d_{k+t}}x) - f(x)| < \epsilon$.  Therefore $\exp(2\pi i \gamma i_{k}d_{k}) \to 1$.

For large enough $k$ (say $k \geq k_0$), $\langle i d_{k} \gamma \rangle < \frac{1}{160}$ for all $0 < i \leq \max(1,0.5b_{k+1})$.  Suppose that for all $c \in \mathbb{Z}$, we have $|c - d_{k}\gamma| \geq 0.05\frac{d_{k}}{d_{k+1}} \geq 0.05(2b_{k+1}+2)^{-1}$ (using that $d_{k+1} = b_{k+1}d_{k} + a_{k+1}d_{k-1} \leq b_{k+1}d_{k} + (b_{k+1}+2)d_{k-1}$).  
Then $|\max(1,\floor{0.5b_{k+1}})c - \max(1,\floor{0.5b_{k+1}})d_{k}\gamma| \geq 0.025\frac{\max(1,\floor{0.5(b_{k+1}})}{b_{k+1}+1} \geq \frac{1}{160}$, a contradiction.
This implies that for all $k \geq k_0$, there exists $c'_k \in \mathbb{Z}$,
so that $\left|\gamma - \frac{c'_k}{d_k} \right| < 0.05(d_{k+1})^{-1}$.

We will prove that $c'_{k+1} = b_{k+1} c'_k + a_{k+1} c'_{k-1}$ for all $k > k_0$.

For $k > k_0$, let $c''_{k+1} = b_{k+1} c'_k + a_{k+1} c'_{k-1}$. By the above,
\[
\left|\gamma - \frac{c'_{k-1}}{d_{k-1}} \right| < 0.05(d_k)^{-1} \textrm{ and }
\left|\gamma - \frac{c'_k}{d_k} \right| < 0.05(d_{k+1})^{-1} \textrm{ so } 
\left|d_{k+1} \gamma - c'_k \frac{d_{k+1}}{d_k} \right| < 0.05.
\]

Since $|d_{k-1}\gamma - c_{k-1}^{\prime}| < 0.05d_{k-1}(d_{k})^{-1}$,
\[
\left|a_{k+1} d_{k-1} \gamma - a_{k+1} c'_{k-1}\right| < \frac{0.05a_{k+1} d_{k-1}}{d_{k}} \leq \frac{0.05(b_k+2) d_{k-1}}{d_{k}} < 0.05(3) = 0.15.
\]

Similarly, since $|d_{k}\gamma - c_{k}^{\prime}| < 0.05d_{k}(d_{k+1})^{-1}$,
\[
\left|a_{k+1} d_{k-1} \gamma - c'_k a_{k+1} \frac{d_{k-1}}{d_k}\right| 
= \left|a_{k+1}d_{k-1}\left(\gamma - \frac{c_{k-1}^{\prime}}{d_{k}}\right)\right|
< \frac{0.05a_{k+1} d_{k-1}}{d_{k+1}} \leq \frac{0.05(b_k +2)d_{k-1}}{d_{k+1}} < 0.15.
\]

Therefore,
\begin{align*}
\left|c'_k \frac{d_{k+1}}{d_k} - c''_{k+1}\right| &= \left|c'_k\left(b_{k+1} + \frac{a_{k+1} d_{k-1}}{d_k}\right) - b_{k+1} c'_k - a_{k+1} c'_{k-1}\right| = \left| c'_k a_{k+1} \frac{d_{k-1}}{d_k} - a_{k+1} c'_{k-1} \right| \\
&\leq \left| c'_k a_{k+1} \frac{d_{k-1}}{d_k} - a_{k+1}d_{k-1}\gamma\right| + \left|a_{k+1}d_{k-1}\gamma - a_{k+1} c'_{k-1} \right|
< 0.3.
\end{align*}

Combining with $|d_{k}\gamma - c_{k}^{\prime}| < 0.05d_{k}(d_{k+1})^{-1}$ via the triangle inequality yields
\[
\left|d_{k+1} \gamma - c''_{k+1}\right| \leq \left|d_{k+1}\gamma - c_{k}^{\prime}\frac{d_{k+1}}{d_{k}}\right| + \left|c_{k}^{\prime}\frac{d_{k+1}}{d_{k}} - c_{k}^{\prime\prime}\right|
< \frac{d_{k+1}}{d_{k}}|d_{k}\gamma - c_{k}^{\prime}| + 0.3 < 0.35.
\]
Recall that by definition, 
\[
\left|\gamma - \frac{c'_{k+1}}{d_{k+1}}\right| < 0.05 (d_{k+2})^{-1}, \textrm{ and so }
\left|d_{k+1} \gamma - c'_{k+1}\right| < 0.05 \frac{d_{k+1}}{d_{k+2}} < 0.05.
\]
This implies that $c'_{k+1} = c''_{k+1}$ (since they are both integers).

For $-2 \leq k < k_{0}$, define $c_{k}^{\prime} \in \mathbb{Q}$ using the recursion relation $c_{k+1}^{\prime} = b_{k+1}c_{k}^{\prime} + a_{k+1}c_{k-1}^{\prime}$ in reverse.
Since the recurrence relations defining $c_{k}, e_{k}, d_{k}$ and $c_{k}^{\prime}$ are the same linear relation, $c_{k}^{\prime} = c_{-2}^{\prime} c_{k} + c_{-1}^{\prime}e_{k}$ so
\[
\gamma = \lim \frac{c_{k}^{\prime}}{d_{k}} = \lim \frac{c_{-2}^{\prime}c_{k} + c_{-1}^{\prime}e_{k}}{e_{k}} \frac{e_{k}}{d_{-2}c_{k} + d_{-1}e_{k}} = \frac{c_{-2}^{\prime}\lambda + c_{-1}^{\prime}}{d_{-2}\lambda + d_{-1}}.
\]
Then
\begin{align*}
\gamma &= c_{-2}^{\prime}\alpha + \frac{c_{-1}^{\prime}}{d_{-2}\lambda + d_{-1}} = c_{-2}^{\prime}\alpha + \frac{c_{-1}^{\prime}}{d_{-1}} \frac{d_{-1}}{d_{-2}\lambda + d_{-1}}
= c_{-2}^{\prime}\alpha + \frac{c_{-1}^{\prime}}{d_{-1}} \Big{(} 1 - \frac{d_{-2}\lambda}{d_{-2}\lambda + d_{-1}}\Big{)} \\
&= c_{-2}^{\prime}\alpha + \frac{c_{-1}^{\prime}}{d_{-1}}(1 - d_{-2}\alpha)
= \Big{(}c_{-2}^{\prime} - \frac{c_{-1}^{\prime}d_{-2}}{d_{-1}}\Big{)}\alpha + \frac{c_{-1}^{\prime}}{d_{-1}}
\end{align*}
meaning that
\[
d_{-1}\gamma = (c_{-2}^{\prime}d_{-1} - c_{-1}^{\prime}d_{-2})\alpha + c_{-1}^{\prime}.
\]

It is easily seen by induction that
\[
\frac{c_{k}^{\prime}d_{k+1} - c_{k+1}^{\prime}d_{k}}{c_{-2}^{\prime}d_{-1} - c_{-1}^{\prime}d_{-2}} = (-1)^{k} a_{0} \cdots a_{k+1}
= \frac{c_{k}^{\prime}c_{k+1} - c_{k+1}^{\prime}c_{k}}{c_{-2}^{\prime}c_{-1} - c_{-1}^{\prime}c_{-2}}
\]
and therefore, as $c_{k}^{\prime} \in \mathbb{Z}$ for $k \geq k_{0}$ and $c_{-1} = 0$ and $c_{-2} = 1$,
\[
(c_{-2}^{\prime}d_{-1} - c_{-1}^{\prime}d_{-2})\frac{a_{0}\cdots a_{k_{0}+1}}{\gcd(d_{k_{0}},d_{k_{0}+1})},~c_{-1}^{\prime}a_{0}\cdots a_{k_{0}} \in \mathbb{Z}.
\]
Then
$
\gamma = q\alpha + r
$ for some $q \in \frac{\gcd(d_{k_{0}},d_{k_{0}+1})}{d_{-1}a_{0}\cdots a_{k_{0}+1}}\mathbb{Z}$ and $r \in \mathbb{Q}$.
\end{proof}

\subsection{Rational additive measurable eigenvalues}

Next, we establish that the only rational additive measurable eigenvalues are those in $R_X$. 

\begin{proposition}\label{ratmeas}
If a rational number $\nicefrac{m}{n}$ in lowest terms is an additive measurable eigenvalue then $n$ eventually divides 
the lengths of both 
$u_{k}$ and $v_{k}$, equivalently the lengths of both $v_{k}$ and $v_{k+1}$.
\end{proposition}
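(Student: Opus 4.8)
The plan is to reuse the Rokhlin-tower and martingale apparatus from the proof of Proposition~\ref{2pigamma}, observing that for a \emph{rational} eigenvalue the continued-fraction approximation at the end of that argument collapses to a plain divisibility statement. Let $\gamma=m/n$ be in lowest terms and let $f$ be a measurable eigenfunction, $f(\sigma x)=e^{2\pi i\gamma}f(x)$. Since $\limsup p(q)/q<3$ forces $X$ to be uniquely ergodic, $\mu$ is ergodic, so $|f|$ (being $\sigma$-invariant) is a.e.\ equal to a nonzero constant, and we normalize $|f|\equiv 1$.

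The core claim I would isolate is that any measurable eigenvalue $\gamma$ satisfies $\langle\gamma\len{v_{k+1}}\rangle\to0$, which is exactly what the proof of Proposition~\ref{2pigamma} establishes: with the towers $T_k=\bigsqcup_{j=0}^{\len{v_{k+1}}-1}\sigma^jB_k$ of Lemma~\ref{TT} and the conditional expectations $f_k=\mathbb{E}[f\mid\mathcal{F}_k]$ onto the tower $\sigma$-algebras, one has $f_k\to f$ a.e.\ by the Martingale Convergence Theorem; a fixed positive fraction of the occurrences of $v_{k+1}$ inside a $(k+t+1)$-concatenation lie within a single $v_{k+t+1}$, on which $\sigma^{\len{v_{k+t+1}}}$ sends $v_{k+1}$-occurrences to $v_{k+1}$-occurrences away from the boundary, so $\mu(\sigma^{\len{v_{k+t+1}}}B_k\cap B_k)\ge c\,\mu(B_k)$ and $f_k\circ\sigma^{\len{v_{k+t+1}}}=f_k$ on a set of measure bounded below independently of $t$. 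Taking the free parameter $i_k$ in that proof identically equal to $1$ and passing to the limit (using $|f|\equiv1$) yields $e^{2\pi i\gamma\len{v_{k+1}}}\to1$, i.e.\ $\langle\gamma\len{v_{k+1}}\rangle\to0$; I would invoke this computation rather than redo it.

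Rationality then finishes the argument: $\langle (m/n)\len{v_{k+1}}\rangle$ vanishes exactly when $n\mid\len{v_{k+1}}$ (here $\gcd(m,n)=1$) and is $\ge 1/n$ otherwise, so $\langle (m/n)\len{v_{k+1}}\rangle\to0$ forces $n\mid\len{v_{k+1}}$ for all large $k$. Hence $n\mid\len{v_k}$ for all large $k$, and then $n\mid\len{u_k}=\len{v_k}+(n_{k-1}-m_{k-1})\len{v_{k-1}}$ for all large $k$; the stated equivalence with divisibility of both $\len{v_k}$ and $\len{v_{k+1}}$ follows the same way from the recursions $\len{v_{k+1}}=(m_k-1)\len{v_k}+\len{u_k}$ and $\len{v_{k+1}}=(m_k+r_k-2)\len{v_k}+2\len{u_k}$ (according to whether $r_k=0$), together with the $\len{u_k}$ formula above. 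The one genuinely delicate point is the uniform-in-$t$ lower bound in the middle step — that a single positive-measure set works simultaneously for all the shifts $\sigma^{\len{v_{k+t+1}}}$, $t\ge0$, once $k$ is large — which is precisely the mechanism already carried out in the proof of Proposition~\ref{2pigamma}.
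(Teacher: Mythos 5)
The proof is correct and takes a genuinely different route from the paper's. You extract from the proof of Proposition~\ref{2pigamma} the intermediate fact that for any measurable eigenvalue $\gamma$ one has $\langle \gamma \len{v_{k+1}} \rangle \to 0$ (specializing the free sequence $i_k$ to $i_k \equiv 1$ in that argument), and then observe that rationality of $\gamma = m/n$ in lowest terms converts this decay into exact divisibility, since the nonzero values of $\langle (m/n)\len{v_{k+1}} \rangle$ are all at least $1/n$; once $n \mid \len{v_k}$ holds for two consecutive $k$, the recursions give $n \mid \len{u_k}$ as well. The paper's own proof instead reduces to $\gamma = p^{-r}$, takes the invariant set $A$ (a level set of the eigenfunction, with $\sigma^{p^r}A = A$ and $\sigma^{p^{r-1}}A \cap A = \emptyset$), approximates $A$ by tower-cylinder unions $A_k$, and runs a fairly intricate combinatorial argument on the level-index sets $S_k^v$, $S_k^u$ to conclude $\sigma^{p^z}A = A$ modulo null for some $z \leq r$, whence $z = r$ by the choice of $A$. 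Both arguments rest on the same Rokhlin towers from Lemma~\ref{TT}, and both need a positive-measure return estimate of the form $\mu(\sigma^{\len{v_{k+1}}}B_k \cap B_k) \geq c\,\mu(B_k)$; your route is substantially shorter once the decay estimate embedded in the proof of Proposition~\ref{2pigamma} is isolated and quoted as a lemma, while the paper's route is self-contained within the rational setting, works measure-theoretically with sets rather than eigenfunctions, and does not require factoring out and re-citing an intermediate step of a separate proof.
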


\begin{proof}
It suffices to prove the case when $m = 1$ and $n$ is a prime power.
Assume that $p^{-r}$ is an additive eigenvalue for a prime $p$ and integer $r \geq 1$.  
Then there exists a positive measure set $A$ such that $\sigma^{p^{r}}A = A$ and $\sigma^{p^{r-1}}A$ is disjoint from $A$.  Let $B_{k}$ and $B_{k}^{\prime}$ as in Lemma \ref{TT}.
Since cylinder sets generate the algebra of measurable sets, there exists 
$S_{k}^{v} \subseteq \{ 0, \ldots, \len{v_{k+1}}-1 \}$ and $S_{k}^{u} \subseteq \{ 0, \ldots, \len{u_{k+1}^{\prime}}-1 \}$ such that $A_{k} = \bigsqcup_{j \in S_{k}^{v}}\sigma^{j}B_{k} \sqcup \bigsqcup_{j \in S_{k}^{u}}\sigma^{j}B_{k}^{\prime}$ has $\mu(A_{k} \symdiff A) \to 0$.
Since $\sigma^{p^{r}}A = A$, $\sup_{t \in \mathbb{Z}} \mu(A_{k} \symdiff \sigma^{p^{r}t}A_{k}) \to 0$.

Set $A_{k}^{v} = A_{k} \cap T_{k}$.  Since $\mu(T_{k}) \geq \frac{1}{4}$ and $\| \bbone_{\sigma T_{k}} - \bbone_{T_{k}} \|_{2} \to 0$, by Lemma 3.6 \cite{danilenkopr}, $\liminf \mu(A \cap T_{k}) \geq \frac{1}{4}\mu(A)$.  Then  $\mu(A_{k}^{v})$ is uniformly bounded above zero for sufficiently large $k$.

For $p^{r} \leq j < \len{v_{k+1}}$, if $j \in S_{k}^{v}$ and $j-p^{r} \notin S_{k}^{v}$ then $\sigma^{j}B_{k} \subseteq A_{k}^{v}$ and $\sigma^{j-p^{r}}B_{k} \cap A_{k} = \emptyset$ so $\sigma^{j}B_{k} \subseteq A_{k}^{v} \setminus \sigma^{p^{r}}A_{k}$.  Therefore, since $\mu(A_{k}^{v}) = |S_{k}^{v}|\mu(B_{k})$,
\[
\frac{1}{|S_{k}^{v}|}\left|\{ j < \len{v_{k+1}} : j \in S_{k}^{v}, j-p^{r} \notin S_{k}^{v} \}\right| \leq \frac{p^{r}}{|S_{k}^{v}|} + \frac{\mu(A_{k}^{v} \setminus \sigma^{p^{r}}A_{k})}{\mu(A_{k}^{v})} \leq \frac{p^{r}}{|S_{k}^{v}|} + \frac{\mu(A_{k} \setminus \sigma^{p^{r}}A_{k})}{\mu(A_{k}^{v})} \to 0
\]
so $\frac{1}{|S_{k}^{v}|}|\{ j \in S_{k}^{v} : j - p^{r} \in S_{k}^{v} \}| \to 1$.

Choose $t_{k} \in \mathbb{Z}$ such that $p^{r}t_{k} = \len{v_{k+1}} + \ell_{k}$ for some $0 < \ell_{k} \leq p^{r}$.  Then $\sigma^{p^{r}t_{k}}B_{k} \subseteq \sigma^{\ell_{k}}B_{k} \sqcup \sigma^{\ell_{k}}B_{k}^{\prime}$.  Since the set of $x \in X$ such that $x$ has $v_{k}^{2}$ at the origin is positive measure (as otherwise every $x$ would be a multiple of $u_{k}$), the same reasoning as above gives that $\frac{1}{|S_{k}^{v}|}|\{ j \in S_{k}^{v} : j + \ell_{k} \in S_{k}^{v} \}| \to 1$.

Since $0 < \ell_{k} \leq p^{r}$, there exists a constant $0 < \ell \leq p^{r}$ such that $\ell_{k_{i}} = \ell$ for infinitely many $k_{i}$ and we may assume $\ell$ is the minimal such constant.  Let $0 \leq z \leq r$ maximal such that $p^{z}$ divides $\ell$.  Then there exist integers $a < 0$ and $b > 0$ such that $ap^{r} + b\ell = p^{z}$.  As $a$ and $b$ are fixed and $\frac{1}{|S_{k_{i}}^{v}|}|\{ j \in S_{k_{i}}^{v} : j+\ell,j-p^{r} \in S_{k_{i}}^{v} \}| \to 1$, we have $\frac{1}{|S_{k_{i}}^{v}|}|\{ j \in S_{k_{i}}^{v} : j + p^{z} \in S_{k_{i}}^{v} \}| \to 1$.

Let $S_{k_{i}}^{\prime} = \{ j \in S_{k_{i}}^{u} : j = j_{0} + \len{u_{k_{i}+1}^{\prime}} - \len{v_{k_{i}+1}}~\text{for some $0 \leq j_{0} < \len{v_{k_{i}+1}} - p^{z}$} \}$ and $S_{k_{i}}^{\prime\prime} = \{ j \in S_{k_{i}}^{u} : j = j_{0} + \len{u_{k_{i}+1}^{\prime}} - 2\len{v_{k_{i}+1}}~\text{for some $0 \leq j_{0} < \len{v_{k_{i}+1}} - 2p^{z}$} \}$.  Since $\len{u_{k_{i}+1}} < 3\len{v_{k_{i}+1}}$ (Remark \ref{pands}), $|S_{k_{i}}^{u} \setminus (S_{k_{i}}^{\prime} \sqcup S_{k_{i}}^{\prime\prime})| \leq 3p^{z}$.

Since $u_{k_{i}+1}^{\prime}$, when it appears at the start of a $u_{k_{i}+1}$ in a concatenation, is always followed by $v_{k_{i}+1}$, $\sigma^{\len{u_{k_{i}+1}^{\prime}}}B_{k_{i}}^{\prime} \subseteq B_{k_{i}}$.  
For $j = j_{0} + \len{u_{k_{i}+1}^{\prime}} - \len{v_{k_{i}+1}} \in S_{k_{i}}^{\prime}$, then $\sigma^{p^{r}t_{k_{i}}}\sigma^{j}B_{k_{i}}^{\prime} \subseteq \sigma^{p^{z} + j_{0}}B_{k_{i}}$ which is a level in $T_{k_{i}}$ (as $j_{0} < \len{v_{k_{i}+1}}-p^{z}$) and for $j = j_{0} + \len{u_{k_{i}+1}^{\prime}} - 2\len{v_{k_{i}+1}} \in S_{k_{i}}^{\prime\prime}$, then $\sigma^{2p^{r}t_{k_{i}}}\sigma^{j}B_{k_{i}}^{\prime} \subseteq \sigma^{2p^{z} + j_{0}}B_{k_{i}}$ which is also a level in $T_{k_{i}}$.

Since $\mu(\sigma^{p^{r}t_{k_{i}}}A_{k_{i}} \symdiff A_{k_{i}}) \to 0$, then $\frac{1}{|S_{k_{i}}^{\prime}|}|\{ j \in S_{k_{i}}^{\prime} : p^{r}t_{k_{i}} + j - \len{u_{k_{i}+1}^{\prime}} \in S_{k_{i}}^{v} \}| \to 1$ and $\frac{1}{|S_{k_{i}}^{\prime\prime}|}|\{ j \in S_{k_{i}}^{\prime\prime} : 2p^{r}t_{k_{i}} + j - \len{u_{k_{i}+1}^{\prime}} \in S_{k_{i}}^{v} \}| \to 1$.  As $\frac{1}{|S_{k_{i}}^{v}|}|\{ j \in S_{k_{i}}^{v} : j + p^{z} \in S_{k_{i}}^{v} \}| \to 1$, then $\frac{1}{|S_{k_{i}}^{\prime}|}|\{ j \in S_{k_{i}}^{\prime} : j + p^{z} \in S_{k_{i}}^{\prime} \}| \to 1$ and likewise for $S_{k_{i}}^{\prime\prime}$ so $\frac{1}{|S_{k_{i}}^{u}|}|\{ j \in S_{k_{i}}^{u} : j + p^{z} \in S_{k_{i}}^{v} \}| \to 1$.

Then $\mu(A_{k_{i}} \symdiff \sigma^{p^{z}}A_{k_{i}}) \to 0$ meaning that $\mu(A \symdiff \sigma^{p^{z}}A) = 0$.  By choice of $A$ then $z = r$.  Therefore $p^{r}t_{k_{i}} = \len{v_{k_{i}+1}} + p^{r}$ so $p^{r}$ divides $\len{v_{k_{i}+1}}$.  As $\ell$ was chosen minimally, then $p^{r}t_{k} = \len{v_{k+1}} + p^{r}$ for all sufficiently large $k$ so $p^{r}$ divides $\len{v_{k}}$ for all sufficiently large $k$.
Since $\len{u_{k+1}} = \len{v_{k+1}} + (n_{k} - m_{k})\len{v_{k}}$, then $p^{r}$ divides $\len{u_{k}}$ for all sufficiently large $k$ as well.
\end{proof}

\subsection{The structure of the additive eigenvalue group}

We are now ready to establish the relationship between $Q_{X}$, $R_{X}$ and $E_{X}$.

\begin{proposition}\label{homomorph}
There exists a homomorphism $\phi : Q_{X} \to \mathbb{Q}/R_{X}$ with $\phi(1) = \phi(0)$ such that $q \mapsto q\alpha + \phi(q)$ is an isomorphism $Q_{X} \to E_{X}/R_{X}$.  
\end{proposition}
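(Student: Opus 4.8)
The plan is to combine the three descriptions of $E_X$ already obtained: the containment $\{q\alpha + r_q + r : q \in Q_X, r \in R_X\} \subseteq E_X$ (Proposition~\ref{contones}), the reverse-type constraint that every measurable (hence every continuous) additive eigenvalue lies in $Q_X\alpha + \mathbb{Q}$ (Proposition~\ref{2pigamma}), and the identification of the rational part of $E_X$ as exactly $R_X$ (Propositions~\ref{rational} and \ref{ratmeas}). First I would define a map on $Q_X$ by sending $q$ to the coset in $\mathbb{Q}/R_X$ of $r_q$, where $r_q$ is the rational produced in Proposition~\ref{gen} with $q\alpha + r_q \in E_X$. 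The first thing to check is that this is \emph{well-defined}: if $q\alpha + r$ and $q\alpha + r'$ both lie in $E_X$, then their difference $r - r'$ is a rational additive continuous eigenvalue, so by Propositions~\ref{rational} and \ref{ratmeas} it lies in $R_X$; hence $r \equiv r' \pmod{R_X}$. This gives a well-defined function $\phi\colon Q_X \to \mathbb{Q}/R_X$.

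Next I would verify $\phi$ is a homomorphism. Since $E_X$ is a group, if $q_1\alpha + r_{q_1}$ and $q_2\alpha + r_{q_2}$ are in $E_X$ then so is $(q_1+q_2)\alpha + (r_{q_1}+r_{q_2})$, so by the well-definedness just established $\phi(q_1+q_2) = \phi(q_1) + \phi(q_2)$ in $\mathbb{Q}/R_X$. For $\phi(1) = \phi(0)$: note $0 \in E_X$ trivially, so $\phi(0)$ is the coset of $0$, i.e.\ $R_X$ itself; and we must show $1\cdot\alpha + r_1 \in E_X$ forces $r_1 \equiv 0$. But $1 \in R_X$ (as $R_X$ contains $\mathbb{Z}$), and $\alpha$ itself is a continuous additive eigenvalue by Corollary~\ref{alphairrat}, so $\alpha + 0 \in E_X$; hence $\phi(1)$ is the coset of $0$, which equals $\phi(0)$. (Equivalently, $1$ is always an eigenvalue, so adding $\alpha$ to $E_X$ costs no rational shift modulo $R_X$.)

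Then I would show $q \mapsto q\alpha + \phi(q)$ is a well-defined map $Q_X \to E_X/R_X$ and is an isomorphism. Well-definedness into $E_X/R_X$: the element $q\alpha + r_q$ lies in $E_X$ by construction, and changing the representative of $\phi(q)$ changes it by an element of $R_X$, so the image in $E_X/R_X$ is unambiguous. It is a homomorphism because $\phi$ is and because $\alpha$-scaling is linear. \textbf{Injectivity:} if $q\alpha + \phi(q) \in R_X$, i.e.\ $q\alpha + r_q = r \in R_X \subseteq \mathbb{Q}$ for some representative, then $q\alpha \in \mathbb{Q}$; since $\alpha$ is irrational (Proposition~\ref{irrat}) this forces $q = 0$. \textbf{Surjectivity:} this is where I expect the main obstacle, and it is the step requiring the most care. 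Take any $\gamma \in E_X$. By Proposition~\ref{2pigamma}, $\gamma = q\alpha + r$ with $q \in Q_X$ — \emph{provided} one checks that the $q$ produced there genuinely lies in $Q_X$, which is precisely the content of how $L_X(p)$ is defined (via the integrality statement of Lemma~\ref{div} and the denominators $\gcd(d_{k_0},d_{k_0+1})/(d_{-1}a_0\cdots a_{k_0+1})$ appearing in that proof). Granting $q \in Q_X$: then $\gamma - (q\alpha + r_q) = r - r_q$ is a rational element of $E_X$ (being a difference of two eigenvalues), hence lies in $R_X$ by Propositions~\ref{rational} and \ref{ratmeas}; therefore $\gamma \equiv q\alpha + r_q \equiv q\alpha + \phi(q) \pmod{R_X}$, i.e.\ $\gamma$ is in the image. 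So the map is onto $E_X/R_X$, completing the argument.

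The one genuinely delicate point, as flagged, is reconciling the denominator bound from the proof of Proposition~\ref{2pigamma} with membership in $Q_X$: one must observe that for every prime $p$, the power of $p$ dividing $d_{-1}a_0\cdots a_{k_0+1}/\gcd(d_{k_0},d_{k_0+1})$ is at most $L_X(p)$ by the very definition of $L_X(p)$ (since $d_{-1} = \len{v_0}$ and $d_j = \len{v_{j+1}}$, matching the expression $\len{v_0}a_0\cdots a_k/\gcd(\len{v_k},\len{v_{k+1}})$ in Theorem~\ref{evalgroup}), together with Lemma~\ref{div} ensuring this is an integer; hence the allowed denominators of $q$ are exactly those permitted in the $(L_X(p))$-subgroup $Q_X$. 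Everything else is a routine diagram chase using that $E_X$ is a group, that $\alpha$ is irrational, and that $R_X$ is exactly the rational part of $E_X$.
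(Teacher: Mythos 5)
Your proposal is correct and follows essentially the same route as the paper: define $\phi(q)$ as the coset of the rational $r_q$ from Proposition~\ref{gen}, establish well-definedness and the homomorphism property from $E_X \cap \mathbb{Q} = R_X$ (the $\subseteq$ from Proposition~\ref{ratmeas}, the $\supseteq$ from Proposition~\ref{rational}), get $\phi(1) = \phi(0)$ from $\alpha \in E_X$, surjectivity of $q \mapsto q\alpha + \phi(q)$ from Proposition~\ref{2pigamma}, and injectivity from irrationality of $\alpha$. The one extra remark you make — double-checking that the $q$ produced in the proof of Proposition~\ref{2pigamma} really lands in $Q_X$ by matching the denominator $d_{-1}a_0\cdots a_{k_0+1}/\gcd(d_{k_0},d_{k_0+1})$ against the definition of $L_X(p)$ — is a sound instinct, but the paper already packaged that bookkeeping into the statement of Proposition~\ref{2pigamma}, so invoking it as a black box is already rigorous.
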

\begin{proof}
By Proposition \ref{gen}, for every $q \in Q_{X}$ there exists $r_{q} \in \mathbb{Q}$ such that $q\alpha + r_{q} \in E_{X}$.  Let $\phi(q) = r_{q} + R_{X}$.  If $r,r^{\prime} \in \mathbb{Q}$ such that $q\alpha + r, q\alpha + r^{\prime} \in E_{X}$ then $r - r^{\prime} \in E_{X} \cap \mathbb{Q} = R_{X}$ so for every $r \in \mathbb{Q}$ such that $q\alpha + r \in E_{X}$, we have $r \in \phi(q)$.  Since $\alpha \in E_{X}$, $\phi(1) = R_{X} = \phi(0)$.  Since $r_{q+q^{\prime}} - r_{q} - r_{q^{\prime}} = (q + q^{\prime})\alpha + r_{q+q^{\prime}} - (q\alpha + r_{q}) - (q^{\prime}\alpha + r_{q^{\prime}}) \in E_{X} \cap \mathbb{Q} = R_{X}$, $\phi$ is a homomorphism and therefore $q \mapsto q\alpha + \phi(q)$ is a homomorphism $Q_{X} \to E_{X}/R_{X}$.

By Proposition \ref{2pigamma}, every $\gamma \in E_{X}$ is of the form $q\alpha + r$ for some $q \in Q_{X}$ and $r \in \mathbb{Q}$ so $q \mapsto q\alpha + \phi(q)$ is onto.  Since $\alpha \notin \mathbb{Q}$, the kernel of $q \mapsto q\alpha + \phi(q)$ is $\{ 0 \}$ meaning $q \mapsto q\alpha + \phi(q)$ is an isomorphism.
\end{proof}

To characterize the structure of $E_{X}$, we need to establish the nature of such homomorphisms $\phi$.

\begin{proposition}\label{phi}
Let $0 \leq \ell_{p},r_{p} \leq \infty$ and
$\phi : Q_{(\ell_{p})} \to \mathbb{Q} / Q_{(r_{p})}$ be a homomorphism such that $\phi(1) = \phi(0)$.  Then there exist $e_{p} \in \mathbb{Q}_{p}$ for each prime $p$
such that for all $q \in Q_{(\ell_{p})}$,
\[
\phi(q) = \sum_{p} \{ qe_{p} \}_{p} + Q_{(r_{p})}.
\]
\end{proposition}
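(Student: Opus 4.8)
The plan is to reduce to the case where only one prime is involved, and then build the $e_p$ prime-by-prime, patching together via a compatibility/CRT argument. First I would fix a prime $p$ and restrict attention to how $\phi$ behaves on denominators that are powers of $p$. For each $k \le \ell_p$, pick $x_k \in \mathbb{Q}$ with $\phi(p^{-k}) = x_k + Q_{(r_p)}$; the homomorphism property gives $p\,x_{k+1} \equiv x_k \pmod{Q_{(r_p)}}$, and $\phi(1)=\phi(0)$ forces $x_0 \in Q_{(r_p)}$, i.e.\ we may take $x_0 = 0$. The key observation is that, after decomposing each $x_k$ into its $p$-adic fractional part and the sum of $q$-adic fractional parts for $q \ne p$ (using $q = \sum_q \{q\}_q \pmod{\mathbb Z}$), the ``wrong-prime'' pieces can be absorbed into the $e_q$ for $q\neq p$; so it suffices to track the $p$-part, which is a coherent sequence $\{x_k\}_p \in p^{-k}\zz/\zz$ under multiplication by $p$. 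Such a coherent sequence is exactly an element $e_p$ of $\mathbb{Q}_p$ (when $\ell_p = \infty$) or of $p^{-\ell_p}\zz_p/\zz_p \subseteq \mathbb{Q}_p$ (when $\ell_p < \infty$), with $\{p^{-k} e_p\}_p = \{x_k\}_p$.

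Second, I would define $e_p$ this way for every prime $p$, and then verify the resulting formula $\psi(q) := \sum_p \{q e_p\}_p + Q_{(r_p)}$ agrees with $\phi$ on all of $Q_{(\ell_p)}$. Since $Q_{(\ell_p)}$ is generated as a group by the elements $p^{-k}$ (for $p$ prime, $0 \le k \le \ell_p$) and both $\phi$ and $\psi$ are homomorphisms, it is enough to check agreement on these generators, which holds by construction. One has to check that $\psi$ is well-defined, i.e.\ that $\sum_p \{q e_p\}_p$ is a finite sum modulo $\zz$ for each $q \in Q_{(\ell_p)}$: writing $q = m/n$ with $n = \prod p^{t_p}$, $t_p \le \ell_p$, the term $\{q e_p\}_p$ is integral (hence zero mod $\zz$) for every prime $p \nmid n$, so only finitely many terms survive; and $\{q e_p\}_p$ lies in $p^{-t_p}\zz/\zz \subseteq Q_{(r_p)}$-relevant lattice only when... actually one needs $\psi(q) \in \mathbb{Q}/Q_{(r_p)}$, which is automatic since each $\{q e_p\}_p \in \mathbb{Q}$. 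Then linearity in $q$ of each $q \mapsto \{q e_p\}_p \pmod{\zz}$ gives that $\psi$ is a homomorphism.

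The main obstacle I anticipate is handling the modulus $Q_{(r_p)}$ correctly throughout, rather than working modulo $\zz$: the values $\phi(p^{-k})$ are only defined up to $Q_{(r_p)}$, so the ``coherent sequence'' $\{x_k\}$ lives in $\mathbb{Q}/Q_{(r_p)}$, not $\mathbb{Q}/\zz$, and I must argue that the ambiguity can be chosen consistently (i.e.\ lift the coherent sequence in $\varprojlim_k \, p^{-k}\zz / (Q_{(r_p)} \cap \text{relevant piece})$ to a genuine coherent sequence in $\varprojlim_k p^{-k}\zz/\zz$, or equivalently to an element of $\mathbb{Q}_p$). This is a diagram-chase: the relevant inverse limit surjects onto the quotiented one because the transition maps (multiplication by $p$) are surjective and the kernels form a Mittag-Leffler system, so lifts exist; the freedom in the lift corresponds precisely to the freedom of modifying $e_p$ by an element of $\zz_p$, which does not change $\phi$ since it contributes $0$ to $\{qe_p\}_p \pmod \zz$. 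Once this lifting step is done cleanly for each prime independently, reassembling the global $\psi$ and checking it equals $\phi$ is routine, using that the generators $p^{-k}$ span $Q_{(\ell_p)}$ and that $\phi(1) = \phi(0)$ kills the integer part.
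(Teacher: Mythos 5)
Your overall strategy --- constructing $e_p$ prime-by-prime from the coherent sequence $\phi(p^{-k})$ and then patching --- is the same in spirit as the paper's proof, and the Mittag--Leffler argument is a reasonable alternative to the paper's more explicit construction of $e_p$ as a $p$-adic Cauchy limit of representatives $p^n c_{p,n}$. However, there is one genuine gap and one error in the details.

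The gap is the sentence claiming the ``wrong-prime'' pieces of $\phi(p^{-k})$ ``can be absorbed into the $e_q$ for $q \neq p$.'' This is not a construction, it is a hope, and if taken literally it threatens circularity: the value of $e_q$ is supposed to be determined by $\phi$ on $q$-power denominators, and you give no reason why the ``absorbed'' contributions from the $p$-denominator analysis would be consistent with that. The correct resolution is that there is nothing to absorb: since $p^k\,\phi(p^{-k}) = \phi(1) = 0$ in $\mathbb{Q}/Q_{(r_p)}$, the class $\phi(p^{-k})$ is $p^k$-torsion, and because a homomorphism sends $p$-primary torsion to $p$-primary torsion, the $q$-adic fractional parts of any lift $x_k$ for $q\neq p$ are automatically in $Q_{(r_p)}$ and hence vanish in the target. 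This is precisely the structural observation the paper makes by decomposing both $Q_{(\ell_p)}/\mathbb{Z}$ and $\mathbb{Q}/Q_{(r_p)}$ into direct sums of their $p$-primary components and factoring $\phi$ as $\bigoplus_p \tilde\phi_p$. You need to prove (or at least state) this; without it the prime-by-prime construction is not justified.

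The error is the claim that ``the freedom in the lift corresponds precisely to the freedom of modifying $e_p$ by an element of $\mathbb{Z}_p$, which does not change $\phi$.'' That is false when $\ell_p > r_p$: for $z\in\mathbb{Z}_p$ and $t\leq\ell_p$, $\{p^{-t}z\}_p$ lies in $p^{-t}\mathbb{Z}$, which is not contained in $p^{-r_p}\mathbb{Z}$ when $t>r_p$, so adding $z$ can change $\phi(p^{-t})$ modulo $Q_{(r_p)}$. The actual ambiguity in $e_p$ is $p^{\ell_p - r_p}\mathbb{Z}_p$ when both exponents are finite, $\{0\}$ when $\ell_p = \infty$ and $r_p < \infty$, and all of $\mathbb{Q}_p$ when $r_p = \infty$. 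Relatedly, note that for the global sum $\sum_p \{qe_p\}_p$ to make sense modulo $Q_{(r_p)}$ one needs $e_p\in p^{-r_p}\mathbb{Z}_p$ (which the paper's construction guarantees and your ``$\mathbb{Z}_p$-ambiguity'' picture obscures). These are fixable, but as written the lifting step is not correct.
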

\begin{proof}
Since $\phi(1) = \phi(0)$, there exists a homomorphism $\tilde{\phi} : Q_{(\ell_{p})} / \mathbb{Z} \to \mathbb{Q} / Q_{(r_{p})}$ such that $\phi(q) = \tilde{\phi}(q + \mathbb{Z})$.  As $Q_{(\ell_{p})} / \mathbb{Z}$ is an abelian torsion group (since it is a subgroup of $\mathbb{Q}/\mathbb{Z}$), it is isomorphic to the direct sum of its $p$-power torsion groups.  Concretely speaking, adopting the convention that $p^{-\infty}\mathbb{Z} = \mathbb{Z}\left[\nicefrac{1}{p}\right]$, the map $i : Q_{(\ell_{p})} / \mathbb{Z} \to \bigoplus_{p} p^{-\ell_{p}}\mathbb{Z} / \mathbb{Z}$ given by $i(q + \mathbb{Z}) = (\{ q \}_{p} + \mathbb{Z})_{p}$ is an isomorphism with inverse map given by $(x_{p} + \mathbb{Z})_{p} \mapsto \sum_{p} x_{p} + \mathbb{Z}$.  Likewise, $\mathbb{Q} / Q_{(r_{p})}$ is a torsion group and $j : \mathbb{Q} / Q_{(r_{p})} \to \bigoplus \mathbb{Z}\left[\nicefrac{1}{p}\right]/p^{-r_{p}}\mathbb{Z}$ by 
$j(r + Q_{(r_{p})}) = (\{ r \}_{p} + p^{-r_{p}}\mathbb{Z})_{p}$ is the isomorphism.  As $p$-power torsion elements must map to $p$-power torsion elements, there exist homomorphisms $\tilde{\phi}_{p} : p^{-\ell_{p}}\mathbb{Z} / \mathbb{Z} \to \mathbb{Z}\left[\nicefrac{1}{p}\right] / p^{-r_{p}}\mathbb{Z}$ such that $\tilde{\phi} = j^{-1} \circ \left( \bigoplus \tilde{\phi}_{p} \right) \circ i$.

For $p$ such that $r_{p} = \infty$, $\tilde{\phi}_{p}$ maps to the trivial group so $\tilde{\phi}_{p}(p^{-t}) = 0$ for all $t \leq \ell_{p}$ and we set $e_{p} = 0$.  For $p$ such that $r_{p} < \infty$ and $\ell_{p} = \infty$, for each $n > 0$, let $c_{p,n} \in \tilde{\phi}_{p}(p^{-n} + \mathbb{Z})$.  Then $p^{n}c_{p,n} \in \tilde{\phi}_{p}(\mathbb{Z}) = p^{-r_{p}}\mathbb{Z}$ and $p^{n+m}c_{p,n+m} - p^{n}c_{p,n} \in p^{n}(p^{m}\tilde{\phi}_{p}(p^{-n-m} + \mathbb{Z}) - \tilde{\phi}_{p}(p^{-n} + \mathbb{Z})) = p^{n}\tilde{\phi}_{p}(\mathbb{Z}) = p^{n-r_{p}}\mathbb{Z}$.  Then $p^{n}c_{p,n}$ is a $p$-adic Cauchy sequence so $p^{n}c_{p,n} \to e_{p} \in \mathbb{Q}_{p}$ and since $p^{n}c_{p,n} \in p^{-r_{p}}\mathbb{Z}$, $e_{p} \in p^{-r_{p}}\mathbb{Z}_{p}$.  

Let $e_{p} = \sum_{t=-r_{p}}^{\infty} e_{p,t}p^{t}$ and $p^{n}c_{p,n} = \sum_{t=-r_{p}}^{\infty} d_{p,n,t}p^{t}$ be the $p$-adic expansions.  Since $p^{n+m}c_{p,n+m} - p^{n}c_{p,n} \in p^{n-r_{p}}\mathbb{Z}$, $e_{p,t} = d_{p,n,t}$ for $t \leq n-r_{p}$ so $e_{p,t} = d_{p,n+r_{p},t}$ for $t \leq n$.  Then $p^{n}\{ p^{-n}e_{p,t} \}_{p} = \sum_{t=-r_{p}}^{n-1} e_{p,t}p^{t} = \sum_{t=-r_{p}}^{n-1} d_{p,n+r_{p},t}p^{t} = p^{n}\{ p^{-n}p^{n+r_{p}}c_{p,n+r_{p}} \}_{p}$ meaning that $\{ p^{-n}e_{p} \}_{p} = \{ p^{r_{p}}c_{p,n+r_{p}} \}_{p}$.  Now $p^{r_{p}}c_{p,n+r_{p}} - c_{p,n} \in \tilde{\phi}_{p}(\mathbb{Z}) = p^{-r_{p}}\mathbb{Z}$ so $\{ p^{r_{p}}c_{p,n+r_{p}} \}_{p} - c_{p,n} \in p^{-r_{p}}\mathbb{Z}$.  

Since we have dealt with all possibilities for $p$, for every $p$ and $n$ we have $\{ p^{-n}e_{p} \}_{p} \in \tilde{\phi}_{p}(p^{-n})$.

For $p$ such that $\ell_{p} < \infty$ and $r_{p} < \infty$, let $c_{p,\ell_{p}} \in \tilde{\phi}_{p}(p^{-\ell_{p}})$ and set $e_{p} = p^{\ell_{p}}c_{p,\ell_{p}} \in p^{-r_{p}}\mathbb{Z}$.  For $n \leq \ell_{p}$, $\{ p^{-n}e_{p} \}_{p} = \{ p^{-n}p^{\ell_{p}}c_{p,\ell_{p}} \}_{p} = \{ p^{\ell_{p}-n}c_{p,\ell_{p}} \}_{p} \in p^{\ell_{p}-n}\tilde{\phi}_{p}(p^{-\ell_{p}}) = \tilde{\phi}_{p}(p^{-n})$.  Therefore, for all $p$, there exist $e_{p} \in p^{-r_{p}}\mathbb{Z}_{p}$ such that $\{ p^{-n}e_{p} \}_{p} \in \tilde{\phi}_{p}(p^{-n})$ for all $n \leq \ell_{p}$ meaning that $\tilde{\phi}_{p}(x + \mathbb{Z}) = \{ xe_{p} \}_{p} + p^{-r_{p}}\mathbb{Z}$ for all $x \in p^{-\ell_{p}}\mathbb{Z}$.
Therefore for all $q \in Q_{(\ell_{p})}$,
\begin{align*}
\phi(q) &= \tilde{\phi}\left(q + \mathbb{Z}\right)
= j^{-1} \circ \left(\bigoplus \tilde{\phi}_{p}\right) \circ i\left(q + \mathbb{Z}\right) \\
&= j^{-1} \circ \left(\bigoplus \tilde{\phi}_{p}\right) \left(\left(\{ q \}_{p} + \mathbb{Z}\right)_{p}\right)
= j^{-1} \left( \left( \{ \{ q \}_{p} e_{p} \}_{p} + p^{-r_{p}}\mathbb{Z}\right)_{p}\right).
\end{align*}
Since $q - \{ q \}_{p} \in \mathbb{Z}_{p}$ and $\{ e_{p} \}_{p} \in p^{-r_{p}}\mathbb{Z}$, $\{ (q - \{ q \}_{p})e_{p} \}_{p} \in p^{-r_{p}}\mathbb{Z}$.  As $\{ qe_{p} \}_{p} = \{ (q - \{ q \}_{p})e_{p} \}_{p} + \{ \{ q \}_{p} e_{p} \}_{p} \pmod{\mathbb{Z}}$, then $\{ qe_{p} \}_{p} = \{ \{ q \}_{p} e_{p} \}_{p}\pmod{p^{-r_{p}}\mathbb{Z}}$.  Therefore
\[
\phi(q)
= j^{-1} \left( \left( \{ \{ q \}_{p} e_{p} \}_{p} + p^{-r_{p}}\mathbb{Z}\right)_{p}\right)
= j^{-1}  \left( \left( \{ q e_{p} \}_{p} + p^{-r_{p}}\mathbb{Z}\right)_{p}\right)
= \sum \{ q e_{p} \}_{p} + Q_{(r_{p})}. \qedhere
\]
\end{proof}

Now we are in a position to prove the explicit description of the eigenvalue group and verify that all eigenvalues are continuous.

\begin{proof}[Proof of Theorem \ref{evalgroup}]
Consider any additive (measurable) eigenvalue $\gamma$. By Proposition~\ref{2pigamma}, there exist $q \in Q_X$ and $r \in \mathbb{Q}$ so that $\gamma = q\alpha + r$. By Proposition~\ref{contones}, $q\alpha + r_q$ is an additive (even continuous) eigenvalue. Therefore, $r - r_q$ is a rational additive eigenvalue, which must be in $R_X$ by Proposition~\ref{ratmeas}, and so $\gamma = q\alpha + r_q + (r - r_q) \in 
\{q \alpha + r_q + r \ : \ q \in Q_X, r \in R_X\}$. Therefore, by Proposition~\ref{contones}, $\gamma$ is also an additive continuous eigenvalue. Since all eigenspaces are one-dimensional by ergodicity of the unique $\sigma$-invariant measure $\mu$, all eigenfunctions of $\gamma$ are continuous.

By Proposition \ref{homomorph}, $E_{X} = \{ q\alpha + r : r \in \phi(q) \}$ for some homomorphism $\phi : Q_{X} \to E_{X}/R_{X}$.  By Proposition \ref{phi}, there exists $e_{p} \in \mathbb{Q}_{p}$ such that
$\sum_{p} \{ qe_{p} \}_{p} \in \phi(q)$ for all $q \in Q_{X}$.  Therefore $E_{X} = \{ q\alpha + \sum_{p} \{ qe_{p} \}_{p} + r : q \in Q_{X}, r \in R_{X} \}$.
\end{proof}

\section{The maximal equicontinuous factor}\label{MEF}

In this section, we characterize the maximal equicontinuous factors of low complexity minimal subshifts as products of odometers and rotations on abelian adelic nilmanifolds.  We begin by describing the odometers and nilmanifolds in question.

\begin{definition}
Let
$\mathbb{A}_{X} = \{ (a_{\infty},(a_{p})) \in \mathbb{A} : a_{p} \in p^{-L_{X}(p)}\mathbb{Z}_{p} \}
$ with the convention that $p^{-\infty}\mathbb{Z}_{p} = \mathbb{Q}_{p}$ and identify $Q_{X}$ with its diagonal embedding $q \mapsto (q,(-q))$ as a lattice in $\mathbb{A}_{X}$.
The \textbf{abelian adelic nilmanifold associated to $X$} is $\mathcal{M}_{X} = \mathbb{A}_{X} / Q_{X}$, which is equipped with the action of translation by an element of $\mathcal{M}_{X}$ equivalent to translation
by the adele $(\alpha,(e_{p}))$ where $\alpha, e_{p}$ are as in Theorem \ref{evalgroup}.
\end{definition}

\begin{remark}
The simplest example is when $L_{X}(p) = 0$ for all primes $p$, which for instance happens for any Sturmian subshift.  Here $\mathbb{A}_{X} = \mathbb{R} \times \prod_p \mathbb{Z}_p$ and $Q_{X} = \mathbb{Z}$ so upon quotienting, $\mathcal{M}_{X} = \mathbb{A}_{X} / Q_{X} = \mathbb{R} / \mathbb{Z} = S^1$ and so the MEF is an irrational circle rotation.

An example of a $p$-adic MEF is when $L_{X}(2) = \infty$ and $L_{X}(p) = 0$ for $p \ne 2$. Here
$\mathbb{A}_{X} = \mathbb{R} \times \mathbb{Q}_2 \times \prod_{p > 2} \mathbb{Z}_p$ and $Q_{X} = \mathbb{Z}[\nicefrac{1}{2}]$. Upon quotienting, $\prod_{p > 2} \mathbb{Z}_p$ disappears, and so 
$\mathcal{M}_{X} = \mathbb{A}_{X} / Q_{X} = (\mathbb{R} \times \mathbb{Q}_2) / \mathbb{Z}[\nicefrac{1}{2}] = \mathcal{M}_2$. Therefore, the MEF is a rotation of $\mathcal{M}_2$ as described in Section \ref{int6}. This MEF structure occurs for Example 1.2, but could also occur for a subshift where $r_{k}=1$ and $n_{k}=m_{k}+1$ for all $k$.
\end{remark}

\begin{definition}
The \textbf{odometer associated to $X$} is
\[
\mathcal{O}_{X} = \varprojlim_{k \to \infty} \bigslant{\mathbb{Z}}{\mathrm{gcd}(\len{v_{k+1}},\len{v_{k}})\mathbb{Z}}
\]
under the natural (coordinatewise) $+1$ action where $v_{k}$ and $v_{k+1}$ are the words from Proposition \ref{words}.
\end{definition}

\begin{theorem}\label{MEFprod}
Let $X$ be an infinite minimal subshift with $\limsup p(q)/q < 1.5$.  Then $X$ is measurably isomorphic to its maximal equicontinuous factor $\mathcal{M}_{X} \times \mathcal{O}_{X}$.
\end{theorem}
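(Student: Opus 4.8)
The plan is to derive the statement from the discrete spectrum theorem together with the explicit description of the eigenvalue group; the only real work is an algebraic identification of character groups. Concretely, the unique $\sigma$-invariant measure $\mu$ (unique by \cite{boshernitzan}) is ergodic, and by Theorem~\ref{discspec} the system $(X,\sigma,\mu)$ has discrete spectrum, so Theorem~\ref{isochars} gives a measure-theoretic isomorphism of $(X,\sigma,\mu)$ onto $\widehat{\mathcal{E}_X}$ with its Haar measure, under rotation by the identity homomorphism $\iota\colon\mathcal{E}_X\hookrightarrow S^1$, where $\mathcal{E}_X=\{e^{2\pi i\gamma}:\gamma\in E_X\}\cong E_X/\mathbb{Z}$. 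By Theorem~\ref{evalgroup} every eigenfunction is continuous, so the measurable and continuous eigenvalue groups coincide; since the maximal equicontinuous factor of a minimal system is the rotation on the dual of its continuous eigenvalue group (\cite{MR3160543}), this realization is exactly the MEF of $(X,\sigma)$. It thus remains to identify $(\widehat{\mathcal{E}_X},\iota)$, as a group rotation, with $\mathcal{M}_X\times\mathcal{O}_X$ under its stated action.

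Next I would split $\mathcal{E}_X$. Using $E_X=\{q\alpha+\sum_p\{qe_p\}_p+r:q\in Q_X,\ r\in R_X\}$ from Theorem~\ref{evalgroup}, define
\[
\Psi\colon Q_X\times R_X\longrightarrow \mathcal{E}_X,\qquad \Psi(q,r)=\exp\!\Big(2\pi i\big(q\alpha+\textstyle\sum_p\{qe_p\}_p+r\big)\Big).
\]
For fixed $q$ the sum $\sum_p\{qe_p\}_p$ is finite, and $\{a+b\}_p\equiv\{a\}_p+\{b\}_p\pmod{\mathbb{Z}}$ makes $\Psi$ a homomorphism; it is onto $\mathcal{E}_X$ by Theorem~\ref{evalgroup}. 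If $\Psi(q,r)=1$ then $q\alpha\in\mathbb{Q}$, forcing $q=0$ by Proposition~\ref{irrat}, whence $r\in\mathbb{Z}$; since $\mathbb{Z}\subseteq R_X$, the kernel is $\{0\}\times\mathbb{Z}$. Hence $\mathcal{E}_X\cong Q_X\times(R_X/\mathbb{Z})$, and dualizing (contravariantly) $\widehat{\mathcal{E}_X}\cong\widehat{Q_X}\times\widehat{R_X/\mathbb{Z}}$, under which $\iota$ corresponds to the character $\iota_A\otimes\iota_B$ with $\iota_A(q)=e^{2\pi i(q\alpha+\sum_p\{qe_p\}_p)}$ on $Q_X$ and $\iota_B(r+\mathbb{Z})=e^{2\pi i r}$ on $R_X/\mathbb{Z}$.

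Then I would identify the two dual factors with their rotations. For $\widehat{Q_X}$: here $Q_X=Q_{(L_X(p))}$, and the adelic pairing $\langle q,(a_\infty,(a_p))\rangle=e^{2\pi i(qa_\infty+\sum_p\{qa_p\}_p)}$ is trivial on the diagonal copy $q\mapsto(q,(-q))$ of $Q_X$ in $\mathbb{A}_X$ (because $\sum_p\{x\}_p\equiv x\pmod{\mathbb{Z}}$ for $x\in\mathbb{Q}$), so it realizes $\widehat{Q_X}=\mathbb{A}_X/Q_X=\mathcal{M}_X$; under it $\iota_A$ is precisely the character paired with the adele $(\alpha,(e_p))+Q_X$, so rotation by $\iota_A$ is translation by $(\alpha,(e_p))$ on $\mathcal{M}_X$. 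For $\widehat{R_X/\mathbb{Z}}$: writing $g_k=\gcd(\len{v_{k+1}},\len{v_k})$, the computation in the proof of Lemma~\ref{div} gives $g_k\mid g_{k+1}$, so $\mathcal{O}_X=\varprojlim\mathbb{Z}/g_k\mathbb{Z}$ is a genuine odometer with $\widehat{\mathcal{O}_X}=\varinjlim\frac1{g_k}\mathbb{Z}/\mathbb{Z}$, and this direct limit equals $R_X/\mathbb{Z}$ because $R_X(p)=\sup_k v_p(g_k)$ and the valuations $v_p(g_k)$ are nondecreasing; hence by Pontryagin duality $\widehat{R_X/\mathbb{Z}}=\mathcal{O}_X$ (this is exactly Proposition~\ref{odo} with $o_k=g_k$), under which $\iota_B$ corresponds to $1=(1,1,\dots)\in\mathcal{O}_X$, so rotation by $\iota_B$ is the $+1$ odometer action. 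Assembling the two factors, $(\widehat{\mathcal{E}_X},\iota)$ is the rotation by $((\alpha,(e_p))+Q_X,\,1)$ on $\mathcal{M}_X\times\mathcal{O}_X$, which is the product action from the definitions; combined with the first step, $X$ is measurably isomorphic to its MEF $\mathcal{M}_X\times\mathcal{O}_X$.

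The main obstacle is the $p$-adic factor and the matching of conventions. One must know that the realization $\widehat{Q_{(\ell_p)}}=\mathbb{A}_{(\ell_p)}/Q_{(\ell_p)}$ is valid in the genuinely adelic case (where $Q_{(\ell_p)}$ is not a finite-covolume lattice in the full adele ring), that the representatives $e_p$ may be chosen so that $(\alpha,(e_p))\in\mathbb{A}_X$ (so the definition of $\mathcal{M}_X$ is the intended one), and that the pairing and sign conventions line up so that $\iota_A$ lands on $(\alpha,(e_p))+Q_X$ relative to the chosen diagonal embedding $q\mapsto(q,(-q))$. Keeping these conventions consistent — rather than any single estimate — is the delicate point; everything else is bookkeeping with Pontryagin duality layered on top of the results already established.
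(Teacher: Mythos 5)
Your proof is correct and takes essentially the same route as the paper: discrete spectrum plus Theorem~\ref{isochars} and the dual-group characterization of the MEF (Proposition~\ref{almostoneone}), the splitting $\widehat{\mathcal{E}_X}\cong\widehat{Q_X}\times\widehat{R_X/\mathbb{Z}}$ (Proposition~\ref{product}, which you obtain by dualizing your map $\Psi$), and the adelic/odometer identifications (Propositions~\ref{Qhat} and~\ref{odo}). The one technicality you flagged — whether $(\alpha,(e_p))$ lies in $\mathbb{A}_X$ — is handled in the paper by setting $q_0=\sum_p\{e_p\}_p$ and replacing $(\alpha,(e_p))$ with the equivalent $\mathbb{Q}$-adele $(\alpha+q_0,(e_p-q_0))$, which lies in $\mathbb{R}\times\prod_p\mathbb{Z}_p\subseteq\mathbb{A}_X$.
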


We start by characterizing the MEF as the group of characters on the multiplicative eigenvalue group.

\begin{proposition}\label{almostoneone}
The maximal equicontinuous factor of $X$ is $\widehat{\mathcal{E}_{X}}$ equipped with Haar measure under the action of multiplication by the identity character.

Moreover, $(X,\sigma)$ is measurably isomorphic to $\widehat{\mathcal{E}_{X}}$ under the action of multiplication by the identity character.
\end{proposition}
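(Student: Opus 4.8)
## Proof proposal for Proposition~\ref{almostoneone}

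The plan is to prove the two assertions in sequence: first that $\widehat{\mathcal{E}_X}$ (with the identity-character rotation, Haar measure) is the MEF, and then that $(X,\sigma)$ is measurably isomorphic to it. The first assertion is essentially a combination of results already available: by Theorem~\ref{discspec}, $X$ has discrete spectrum with respect to its unique invariant measure $\mu$; by Theorem~\ref{isochars} (quoted in Section~\ref{defs}), an ergodic measure-preserving transformation with discrete spectrum is measurably isomorphic to the rotation by the identity homomorphism on the character group of its multiplicative eigenvalue group, equipped with Haar measure. Since by Theorem~\ref{evalgroup} every measurable eigenvalue of $(X,\sigma)$ is a continuous eigenvalue, the measurable multiplicative eigenvalue group equals $\mathcal{E}_X$, the \emph{continuous} one. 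So $(X,\sigma,\mu)$ is measurably isomorphic to $(\widehat{\mathcal{E}_X}, \cdot\,\mathrm{id}, \mathrm{Haar})$; this is the ``Moreover'' statement. It then remains to identify this latter system with the MEF $(M,\eta)$ of $(X,\sigma)$ as a topological dynamical system.

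For that identification, I would invoke the general description of the MEF of a minimal TDS recalled in Section~\ref{int2}: for any minimal $(X,T)$, the MEF is (a rotation of) the dual group $\widehat{\mathcal{E}_X}$ of the group of continuous multiplicative eigenvalues (citing \cite{MR3160543}), with the action given by multiplication by the identity character, and the canonical factor map $\phi \colon X \to \widehat{\mathcal{E}_X}$ sending $x$ to the evaluation character $\gamma \mapsto f_\gamma(x)$, where $f_\gamma$ is the (continuous, normalized) eigenfunction for the eigenvalue $\gamma$. This factor map is continuous and equivariant by construction. Thus $(M,\eta) = (\widehat{\mathcal{E}_X}, \cdot\,\mathrm{id})$ topologically, which is the first assertion. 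Combining: $\phi$ is a continuous factor map onto the MEF, and by Theorem~\ref{isochars} it is a measure-theoretic isomorphism when $\widehat{\mathcal{E}_X}$ carries its Haar measure — but I must check that $\phi_*\mu$ is Haar measure on $\widehat{\mathcal{E}_X}$. This follows because $\phi_*\mu$ is a $\eta$-invariant Borel probability measure on a uniquely ergodic system (an equicontinuous minimal rotation is uniquely ergodic), hence equals Haar measure; alternatively, the isomorphism in Theorem~\ref{isochars} is realized precisely through the eigenfunctions, which are exactly the coordinates of $\phi$, so the pushforward is automatically Haar.

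The one genuine point requiring care — and what I expect to be the main obstacle — is making sure the abstract measure-isomorphism of Theorem~\ref{isochars} is \emph{implemented by the canonical topological factor map $\phi$}, rather than by some other abstract isomorphism. The cleanest way to handle this is: the isomorphism of Theorem~\ref{isochars} is built by sending $x$ to the point of $\widehat{\mathcal{E}_X}$ whose coordinate in the eigenfunction $f_\gamma$ is $f_\gamma(x)$ — i.e.\ it \emph{is} $\phi$, once one fixes the eigenfunctions to be the continuous ones normalized to take values in $S^1$ (possible by minimality and unique ergodicity, since $|f_\gamma|$ is a continuous invariant function hence constant). So the content is that, because all eigenfunctions are continuous (Theorem~\ref{evalgroup}), the measurable eigenvalue group and the continuous eigenvalue group coincide, and hence the abstract Halmos--von Neumann model and the topological MEF are literally the same system with the same factor map. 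I would state this identification carefully and cite \cite{MR648108} and \cite{MR3160543}, then conclude that $\phi$ is simultaneously the topological MEF factor map and a measure-theoretic isomorphism, which is exactly both assertions of the proposition (and sets up Proposition~\ref{almostoneone}'s use in Theorem~\ref{main}).
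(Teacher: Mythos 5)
Your argument is correct and follows the same route as the paper's short proof, which simply cites Theorem 2.21 of \cite{MR3160543} for the topological description of the MEF and Theorem \ref{isochars} for the measure-theoretic isomorphism. The two additional points you make explicit---that Theorem \ref{evalgroup}'s ``all measurable eigenvalues are continuous'' is what lets one replace the character group of the \emph{measurable} eigenvalue group (which is what Theorem \ref{isochars} actually produces) by $\widehat{\mathcal{E}_X}$, and that the Halmos--von Neumann isomorphism is realized by the canonical factor map $\phi$ (needed when Theorem~\ref{main} attributes to this proposition the stronger claim that $\phi$ itself is a measure isomorphism)---are both left implicit in the paper, so your version is a useful elaboration rather than a different proof.
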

\begin{proof}
By Theorem 2.21 in \cite{MR3160543}, the maximal equicontinuous factor is homeomorphic to $\widehat{\mathcal{E}_{X}}$ under multiplication by the identity character.  Since $X$ has discrete spectrum, Theorem \ref{isochars} implies $X$ is measurably isomorphic to $\widehat{\mathcal{E}_{X}}$ under that action.
\end{proof}

Next we establish that the space of characters is a direct product of the spaces of characters on $Q_{X}$ and $R_{X}/\mathbb{Z}$.  By slight abuse of notation, for $\chi \in \widehat{\mathcal{E}_{X}}$ and $\gamma \in E_{X}$, we will write $\chi(\gamma)$ to mean $\chi(\exp(2\pi i \gamma))$ and treat $\chi$ as a character on $E_{X}$ which maps $\mathbb{Z}$ to $1$.

\begin{proposition}\label{product}
The space of characters $\widehat{\mathcal{E}_{X}}$ is isomorphic as a topological group to $\widehat{Q_{X}} \times \widehat{R_{X}/\mathbb{Z}}$.

Let $e_{p} \in \mathbb{Q}_{p}$ and $\alpha$ be as in Theorem \ref{evalgroup}.  The action of multiplication by the identity character on $\widehat{\mathcal{E}_{X}}$ maps to the action of multiplication by $\exp(2\pi i (q \alpha + \sum_{p} \{ qe_{p} \}_{p}))$ on $\widehat{Q_{X}}$ and multiplication by the identity character on $\widehat{R_{X}/\mathbb{Z}}$.
\end{proposition}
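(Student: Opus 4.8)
The plan is to identify $\mathcal{E}_X$ as an extension and then dualize. By Theorem~\ref{evalgroup}, $E_X = \{q\alpha + \sum_p \{qe_p\}_p + r : q \in Q_X, r \in R_X\}$, and $\alpha \notin \mathbb{Q}$, so $\mathcal{E}_X \simeq E_X/\mathbb{Z}$ sits in a short exact sequence
\[
0 \to R_X/\mathbb{Z} \to E_X/\mathbb{Z} \xrightarrow{\psi} Q_X \to 0,
\]
where $\psi$ sends the class of $q\alpha + \sum_p\{qe_p\}_p + r$ to $q$ (this is well-defined and surjective because, by Proposition~\ref{homomorph}, $q \mapsto q\alpha + \phi(q)$ is an isomorphism $Q_X \to E_X/R_X$, and the kernel $R_X/\mathbb{Z}$ is exactly the rational part). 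First I would verify that this sequence \emph{splits}: the map $Q_X \to E_X/\mathbb{Z}$ given by $q \mapsto q\alpha + \sum_p\{qe_p\}_p + \mathbb{Z}$ is a group homomorphism (using $\{qe_p + q'e_p\}_p = \{qe_p\}_p + \{q'e_p\}_p \pmod{\mathbb{Z}}$, as recorded after the definition of the $p$-adic fractional part) and is a one-sided inverse to $\psi$. Hence $E_X/\mathbb{Z} \simeq Q_X \oplus (R_X/\mathbb{Z})$ as abstract groups; since $Q_X$ and $R_X/\mathbb{Z}$ carry the discrete topology (as subquotients of $\mathbb{Q}$), this is an isomorphism of discrete groups, and $\mathcal{E}_X$ is that group with its topology as a subgroup of $S^1$, which is the same as the (discrete) direct sum topology.

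Next I would apply Pontryagin duality. The dual of a direct sum of discrete abelian groups is the direct product of the duals: $\widehat{\mathcal{E}_X} = \widehat{E_X/\mathbb{Z}} \simeq \widehat{Q_X \oplus (R_X/\mathbb{Z})} \simeq \widehat{Q_X} \times \widehat{R_X/\mathbb{Z}}$, and this is an isomorphism of topological groups (both sides compact). Concretely, a character $\chi$ of $\mathcal{E}_X$, viewed (per the stated abuse of notation) as a homomorphism $E_X \to S^1$ killing $\mathbb{Z}$, is determined by its restriction to the image of the splitting $\{q\alpha + \sum_p\{qe_p\}_p\}$—a character of $Q_X$—and its restriction to $R_X/\mathbb{Z}$—a character of $R_X/\mathbb{Z}$—and conversely any such pair assembles to a character. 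This gives the first assertion.

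For the second assertion, I would trace through what the identity character $\mathrm{id} \in \widehat{\mathcal{E}_X}$ (the inclusion $\mathcal{E}_X \hookrightarrow S^1$) corresponds to under the above decomposition, and how translation by it acts. Under the isomorphism, $\mathrm{id}$ corresponds to the pair consisting of the character $q \mapsto \exp(2\pi i(q\alpha + \sum_p\{qe_p\}_p))$ of $Q_X$ and the character $r + \mathbb{Z} \mapsto \exp(2\pi i r)$ of $R_X/\mathbb{Z}$, simply by evaluating the inclusion on the two pieces of the splitting. Multiplication by the identity character on $\widehat{\mathcal{E}_X}$ is pointwise multiplication of characters, which under a direct product decomposition acts coordinatewise; so it becomes multiplication by the distinguished element $\bigl(q\mapsto \exp(2\pi i(q\alpha + \sum_p\{qe_p\}_p))\bigr)$ on $\widehat{Q_X}$ and multiplication by $\bigl(r+\mathbb{Z}\mapsto\exp(2\pi i r)\bigr)$—which is precisely the identity character of $R_X/\mathbb{Z}$—on $\widehat{R_X/\mathbb{Z}}$. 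This is exactly the stated conclusion.

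The main obstacle I anticipate is not any single hard step but rather bookkeeping: being careful that the splitting is a genuine group homomorphism (the only real content is the mod-$\mathbb{Z}$ additivity of $q \mapsto \sum_p\{qe_p\}_p$, which is already available), that the topological identifications are legitimate (all groups here are either compact or discrete, so duality behaves well and there are no completion subtleties), and that "multiplication by the identity character" is transported correctly—this is purely formal once the direct-product structure is in hand. No deep input beyond Theorems~\ref{evalgroup} and Proposition~\ref{homomorph} and standard Pontryagin duality is needed.
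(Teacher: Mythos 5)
Your proof is correct and follows essentially the same route as the paper: both hinge on observing that $q\mapsto q\alpha+\sum_p\{qe_p\}_p$ is a homomorphism modulo $\mathbb{Z}$ and that rational and irrational parts separate cleanly, then deduce that $\widehat{\mathcal{E}_X}$ factors as the product of the duals and trace the identity character through coordinatewise. You package this as a split short exact sequence $0\to R_X/\mathbb{Z}\to E_X/\mathbb{Z}\to Q_X\to 0$ followed by Pontryagin duality, whereas the paper directly defines $\chi_Q(q)=\chi(q\alpha+\sum_p\{qe_p\}_p)$ and checks that $\chi\mapsto(\chi_Q,\chi|_{R_X})$ is a continuous bijective homomorphism---the same content, just presented at the level of the eigenvalue group rather than the character group.
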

\begin{proof}
By Theorem \ref{evalgroup}, $E_{X} = \{ q\alpha + \sum_{p} \{ qe_{p} \}_{p} + r : q \in Q_{X}, r \in R_{X} \}$.
Let $\chi \in \widehat{\mathcal{E}_{X}}$.  For $q \in Q_{X}$, set $\chi_{Q}(q) = \chi(q\alpha + \sum_{p} \{ qe_{p} \}_{p})$.  Since $\sum \{ (q + q^{\prime})e_{p} \}_{p} = \sum \{ qe_{p} \}_{p} + \sum \{ q^{\prime}e_{p} \}_{p}\pmod{\mathbb{Z}}$, and since $\chi(1) = 1$,
\begin{align*}
\chi_{Q}(q + q^{\prime}) &= \chi\left(q\alpha + \sum \{ qe_{p} \}_{p}\right)\chi\left(q^{\prime}\alpha + \sum \{ q^{\prime}e_{p} \}_{p}\right)\chi\left(\sum \{ (q+q^{\prime})e_{p} \}_{p} - \sum \{ qe_{p} \}_{p} - \sum \{ q^{\prime}e_{p} \}_{p}\right) \\
&= \chi_{Q}(q)\chi_{Q}(q^{\prime})~1
\end{align*}
so $\chi_{Q} \in \widehat{Q_{X}}$.  Therefore for any $q \in Q_{X}$ and $r \in R_{X}$,
\[
\chi\left(q\alpha + \sum \{ qe_{p} \}_{p} + r\right) = \chi_{Q}(q)\chi(r)
\]
so $\chi \mapsto \chi_{Q} \cdot \chi \big{|}_{R_{X}}$ defines a homomorphism $\widehat{\mathcal{E}_{X}} \to \widehat{Q_{X}} \times \widehat{R_{X}/\mathbb{Z}}$.  As every such product of characters defines a character on $\mathcal{E}_{X}$, the homomorphism is onto and it is easily seen to be continuous and have trivial kernel.

The action of multiplication by the identity character on $\mathcal{E}_{X}$ on $\chi = \chi_{Q} \cdot \chi \big{|}_{R_{X}}$ is
\begin{align*}
(\chi\iota)\left(q\alpha + \sum \{ qe_{p} \}_{p} + r\right)
&= \chi\left(q\alpha + \sum \{ qe_{p} \}_{p} + r\right) \exp\left(2\pi i \left(q\alpha + \sum \{ qe_{p} \}_{p} + r\right)\right) \\
&= \chi_{Q}(q)\exp\left(2\pi i \left(q\alpha + \sum \{ qe_{p} \}_{p}\right)\right)~\chi \big{|}_{R_{X}}(r)\exp(2\pi i r). \qedhere
\end{align*}
\end{proof}

Our next task then is to characterize the character groups of $Q_{(\ell_{p})}$ and $Q_{(r_{p})}/\mathbb{Z}$.  We begin with an observation connecting such characters to $p$-adic integers.

\begin{lemma}\label{achar}
Let $0 \leq \ell_{p} \leq \infty$ for each prime $p$ and
$\chi \in \widehat{Q_{(\ell_{p})}}$.  Then there exists a unique $\theta \in [0,1)$ and unique $z_{p} \in \mathbb{Z}_{p}$ with $0 \leq z_{p} < p^{\ell_{p}}$ when $\ell_{p} < \infty$ such that for all $q \in Q_{(\ell_{p})}$,
\[
\chi(q) = \exp\left(2\pi i \left(q\theta + \sum \{ qz_{p} \}_{p}\right)\right).
\]
\end{lemma}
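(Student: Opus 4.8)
The plan is to decompose the character $\chi$ according to the torsion structure of $Q_{(\ell_p)}/\mathbb{Z}$, exactly as in the proof of Proposition~\ref{phi}, and then handle the ``real'' direction separately. First I would observe that $Q_{(\ell_p)} = \mathbb{Z} + \sum_p p^{-\ell_p}\mathbb{Z}$, so every $q \in Q_{(\ell_p)}$ can be written as $q = n + \sum_p \{q\}_p$ with $n \in \mathbb{Z}$ and all but finitely many $\{q\}_p = 0$. Since $\chi$ is a character and $\chi(1)$ need not be $1$ here (unlike in Proposition~\ref{phi}, $\chi$ is a genuine character on $Q_{(\ell_p)}$, not a homomorphism killing $\mathbb{Z}$), set $\theta \in [0,1)$ so that $\chi(1) = \exp(2\pi i \theta)$. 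The map $q \mapsto \chi(q)\exp(-2\pi i q\theta)$ is then a character on $Q_{(\ell_p)}$ that is trivial on $\mathbb{Z}$, hence factors through $Q_{(\ell_p)}/\mathbb{Z}$, which is the torsion group $\bigoplus_p p^{-\ell_p}\mathbb{Z}/\mathbb{Z}$ (with the convention $p^{-\infty}\mathbb{Z} = \mathbb{Z}[\tfrac1p]$).

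Next I would analyze the restriction of this modified character to each $p$-primary component $p^{-\ell_p}\mathbb{Z}/\mathbb{Z}$. A character on $p^{-\ell_p}\mathbb{Z}/\mathbb{Z}$ is determined by its values on the generators $p^{-t}+\mathbb{Z}$, and compatibility ($(\chi(p^{-t-1}))^p = \chi(p^{-t})$) forces these values to be $\exp(2\pi i \{p^{-t} z_p\}_p)$ for a coherent sequence of $p$-adic integers, i.e. a single $z_p \in \mathbb{Z}_p$; when $\ell_p < \infty$ the component is finite cyclic of order $p^{\ell_p}$ and one can take the unique representative $0 \le z_p < p^{\ell_p}$, while when $\ell_p = \infty$ the sequence $\{p^{-t}z_p\}_p$ over all $t$ pins down $z_p \in \mathbb{Z}_p$ uniquely. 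This is essentially the statement $\widehat{p^{-\ell_p}\mathbb{Z}/\mathbb{Z}} \simeq \mathbb{Z}_p/p^{\ell_p}\mathbb{Z}_p$, which is standard Pontryagin duality for the Prüfer-type groups; I would either cite it or give the one-line coherence argument. Assembling over all $p$ and using that $q - \{q\}_p \in \mathbb{Z}_p$ (so $\{q z_p\}_p = \{\{q\}_p z_p\}_p \pmod{\mathbb{Z}}$, and $\{q\}_p = 0$ for all but finitely many $p$), I get $\chi(q) = \exp\!\big(2\pi i(q\theta + \sum_p \{qz_p\}_p)\big)$.

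For uniqueness: if $\exp(2\pi i(q\theta + \sum_p\{qz_p\}_p)) = \exp(2\pi i(q\theta' + \sum_p\{qz_p'\}_p))$ for all $q \in Q_{(\ell_p)}$, then taking $q = 1$ gives $\theta - \theta' \in \mathbb{Z}$, hence $\theta = \theta'$; then taking $q = p^{-t}$ for each prime $p$ and each $1 \le t \le \ell_p$ forces $\{p^{-t}(z_p - z_p')\}_p \in \mathbb{Z}$, i.e. $z_p \equiv z_p' \pmod{p^t}$ for all such $t$, which gives $z_p = z_p'$ in $\mathbb{Z}_p$ (or equality of the chosen representatives when $\ell_p < \infty$). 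I expect the main obstacle to be purely bookkeeping rather than conceptual: carefully justifying the interchange between ``values on generators of the Prüfer component'' and ``a single $p$-adic integer $z_p$'' in the $\ell_p = \infty$ case, and making sure the finite-sum conventions ($\{q\}_p = 0$ a.e.) are handled so that $\sum_p\{qz_p\}_p$ is a well-defined element of $\mathbb{R}/\mathbb{Z}$. None of this is hard, but it mirrors almost verbatim the torsion-group manipulations already carried out in the proof of Proposition~\ref{phi}, so I would lean on that proof's setup (the isomorphisms $i$ and $j$ there) to keep the argument short.
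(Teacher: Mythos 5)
Your argument follows the paper's proof essentially step for step: both peel off $\theta$ from $\chi(1)$, pass to the induced character on the torsion group $Q_{(\ell_p)}/\mathbb{Z}\simeq\bigoplus_p p^{-\ell_p}\mathbb{Z}/\mathbb{Z}$, extract a coherent sequence $z_{p,t}\in\mathbb{Z}/p^t\mathbb{Z}$ from the values on $p^{-t}$, take the $p$-adic limit $z_p$, and reassemble via $q=\sum_p\{q\}_p\pmod{\mathbb{Z}}$. The only cosmetic differences are that the paper computes this duality directly rather than invoking Proposition~\ref{phi}, and it leaves the (routine) uniqueness implicit, while you spell it out; neither affects correctness.
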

\begin{proof}
Let $\theta \in [0,1)$ be the unique value such that $\chi(1) = \exp(2\pi i \theta)$ and let $\chi^{\prime}(q) = \chi(q)/\exp(2\pi i q \theta)$.  Then $\chi^{\prime} \in \widehat{Q_{(\ell_{p})}}$ and $\chi^{\prime}(1) = 1$.  For $0 < t \leq \ell_{p}$,
\[
\left(\chi^{\prime}(p^{-t})\right)^{p^{t}} = \left(\frac{\chi(p^{-t})}{\exp(2\pi i p^{-t}\theta)}\right)^{p^{t}} = \frac{\chi(1)}{\exp(2\pi i \theta)} = 1
\]
so there exist unique integers $0 \leq z_{p,t} < p^{t}$ such that $\chi^{\prime}(p^{-t}) = \exp(2\pi i p^{-t}z_{p,t})$.  Since $(\chi^{\prime}(p^{-t-1}))^{p} = \chi^{\prime}(p^{-t})$, we have $z_{p,t+1} \pmod{p^{t}} = z_{p,t}$.  For $p$ such that $\ell_{p} < \infty$, set $z_{p,t} = z_{p,\ell_{p}}$ for $t > \ell_{p}$.  Then $z_{p,t} \to z_{p} \in \mathbb{Z}_{p}$ and $0 \leq z_{p} < p^{\ell_{p}}$ when $\ell_{p} < \infty$.

Since $\{ p^{-t}z_{p} \} = p^{-t}z_{p,t}$, then $\chi^{\prime}(p^{-t}) = \exp(2\pi i \{ p^{-t}z_{p} \}_{p})$ for all $p$ and $t \leq \ell_{p}$.  Let $q \in Q_{(\ell_{p})}$.  For each prime $p$, $q$ has $p$-adic expansion $\sum_{t=-m}^{\infty} q_{p,t}p^{t}$ for some $m \leq \ell_{p}$ so
\[
\chi^{\prime}(\{ q \}_{p})
= \prod_{t=-m}^{-1} \chi^{\prime}(q_{p,t}p^{t})
= \prod_{t=-m}^{-1} \exp(2\pi i \{ q_{p,t} p^{t} z_{p} \}_{p})
= \exp(2\pi i \{ qz_{p} \}_{p})
\]
and as $q = \sum \{ q \}_{p}\pmod{\mathbb{Z}}$,
\begin{align*}
\chi(q) &= \exp(2\pi i q \theta) \chi^{\prime}(q)
= \exp(2\pi i q \theta) \prod \chi^{\prime}(\{ q \}_{p}) \\
&= \exp(2\pi i q \theta) \prod \exp(2\pi i \{ qz_{p} \}_{p})
= \exp\left(2\pi i \left(q \theta + \sum \{ qz_{p} \}_{p}\right)\right). \qedhere
\end{align*}
\end{proof}

We can now characterize the character group of $Q_{X}$ as an abelian adelic nilmanifold.

\begin{proposition}\label{Qhat}
Let $0 \leq \ell_{p} \leq \infty$ for each prime $p$.  Let
$\mathbb{A}_{(\ell_{p})} = \{ (a_{\infty},(a_{p})) \in \mathbb{A} : a_{p} \in p^{-\ell_{p}}\mathbb{Z}_{p} \}
$ with the convention that $p^{-\infty}\mathbb{Z}_{p} = \mathbb{Q}_{p}$ and identify $Q_{(\ell_{p})}$ with its diagonal embedding $q \mapsto (q,(-q))$ as a lattice in $\mathbb{A}_{(\ell_{p})}$.  Then there exists a topological group isomorphism
\[
\widehat{Q_{(\ell_{p})}} \simeq \bigslant{\mathbb{A}_{(\ell_{p})}}{Q_{(\ell_{p})}}.
\]
\end{proposition}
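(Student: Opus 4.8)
The plan is to construct the isomorphism by hand, bootstrapping from the parametrization of characters already produced in Lemma~\ref{achar}. Define $\Phi\colon \mathbb{A}_{(\ell_p)} \to \widehat{Q_{(\ell_p)}}$ by sending $a = (a_\infty,(a_p))$ to the character $\Phi(a)\colon q \mapsto \exp\bigl(2\pi i\bigl(q a_\infty + \sum_p \{q a_p\}_p\bigr)\bigr)$. The routine preliminaries are: (i) $\Phi(a)$ is well defined, since for each fixed $q \in Q_{(\ell_p)}$ one has $q a_p \in \mathbb{Z}_p$ (hence $\{q a_p\}_p = 0$) for all but finitely many $p$, as $q$ has bounded denominator and $a_p \in \mathbb{Z}_p$ for almost all $p$; (ii) $\Phi(a)$ is genuinely a character of $Q_{(\ell_p)}$ that is trivial on $\mathbb{Z}$, using $\{x+y\}_p \equiv \{x\}_p + \{y\}_p \pmod{\mathbb{Z}}$ and $q \equiv \sum_p \{q\}_p \pmod{\mathbb{Z}}$; (iii) $\Phi$ is a group homomorphism; and (iv) $\Phi$ is continuous into the compact--open topology on $\widehat{Q_{(\ell_p)}}$.

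Surjectivity of $\Phi$ is essentially a reformulation of Lemma~\ref{achar}: given $\chi$, that lemma produces $\theta \in [0,1)$ and $z_p \in \mathbb{Z}_p \subseteq p^{-\ell_p}\mathbb{Z}_p$ (with $z_p \in \mathbb{Z}_p$ for all $p$, so $(\theta,(z_p)) \in \mathbb{A}_{(\ell_p)}$) satisfying $\chi(q) = \exp(2\pi i(q\theta + \sum_p \{q z_p\}_p)) = \Phi(\theta,(z_p))(q)$. The crux is computing $\ker \Phi$, which should be exactly the diagonally embedded copy of $Q_{(\ell_p)}$ (modulo the normalization of fundamental-domain representatives built into $\mathbb{A}_{(\ell_p)}$ and the embedding). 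One inclusion is immediate: $\Phi(q_0,(-q_0))(q) = \exp(2\pi i(q q_0 - \sum_p \{q q_0\}_p)) = 1$ because $q q_0 - \sum_p \{q q_0\}_p \in \mathbb{Z}$. For the reverse, suppose $\Phi(a) = 1$. Evaluating at $q=1$ gives $a_\infty + \sum_p \{a_p\}_p \in \mathbb{Z}$; each $\{a_p\}_p$ is a rational with $p$-power denominator dividing $p^{\ell_p}$, so $\sum_p \{a_p\}_p \in Q_{(\ell_p)}$ and hence $a_\infty \in Q_{(\ell_p)}$. Subtracting the (already trivial) element $(a_\infty,(-a_\infty))$ reduces to $a_\infty = 0$; then $q=1$ forces every $\{a_p\}_p = 0$ (a finite sum of numbers in $[0,1)$ with pairwise coprime prime-power denominators is an integer only if all terms vanish), i.e.\ $a_p \in \mathbb{Z}_p$ for all $p$, and evaluating at $q = p^{-t}$ for $1 \le t \le \ell_p$ then constrains $a_p$ within $p^{t}\mathbb{Z}_p$; running $t$ over all admissible values, together with the chosen normalization, forces $a = 0$.

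Finally, $\Phi$ descends to a continuous bijective homomorphism $\overline{\Phi}\colon \mathbb{A}_{(\ell_p)}/Q_{(\ell_p)} \to \widehat{Q_{(\ell_p)}}$, and this is upgraded to a topological group isomorphism by observing that $Q_{(\ell_p)}$ is a cocompact lattice in $\mathbb{A}_{(\ell_p)}$ — so the domain is compact — while $\widehat{Q_{(\ell_p)}}$ is Hausdorff, and a continuous bijection from a compact space onto a Hausdorff space is a homeomorphism. Discreteness of $Q_{(\ell_p)}$ is easy (a small real neighborhood together with $\prod_p \mathbb{Z}_p$ isolates $0$), and cocompactness follows by reducing every coset to a representative whose $p$-adic coordinates lie in $\mathbb{Z}_p$ and exhibiting the resulting compact fundamental domain. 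I expect the main obstacle to be precisely the $p$-adic kernel computation above — ensuring that no adele beyond the diagonal rationals annihilates all of $Q_{(\ell_p)}$, with the interplay between the cutoffs $\ell_p$ and the choice of representatives handled correctly — since everything else is formal Pontryagin duality and bookkeeping.
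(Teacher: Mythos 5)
Your kernel computation breaks down at exactly the step you yourself flag as the main obstacle, and the gap is not fillable. After reducing to $a_\infty = 0$ with $a_p \in \mathbb{Z}_p$ for all $p$, evaluating $\Phi(a)$ at $q = p^{-t}$ for $1 \le t \le \ell_p$ yields only $a_p \in p^{\ell_p}\mathbb{Z}_p$ when $\ell_p < \infty$, not $a_p = 0$; and there is no further ``normalization'' in the definition of $\mathbb{A}_{(\ell_p)}$ to invoke, since the only constraint imposed is $a_p \in p^{-\ell_p}\mathbb{Z}_p$, which every $a_p \in p^{\ell_p}\mathbb{Z}_p$ trivially satisfies. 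Such elements really do lie in $\ker\Phi$: for every $q \in Q_{(\ell_p)}$ we have $q \in p^{-\ell_p}\mathbb{Z}_p$, hence $qa_p \in \mathbb{Z}_p$ and $\{qa_p\}_p = 0$. Concretely, take $\ell_p = 0$ for all $p$, so $Q_{(\ell_p)} = \mathbb{Z}$ and $\mathbb{A}_{(\ell_p)} = \mathbb{R}\times\prod_p\mathbb{Z}_p$: the adele $a$ with $a_\infty = 0$, $a_2 = 1$, $a_p = 0$ for $p>2$ satisfies $\Phi(a)=1$ but is not $(n,(-n))$ for any $n\in\mathbb{Z}$. So $\ker\Phi$ strictly contains $Q_{(\ell_p)}$ whenever some $\ell_p$ is finite, and $\overline{\Phi}$ is not injective. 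The two sides cannot even be abstractly isomorphic in this example: $\widehat{\mathbb{Z}} = S^1$ has dual group $\mathbb{Z}$, whereas the dual of $(\mathbb{R}\times\prod_p\mathbb{Z}_p)/\mathbb{Z}$ is the annihilator of the diagonal $\mathbb{Z}$ inside $\mathbb{R}\times(\mathbb{Q}/\mathbb{Z})$, which is isomorphic to $\mathbb{Q}$.

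Be aware that the paper's own proof of this proposition has the identical gap. It argues that $a_p + a_\infty \in \mathbb{Z}_p$, together with the uniqueness clause of Lemma~\ref{achar}, forces $a_p + a_\infty = 0$; but that uniqueness holds only for the normalized representative $z_p$ satisfying $0 \le z_p < p^{\ell_p}$ when $\ell_p < \infty$, and $a_p + a_\infty$ need not be normalized --- the correct conclusion is merely $a_p + a_\infty \in p^{\ell_p}\mathbb{Z}_p$. What your argument (stripped of the unjustified final normalization step) actually establishes is $\ker\Phi = Q_{(\ell_p)} + K$ where $K = \{0\}\times\prod_p p^{\ell_p}\mathbb{Z}_p$ (with $p^{\infty}\mathbb{Z}_p := \{0\}$), a closed subgroup, and hence $\widehat{Q_{(\ell_p)}} \simeq \mathbb{A}_{(\ell_p)}/\bigl(Q_{(\ell_p)} + K\bigr)$. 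The statement as printed holds only in the special case $\ell_p \equiv \infty$, i.e.\ $Q_{(\ell_p)} = \mathbb{Q}$.
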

\begin{proof}
For $(a_{\infty},(a_{p})) \in \mathbb{A}_{(\ell_{p})}$, let $\chi_{a_{\infty},(a_{p})} \in \widehat{Q_{(\ell_{p})}}$ by
\[
\chi_{a_{\infty},(a_{p})}(q) = \exp\left(2\pi i \left(qa_{\infty} + \sum \{ qa_{p} \}_{p}\right)\right).
\]
The mapping $\mathbb{A}_{(\ell_{p})} \to \widehat{Q_{(\ell_{p})}}$ is clearly a continuous homomorphism and by Lemma \ref{achar}, it is onto.

Let $(a_{\infty},(a_{p})) \in \mathbb{A}_{(\ell_{p})}$ such that $\chi_{a_{\infty},(a_{p})}$ is the trivial character.  Then $\exp(2\pi i (a_{\infty} + \sum \{ a_{p} \}_{p})) = 1$ so $a_{\infty} = - \sum \{ a_{p} \}_{p}\pmod{\mathbb{Z}}$ hence $a_{\infty} \in \mathbb{Q}$.  Then $a_{\infty} = \sum \{ a_{\infty} \}_{p}\pmod{\mathbb{Z}}$ so
\begin{align*}
\chi_{a_{\infty},(a_{p})}(q) &= \exp\left(2\pi i \left(qa_{\infty} + \sum \{qa_{p} \}_{p}\right)\right)
= \exp\left(2\pi i \left(\sum \{ qa_{\infty} \}_{p} + \sum \{ qa_{p} \}_{p}\right)\right) \\
&= \exp\left(2\pi i \sum \{ q(a_{\infty} + a_{p}) \}_{p}\right)
= \chi_{0,(a_{p} + a_{\infty})}(q).
\end{align*}
Now $a_{\infty} + a_{p} = a_{p} - \sum_{p^{\prime}} \{ a_{p^{\prime}} \}_{p^{\prime}}\pmod{\mathbb{Z}} = (a_{p} - \{ a_{p} \}_{p}) + \sum_{p^{\prime} \ne p} \{ a_{p^{\prime}} \}_{p^{\prime}}\pmod{\mathbb{Z}}$ so, as $\{ a_{p^{\prime}} \}_{p^{\prime}} \in \mathbb{Z}_{p}$ for $p^{\prime} \ne p$, we have $a_{\infty} + a_{p} \in \mathbb{Z}_{p}$.  By Lemma \ref{achar}, there is a unique $\theta$ and $z_{p}$ such that the trivial character is $\exp(2\pi i (q\theta + \sum \{ qz_{p} \}_{p}))$ which clearly must all be zero.  Then $a_{p} + a_{\infty} = 0$ for all $p$ which is precisely the statement that $(a_{\infty},(a_{p})) = (a_{\infty},(-a_{\infty})) \in Q_{(\ell_{p})}$ when embedded diagonally so the kernel of the map $\mathbb{A}_{(\ell_{p})} \to \widehat{Q_{(\ell_{p})}}$ is $Q_{(\ell_{p})}$.
\end{proof}

Likewise, we can characterize the character group of $R_{X}/\mathbb{Z}$ as an odometer.

\begin{proposition}\label{odo}
Let $0 \leq \ell_{p} \leq \infty$.  Then $\widehat{Q_{(\ell_{p})}/\mathbb{Z}}$ equipped with multiplication by the identity character is isomorphic as a topological dynamical system to the odometer
\[
\mathcal{O}_{(\ell_{p})} = \varprojlim \bigslant{\mathbb{Z}}{\prod_{p \leq k} p^{\min(k,\ell_{p})}\mathbb{Z}}.
\]
\end{proposition}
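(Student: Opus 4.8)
The plan is to realize $\widehat{Q_{(\ell_p)}/\mathbb{Z}}$ as a product of local duals, recognize that product as the stated inverse limit, and then check that the identity-character action becomes the $+1$ odometer action. First I would note that $Q_{(\ell_p)}/\mathbb{Z}$ is a subgroup of $\mathbb{Q}/\mathbb{Z}$, hence an abelian torsion group, and as such equals the internal direct sum of its $p$-primary components, which by definition of $Q_{(\ell_p)}$ are exactly $p^{-\ell_p}\mathbb{Z}/\mathbb{Z}$ (with the convention $p^{-\infty}\mathbb{Z} = \mathbb{Z}[\nicefrac{1}{p}]$); this is the decomposition already exploited in the proof of Proposition~\ref{phi}. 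Writing $Q_{(\ell_p)}/\mathbb{Z} = \bigoplus_p \big(p^{-\ell_p}\mathbb{Z}/\mathbb{Z}\big)$ as the direct limit of its finite subsums, Pontryagin duality turns the direct sum into the direct product, giving a topological group isomorphism $\widehat{Q_{(\ell_p)}/\mathbb{Z}} \simeq \prod_p \widehat{p^{-\ell_p}\mathbb{Z}/\mathbb{Z}}$.

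Next I would compute the local factors. For $\ell_p < \infty$, $p^{-\ell_p}\mathbb{Z}/\mathbb{Z} \simeq \mathbb{Z}/p^{\ell_p}\mathbb{Z}$ is finite cyclic, hence self-dual; for $\ell_p = \infty$, $p^{-\infty}\mathbb{Z}/\mathbb{Z} = \mathbb{Z}[\nicefrac{1}{p}]/\mathbb{Z} = \varinjlim_j \mathbb{Z}/p^j\mathbb{Z}$ is the Pr\"ufer group, whose dual is $\varprojlim_j \mathbb{Z}/p^j\mathbb{Z} = \mathbb{Z}_p$. Both cases are uniformly $\widehat{p^{-\ell_p}\mathbb{Z}/\mathbb{Z}} \simeq \varprojlim_j \mathbb{Z}/p^{\min(j,\ell_p)}\mathbb{Z}$. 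Taking the product over $p$ and using the Chinese Remainder Theorem, together with the fact that the indices $(S,j)$ with $S=\{p\le k\}$ and $j\equiv k$ are cofinal among all (finite set of primes, depth) pairs, I obtain
\[
\widehat{Q_{(\ell_p)}/\mathbb{Z}} \simeq \prod_p \varprojlim_j \mathbb{Z}/p^{\min(j,\ell_p)}\mathbb{Z} \simeq \varprojlim_k \prod_{p\le k} \mathbb{Z}/p^{\min(k,\ell_p)}\mathbb{Z} \simeq \varprojlim_k \bigslant{\mathbb{Z}}{\Big(\prod_{p\le k} p^{\min(k,\ell_p)}\Big)\mathbb{Z}} = \mathcal{O}_{(\ell_p)},
\]
an isomorphism of topological groups.

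Finally I would track the dynamics. Under the self-duality of $\mathbb{Z}/p^{\ell_p}\mathbb{Z}$ in which $b$ corresponds to the character $a/p^{\ell_p}\mapsto \exp(2\pi i ab/p^{\ell_p})$ (and the analogous identification $\widehat{\mathbb{Z}[\nicefrac1p]/\mathbb{Z}}=\mathbb{Z}_p$), the identity character $\iota(q)=\exp(2\pi i q)$ restricts on the $p$-component to the value $b=1$; hence $\iota$ corresponds to the coherent sequence $(1,1,\dots)$ in $\varprojlim_k \mathbb{Z}/\prod_{p\le k}p^{\min(k,\ell_p)}\mathbb{Z}$. Since character multiplication corresponds to addition in the dual group, $\chi\mapsto\chi\iota$ becomes translation by $1$, which is precisely the odometer action, completing the identification of topological dynamical systems. (Equivalently, Lemma~\ref{achar} exhibits every $\chi$ as $q\mapsto\exp(2\pi i\sum_p\{qz_p\}_p)$ with $z_p\in\mathbb{Z}_p$, gives the homeomorphism $\chi\mapsto(z_p)_p$ onto $\prod_p \mathbb{Z}_p/p^{\ell_p}\mathbb{Z}_p$ directly, and shows $\chi\iota$ sends $(z_p)$ to $(z_p+1)$.)

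The duality bookkeeping is routine; the one point demanding care is the $\ell_p=\infty$ case — ensuring the direct-sum/direct-limit structure, the passage to inverse limits, and the CRT reindexing are all arranged so that infinitely many (even infinite) $\ell_p$ are permitted, and that the two natural indexings (over primes times a depth parameter, versus a single level $k$ with $p\le k$) are cofinal and so yield the same inverse limit. I expect that reconciliation of indexing conventions to be the main, though mild, obstacle.
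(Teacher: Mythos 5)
Your argument is correct and lands on the same underlying decomposition as the paper, but your main exposition takes a more abstract route than the paper's. The paper proceeds directly through Lemma~\ref{achar}: every $\chi\in\widehat{Q_{(\ell_p)}/\mathbb{Z}}$ has $\theta=0$ and corresponds uniquely to the tuple $(a_p)_p$ with $a_p\in\mathbb{Z}_p$ (reduced mod $p^{\ell_p}$), which is assembled into a point of $\mathcal{O}_{(\ell_p)}$ by the Chinese Remainder Theorem, and multiplication by $\iota$ becomes $a_p\mapsto a_p+1$. You instead write $Q_{(\ell_p)}/\mathbb{Z}=\bigoplus_p p^{-\ell_p}\mathbb{Z}/\mathbb{Z}$ and invoke the general Pontryagin facts that the dual of a discrete direct sum is the product of the duals and that $\widehat{\mathbb{Z}(p^\infty)}\simeq\mathbb{Z}_p$, $\widehat{\mathbb{Z}/p^n\mathbb{Z}}\simeq\mathbb{Z}/p^n\mathbb{Z}$, before reassembling by CRT and a cofinality argument to reach $\mathcal{O}_{(\ell_p)}$. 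These are really the same computation; Lemma~\ref{achar} is precisely the concrete incarnation of the $p$-primary splitting plus Pr\"ufer/cyclic duality, and you flag this yourself in the parenthetical at the end. What your version buys is transparency about two points the paper passes over quickly: that $\widehat{\bigoplus}=\prod\widehat{\phantom{\cdot}}$ is what makes the dual a product, and that the switch from the indexing ``(prime, depth)'' to the single level $k$ with $p\le k$ requires a cofinality check. What the paper's version buys is brevity, since Lemma~\ref{achar} has already done the duality bookkeeping. Your tracking of the dynamics (the identity character $\iota$ corresponds to the coherent element $(1,1,\dots)$, and pointwise multiplication becomes $+1$) matches the paper's exactly.
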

\begin{proof}
By Lemma \ref{achar}, any $\chi \in \widehat{Q_{(\ell_{p})}/\mathbb{Z}}$ corresponds uniquely to $\theta \in [0,1)$ and $a_{p} \in \mathbb{Z}_{p}$.  Since $\chi(1) = 1$, we have $\theta = 0$.  The $p$-adic expansions $a_{p} = \sum_{t=0}^{\infty} a_{p,t} p^{t}$ have the property that $a_{p,t+1}\pmod{p^{t}} = a_{p,t}$ so the values $a_{p,t}$ uniquely determine a point
$x \in \mathcal{O}_{(\ell_{p})}$ via the Chinese Remainder Theorem.

Conversely, given $x \in \mathcal{O}_{(\ell_{p})}$, if one defines $a_{p,t}$ as above, then $a_{p,t} \to a_{p} \in \mathbb{Z}_{p}$ which uniquely determine a character on $Q_{(\ell_{p})} / \mathbb{Z}$.  We have then described a one-one onto mapping from $\widehat{Q_{(\ell_{p})}/\mathbb{Z}}$ to $\mathcal{O}_{(\ell_{p})}$, which is easily checked to be continuous from the topology of pointwise convergence to the natural topology.

Let $(a_{p})$ correspond to $\chi$ and $\iota$ be the identity character.  Then
for $t \leq \ell_{p}$,
\begin{align*}
(\iota\chi)(p^{-t}) &= \exp(2\pi i p^{-t})\chi(p^{-t}) = \exp(2\pi i p^{-t})\exp(2\pi i \{ p^{-t}a_{p} \}_{p}) 
= \exp(2\pi i \{ p^{-t}(a_{p} + 1) \}_{p}).
\end{align*}
As the natural action on $\mathcal{O}_{(\ell_{p})}$ maps to the action $a_{p,t} \mapsto a_{p,t} + 1 \pmod{p^t}$, the claim follows.
\end{proof}

Finally we are in a position to prove the MEF has the claimed structure.

\begin{proof}[Proof of Theorem \ref{MEFprod}]
By Proposition \ref{almostoneone}, $X$ is measurably isomorphic to 
its maximal equicontinuous factor $\widehat{\mathcal{E}_{X}}$ under multiplication by the identity character.  By Proposition \ref{product}, $\widehat{\mathcal{E}_{X}}$ under multiplication by the identity character is the direct product of $\widehat{Q_{X}}$ under multiplication by $\exp(2\pi i (q \alpha + \sum \{ qe_{p} \}_{p}))$ and $\widehat{R_{X}/\mathbb{Z}}$ under multiplication by the identity character.

By Proposition \ref{odo}, $\widehat{R_{X}/\mathbb{Z}}$ is isomorphic as a topological dynamical system to $\mathcal{O}_{X}$.
By Proposition \ref{Qhat}, $\widehat{Q_{X}}$ is isomorphic as a topological group to the abelian adelic nilmanifold $M = \mathbb{A}_{(L_{X}(p))} / Q_{X}$ and the action of multiplication by the identity character on $\widehat{\mathcal{E}_{X}}$ becomes multiplication by $\exp(2\pi i (q\alpha + \sum \{ qe_{p} \}_{p}))$.

Set $q_{0} = \sum \{ e_{p} \}_{p}$.  Since $\{ e_{p^{\prime}} \}_{p^{\prime}} \in \mathbb{Z}_{p}$ for $p^{\prime} \ne p$, we have $e_{p} - q_{0} \in \mathbb{Z}_{p}$ and therefore $(\alpha+q_{0},(e_{p}-q_{0})) \in \mathbb{R} \times \prod^{\prime}\mathbb{Z}_{p} \subseteq \mathbb{A}_{X}$.  Since $\sum \{ qe_{p} \}_{p} = \sum \{ qq_{0} \}_{p} + \sum \{ q(e_{p} - q_{0}) \}_{p}\pmod{\mathbb{Z}} = qq_{0} + \sum \{ q(e_{p} - q_{0}) \}_{p}\pmod{\mathbb{Z}}$, 
for $(a_{\infty},(a_{p})) \in M$, the action on the corresponding character $\chi_{a_{\infty},(a_{p})}$ is
\begin{align*}
\chi_{a_{\infty},(a_{p})}(q)\exp\left(2\pi i \left(q\alpha + \sum \{ qe_{p} \}_{p}\right)\right)
&= \exp\left(2\pi i \left( qa_{\infty} + \sum \{ qa_{p} \}_{p} + q\alpha + \sum \{ qe_{p} \}_{p}\right)\right) \\
&= \exp\left(2\pi i \left( q(a_{\infty} + \alpha + q_{0}) + \sum \{ qa_{p} \}_{p} + \sum \{ q(e_{p}-q_{0}) \}_{p}\right)\right) \\
&= \exp\left(2\pi i \left( q(a_{\infty} + \alpha + q_{0}) + \sum \{ q(a_{p} + e_{p} - q_{0}) \}_{p}\right)\right) \\
&= \chi_{a_{\infty}+\alpha+q_{0},(a_{p}+e_{p}-q_{0})}(q).
\end{align*}
Therefore the action on $M$ is $(a_{\infty},(a_{p})) \mapsto (a_{\infty}+\alpha+q_{0},(a_{p}+e_{p}-q_{0}))$, i.e.~translation by the element $(\alpha+q_{0},(e_{p}-q_{0})) \in \mathbb{A}_{X}$ which is equivalent as a $\mathbb{Q}$-adele to $(\alpha,(e_{p}))$.
\end{proof}

\section{Orbit equivalence and strong orbit equivalence}\label{orb}

\textbf{Orbit equivalence} and \textbf{strong orbit equivalence} are two weakened versions of isomorphism which are well-studied in dynamical systems. It was proved by Giordano, Putnam, and Skau in \cite{MR1363826} that for minimal TDS on a Cantor set, the so-called \textbf{dimension group} (a unital ordered group $K^{0}(X,\sigma)$) is a complete invariant for strong orbit equivalence, and the \textbf{reduced dimension group} (a unital ordered group $\widehat{K^{0}}(X,\sigma)$) is a complete invariant for orbit equivalence.

In this section, we will give a description of the dimension group for our class of subshifts, and prove that it is always equal to the reduced dimension group. As we do not make use of any nuanced properties of the dimension groups, we omit definitions and refer the reader to e.g. \cite{MR4228544} for definitions and details. The first step in characterizing the dimension groups is to show that our subshifts are balanced on words.

\subsection{The balanced property}

\begin{theorem}\label{balanced}
Any infinite minimal subshift $X$ with $\limsup p(q)/q < 1.5$ is balanced on words.
\end{theorem}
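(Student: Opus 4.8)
The plan is to leverage the S-adic structure from Proposition~\ref{words} together with the key bound $d(u_k v_k, v_k u_k) < 2\len{u_0}\,2^{\sum_{j<k}\bbone_{r_j}}\prod_{j<k}(n_j-m_j)$ from Lemma~\ref{dvu} and the exponential decay of Proposition~\ref{Pmain}. Fix a word $v \in L(X)$; I need a constant $C_v$ bounding the difference $\big||w|_v - |w'|_v\big|$ over all $w,w' \in L(X)$ with $\len{w}=\len{w'}$. The idea is to choose $k$ large enough that $\len{v}$ is small compared with $\len{v_k}$, then decompose any long word $w$ (as a factor of a bi-infinite $x\in X$ written as a concatenation of $u_k$ and $v_k$) into a run of full $u_k$- and $v_k$-blocks, with bounded-length ``fringe'' words at each end. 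Since $X$ forbids three consecutive $u_k$, every occurrence of $v$ within $w$ either sits inside a single block, or straddles a boundary between consecutive blocks, and there are only boundedly many distinct ``block-pair'' contexts ($v_k v_k$, $v_k u_k$, $u_k v_k$, $u_k u_k$) in which a straddling copy of $v$ can live.

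First I would make this precise by counting, for a word $w$ of a given length $\ell$, occurrences of $v$ in three groups: (i) copies entirely inside some $u_k$ or $v_k$ block; (ii) copies straddling one block boundary; (iii) copies inside the two fringe pieces. For (i) and (ii), the count is determined up to an additive constant by the number $N_u(w)$ of $u_k$-blocks and $N_v(w)$ of $v_k$-blocks in $w$'s decomposition (each full $v_k$ contributes exactly $|v_k|_v$ copies of type (i), each block-boundary contributes a number of type-(ii) copies depending only on $v$ and the two-block context, of which there are at most four types), while (iii) is bounded by $2\len{v}$ since each fringe has length $<\len{u_k}+\len{v}$ — wait, that's not bounded; instead I bound type-(iii) copies by $2(\len{u_k}+\len{v})$, which is fine for a \emph{fixed} $k$. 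So for fixed $k$, $|w|_v = N_v(w)\,|v_k|_v + N_u(w)\,|u_k|_v + (\text{boundary contributions}) + (\text{fringe error})$, where the boundary term differs from $\big(N_u(w)+N_v(w)-1\big)\cdot(\text{avg boundary count})$ by at most a constant, \emph{except} that the boundary count genuinely depends on whether each successive pair is $v_kv_k$, $v_ku_k$, $u_kv_k$, or $u_ku_k$.

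The crucial point — and I expect this to be the main obstacle — is controlling how the block-composition can vary between two words $w,w'$ of equal length. Two difficulties arise: the counts $N_u, N_v$ for $w$ and $w'$ of the same length need not agree, and the sequence of boundary-types can differ. To handle the first, I would use that $X$ is uniquely ergodic (cited: $\limsup p(q)/q<3$ implies unique invariant measure, via \cite{boshernitzan}) combined with the rigid hierarchical structure: since every point is a concatenation of $u_k,v_k$ and the block structure is a ``S-adic tower,'' the number of $v_k$-blocks in a window of length $\ell$ deviates from $\ell/(\text{average block length})$ by an amount controlled by the next level up, and inductively this telescopes using Proposition~\ref{Pmain}. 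Concretely, I would pass to level $k$, bound the level-$k$ block-count discrepancy between $w$ and $w'$ by $O(1)$ terms coming from each higher level $j>k$ weighted by $\len{v_j}/\len{v_{j+1}}$-type ratios that are summable by Lemma~\ref{betaconvexp}/Proposition~\ref{Pmain}; the forbidden-triple-$u_k$ condition plus the restrictions of Propositions~\ref{svp}--\ref{rk} keep the per-level contributions bounded. For the boundary-type sequence, note that $u_kv_ku_k$-type and three-$u_k$ patterns are forbidden, so the boundary-type sequence is itself governed by the level-$(k+1)$ decomposition, and the same telescoping argument bounds the discrepancy in the number of each boundary type between $w$ and $w'$.

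Assembling these: for a well-chosen fixed $k=k(v)$ (say the least $k$ with $\len{v_k} \geq \len{v}$), the difference $\big||w|_v-|w'|_v\big|$ is bounded by $\big|N_v(w)-N_v(w')\big|\cdot|v_k|_v + \big|N_u(w)-N_u(w')\big|\cdot|u_k|_v + (\text{boundary-type discrepancies})\cdot(\text{at most }\len{v}) + 2\cdot 2(\len{u_k}+\len{v})$, and each of the first three terms is $O(1)$ by the telescoping/summability argument, while the last is a constant depending only on $v$. This gives the required $C_v$ and completes the proof. The only real work is the bookkeeping in the telescoping estimate for the block- and boundary-count discrepancies; I would isolate that as a lemma stating that for each $k$, the number of level-$k$ blocks (resp.\ each boundary type) in any two equal-length words of $L(X)$ differs by at most a constant depending on $k$, proved by induction on the level-gap using Proposition~\ref{Pmain} exactly as in the proof of Theorem~\ref{discspec}.
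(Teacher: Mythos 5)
Your approach is genuinely different from the paper's. The paper applies Theorem 5.8 of \cite{MR3330561}, which derives balancedness on letters from a summability condition $\sum_k \|(M_{-1}\cdots M_{k-1})^T\|_{f^\perp}\|M_k\| < \infty$ on the incidence matrices $M_j$ of the substitutions (verified via Proposition~\ref{Pmain}), and then upgrades from letters to words via Corollary 4.3 of \cite{PoSt}, using that each $\tau_{m,n,r}$ is right-proper. You instead propose a direct combinatorial count: fix $v$, choose $k$ with $\len{v_k}\ge\len{v}$, decompose $w\in L(X)$ into level-$k$ blocks plus two fringes, and classify occurrences of $v$ by whether they sit inside a block, straddle one boundary, or lie in a fringe. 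The benefit is that you attack words directly without a separate letters-to-words upgrade; the cost is that you must additionally track boundary-type counts.

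However, the decisive step of your argument is left as a sketch, and the sketch is not quite right. Bounding the discrepancy $|N_u(w)-N_u(w')|$ in the number of $u_k$ blocks (and of each boundary-type pair) between two equal-length $w,w'\in L(X)$ is precisely the content the paper delegates to \cite{MR3330561}. You say the per-level contributions are ``weighted by $\len{v_j}/\len{v_{j+1}}$-type ratios that are summable by Lemma~\ref{betaconvexp}/Proposition~\ref{Pmain},'' but those ratios are not summable: they are bounded below, e.g.\ $\len{v_j}/\len{v_{j+1}} > 1/(b_j+a_j)$, and when $m_j=1$, $r_j=0$, $n_j=2$ for all $j$ (a legal sequence, with complexity $\le q+C$) this ratio tends to $1/\varphi$, the reciprocal golden mean. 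What those results actually make summable is the full product $\prod_{j<k}\beta_j$ (equivalently, up to a constant, $a_0\cdots a_k/\len{v_{k+1}}$, which carries the factors $a_{j+1}=2^{\bbone_{r_{j+1}}}(n_j-m_j)$). Converting a block-count discrepancy into something controlled by that product requires the dual bookkeeping that Theorem 5.8 of \cite{MR3330561} carries out: one projects the level-$k$ block-count vector onto the hyperplane $f^\perp$ orthogonal to the frequency vector and tracks how that projection evolves under the transposed incidence matrices. Your suggestion to prove it ``exactly as in the proof of Theorem~\ref{discspec}'' does not transfer: that argument bounds the upper density of Hamming disagreements between $x$ and $\sigma^{\len{v_k}}x$, a genuinely different quantity, and does not directly compare block counts in two unrelated windows of equal length. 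To carry your argument through, you would, in effect, have to reprove the incidence-matrix lemma the paper cites.
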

\begin{proof}
We apply our S-adic decomposition from Corollary~\ref{taus} and Theorem 5.8 from \cite{MR3330561}, which gives a way to view balancedness for letters in terms of so-called incidence matrices of the substitutions.

For any substitution $\tau$, the \textbf{incidence matrix} of $\tau$ is a square $|A| \times |A|$ matrix $M$ with $m_{ij}$ equal to
$\tau(j)|_i$, the number of times $i$ appears in $\tau(j)$.
A subshift $X$ has \textbf{uniform letter frequencies} if, for each letter $a \in A$, there exists $f(a)$ which is the uniform limit of the proportion of $a$ letters in $k$-letter words in $L(X)$, uniformly in $k$.

Theorem 5.8, \cite{MR3330561} states that
if $X$ is generated by a sequence $(\tau_k)$ of substitutions with incidence matrices $(M_k)$, $u$ has uniform letter frequencies with frequency vector $f$, and 
\[
\sum_k \| (M_0 M_1 \ldots M_{k-1})^T \|_{f^\perp} \|M_k \| < \infty,
\]
then $X$ is balanced on letters. (Here $\| M \|_S = \sum_{v \in S^*} \frac{\|M v\|}{\|v\|}$ represents the operator norm of $M$ restricted to a subspace $S$.)

Let $M_{j}$ be the incidence matrix for $\tau_{m_{j},n_{j},r_{j}}$ and $M_{-1}$ be the incidence matrix for $\pi$.

Let $d_{k} = \len{v_{k+1}}$ so that $d_{k} = b_{k}d_{k-1} + a_{k}d_{k-2}$ for all $k \geq 0$ (setting $d_{-2} = \len{u_{0}} - \len{v_{0}}$).  Let $g_{k} = \len{v_{k+1}}_{1}$ and $g_{-2} = \len{u_{0}}_{1} - \len{v_{0}}_{1}$ so that $g_{k} = b_{k}g_{k-1} + a_{k}g_{k-2}$.  Let $c_{-2} = e_{-1} = 1$ and $c_{-1} = e_{-2} = 0$ and define $c_{k}$ and $e_{k}$ via the same recurrence relation.  As shown in the proof of Proposition \ref{2piqalpha}, $\frac{c_{k}}{d_{k}} \to \alpha$ and $\frac{e_{k}}{d_{k}} \to \alpha_{0}$.  Then $\frac{g_{k}}{d_{k}} \to g_{-2}\alpha + g_{-1}\alpha_{0}$.  Set $\alpha^{\star} = g_{-2}\alpha + g_{-1}\alpha_{0}$.

Therefore, the frequency of $1$s in $v_k$ approaches $\alpha^{\star}$, and so $X$ has uniform letter frequencies given by $f = (1-\alpha^{\star}, \alpha^{\star})$. Then, $f^{\perp}$ is spanned by $(-\alpha^{\star}, 1-\alpha^{\star})^T$, meaning that $\|(M_{-1} M_0 \ldots M_{k-1})^T \|_{f^\perp} \leq \|(M_{-1} \ldots M_{k-1})^T (-\alpha^{\star}, 1-\alpha^{\star})^T \|$. It's easily checked by induction that $M_{-1} \ldots M_{k-1}$ is $\left( \begin{smallmatrix} |v_{k}|_0 & |u_{k}|_0\\ |v_{k}|_1 & |u_{k}|_1 \end{smallmatrix} \right)$. Therefore, 
\[
(M_{-1} \ldots M_{k-1})^T (-\alpha^{\star}, 1-\alpha^{\star})^T = \left( \begin{smallmatrix} -|v_{k}|_0 \alpha^{\star} + |v_{k}|_1 (1-\alpha^{\star})\\
-|u_{k}|_0 \alpha^{\star} + |u_{k}|_1 (1-\alpha^{\star}) \end{smallmatrix} \right)
= \left( \begin{smallmatrix} |v_{k}|_1 - |v_{k}|\alpha^{\star})\\ |u_{k}|_1 - |u_{k}|\alpha^{\star}) \end{smallmatrix} \right).
\]

The top entry is, using the language above, $|g_{k-1} - d_{k-1}\alpha^{\star}| = d_{k-1} \left| \frac{g_{k-1}}{d_{k-1}} - \alpha^{\star} \right|$.
It is easily checked by induction that $g_{k-1}d_{k} - g_{k}d_{k-1} = (-1)^{k}a_{0}a_{1} \cdots a_{k}(g_{-2}d_{-1} - g_{-1}d_{-2})$.  Set $C = |g_{-2}d_{-1} - g_{-1}d_{-2}|$. Then
\begin{align*}
\left| g_{k-1} - d_{k-1}\alpha^{\star} \right|
&= d_{k-1} \left|\frac{g_{k-1}}{d_{k-1}} - \alpha^{\star} \right|
< d_{k-1} \left|\frac{g_{k-1}}{d_{k-1}} - \frac{g_{k}}{d_{k}}\right|
= \frac{|g_{k-1}d_{k} - g_{k}d_{k-1}|}{d_{k}}
= \frac{Ca_{0}a_{1}\cdots a_{k}}{d_{k}} \\
&= C \frac{d_{k-1}}{d_{k}} \frac{2^{\sum_{j=0}^{k}\bbone_{r_{j}}}\prod_{j=0}^{k-1} (n_{j} - m_{j})}{\len{v_{k}}}
< C \frac{d_{k-1}}{d_{k}} \epsilon_{k}
\end{align*}
where $\epsilon_{k}$ is as in Proposition \ref{Pmain}.

Finally, we note that $\|M_k\|$ is the largest entry of $M_k$, which is bounded by $2n_{k}$. Therefore,
\[
\|(M_{-1} M_0 \ldots M_{k-1})^T \|_{f^\perp} \|M_k\| \leq 
C \frac{2n_{k}d_{k-1}}{d_{k}} \epsilon_{k}
< C \frac{2n_{k}d_{k-1}}{m_{k}d_{k-1}} \epsilon_{k}
= C \frac{2n_{k}}{m_{k}} \epsilon_{k}
\leq C \frac{4m_{k} + 4}{m_{k}}\epsilon_{k}
\leq 8C \epsilon_{k}
\]
so this series is summable and so $X$ is balanced on letters. Finally, since our substitutions are each right-proper, meaning that the image of every letter ends with the same letter, Corollary 4.3 from \cite{PoSt} implies that $X$ is balanced on words.
\end{proof}

\subsection{The dimension group is the eigenvalue group}

We can now describe the dimension groups of any low-complexity infinite minimal subshift.

\begin{theorem}\label{dimgroup}
The dimension group and reduced dimension group are both equal to $(E_{X}, E_{X} \cap \mathbb{R}^{+}, 1)$.
\end{theorem}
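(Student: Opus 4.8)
## Proof plan for Theorem~\ref{dimgroup}

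The plan is to identify the dimension group $K^{0}(X,\sigma)$ of a minimal Cantor system with the group of continuous functions $X \to \mathbb{Z}$ modulo coboundaries, ordered by the unique invariant measure $\mu$, and then to compute this explicitly using the balancedness established in Theorem~\ref{balanced}. The starting point is the standard fact (see e.g.~\cite{MR4228544}) that for a minimal subshift, $K^{0}(X,\sigma) \cong C(X,\mathbb{Z})/\partial C(X,\mathbb{Z})$ as an ordered group with order unit the class of the constant function $1$, where $\partial f = f - f\circ\sigma$; moreover since $X$ is uniquely ergodic, the state space is a single point and the order is determined by the map $[f] \mapsto \int f\,d\mu$. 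The reduced dimension group $\widehat{K^{0}}(X,\sigma)$ is the quotient of $K^{0}$ by its infinitesimal subgroup, i.e.~by the classes $[f]$ with $\int f\,d\mu = 0$. So the two statements to prove are: (a) the image of the map $[f]\mapsto\int f\,d\mu$ on $C(X,\mathbb{Z})/\partial C(X,\mathbb{Z})$ is exactly $E_{X}$, with the order being the restriction of the usual order on $\mathbb{R}$ and order unit $1$; and (b) this map is injective, i.e.~$X$ has no nonzero infinitesimals, which forces $K^{0} = \widehat{K^{0}}$.

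For part (a), the containment of the image in $E_{X}$ (as a subset of $\mathbb{R}$, with $E_X$ viewed via its image in $\mathbb{R}$, noting $E_X \cap \mathbb{R}^+$ makes sense because $E_X \subseteq \mathbb{R}$) follows from the general principle that $\int f\,d\mu$ for $f\in C(X,\mathbb{Z})$ lies in the additive group generated by measures of cylinder sets, and these are controlled by the eigenvalues; more precisely one uses that continuous eigenvalues of $(X,\sigma)$ are, by a theorem going back to the S-adic setting, exactly the elements $\gamma$ such that $\exp(2\pi i \gamma n \mu([w]))$-type expressions converge, but cleaner is to invoke the known identification of $K^0$ for such S-adic subshifts via the ordered inductive limit $\varinjlim (\mathbb{Z}^{2}, M_k^{T})$ coming from Corollary~\ref{taus}, where $M_k$ is the incidence matrix of $\tau_{m_k,n_k,r_k}$ and $M_{-1}$ that of $\pi$. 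Under this description, the trace (integration against $\mu$) sends the $k$-th copy of $\mathbb{Z}^2$ via the frequency vector, and balancedness on words (Theorem~\ref{balanced}) guarantees that the inductive limit injects into $\mathbb{R}$ with image the group generated by $\{\len{v_k}\mu([w]), \len{u_k}\mu([w])\}$ over all words $w$; identifying this group with $E_{X}$ is then a direct computation matching the generators $\alpha_k$ (equivalently $\alpha$ and the rationals in $R_X$) from Section~\ref{eigs} against the limiting frequency data. The order unit is $[1]$, which integrates to $1$, and positivity in the inductive limit corresponds to eventual positivity of the integral, which since the image sits in $\mathbb{R}$ is just $E_{X}\cap\mathbb{R}^{+}$ — here one needs that $\mu$ has full support (minimality) so that a function with positive integral is eventually positive in some $M_k^T$-level.

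For part (b) — the absence of nontrivial infinitesimals, equivalently $K^{0} = \widehat{K^{0}}$ — this is precisely where balancedness is the essential input, and I expect it to be the main obstacle to state cleanly (though the substance is already in Theorem~\ref{balanced}). The point is the general fact (see \cite{PoSt}, \cite{MR3330561}) that a minimal subshift which is balanced on words has no nonzero infinitesimals in its dimension group: if $[f]$ is infinitesimal then $\int f\,d\mu = 0$, and writing $f$ as an integer combination of indicator functions of cylinders, balancedness bounds the partial sums $\sum_{j=0}^{n-1} f(\sigma^j x)$ uniformly in $n$ and $x$, which by the Gottschalk–Hedlund theorem means $f$ is a continuous coboundary, i.e.~$[f] = 0$. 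So $K^{0} \to \widehat{K^{0}}$ is an isomorphism, and both equal $(E_{X}, E_{X}\cap\mathbb{R}^{+}, 1)$ by part (a). I would therefore organize the proof as: first cite the inductive-limit presentation of $K^{0}$ from Corollary~\ref{taus}; then compute the trace image as $E_{X}$ using the continued-fraction identities from Section~\ref{eigs} and the decay from Proposition~\ref{Pmain}; then invoke Theorem~\ref{balanced} together with Gottschalk–Hedlund to kill infinitesimals; and finally read off the ordered-group statement. The subtle point requiring care is matching the \emph{order} — verifying that the positive cone of the abstract dimension group really is $E_X \cap \mathbb{R}^+$ and not something smaller — which again rests on full support of $\mu$ and the unique ergodicity guaranteed for this complexity range.
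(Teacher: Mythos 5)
Your high-level strategy coincides with the paper's: realize $\widehat{K^{0}}$ as the image group under the (unique) trace, show the image group equals $E_{X}$, and invoke balancedness on words to kill infinitesimals so that $K^{0}=\widehat{K^{0}}$. The paper cites Section 2.4 and Propositions 2.6 and 5.4 of \cite{MR4228544} for the abstract apparatus, where you propose Gottschalk--Hedlund directly; that substitution is fine and essentially equivalent.

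The genuine gap is in your ``direct computation'' that the image group is $E_{X}$. This equality requires two inclusions, and you address neither carefully. For $I(X,\sigma)\subseteq E_{X}$, the paper does not match generators abstractly; it uses the explicit recurrence from Section~\ref{eigs}: if $f_{k}=\len{v_{k+k_{0}+1}}_{w}$ denotes the occurrence count of $w$ in $v_{k+k_{0}+1}$, then $f_{k}$ satisfies the same linear recursion $f_{k+1}=b_{k+1}f_{k}+a_{k+1}f_{k-1}$ as $d_{k}=\len{v_{k+k_{0}+1}}$ with initial data $f_{-2}=0$, which forces $f_{k}=f_{-1}e_{k}$ and hence $\mu([w])=\lim f_{k}/d_{k}=\len{v_{k_{0}}}_{w}\,\alpha_{k_{0}}\in E_{X}$. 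Your plan gestures at ``frequency data'' but does not identify this mechanism. More seriously, the reverse inclusion $E_{X}\subseteq I(X,\sigma)$ is not a computation at all --- it is a standard but nontrivial fact about minimal Cantor systems (the paper cites Proposition 11 of \cite{MR3570018}), and you do not mention it; your plan as stated would only give $I(X,\sigma)\subseteq E_{X}$, not equality. Also, your claim that the image is ``the group generated by $\{\len{v_{k}}\mu([w]),\len{u_{k}}\mu([w])\}$'' is garbled --- $I(X,\sigma)$ is the group generated by the $\mu([w])$ themselves --- and the proposed presentation $K^{0}\cong\varinjlim(\mathbb{Z}^{2},M_{k}^{T})$ via Corollary~\ref{taus} requires verifying that the S-adic decomposition (including the initial substitution $\pi$ onto a larger alphabet) yields a proper Bratteli--Vershik representation, which you do not justify and the paper avoids entirely by working with $\{\mu([w])\}$ directly.
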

\begin{proof}
We claim first that for every word $w$, we have $\mu([w]) \in E_{X}$.
If $w \notin \mathcal{L}(X)$ then $\mu([w]) = 0 \in E_{X}$ so assume $w \in \mathcal{L}(X)$ and let $k_{0}$ be minimal such that $w$ is a subword of $v_{k_{0}}$.  Let $a_{k}, b_{k}, c_{k}, d_{k}, e_{k}$ be as in the proof of Proposition \ref{2piqalpha}.  Define $f_{k} = \len{v_{k+k_{0}+1}}_{w}$ for $k \geq -2$.  Then $f_{k+1} = b_{k+1}f_{k} + a_{k+1}f_{k-1}$ and $f_{-2} = 0$ (since $k_{0}$ is minimal) and so $f_{k} = f_{-1}e_{k}$ for all $k$.
Since $\frac{e_{k}}{d_{k}} \to \alpha_{k_{0}} \in E_{X}$, then $\frac{f_{k}}{d_{k}} \to \len{v_{k_{0}}}_{w} \alpha_{k_{0}} \in E_{X}$.
Since $(X, \sigma)$ is uniquely ergodic, $\frac{f_{k}}{d_{k}} = \frac{\len{v_{k+k_{0}+1}}_{w}}{\len{v_{k+k_{0}+1}}}$ converges to 
$\mu([w])$, and the claim is proved.

By section 2.4 of \cite{MR4228544}, since $X$ is minimal and uniquely ergodic, the dimension group $K^{0}(X,\sigma)$ and its group of infinitesimals $\mathrm{Inf}(K^{0}(X,\sigma))$ have the property that the reduced dimension group $\widehat{K^{0}}(X,\sigma) = K^{0}(X,\sigma) / \mathrm{Inf}(K^{0}(X,\sigma))$ is isomorphic to the image group $(I(X,\sigma),I(X,\sigma) \cap \mathbb{R}^{+},1)$.  Proposition 2.6 in \cite{MR4228544} states that $I(X,\sigma) = \{ \mu([w]) : w \in \mathcal{L}(X) \}$ so $I(X,\sigma) \subseteq E_{X}$.  Since $E_{X}$ is always a subgroup of $I(X,\sigma)$ (see e.g.~\cite{MR3570018} Proposition 11), then $I(X,\sigma) = E_{X}$.
By Theorem \ref{balanced}, $X$ is balanced on words so Proposition 5.4 of \cite{MR4228544} implies there are no infinitesimals.  Then 
$\widehat{K^{0}}(X,\sigma) = K^{0}(X,\sigma) = (E_{X}, E_{X} \cap \mathbb{R}^{+}, 1)$.
\end{proof}

The following corollary is now immediate, modulo the simple observation that if $G, G'$ are additive subgroups of $\mathbb{R}$ containing $1$, then $(G, G \cap \mathbb{R}^{+}, 1)$ and $(G', G' \cap \mathbb{R}^{+}, 1)$ are isomorphic as unital ordered groups iff $G = G'$.

\begin{corollary}\label{SOE}
Two minimal subshifts with complexity satisfying $\limsup p(q)/q < 1.5$ are orbit equivalent if and only if they are strong orbit equivalent if and only if they have the same additive eigenvalue group.
\end{corollary}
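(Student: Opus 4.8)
The plan is to deduce Corollary~\ref{SOE} directly from Theorem~\ref{dimgroup} together with the Giordano--Putnam--Skau classification cited in the introduction to Section~\ref{orb}. By \cite{MR1363826}, two minimal Cantor systems are strong orbit equivalent iff their dimension groups $K^0$ are isomorphic (as unital ordered groups), and orbit equivalent iff their reduced dimension groups $\widehat{K^0}$ are isomorphic. Theorem~\ref{dimgroup} tells us that for any infinite minimal subshift $X$ with $\limsup p(q)/q < 1.5$, both of these invariants coincide and equal $(E_X, E_X \cap \mathbb{R}^+, 1)$. So for two such subshifts $X, X'$, strong orbit equivalence $\iff$ $(E_X, E_X \cap \mathbb{R}^+, 1) \simeq (E_{X'}, E_{X'} \cap \mathbb{R}^+, 1)$ $\iff$ orbit equivalence, and it only remains to show that this isomorphism of unital ordered groups happens iff $E_X = E_{X'}$ as subgroups of $\mathbb{R}$.

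First I would note that all the subshifts in question are indeed minimal systems on a Cantor set: they are infinite (hence the underlying space is a perfect, totally disconnected, compact metrizable space, i.e.\ a Cantor set) and minimal by hypothesis, so the Giordano--Putnam--Skau machinery applies. Second, I would record the elementary order-theoretic fact flagged just before the corollary: if $G, G'$ are additive subgroups of $(\mathbb{R},+)$ each containing $1$, then any unital ordered group isomorphism $\theta: (G, G\cap\mathbb{R}^+,1) \to (G', G'\cap\mathbb{R}^+,1)$ must be the identity inclusion, so that $G = G'$. The quick argument: $\theta$ is an additive group isomorphism with $\theta(1)=1$, hence $\theta(q)=q$ for all $q \in G \cap \mathbb{Q}$ (in particular on $\mathbb{Z}$); for general $g \in G$ with $g > 0$, order-preservation forces $\theta$ to fix every rational in $G$ below $g$ and to send $g$ above every such rational and below every rational in $G$ above $g$, and since $\mathbb{Q}$ is dense in $\mathbb{R}$ while $\theta(g) \in \mathbb{R}$, this pins down $\theta(g) = g$; applying this to $g$ and $-g$ handles all of $G$. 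Hence $\theta = \mathrm{id}$ and $G = G'$. Conversely if $E_X = E_{X'}$ the identity map is trivially such an isomorphism.

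Combining these, I would conclude: $X \sim_{\text{OE}} X' \iff \widehat{K^0}(X) \simeq \widehat{K^0}(X') \iff (E_X,\dots) \simeq (E_{X'},\dots) \iff E_X = E_{X'} \iff (E_X,\dots)\simeq (E_{X'},\dots) \iff K^0(X) \simeq K^0(X') \iff X \sim_{\text{SOE}} X'$, which is exactly the three-way equivalence claimed. I do not anticipate a genuine obstacle here; the only mild subtlety is making sure Theorem~\ref{dimgroup} is being quoted with the correct normalization (that the order unit is $1 \in E_X \subseteq \mathbb{R}$, which it is by construction there), and checking that the elementary rigidity lemma for unital ordered subgroups of $\mathbb{R}$ is stated cleanly — but as the text already asserts this observation is "simple," a two-sentence justification suffices. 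The proof is therefore essentially a bookkeeping argument chaining together \cite{MR1363826}, Theorem~\ref{dimgroup}, and the rigidity of unital ordered subgroups of $\mathbb{R}$.
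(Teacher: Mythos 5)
Your overall route is exactly the paper's: cite Giordano--Putnam--Skau, invoke Theorem~\ref{dimgroup} to identify both $K^0$ and $\widehat{K^0}$ with $(E_X, E_X \cap \mathbb{R}^+, 1)$, and finish with the rigidity of unital ordered subgroups of $\mathbb{R}$. The paper declares the corollary ``immediate'' modulo that rigidity observation, so the only content you add is your sketch of it --- and that sketch has a gap. You argue that $\theta(g)$ is squeezed between the rationals \emph{in $G$} lying below and above $g$, and then invoke density of $\mathbb{Q}$ in $\mathbb{R}$. But the rationals in $G$ need not be dense: for a Sturmian subshift one has $E_X = \mathbb{Z}\alpha + \mathbb{Z}$, so $E_X \cap \mathbb{Q} = \mathbb{Z}$, and your squeeze only confines $\theta(g)$ to an interval of length $1$. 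Density of $\mathbb{Q}$ in $\mathbb{R}$ does not rescue this, because $\mathbb{Q} \not\subseteq G$ in general.

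The standard fix is to compare $ng$ against integers rather than $g$ against elements of $G \cap \mathbb{Q}$. Since $1 \in G$, for every $g \in G$ and integers $n > 0$, $m$ the element $ng - m$ lies in $G$, so order-preservation of $\theta$ together with $\theta(1) = 1$ gives $ng > m \iff \theta(ng - m) > 0 \iff n\theta(g) > m$, i.e.\ $g > m/n \iff \theta(g) > m/n$. Thus $g$ and $\theta(g)$ lie on the same side of \emph{every} rational, and now density of $\mathbb{Q}$ does force $\theta(g) = g$, hence $G = G'$. With that correction your proof matches the paper's (which leaves the observation unproved) and is complete.
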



\section{Existence of low complexity minimal subshifts for every odometer}\label{all}

We here demonstrate that there are no restrictions on the abelian adelic one-dimensional nilmanifolds $\mathcal{M}_{X}$ and odometers $\mathcal{O}_{X}$ which can appear in the MEF of an infinite low complexity subshift.  Other than the case when $\mathcal{M}_{X}$ is a finite group extension of $S^{1}$ and $\mathcal{O}_{X}$ is finite, we show that $\limsup \frac{p(q)}{q}$ can take any value in $[1, 1.5)$ for subshifts with that MEF.

\begin{theorem}\label{nobetter}
Let $\mathcal{O}$ be an odometer and $\mathcal{M}$ be an abelian adelic one-dimensional nilmanifold. There exists a infinite minimal subshift with $\lim p(q)/q = 1$ which has maximal equicontinuous factor the product of $\mathcal{O}$ and a rotation on $\mathcal{M}$.

If $\mathcal{M}$ is not a finite group extension of $S^{1}$ or $\mathcal{O}$ is infinite (or both), then for every 
$0 < \delta < \frac{1}{2}$, there exists a minimal uniquely ergodic subshift with $\limsup p(q)/q = 1 + \delta$ which 
has maximal equicontinuous factor of the same type.
\end{theorem}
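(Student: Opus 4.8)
The plan is to build the subshift directly from the S-adic presentation of Corollary~\ref{taus}: choose the initial morphism $\pi$ and a sequence of substitutions $\tau_{m_k,n_k,r_k}$ and engineer the parameters so that the odometer $\mathcal{O}_X$, the nilmanifold $\mathcal{M}_X$ (via the exponents $L_X(p)$ of Theorem~\ref{evalgroup}), and the quantity $\limsup p(q)/q$ (via Corollary~\ref{sum}) all come out as prescribed. Throughout I would take every $r_k=0$, so that $b_k=m_k$, $a_{k+1}=n_k-m_k$ and $a_0=1$; then $(b_k)_{k\ge0}$ and $(a_k)_{k\ge1}$ may be prescribed as arbitrary sequences of positive integers, and $\pi$ may be chosen so that $|v_0|$ and $|v_{-1}|:=|u_0|-|v_0|$ are any prescribed positive integers (e.g.\ $\pi(0)=1$, $\pi(1)=0^{|v_{-1}|}1$). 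By Lemma~\ref{vrec}, $|v_{k+1}|=b_k|v_k|+a_k|v_{k-1}|$, and, exactly as in the proof of Lemma~\ref{div}, $g_k:=\gcd(|v_{k-1}|,|v_k|)$ satisfies $g_{k+1}=g_k\gcd(a_k,|v_k|/g_k)$, so $g_k\mid g_{k+1}$. Hence $\mathcal{O}_X\cong\varprojlim_k\mathbb{Z}/g_k\mathbb{Z}$, and for every prime $p$ one has the closed formulas $R_X(p)=v_p(g_0)+\sum_{k\ge0}\min\!\bigl(v_p(a_k),v_p(|v_k|/g_k)\bigr)$ and $L_X(p)=v_p(|v_0|/g_0)+\sum_{k\ge0}\max\!\bigl(0,\,v_p(a_k)-v_p(|v_k|/g_k)\bigr)$, both being sums of nonnegative terms.

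For the first assertion ($\lim p(q)/q=1$), if $\mathcal{M}$ is a finite extension of $S^1$ and $\mathcal{O}$ is finite one takes every substitution to be $\tau_{m,m+1,0}$ with $m\to\infty$: then all $a_k=1$, so $g_k\equiv g_0$, and choosing $\pi$ so that $g_0=\gcd(|v_0|,|v_{-1}|)$ and $|v_0|/g_0$ have the correct $p$-adic valuations makes $R_X(p),L_X(p)$ correct, while by Lemma~\ref{only} these substitutions contribute nothing to $p(q+1)-p(q)$, so $p(q)/q\to1$. In general one still uses $\tau_{m,m+1,0}$ for all but a very sparse sequence of ``special'' indices (with $m\to\infty$), inserting at each special index $k$ a substitution $\tau_{m_k,m_k+a_k,0}$ whose multiplier $a_k$ carries exactly the prime factors needed to move $L_X(p)$ or $R_X(p)$ one step toward its (possibly infinite) target, with $m_k$ enormous compared with $a_k$; by Corollary~\ref{sum} the induced rise of $p(q)/q$ near length $|s_kv_k^{a_k-1}p_k|$ is at most $\frac{(a_k-1)|v_k|+O(|v_{k'}|)}{(m_k+a_k)|v_k|}$ for the previous special index $k'$, which is forced below any tolerance by spacing the special indices far apart and taking $m_k$ large, so $p(q)/q\to1$. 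Theorem~\ref{MEFprod} then identifies the MEF as $\mathcal{M}_X\times\mathcal{O}_X\cong\mathcal{M}\times\mathcal{O}$; minimality and aperiodicity hold because each finite block of consecutive substitutions is primitive and the directive sequence is everywhere growing.

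For the second assertion ($\limsup p(q)/q=1+\delta$, $0<\delta<\frac12$) the construction is identical except that the special substitutions now produce complexity bumps of height tending to $\delta$: at the special index $k$ take $\tau_{m_k,n_k,0}$ with $n_k\to\infty$ and $\frac{n_k-m_k-1}{n_k}\to\delta$ (hence $m_k,a_k=n_k-m_k\to\infty$), again diluted by long runs of $\tau_{M,M+1,0}$ with $M$ large. On a bump interval $p(q+1)-p(q)=2$, so $p(q)/q$ rises monotonically across the bump from $\approx1$ up to $1+\frac{n_k-m_k-1}{n_k}+o(1)$ and then decreases through the following dilution stretch (where $p(q)-q$ is constant); taking $n_k-m_k-1=\lfloor\delta n_k\rfloor$ this keeps $p(q)/q\le1+\delta+o(1)$ everywhere while $p(q)/q\to1+\delta$ along the right endpoints $|s_kv_k^{n_k-2}p_k|$, so $\limsup p(q)/q=1+\delta$ (which is $<\frac32$ precisely because $\delta<\frac12$, the same bound that keeps the construction consistent with Propositions~\ref{svp}--\ref{rk}). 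The hypothesis that $\mathcal{M}$ is not a finite extension of $S^1$ or $\mathcal{O}$ is infinite is exactly what supplies infinitely many special indices: every multiplier $a_k$ must be composed only of primes $p$ for which the prescribed $\ell_p$ or $r_p$ is nonzero (stray prime factors would spoil some target), and having infinitely many ``units of valuation'' to place is precisely the regime in which infinitely many nontrivial special substitutions are admissible.

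The main obstacle is the coupled $p$-adic bookkeeping: a factor of $p$ in $a_k$ is absorbed into the odometer when $|v_k|/g_k$ already carries enough $p$'s and pushed into the nilmanifold otherwise, so one must choose the residues of the $b_k$'s (hence of the $|v_k|$'s modulo the relevant primes) so that each absorption happens where intended and $L_X(p),R_X(p)$ land exactly on the prescribed values for \emph{all} primes at once --- handling at step $k$ only finitely many primes, by the Chinese Remainder Theorem, and deferring the rest. Making the complexity estimates of Corollary~\ref{sum} fully rigorous during the long dilution stretches --- in particular that the running value of $p(q)/q$ never overshoots $1$ (resp.\ $1+\delta$) --- is routine given the super-exponential growth of $|v_k|$.
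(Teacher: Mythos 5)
Your proposal is, at the level of structure, the same as the paper's: take $r_k=0$ throughout, realize $X$ via Corollary~\ref{taus}, control $\gcd(|v_k|,|v_{k+1}|)$ via the recursion of Lemma~\ref{vrec} and Lemma~\ref{div}, read off $R_X(p)$ and $L_X(p)$ from the per-prime min/max formulas, and use Corollary~\ref{sum} to tune $\limsup p(q)/q$ by interspersing sparse ``special'' substitutions $\tau_{m_k,n_k,0}$ with $(n_k-m_k-1)/n_k\to\delta$ among long runs of $\tau_{m,m+1,0}$. Your closed formulas for $R_X(p)$ and $L_X(p)$ are exactly the ones the paper exploits implicitly, and the monotone-rise-then-fall picture for $p(q)/q$ across a bump is correct (it follows from $p(q)<2q$). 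The one place you wave your hands --- ``choose the residues of the $b_k$ by CRT so each absorption happens where intended'' --- is precisely the technical heart of the paper's proof; the paper makes this explicit by an inductive choice of $m_k$, $s_k$, $t_k$, $j_k$ (arranging $\gcd(\ell_{k+1}/g_k,t_{k+1})$ and $p_k\nmid\ell_{k+1}/g_{k+1}$ via surjectivity of $m\mapsto m\tfrac{\ell_k}{g_k}+s_{k-1}\tfrac{\ell_{k-1}}{g_{k-1}}\pmod{t_{k+1}}$), and it handles the case $\mathcal M$ a finite extension of $S^1$ with $\mathcal O$ infinite by a separate recipe ($a_{k+1}=s_{k+1}$ a product of consecutive odometer primes with $\gcd(m_{k+1},s_k|v_k|)=s_k$). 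Two smaller slips: your bump is tagged $|s_kv_k^{a_k-1}p_k|$ but the right endpoint is $|s_kv_k^{n_k-2}p_k|$ and the rise is $(a_{k+1}-1)|v_k|=(n_k-m_k-1)|v_k|$ (an off-by-one in the $a$ index); and your sample $\pi$ with $\pi(0)=1$ fixes $|v_0|=1$ rather than realizing an arbitrary $|v_0|$, though an easy modification fixes that. Neither affects the correctness of the strategy --- you have the right proof in outline, with the number-theoretic bookkeeping deferred exactly where the paper spends its effort.
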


\begin{remark}
We make two comments about Theorem~\ref{nobetter}. First of all, it's unavoidable that the second statement excludes the case where both $\mathcal{M}$ is a finite group extension of $S^{1}$ and $\mathcal{O}$ is finite; in that case, $a_k$ is eventually $1$, meaning that the substitutions are eventually of the form $\tau_{m_k,m_k+1,0}$ (say for $k > k_0$). In that case, $X$ is the image of a Sturmian subshift under the substitution $\rho_{k_0}$. Such a subshift is called \textbf{quasisturmian}, and is known to have 
$p(q) \leq q + C$ for a constant $C$ (\cite{quasisturm}), and so is forced to have $\limsup p(q)/q = 1$.

Secondly, we want to be clear that we are not characterizing the set of possible MEFs of infinite minimal low-complexity subshifts, since we only show that a single adele $(\alpha,(e_{p}))$ can occur together with a pair of a nilmanifold and an odometer; we do not currently know which rotations $(\alpha,(e_{p}))$ can be associated with a specific group $\mathcal{M} \times \mathcal{O}$.
\end{remark}

\begin{proof}
Define a sequence $(\delta_k)$ of positive reals as follows: when $\delta > 0$, set $\delta_{k} = \delta$, and when $\delta = 0$, let $\delta_{k}$ be any sequence approaching $0$.
We first consider the case when $\mathcal{M}$ is not a finite group extension of $S^{1}$.
Let $0 \leq x_{p} \leq \infty$ such that $\mathcal{M} = \mathbb{A}_{(x_{p})} / Q_{(x_{p})}$.  Let $s_{k} = p_{k}^{q_{k}}$ for $p_{k} \in \mathbb{P} \cup \{ 1 \}$ and $q_{k}$ nonnegative integers such that for each prime $p$, $\sum_{k : p_{k} = p} = x_{p}$ and such that $s_{k}\to\infty$ (possible as $\mathcal{M}$ is not a finite extension of $S^{1}$).  Let $y_{k} \in \mathbb{P} \cup \{ 1 \}$ such that $\mathcal{O} = \varprojlim \mathbb{Z} / y_{0}\cdots y_{k}\mathbb{Z}$.

We will define $\ell_{k}$, $m_{k}$, $t_{k}$ and $j_{k}$ inductively.  
Set $\ell_{-1} = \ell_{0} = 1$ and $j_{0} = 0$ and $t_{0} = t_{1} = 1$ and $s_{-1} = 0$.  For all $k \geq 1$, we will set $\ell_{k+1} = m_{k}\ell_{k} + t_{k}s_{k-1}\ell_{k-1}$.  For ease of notation, write $g_{k} = t_{0} \cdots t_{k}$.

Choose $m_{0}$ such that $m_{0} > \ceil{(\delta_{0}^{-1}-1)t_{1}s_{0}}$ and $p_{0}$ does not divide $m_{0} + 1$.  Then $\ell_{1} = m_{0}\ell_{0} + t_{0}s_{-1}\ell_{-1} = m_{0} + 1$ so $t_{1}$ divides $\frac{\ell_{1}}{g_{0}}$ and $p_{0}$ does not divide $\frac{\ell_{1}}{g_{1}}$.  Also $\gcd(\ell_{1},\ell_{0}) = 1 = g_{0}$.

Assume that $t_{k}$ divides $\frac{\ell_{k}}{g_{k-1}}$ and $p_{k-1}$ does not divide $\frac{\ell_{k}}{g_{k}}$ and $\gcd(\ell_{k},\ell_{k-1}) = g_{k-1}$.  
If $y_{j_{k}}$ divides $\frac{\ell_{k}}{g_{k}}$ then set $t_{k+1} = 1$ and $j_{k+1} = j_{k}$.  If not, set $t_{k+1} = y_{j_{k}}$ and $j_{k+1} = j_{k} + 1$.  Set $m_{k}^{\prime} = \ceil{(\delta_{k}^{-1}-1)t_{k+1}s_{k}} > s_{k}t_{k+1}$.  The map $m \mapsto m\frac{\ell_{k}}{g_{k}} + s_{k-1}\frac{\ell_{k-1}}{g_{k-1}} \mod t_{k+1}$ is a cyclic onto homomorphism since $\gcd(\frac{\ell_{k}}{g_{k}},t_{k+1}) = 1$ (since $t_{k+1}$ is a prime power or $1$).  So there exists $0 \leq i < t_{k+1}$ such that $t_{k+1}$ divides $(m_{k}^{\prime}-i)\frac{\ell_{k}}{g_{k}} + s_{k-1}\frac{\ell_{k-1}}{g_{k-1}}$.

If $p_{k}t_{k+1}$ were to divide both $(m_{k}^{\prime}-i)\frac{\ell_{k}}{g_{k}} + s_{k-1}\frac{\ell_{k-1}}{g_{k-1}}$ and $(m_{k}^{\prime}-i-t_{k+1})\frac{\ell_{k}}{g_{k}} + s_{k-1}\frac{\ell_{k-1}}{g_{k-1}}$ then $p_{k}t_{k+1}$ divides $t_{k+1}\frac{\ell_{k}}{g_{k}}$ so $p_{k}$ divides $\frac{\ell_{k}}{g_{k}}$ but then $p_{k}$ divides both $\frac{\ell_{k}}{g_{k}}$ and $s_{k-1}\frac{\ell_{k-1}}{g_{k-1}}$ which is impossible as $\gcd(\frac{\ell_{k}}{g_{k}},s_{k-1}\frac{\ell_{k-1}}{g_{k-1}}) = 1$.  Therefore we may take $m_{k}$ such that $m_{k}^{\prime} - 2t_{k+1} \leq m_{k} < m_{k}^{\prime}$ so that $t_{k+1}$ divides $\frac{\ell_{k+1}}{g_{k}}$ and $p_{k}$ does not divide $\frac{\ell_{k+1}}{g_{k+1}}$.
We also have $\gcd(\ell_{k+1},\ell_{k}) = \gcd(m_{k}\ell_{k}+t_{k}s_{k-1}\ell_{k-1},\ell_{k}) = \gcd(t_{k}s_{k-1}\ell_{k-1},\ell_{k})
= g_{k-1}\gcd(t_{k}s_{k-1}\frac{\ell_{k-1}}{g_{k-1}},t_{k}\frac{\ell_{k}}{g_{k}})
= g_{k} \gcd(s_{k-1}\frac{\ell_{k-1}}{g_{k-1}},\frac{\ell_{k}}{g_{k}})
= g_{k}$.

Therefore the sequences exist by induction.  Note that if $y_{j_{k}} > 1$ and $t_{k+1} = 1$ then necessarily $t_{k+2} = y_{j_{k}}$ as otherwise $y_{j_{k}}$ divides $\frac{\ell_{k}}{g_{k}}$ and $y_{j_{k}}$ divides $\frac{\ell_{k+1}}{g_{k+1}}$ but $\gcd(\ell_{k+1},\ell_{k}) = g_{k}$.  By the construction of $j_{k}$, the sequence $(t_k)$ is just the sequence $(y_{j})$ with extra interspersed $1$s, and so the sequence $(t_k)$ induces the odometer $\mathcal{O}^{r}$.
Set $n_{k} = m_{k} + t_{k+1}s_{k}$.  Let $X$ be the orbit closure of $\lim \pi \circ \tau_{m_{0},n_{0},0} \circ \cdots \circ \tau_{m_{k},n_{k},0}(0)$ where $\pi(0) = 0$ and $\pi(1) = 01$ so $\len{v_{k}} = \ell_{k}$.
By Remark \ref{pands}, $\sum_{j=0}^{k-1} (n_{j} - m_{j} - 1)\len{v_{j}} < \sum_{j=0}^{k-1} (m_{j}-1)\len{v_{j}} = \len{p_{k}} < 3\len{v_{k}}$, so by Corollary \ref{sum},
\begin{align*}
\frac{p(\len{s_{k}v_{k}^{n_{k}-2}p_{k}})}{\len{s_{k}v_{k}^{n_{k}-2}p_{k}}}
&\leq 1 + \frac{(n_{k} - m_{k} - 1)\len{v_{k}} + \len{p_{k}} + C}{(n_{k}-2)\len{v_{k}}}
\leq 1 + \frac{t_{k+1}s_{k} + 3 + \frac{C}{\len{v_{k}}}}{\delta_{k}^{-1}t_{k+1}s_{k} -2t_{k+1} - 2} \to 1 + \delta
\end{align*}
since $s_{k} \to \infty$.
By Corollary \ref{sum} and Remark \ref{pands},
\[
\frac{p(\len{s_{k}v_{k}^{n_{k}-2}p_{k}})}{\len{s_{k}v_{k}^{n_{k}-2}p_{k}}}
\geq 1 + \frac{n_{k} - m_{k} - 1}{n_{k} + 1} \geq 1 + \frac{t_{k+1}s_{k} - 1}{\delta_{k}^{-1}t_{k+1}s_{k} + 2}
\to 1 + \delta.
\]
Since $\limsup \frac{p(q)}{q}$ is attained along the sequence $\len{s_{k}v_{k}^{n_{k}-2}p_{k}}$, $\limsup \frac{p(q)}{q} = 1 + \delta$.

By construction, $\gcd(\len{v_{k+1}},\len{v_{k}}) = t_{1} \cdots t_{k}$, and so $\mathcal{O}_X = \mathcal{O}$. Similarly, $a_0 \cdots a_k = (n_{0}-m_{0})\cdots (n_{k-1}-m_{k-1}) = s_{0}\cdots s_{k-1} t_{1} \cdots t_{k}$ so $\frac{\len{v_{0}}a_{0}\cdots a_{k}}{\gcd(\len{v_{k}},\len{v_{k+1}})} = s_{0}\cdots s_{k-1}$, implying that $\mathcal{M}_X = \mathcal{M}$. Therefore by Theorem \ref{MEFprod}, the subshift $X$ defined as the orbit closure of $\lim \pi \circ \tau_{m_{0},n_{0},0} \circ \cdots \circ \tau_{m_{k},n_{k},0}(0)$ has the claimed properties.

Now consider when $\mathcal{M}$ is a finite group extension of $S^{1}$ and $\mathcal{O}$ is infinite.  Let $t_{k}$ such that $\mathcal{O} = \varprojlim \mathbb{Z}/t_{0}\cdots t_{k}\mathbb{Z}$ and let $\mathcal{M} = S^{1} \times \mathbb{Z}/q\mathbb{Z}$.
Let $\pi(0) = 0^{q}$ and $\pi(1) = 0^{q}1$.  Let $j_{0} = 0$ and $s_{0} = 1$.  Given $j_{k}$ and $\len{v_{k}}$, choose $s_{k+1} = t_{j_{k}} t_{j_{k}+1} \cdots t_{j_{k+1}-1}$ such that $\frac{s_{k+1}}{s_{k}\len{v_{k}}} \geq k$.  Choose $m_{k+1}$ such that $\gcd(m_{k+1},s_{k}\len{v_{k}}) = s_{k}$ and $0 \leq m_{k+1} - (\delta_{k+1}^{-1}-1)s_{k+1} \leq s_{k}\len{v_{k}}$ and set $n_{k+1} = m_{k+1} + s_{k+1}$.
Then, as above, since $\frac{s_{k-1}\len{v_{k-1}}}{s_{k}} \to 0$,
\begin{align*}
\frac{p(\len{s_{k}v_{k}^{n_{k}-2}p_{k}})}{\len{s_{k}v_{k}^{n_{k}-2}p_{k}}}
&\leq 1 + \frac{(n_{k} - m_{k} - 1)\len{v_{k}} + \len{p_{k}} + C}{(n_{k}-2)\len{v_{k}}}
\leq 1 + \frac{s_{k} + 3 + \frac{C}{\len{v_{k}}}}{\delta_{k}^{-1} s_{k} - 2} \to 1 + \delta \text{ and} \\
\frac{p(\len{s_{k}v_{k}^{n_{k}-2}p_{k}})}{\len{s_{k}v_{k}^{n_{k}-2}p_{k}}}
&\geq 1 + \frac{n_{k}-m_{k}-1}{n_{k}+1} \geq 1 + \frac{s_{k}-1}{\delta_{k}^{-1}s_{k} + s_{k-1}\len{v_{k-1}} + 1} \to 1 + \delta,
\end{align*}
so $\limsup \frac{p(q)}{q} = 1 + \delta$.  By construction, $\len{v_{0}}a_{0}\cdots a_{k} = qs_{0} \cdots s_{k-1}$ and $\len{v_{k+1}} = m_{k}\len{v_{k}} + (n_{k-1}-m_{k-1})\len{v_{k-1}} = s_{k-1}y_{k}\len{v_{k}} + s_{k-1}\len{v_{k-1}}$, where $\gcd(y_{k},\len{v_{k-1}}) = 1$ since $\gcd(m_{k},s_{k-1}\len{v_{k-1}}) = s_{k-1}$.  Then $\gcd(\len{v_{k+1}},\len{v_{k}}) = s_{k-1}\gcd(\len{v_{k}},\len{v_{k-1}})$ so by induction $\gcd(\len{v_{k+1}},\len{v_{k}}) = s_{0} \cdots s_{k-1}$. For $X$ defined in the usual way, since the sequence of partial products of $(s_k)$ is a subsequence of partial products of $(t_k)$, $\mathcal{O}_X = \mathcal{O}$, and since $\frac{\len{v_{0}}a_{0}\cdots a_{k}}{\gcd(\len{v_{k}},\len{v_{k+1}})} = q$, $\mathcal{M}_X = \mathcal{M}$. Therefore, $X$ has the claimed properties by Theorem~\ref{MEFprod}.

Finally, consider when both $\mathcal{M} = S^{1} \times \mathbb{Z}/q\mathbb{Z}$ and $\mathcal{O} = \mathbb{Z}/r\mathbb{Z}$.  Let $\pi(0) = 0^{qr}$ and $\pi(1) = 0^{qr}1^{r}$.  Let $m_{k} = n_{k} = 1$ for all $k$. Then it's an immediate implication of Lemma~\ref{only} that $p(q+1) - p(q)$ is eventually $1$, and so $\limsup p(q)/q = 1$.

Also, $\len{v_{0}}a_{0}\cdots a_{k} = qr$ and $\gcd(\len{v_{k+1}},\len{v_{k}}) = \gcd(\len{v_{k}} + \len{v_{k-1}},\len{v_{k}}) = \gcd(\len{v_{k-1}},\len{v_{k}})$ for all $k$. Then, since $\gcd(\len{v_{1}},\len{v_{0}}) = \gcd(qr+qr+r,qr) = r$, $\gcd(\len{v_{k+1}},\len{v_{k}}) = r$ for all $k$. Therefore, $X$ defined as above has the claimed properties by Theorem \ref{MEFprod}.
\end{proof}

Finally, we address Examples 1.2-1.4 from the introduction. Example 1.2 is fairly straightforward; it is determined by substitutions with $|u_0| = |v_0| = 1$, $m_k = 3$ and $n_k = 5$. Therefore, by (\ref{ab}), $a_0 = 1$, all other $a_k = 2$, and all $b_k = 3$.
The verification that $\limsup p(q)/q < 3/2$ follows from Corollary \ref{sum} and Remark \ref{pands}. Namely, by Corollary~\ref{sum}, the limsup of $p(q)/q$ is achieved along the sequence $q_k=\len{s_{k}v_{k}^{n_{k}-2}p_{k}} = \len{s_k} + 3\len{v_k} + \len{p_k}$, which equals $\sum_{i=0}^{k-1} 3\len{v_i} + 4\len{v_k}$ by Remark \ref{pands}. The value of $p(q_k)$ is equal to
$q_k + \sum_{i = 0}^k (n_k - m_k - 1)\len{v_i} = \sum_{i=0}^{k-1} 4\len{v_i} + 5\len{v_k} + C$ for some constant $C$. Finally, we note that by the Perron-Frobenius theorem, the lengths $|v_i|$ grow exponentially with base the Perron eigenvalue of the incidence matrix 
$\left(\begin{smallmatrix} 2 & 4\\1 & 1 \end{smallmatrix}\right)$, which is $\kappa = \frac{3 + \sqrt{17}}{2}$. Therefore, 
$\frac{\sum_{i=0}^{k-1} \len{v_i}}{\len{v_k}} \rightarrow \frac{1}{\kappa - 1} = \frac{\sqrt{17} - 1}{8}$, and so
\[
p(q_k)/q_k = \frac{\sum_{i=0}^{k-1} 4\len{v_i} + 5\len{v_k} + C}{\sum_{i=0}^{k-1} 3\len{v_i} + 4\len{v_k}} \rightarrow
\frac{4(\sqrt{17}-1)/8 + 5}{3(\sqrt{17}-1)/8 + 4} = \frac{105 + \sqrt{17}}{86} \approx 1.2689 < 3/2.
\]
It remains to verify that the MEF is a rotation of $\mathcal{M}_{2} = (\mathbb{R} \times \mathbb{Q}_{2})/\mathbb{Z}\left[\nicefrac{1}{2}\right]$. As noted above, 
$a_0 = 1$ and all $a_k$ for $k > 0$ equal $2$. Also, $|v_0| = 1$, and it is easily checked by induction that all $|v_k|$ are odd; since
$\gcd(|v_{k+1}|, |v_k|)$ divides $\len{v_{0}} a_0 \ldots a_k = 2^k$ by Lemma~\ref{div}, all $\gcd(|v_{k+1}|, |v_k|) = 1$. Therefore, in the language of Theorem~\ref{MEFprod}, $\mathcal{O}_{X}$ is trivial and $\mathcal{M}_{X}$ is $\mathcal{M}_{2}$.  

Since the computations are significantly more unpleasant, we omit details of Example 1.3, except to note the following differences from Example 1.2. First, the limsup of $p(q)/q$ is now achieved along the sequence $q_k = \len{s_k v_k^{r_k - 1} u_k v_k^{r_k - 1} p_k} = \len{s_k u_k p_k}$. Second, now every $a_k$ for $k > 0$ is equal to $2^{r_k} (n_k - m_k) = 4$, and $\gcd(\len{v_k}, \len{v_{k+1}}) = 2^k$, which implies by Theorem~\ref{MEFprod} that $\mathcal{M}_{X} = \mathcal{M}_{2}$ and $\mathcal{O}_{X}$ is the binary odometer.

For Example 1.4, we cannot solve exactly for 
$\limsup p(q)/q$ since we do not have a closed form for
$m_k$ and $n_k$. However, we note that by Corollary~\ref{sum}, increasing
$m_k$ while keeping $n_k - m_k$ and $r_k$ constant can only decrease this limsup; since $(m_k, n_k)$ is always either
$(3, 5)$ or $(5, 7)$, this limsup is then clearly less than or equal to 
$\frac{105 + \sqrt{17}}{86}$ from Example 1.2. As in Example 1.2, $a_0 = 1$ and all other $a_k = 2$. It is easily checked by induction and the definition of the $\rho_k$ that for all $k$, $\len{v_k}$ is divisible by $2^k$, but not by $2^{k+1}$. Therefore, 
$\gcd(\len{v_k}, \len{v_{k+1}}) = 2^k = |v_0| a_0 \cdots a_k$, and so by Theorem~\ref{MEFprod}, 
$\mathcal{M}_{X}$ is $\mathbb{R}/\mathbb{Z} = S^{1}$ and $\mathcal{O}_{X}$ is the binary odometer.

\dbibliography{1.5}

\end{document}